\documentclass[10pt,reqno,a4paper]{article} 

\usepackage{anysize}
\marginsize{3.0cm}{3.0cm}{2.2cm}{2.2cm}

\usepackage[english]{babel}
\usepackage[centertags]{amsmath}
\usepackage{amsfonts,amssymb,amsthm}
\usepackage[svgnames]{xcolor}
\usepackage{comment}
\usepackage{hyperref} 
\usepackage{mathrsfs}
\usepackage[sans]{dsfont}
\usepackage{accents}

\let\OLDthebibliography\thebibliography
\renewcommand\thebibliography[1]{
  \OLDthebibliography{#1}
  \setlength{\parskip}{0pt}
  \setlength{\itemsep}{0pt plus 0.3ex} }

\numberwithin{equation}{section}

\theoremstyle{plain}
\newtheorem{theorem}{Theorem}[section]
\newtheorem{proposition}[theorem]{Proposition}
\newtheorem{lemma}[theorem]{Lemma}

\theoremstyle{definition}
\newtheorem{remarkbase}[theorem]{Remark}
\newenvironment{remark}{\pushQED{\qed} \remarkbase}{\popQED\endremarkbase}

\newcommand{\N}{{\mathbb N}}
\newcommand{\R}{{\mathbb R}}
\newcommand{\C}{{\mathbb C}}

\newcommand{\T}{{\mathbb T}}
\renewcommand{\S}{{\mathbb S}}

\newcommand{\mA}{\mathcal{A}}
\newcommand{\mB}{\mathcal{B}}
\newcommand{\mC}{\mathcal{C}}
\newcommand{\mD}{\mathcal{D}}
\newcommand{\mE}{\mathcal{E}}
\newcommand{\mF}{\mathcal{F}}
\newcommand{\mG}{\mathcal{G}}
\newcommand{\mH}{\mathcal{H}}
\newcommand{\mI}{\mathcal{I}}
\newcommand{\mJ}{\mathcal{J}}
\newcommand{\mK}{\mathcal{K}}
\newcommand{\mL}{\mathcal{L}}
\newcommand{\mM}{\mathcal{M}}

\newcommand{\mP}{\mathcal{P}}
\newcommand{\mQ}{\mathcal{Q}}
\newcommand{\mR}{\mathcal{R}}
\newcommand{\mS}{\mathcal{S}}
\newcommand{\mT}{\mathcal{T}}
\newcommand{\mU}{\mathcal{U}}
\newcommand{\mV}{\mathcal{V}}
\newcommand{\mW}{\mathcal{W}}

\renewcommand{\a}{\alpha}
\renewcommand{\b}{\beta}
\newcommand{\g}{\gamma}
\renewcommand{\d}{\delta}

\newcommand{\e}{\varepsilon}
\newcommand{\ph}{\varphi}
\newcommand{\lm}{\lambda}
\newcommand{\Lm}{\Lambda}

\newcommand{\Om}{\Omega}
\newcommand{\om}{\omega}

\newcommand{\s}{\sigma}

\renewcommand{\th}{\vartheta}

\newcommand{\la}{\langle}
\newcommand{\ra}{\rangle}
\newcommand{\pa}{\partial}
\renewcommand{\div}{\mathrm{div}\,}
\newcommand{\grad}{\nabla}

\title{2D capillary liquid drops with constant vorticity: rotating waves existence and a conditional energetic stability result for rotating circles}
 
\author{\normalsize{Giuseppe La Scala}}
\date{} 

\begin{document}

\maketitle

\noindent
\textbf{Abstract.} We consider a two-dimensional, pure capillary drop of \emph{nearly-circular} shape, having constant vorticity. We write the Craig-Sulem equations on the unit circle, then on the flat torus. We show their Hamiltonian structure and we then observe symmetries and we derive constants of motions. 

After showing linear stability for rotating circles, we prove the existence of rotating waves by combining a bifurcation-theoretical approach together with critical point theory. 

Finally, by exploiting the Hamiltonian structure, we show that whenever volume and barycenter are fixed to be the same as those of rotating circle, this solution is also conditionally energetically stable. This holds in the irrotational case as well, in agreement with the stability analysis of rotating cylinder jets in Rayleigh \cite{Lord.Rayleigh.jets}.

\bigskip

\emph{MSC 2020:} 35R35, 35B32, 35C07, 76B45, 35B38.

\tableofcontents

\section{Introduction}

\medskip

In the paper \cite{Lord.Rayleigh.jets}, Lord Rayleigh studied the motion of nearly-cylinder capillary jets, and he formally show that such jets are broken if they are long enough. In particular, he underlined that such break arises from linear instability of long cylinders, due to translational effects along their symmetry axis. At the contrary, rotational effects are not responsible for such instability, but they produce small oscillations about cylinders. To compute them, Rayleigh assumed zero-vorticity for jets and he computed steady-periodic motions for a two-dimensional drop model. Such formal computation has been numerically shown in \cite{Dyachenko} and rigorously justified in \cite{L}.

\medskip

In this paper, we further consider the presence of constant vorticity for two-dimensional capillary drops. Precisely, we consider the free boundary problem
\begin{align}
&\div u 
 = 0 \quad \qquad\qquad\,\,\,\,\,\quad\text{in} \ \Om_t,\label{div.eq.01}
\\ 
&
\text{curl}\,u 
 = \a_0 \quad \qquad\qquad\quad\text{in} \ \Om_t, \label{curl.eq.01}
 \\
 &
 \pa_t u + u \cdot \grad u + \grad p 
 = 0 \quad \text{in} \ \Om_t, \label{dyn.eq.01}
\\
&
p  = \sigma_0 H_{\Om_t} \quad\qquad\qquad\quad \text{on} \ \pa \Om_t, 
\label{pressure.eq.01}
\\
&
V_t  = \la u , \nu_{\Om_t} \ra \quad\qquad\quad\,\,\,\,\, \ \text{on} \ \pa\Om_t.
\label{kin.eq.01}
\end{align}
where, $u$ is the velocity vector field, $p$ is the pressure scalar field, $\a_0$ is the vorticity coefficient, $\s_0>0$ is the capillarity coefficient, $H_{\Om_t}$ is the curvature of $\pa\Om_t$, $V_t$ is the normal boundary velocity, $\nu_{t}$ is the outward unit normal vector field of $\pa\Om_t$, and finally $\Om_t$ is the geometrical domain of the drop which we assume to be smooth, simply connected and having boundary
\begin{equation}\label{ansatz}\pa\Om_t:=\{\g_t(x):=(1+h(t,x))x\colon\,x\in\S^1\},\qquad\S^1:=\{x\in\R^2\colon\,|x|=1\},
\end{equation}
where $h$ is the elevation function which satisfies $1+h>0$. If $\|h\|_{W^{1,\infty}}$ is small, we say that $\pa\Om_t$ has a \emph{nearly circular} shape.

The set of equations \eqref{div.eq.01}-\eqref{kin.eq.01} is given by the Euler equations \eqref{div.eq.01}-\eqref{dyn.eq.01} for 2D incompressible, perfect fluids with no external forces but with nonzero vorticity, by the capillary boundary condition \eqref{pressure.eq.01} for the pressure and by the boundary velocity condition \eqref{kin.eq.01}. 

Since $\Om_t$ is simply connected, some computations (see Section 3.1) lead to the following general form for the velocity vector field
\begin{equation}\label{u.decomposition.potential}  u=\grad\Phi - \frac{\a_0}{2}\mJ x,
\end{equation}
where $\Phi\colon\,\Om_h\to\R$ satisfies
\begin{equation}\label{velocity.potential.dirichlet}\begin{aligned}&\Delta\Phi(t,\cdot)=0\qquad\qquad\qquad\qquad\text{in}\,\,\Om_t,
\\&\Phi(t,\cdot)\circ\g_t=\psi(t,\cdot)\qquad\qquad\quad\text{on}\,\,\S^1,
\end{aligned}\end{equation}
and the second term is the rotation flow (where $\mJ$ is defined in \eqref{J.matrix.symplectic}).

The equations \eqref{div.eq.01}-\eqref{kin.eq.01} turn out to be equivalent to the Craig-Sulem equations
\begin{align}& \pa_t h = \frac{J}{1 + h} \, G(h)\psi + \frac{\a_0}{2}\mM h, \label{h.eq.}
\\
& \pa_t \psi =
 \frac12 \Big( G(h)\psi 
+ \frac{\la \grad_{\S^1} \psi , \grad_{\S^1} h \ra}{J(1+h)} \Big)^2
- \frac{|\grad_{\S^1} \psi|^2}{2(1+h)^2} 
  - \s_0 H(h) \notag \\ 
  &\qquad+\frac{\a_0}{2}\mM\psi + \frac{\a_0^2}{8}(1+h)^2 + \a_0 K(h)\psi + \s_0-\frac{\a_0^2}{8}. \label{psi.eq.}
\end{align}
Here, $G(h)\psi:=\la\grad\Phi\circ\g_h,\nu_h\ra$ is the Dirichlet-Neumann operator, $H(h):=H_{\Om_h}$ is the curvature operator, $\mM f:=\la\grad_{\S^1}f,\mJ x\ra$ and $K(h)\psi:=\Psi\circ\g_h$ is the trace operator for the harmonic conjugate $\Psi$ (which satisfies $\grad\Psi=\mJ\grad\Phi$). 

Equations \eqref{h.eq.}-\eqref{psi.eq.} admit an equivalent formulation on the flat torus $\T^1$, given by
\begin{align}&\pa_t\xi=e^{-2\xi}[\bar G(\xi)\chi + \frac{\a_0}{2}e^{2\xi}\xi'], \label{xi.eq}
\\&\pa_t\chi=e^{-2\xi}\Big[\frac12\Big(\frac{\bar G(\xi)\chi + \xi'\chi'}{\sqrt{1+\xi'^2}}\Big)^2 - \frac12\chi'^2 + \s_0 e^{\xi}\Big(\Big(\frac{\xi'}{\sqrt{1+\xi'^2}}\Big)' - \frac{1}{\sqrt{1+\xi'^2}}\Big)\Big] \notag
\\&\qquad+\frac{\a_0}{2}\chi' + \frac{\a_0^2}{8}e^{2\xi} + \a_0 \bar K(\xi)\chi + \s_0-\frac{\a_0^2}{8}. \label{chi.eq}
\end{align}
Here, $\bar G(\xi)\chi$ is the Dirichlet-Neumann operator on the flat torus (see Lemma \ref{derivative.change.strip}, point $(iii)$), and $\bar K(\xi)\chi$ is the trace operator for the harmonic conjugate on the flat torus (see Lemma \ref{derivative.change.strip}, point $(v)$).

\bigskip

\emph{Hamiltonian structure for \eqref{xi.eq}-\eqref{chi.eq}.}
Equations \eqref{xi.eq}-\eqref{chi.eq} are similar to pure capillary Water Wave equations on $\T^1$ (see for instance \cite{Wahlen.1}), and thus it is rightful to expect a similar Hamiltonian structure. A natural candidate is the total energy of the drop
\begin{equation}\label{total.energy}\mE:=\frac12\int_{\Om_h}|u|^2\,dx + \s_0\mA(\pa\Om_h),
\end{equation}
where the first term is the kinetic energy, and the second is the capillary energy (where in particular, $\mA(\pa\Om_h)$ is the length of the boundary of the drop). 

However, a careful computation (see Section 2.2) shows that, because of the additive term $\s_0 - \frac{\a_0^2}{8}$ in \eqref{chi.eq}, the Hamiltonian contains also a penalization due to the drop area term. Namely, the correct Hamiltonian is
\begin{align}\mH
&=\mE - \Big(\s_0-\frac{\a_0^2}{8}\Big)\int_{\Om_h}1\,dx \notag
\\&=\frac{1}{2}\int_{\T^1}\chi\,\bar G(\xi)\chi\,d\th + \s_0\int_{\T^1}e^\xi\,\sqrt{1+\xi'^2}\,d\th - \Big(\s_0 - \frac{\a_0^2}{8}\Big)\frac12\int_{\T^1}e^{2\xi}\,d\th \notag
\\&\quad-\frac{\a_0}{4}\int_{\T^1}e^{2\xi}\,\chi'\,d\th + \frac{\a_0^2}{32}\int_{\T^1}e^{4\xi}\,d\th 
\label{Hamiltonian.function}
\end{align}
Similarly as in \cite{Wahlen.1}, \eqref{xi.eq}-\eqref{chi.eq} are formally Hamiltonian, in the sense that 
\begin{equation}\label{hamiltonian.equations}\pa_t\begin{pmatrix}\xi \\ \chi
\end{pmatrix} = J_{\a_0}(\xi)\grad\mH(\xi,\chi),\end{equation}
and $J_{\a_0}$ is
\begin{equation*}J_{\a_0}(\xi):=\begin{pmatrix}0 & e^{-2\xi} \\ -e^{-2\xi} & \a_0\pa_\th^{-1}\Pi_0^\perp
\end{pmatrix}.
\end{equation*}
Here, $\pa_\th^{-1}$ denotes the indefinite integral operator while $\Pi_0^\perp$ the $L^2-$projection operator onto the zero-average functions. Still, \eqref{hamiltonian.equations} are not Hamiltonian, as highlighted in Remark \ref{remark:hamiltonian.if.volume.conserved}: the matrix operator $J_{\a_0}$ is invertible only if we fix the volume $\mV(\xi)=\frac12\int_{\T^1}e^{2\xi}\,d\th$ of the fluid, which is a \emph{nonlinear} functional, and thus it is not easy to work over such Hamiltonian structure.
 
However, it is possible to conjugate \eqref{xi.eq}-\eqref{chi.eq} with the change of coordinates 
\begin{equation}\label{change.of.coordinates}  \begin{pmatrix}\xi \\ \chi
\end{pmatrix}=\mC(\zeta,\g):=\begin{pmatrix}\zeta \\ \g + \frac{\a_0}{4}\pa_\th^{-1}\Pi_0^\perp e^{2\zeta}
\end{pmatrix},   
\end{equation}
so as to get an equivalent formulation which is Hamiltonian on the phase space $H^{s_1}\times\dot H^{s_2}$, and its Hamiltonian structure is given by the Hamiltonian function 
\begin{equation}\label{new.hamiltonian.function}\bar\mH:=\mH\circ\mC\end{equation}
and the Poisson tensor 
\begin{equation}\label{new.Poisson.tensor}J(\zeta):=\begin{pmatrix}0 & e^{-2\zeta} \\ -e^{-2\zeta} & 0
\end{pmatrix}.\end{equation} 
These are the drop version of the well-known Wahlén coordinates for the flat torus problem (see \cite{Wahlen.1}).

\bigskip

\emph{Linear stability of rotating circles and rotating waves.} Considering the set $\mT:=\{(0,0)\}\cup\{(\ell,m)\colon\,\ell\in\N,m\in\{-1,1\}\}$ and the $L^2$ Fourier basis
\begin{equation*}\ph_{\ell,m}(x):=\begin{cases}C_{\ell}\cos(\ell x) & \text{if}\,\,\ell\in\N,\,m=1,
\\C_{\ell}\sin(\ell x) & \text{if}\,\,\ell\in\N,\,m=-1,
\\C_0 & \text{if}\,\,\ell=0,\,m=0,
\end{cases}
\end{equation*}
by considering the Fourier expansions
\begin{equation*}\xi=\sum_{(\ell,m)\in\mT}\xi_{\ell,m}\ph_{\ell,m},\qquad\chi=\sum_{(\ell,m)\in\mT}\b_{\ell,m}\ph_{\ell,m},
\end{equation*}
it is remarkable that linearization about the \emph{rotating circle solution} $(\xi,\chi)=(0,0)$ satisfies the eigenvalue equation
\begin{equation*}\lambda^2=\begin{cases}0 & \text{if}\,\,(\ell,m)=(0,0),
\\\ell\Big(-\s_0\ell^2 -\frac{\a_0^2}{4}\ell + \s_0 - \frac{\a_0^2}{4}\Big) & \text{if}\,\,(\ell,m)\ne(0,0).
\end{cases}
\end{equation*}
It always holds that $\lm^2\le0$ for all $(\ell,m)\in\mT$, which implies that all of the eigenvalues are purely imaginary, and so rotating circle is linearly stable.

\medskip

This addresses to the following points. First, close to linear centers it is reasonable to expect the onset of small oscillations about them. Second, in absence of global well-posedness results for Cauchy problems, it is senseful to ask if, more generally, there are physically relevant global-in-time solutions.
To both these aims, we do the following considerations. 

\medskip

A consequence of the Hamiltonian structure for \eqref{xi.eq}-\eqref{chi.eq} and of its invariance by rotations $\mT_\a u(\th):=u(\th+\a)$ is the conservation of the total angular momentum of the drop. In the natural Hamiltonian structure, it is expressed as
\begin{equation}\label{Angular.Momentum}\mI(\xi,\chi)=-\frac12\int_{\T^1}e^{2\xi}\chi'\,d\th + \frac{\a_0}{8}\int_{\T^1}e^{4\xi}\,d\th.
\end{equation}
The first term is due to the irrotational part of the velocity field, while the second term is due to the rotation flow induced by vorticity. In Wahlén coordinates, the total angular momentum is more compactly expressed as
\begin{equation}\label{Angular.Momentum.Wahlen}\bar\mI(\zeta,\g)=\mI\circ\mC(\zeta,\g)=-\frac12\int_{\T^1}e^{2\zeta}\g'\,d\th.
\end{equation}
In any case, such invariances and constants of motion lead to the search for the \emph{rotating waves}. These are solutions which are steady with respect to a corotating observer. For the equations \eqref{h.eq.}-\eqref{psi.eq.}, they are expressed as
\begin{equation}\label{original.rotating.waves}h(t,x):=\bar\eta(R(\om t)x),\qquad\psi(t,x):=\bar\beta(R(\om t)x),\qquad t\in\R,\,x\in\S^1,
\end{equation}
where $R(\th)$ is the rotation matrix around the axis orthogonal to the plane of the drop:
\begin{equation*}R(\th):=\begin{pmatrix}\cos\th & -\sin\th \\ \sin\th & \cos\th \end{pmatrix}.
\end{equation*}
In the equations \eqref{xi.eq}-\eqref{chi.eq}, they can be read as
\begin{equation}\label{rotating.waves}\xi(t,\th):=\eta(\th+\om t),\qquad\chi(t,\th):=\b(\th + \om t),\qquad t\in\R,\,\th\in\T^1,
\end{equation}
and in Wahlén coordinates,
\begin{equation}\label{rotating.waves.Wahlen}\zeta(t,\th):=\bar\eta(\th+\om t),\qquad \g(t,\th):=\bar\b(\th+\om t)\qquad t\in\R,\,\th\in\T^1.
\end{equation}
Working with Wahlén coordinates is equivalent to work with natural coordinates. In particular, thanks to Hamiltonian structures for drop equations, rotating waves can be characterized in natural coordinates as critical points for the action
\begin{equation}\label{action.for.natural.coordinates}\mE:=\mH - \om\mI,
\end{equation}
and in Wahlén coordinates,
\begin{equation}\label{action.for.Wahlen.coordinates}\bar\mE:=\bar\mH - \om\bar\mI.    
\end{equation}
We decide to work with Wahlén coordinates, since its Hamiltonian structure is simpler. 

We observe that $(\bar\eta,\bar\b)=(0,0)$ is always a rotating wave, no matter what value $\om$ holds. Our task is to find different rotating solutions, which we call \emph{nontrivial}, having in general the $\kappa-$fold symmetry, which means that there exists some $\kappa\in\N$ such that for all $\th\in\T^1$,
\begin{equation*}f\Big(\th + \frac{2\pi}{\kappa}\Big)=f(\th).
\end{equation*}
Physically speaking, these correspond to rigid profiles which are invariant by rotations of $\frac{2\pi}{\kappa}$. In particular, for $\kappa\ge3$, they have the same rotational symmetry of a regular $\kappa-$polygon.

To find such waves, we follow a bifurcation-theoretical approach, and by linearizing the rotating wave equations about the rotating circle, we find the resonance relation
\begin{equation}\label{resonance.relation}\om=\frac{\a_0}{2}-\frac{\a_0}{2\kappa\ell}\pm\sqrt{\Big(\frac{\a_0}{2\kappa\ell}\Big)^2 + \frac{4\s_0((\kappa\ell)^2-1)-\a_0^2}{4\kappa\ell}},
\end{equation}
where $\ell\in\N$ is fixed. Such relation has sense if the following condition is met:
\begin{equation}\label{existence.resonance.condition}\Delta:=(\kappa\ell-1)\Big(C\kappa\ell(\kappa\ell+1) - \frac14\Big)\ge0,\qquad C:=\frac{\s_0}{\a_0^2}.
\end{equation}
In particular, this is true for all $\ell,\kappa\in\N$ if $C\ge\frac1{24}$.
In correspondence of the values \eqref{resonance.relation} for $\om$, the linearized equation has a kernel $V$ of multiplicity $2$ or even $4$. As shown in Lemma \ref{lemma:simple.eigenvalue}, $\om$ is double for large values of $\ell$, while it can have multiplicity $4$ otherwise.

At this point, there are two key remarks. The first is that, once we write $u=v+w(\om,v)$ (where $v\in V$ are the kernel vectors for the linearized operator $\mL_\om$, see \eqref{variational.linearization}, and $w=w(\om,v)\in V^{\perp_{L^2}}$), the bifurcation equation is the critical point equation for the functional $\Phi:=\mE(v+w(\om,v))$. The second is that the rotating wave equation is invariant by the torus action. 
As a result, we get the following two consequences. 

\medskip

The first consequence is that when the dimension of $V$ is 2, we can reduce the bifurcation equation to a $1-$dimensional problem, and so we are reduced to check Crandall-Rabinowitz Transversality condition, which realizes if and only if $\om\ne\frac{\a_0}{2}-\frac{\a_0}{2\ell}$. Together with the fact that $\mE$ is invariant by reversibility (that is, $\mE(\eta(-\th),-\b(-\th))=\mE(\eta,\b)$), this gives that all the critical $\mT_\a-$orbits are generated by rotating waves such that $(\eta(-\th),\beta(-\th))=(\eta(\th),-\beta(\th))$ (see Theorem \ref{thm:rw.crandall.rabinowitz}). This is what happens, in particular, for the irrotational case $\a_0=0$, where the eigenvalues are always simple.

\medskip

The second consequence is that, even where the dimension of $V$ is 4, still we can find rotating waves.
There are, however, some technical remarks to underline. If $\om_*>\frac{\a_0}{2}$, then we can carry on the
classical Moser-Weinstein argument (see for instance \cite{Moser} or \cite{Weinstein}), which exploits the conservation of angular momentum, through which the rotating waves are eventually parametrized. This argument is possible because for such values of $\om_*$, the reduced angular momentum \eqref{IZ.approx} on the kernel variables is a positive-definite quadratic form up to cubic terms, and so its level sets are compact. As a result, we get at least two distinct $\mT_\a-$orbits of critical points, corresponding to one maximum and one minimum of the reduced action on the reduced angular momentum constraint. 

\medskip

If instead $\om_*<\frac{\a_0}{2}$, we could generally lose the ellipticity of the reduced angular momentum, and due to the complexity of the arithmetic of \eqref{resonance.relation}, we cannot establish if such values of $\om_*$ correspond to eigenvalues with multiplicity 2 or 4. Anyway, if we suppose the latter case, we can follow the approach in \cite{BBMM} of parametrizing the rotating waves by their angular frequency. Precisely, we exploit some topological considerations about the critical point $v=0$ of functions of the form $f_\om=(\om-\om_*)|v|^2 + O(|v|^3)$, defined on small balls in $\R^4$, and get a Minimax principle to deduce the existence of nonzero critical points (see Lemma 5.2 and 5.4 in \cite{BBMM}). This can be obtained once we show the existence of sets invariant by deformation, namely, Mountain pass critical levels if $0$ is an isolated maximum or a minimum, or Conley blocks if $f$ attains both positive and negative values close to $0$ (being still a isolated critical point).

\medskip

We remark that what happens close to the frequencies $\om=\frac{\a_0}{2} - \frac{\a_0}{2\ell}$ is an open problem. The reason is that by \eqref{resonance.equation}, we get that such frequencies are admissible if and only if
\begin{equation}\label{degenerate.integers}\ell=-\frac12 + \frac12\sqrt{1+\frac{\a_0^2}{4\s_0}}\in\N,
\end{equation}
which happens if and only if $1 + \frac{\a_0^2}{4\s_0}=k^2$ for some odd integer $k$. We also notice that for such frequencies, by \eqref{V.kernel} the kernel space $V$ is made of vectors of the form
\begin{equation*}\mathtt v_{\ell,m}=\begin{pmatrix}\ph_{\ell,m} \\ 0
\end{pmatrix},
\end{equation*}
where $m\in\{-1,1\}$ and $\ell$ is given by \eqref{degenerate.integers}. 
This implies that the above frequencies $\om$ are double eigenvalues, but the Crandall-Rabinowitz transversality condition \eqref{pa.om.L.scalar.product} is violated.

\bigskip

\emph{Conditional energetic stability for rotating circles with fixed volume.} As already seen, rotating circles are linearly stable, which addresses to the natural question of their nonlinear stability. 
The natural idea to pursue such a purpose is to take into account the Hamiltonian structure, and to show that the Hamiltonian $\bar\mH$ is in fact a Lyapunov functional. If this held, we would get a \emph{conditional energetic stability}, namely, stability with respect to the norm $\|(\zeta,\g)\|_E:=\|\zeta\|_{H^1} + \|\g\|_{H^\frac12}$, which makes $\bar\mH$ continuous. Such a result would be only conditional in absence of local well-posedness results in such norm.

Up to some further technical considerations, in the spirit of \cite{GSS.I} and \cite{VWW}, it would be enough to show that $\bar\mH$ is strictly convex close to the rotating circle, which leads to compute the spectrum of its Hessian operator and trying to show that it is uniformly bounded from below by a positive constant. However, a situation like this is, in fact, far from obvious.

Indeed, three problematic eigenvalues arise. The first twos are two zero eigenvalues due to the modes $(\ell,m):=(1,\pm1)$, and the third one is due to the mode $(\ell,m)=(0,0)$ and it has variable sign:
\begin{equation}\label{defective.eigenvalue}\lm_1(0)=-\s_0 + \frac{\a_0^2}{4}=-\a_0^2\Big(C-\frac14\Big),
\end{equation}
where $C$ is defined in \eqref{existence.resonance.condition}, and we call it \emph{modified Bond number}, because it measures how strong is the capillarity effect compared to that of vorticity. This could apparently compromise the stability of rotating circle, 
and it seems that also irrotational circles are unstable, since $\lm_1(0)=-\s_0$ in this case. However, this does not contradict the stability of purely rotating cylinders in \cite{Lord.Rayleigh.jets}, whose analysis is done by fixing the volume of the perturbations equal to that of the cylinder jet. In this spirit, by fixing the volume of the drops equal to that of the rotating circle, the average mode $(\ell,m)=(0,0)$ gets order $O(\|(\zeta,\g)\|_E^2)$, and so it can be neglected. This erases then the negative direction.

However, we cannot still claim that in such volume constraint, the rotating circle gets energetically stable, since we are left with the other two degenerate modes $(\ell,m)=(1,\pm1)$. It is reasonable to ask if we can find another constant of motion to make these directions small enough to get strict convexity for $\bar\mH$. Actually, the answer is positive, because these modes are close to the vector field associated to the drops barycenter velocity
\begin{equation*}\mB(\zeta,\g)=\int_\T e^\zeta\Big(\g' + \frac{\a_0}{4}\Pi_0^\perp e^{2\zeta} - \frac{\a_0}{6}e^{2\zeta} \Big)\begin{pmatrix}-\sin\th \\ \cos\th
\end{pmatrix}\,d\th.
\end{equation*}
Indeed, when we fix it to be the same as that of rotating circle, namely $\mB=(0,0)$, then this also make the position of the barycenter 
\begin{equation*}\mP(\zeta)=\frac12\int_{\T}e^{3\zeta}\begin{pmatrix}\cos\th \\ \sin\th\end{pmatrix}\,d\th
\end{equation*}
another constant of motion, and then if we fix $\mP(\zeta)=(0,0)$, we get smallness also for the left degenerate modes (see Lemma \ref{lemma:smallness.on.constraints}). Physically speaking, imposing such constraints is reasonable because we are essentially requiring that in average, the drop is subjected only to rotation and deformation, so we can always translate the drop reference frame in order to fix the barycenter at the origin.

\medskip

Summing everything up, we get that no matter how large is the modified Bond number, conditional energetic stability for rotating circles is guaranteed as long as we consider the class of perturbations along the volume and barycenter velocity constraints $\bar\mV=\bar\mV(0),\,\mB=(0,0),\,\mP=(0,0)$ (see Theorem \ref{thm:conditional.stability}). 

\bigskip

\emph{Further literature about pure capillary drop.}
The problem of the fluid motion of a capillary drops
dates back to the paper \cite{Lord.Rayleigh.jets} by Lord Rayleigh, who studied nearly-circular 2D drops for explaining capillary jets.
The formulation of the free boundary problem for the drop in the irrotational regime 
as a system of equations on $\S^1$ has been used in \cite{Beyer.Gunther} and for $\S^2$ 
in \cite{B.J.LM, Beyer.Gunther, Shao.initial.notes, Shao.G.paralinearization}.
For 3D drops, its Hamiltonian structure has been obtained in \cite{Beyer.Gunther} 
and, in the present formulation, in \cite{B.J.LM}. 
The Dirichlet-Neumann operator for the 3D drops 
has been studied in \cite{Shao.G.paralinearization} (paralinearization)
and \cite{B.J.LM} (linearization, analyticity, tame estimates). 
Local well-posedness results for the Cauchy problem have been obtained in 
\cite{Beyer.Gunther, CS, Shao.G.paralinearization}, 
and continuation results and a priori estimates in \cite{JL, Shatah.Zeng}. The rigorous existence of rotating traveling waves has been proved in \cite{BLS}.
As for the 2D case, in \cite{L} it has been provided the Craig-Sulem formulation for irrotational drops and the existence of rotating waves with no symmetry assumptions, whose numerical evidence is shown in \cite{Dyachenko}. 
We also mention the recent paper \cite{MNS} for the existence of 2D steady bubbles for capillary drops.

\bigskip

\emph{Further literature about free-boundary problem for perfect fluids with vorticity and bifurcation.} We start by mentioning \cite{Wahlen.1} for the Hamiltonian structure of Water Wave equations on $\T^1$ with constant vorticity, then \cite{Wahlen, Martin, BBMM} for local bifurcations of steady periodic waves and \cite{Constantin.Strauss, CSV} for global bifurcations of steady periodic waves. We then mention \cite{Wahlen.Weber, K.L} for general vorticity. 

\bigskip

\emph{Further literature about stability in Hamiltonian systems.} We mention \cite{GSS.I} and the more recent paper \cite{VWW} for stability problems for ground states in Hamiltonian systems.

\section{Geometrical and functional settings} \label{sec:preliminary}

Let us consider an open set $\Om\subset\R^2$ whose boundary is at least $C^2$-regular. Let us call $\nu_\Om$ the outward unit normal vector field of $\pa\Om$: the tangent line at $x\in\pa\Om$ is
\begin{equation}\label{tangent.line}T_x(\pa\Om)=\{y\in\R^2\colon\,\langle y,\nu_\Om(x)\rangle=0\},
\end{equation}
where we are denoting by $\la\cdot,\cdot\ra$ the Euclidean scalar product.

Thus, calling $N_x(\pa\Om)$ the normal line at $x\in\pa\Om$, we have the orthogonal decomposition $\R^2=N_x(\pa\Om)\oplus T_x(\pa\Om)$, so we can define the projection operators $\Pi_{N_x(\pa\Om)}\colon\R^2\to N_x(\pa\Om)$ and $\Pi_{T_x(\pa\Om)} = I - \Pi_{N_x(\pa\Om)}$, where $I$ is the identity map; one can notice that 
\begin{equation}\label{normal.projection}\Pi_{N_x(\pa\Om)}=\nu_\Om(x)\otimes\nu_\Om(x).
\end{equation}

We can project any vector field $F\colon\R^2\to\R^2$ over $N_x(\pa\Om)$ and $T_x(\pa\Om)$. In particular, given any function $f\colon\R^2\to\R$ and its gradient $\nabla f$, we can define the normal and the tangential gradient of $f$ respectively as
\begin{align}&\nabla_\nu f(x):=\Pi_{N_x(\pa\Om)}[\nabla f(x)]=\langle\nabla f(x),\nu_\Om(x)\rangle\nu_\Om(x), \label{normal.gradient}
\\&\nabla_{\pa\Om}f(x):=\Pi_{T_x(\pa\Om)}[\nabla f(x)]=\nabla f(x) - \nabla_\nu f(x). \label{tangential.gradient}
\end{align}
Thus we can define the tangential differential as
\begin{equation}\label{tangential.differential}D_{\pa\Om}f(x):=[\nabla_{\pa\Om}f(x)]^T,
\end{equation}
a definition that can be extended to any smooth vector field $F\colon\R^2\to\R^2$ as
\begin{equation}\label{extended.tangential.differential}D_{\pa\Om}F(x):=DF(x) - DF(x)\nu_\Om(x)\otimes\nu_\Om(x),
\end{equation}
where $DF(x)$ is the differential (or Jacobian matrix) of $F$ on $\R^2$.
For a vector field $F\colon\,\pa\Om\to\R^3$ we define the tangential divergence as the trace of
the tangential differential
\begin{equation}\label{tangential.divergence}\div_{\pa\Om}F:=\text{Tr}(D_{\pa\Om}F)=\div F - \la (DF)\nu_\Om,\nu_\Om\ra.
\end{equation} 
From this, we define the mean curvature as
\begin{equation}\label{mean.curvature.generic}H_\Om:=\div_{\pa\Om}\nu_\Om.
\end{equation}
In general, it holds the divergence theorem for hypersurfaces
\begin{equation}\label{div.theorem.hypersurfaces}\int_{\pa\Om}\div_{\pa\Om}F\,d\mH_1=\int_{\pa\Om}H_\Om\la F,\nu_\Om\ra\,d\mH^1,
\end{equation}
where $\mH^1$ is the 1-dimensional Hausdorff measure.

Given any smooth function $f\colon\,\pa\Om\to\R$ and any extension $\tilde{f}$ of it over an open neighborhood of $\pa\Om$ in $\R^2$, one can alternatively define the tangential gradient as
\begin{equation}\grad_{\pa\Om}f(x):=\grad\tilde f(x),
\end{equation}
and one can show that this definition does not depend on the choice of the extension $\tilde{f}$, and coincide with \eqref{tangential.gradient}; same holds for vector fields $F\colon\,\pa\Om\to\R^2$.

Let us now turn our attention to the unit disk $\Om:=\mathbb{D}$ whose boundary is the unit circle $\S^1$. 
Given any function $f\colon\,\S^1\to\R$, we define its $0-$homogeneous and $1-$homogeneous extensions to $\R^2\setminus\{0\}$ respectively as
\begin{equation}\label{homogeneous.extensions}\mE_0f(x):=f\Big(\frac{x}{|x|}\Big),\qquad\mE_1f(x):=|x|\mE_0 f(x),\qquad\forall x\in\R^2\setminus\{0\}
\end{equation}
and analogously we do for any vector field $F\colon\,\S^1\to\R^2$. By these definitions and by the fact that $\nu_{\mathbb{D}}(x)=x$ for all $x\in\S^1$, we get 
\begin{align}&\nabla_{\S^1}f(x)=\nabla(\mE_0f)(x)= (\grad\mE_0f)(x) - \la(\grad\mE_0f)(x),x\ra x, \label{tang.grad.S1}
\\&D_{\S^1}f(x)=(\grad_{\S^1}f(x))^T.
\end{align}
By definition, for all $x\in\S^1$ we have
\begin{equation}\label{grad.s1.orth.}\la\grad_{\S^1}f(x),x\ra = 0.
\end{equation}
Let us turn our attention to the deformed disk $\Om_h$:
\begin{equation}\label{Om.h}\Om_h:=\{(1+\mE_0h(x))x\,\colon\,x\in\mathbb{D}\},\qquad h\colon\,\S^1\to(-1,+\infty).
\end{equation}
Then, its boundary is
\begin{equation}\label{pa.Om.h}\pa\Om_h:=\{(1+h(x))x\,\colon\,x\in\mathbb{S}^1\},\qquad h\colon\,\S^1\to(-1,+\infty),
\end{equation}
which is diffeomorphic to $\S^1$ through the map \begin{equation}\label{gamma.diff}\g_h:\S^1 \to \pa \Om_h\qquad\g_h(x):=(1+h(x))x,\qquad x\in\S^1.\end{equation}
The 1-homogeneous extension of $\g_h$ on $\R^2\setminus\{0\}$ is
\begin{equation}  \label{def.tilde.h}
\mE_1 \g_h(x) :=  x \, (1 + \mE_0 h(x)) ,\qquad\forall x\in\R^2\setminus\{0\}.
\end{equation}
The Jacobian matrix, its determinant, its inverse and transpose are
\begin{align}&D (\mE_1 \g_h)  = (1 + \mE_0 h) I + x \otimes \nabla \mE_0 h, \label{Jac.tilde.gamma}
\\&\det D(\mE_1 \g_h) = (1 + \mE_0 h)^2, \label{det.D.tilde.gamma}
\\&[D (\mE_1 \g_h)]^{-1} = \frac{I}{1+ \mE_0 h}  
- \frac{x \otimes \nabla \mE_0 h}{(1 + \mE_0 h)^2}, \label{D.tilde.gamma.inv}
\\&[D (\mE_1 \g_h)]^{-T} 
= \frac{I}{1+ \mE_0 h} 
- \frac{(\grad \mE_0 h) \otimes x}{(1 + \mE_0 h)^2}. \label{D.tilde.gamma.inv.T} 
\end{align}
Then, the tangent line $T_{\g_h(x)}(\g_h(\S^1))$ to the curve $\g_h(\S^1)$ at $\g_h(x) \in \g_h(\S^1)$ can be proved to be
\begin{equation} \label{tangent.pa.Om}
T_{\g_h(x)}(\g_h(\S^1)) 
= \{ D_{\S^1} \g_h(x) v : v \in T_x(\S^1) \}
\qquad\forall x \in \S^1,
\end{equation}
while the outward unit normal vector field to $\g_h(\S^1)$ at $\g_h(x)$ is
\begin{equation}  \label{nu.h}
\nu_{h}\circ\gamma_h(x) = \frac{(1 + h(x)) x - \grad_{\S^1} h(x)}{\sqrt{(1 + h(x))^2 + |\grad_{\S^1} h(x)|^2}} = \frac{(1 + h(x)) x - \grad_{\S^1} h(x)}{J(h)}\qquad x\in\S^1,
\end{equation}
where
\begin{equation}\label{J.def}J(h):=\sqrt{(1+h)^2 + |\grad_{\S^1}h|^2}
\end{equation}

\begin{lemma}
\label{lem:formulas}
Let $\Omega_h$ be as in \eqref{Om.h},  
and let $\g_h$ be as in \eqref{gamma.diff}.
For any function $f\colon\, \pa \Omega_h \to \R$,
let $\tilde f$ be its pullback by $\g_h$, namely $\tilde f(x) = f\circ\g_h(x)$. 
Then 
\begin{equation}
\begin{split}
\nabla_{\pa \Omega_h} f\circ\gamma_h(x) &= \frac{\nabla_{\S^1} \tilde f(x)}{1+h} + \frac{\la \nabla_{\S^1}  \tilde f, \nabla_{\S^1} h \ra}{(1+h)J}\nu_{h} \circ\gamma_h(x)  \\
&=  \frac{\nabla_{\S^1}  \tilde f(x)}{1+h}  - \frac{\la \nabla_{\S^1}  \tilde f, \nabla_{\S^1} h \ra}{(1+h)J^2} \nabla_{\S^1} h + \frac{\la \nabla_{\S^1}  \tilde f, \nabla_{\S^1} h \ra}{J^2} x,
\end{split}
\label{eq.1.in.lem:formulas}
\end{equation}

\end{lemma}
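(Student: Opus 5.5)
We would prove the formula by differentiating the identity $\tilde f=f\circ\g_h$ along $\S^1$ with the chain rule, and then separating the two unknowns: the tangential gradient $\grad_{\pa\Om_h}f$ at $\g_h(x)$, and its tangential character. We extend $f$ to a neighbourhood of $\pa\Om_h$ by $F:=\mE_0\tilde f\circ(\mE_1\g_h)^{-1}$. This is legitimate because $\mE_1\g_h$ is a local diffeomorphism away from the origin by \eqref{det.D.tilde.gamma}. Then $F\circ\mE_1\g_h=\mE_0\tilde f$, and the chain rule gives
\begin{equation*}
\grad\mE_0\tilde f=[D(\mE_1\g_h)]^T(\grad F)\circ\mE_1\g_h .
\end{equation*}
Restricting to $x\in\S^1$ and using \eqref{tang.grad.S1}, the left-hand side is $\grad_{\S^1}\tilde f(x)$. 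Inverting with \eqref{D.tilde.gamma.inv.T} yields
\begin{equation*}
\grad F(\g_h(x))=\frac{\grad_{\S^1}\tilde f}{1+h}-\frac{\la x,\grad_{\S^1}\tilde f\ra}{(1+h)^2}\grad_{\S^1}h=\frac{\grad_{\S^1}\tilde f}{1+h},
\end{equation*}
since $\la x,\grad_{\S^1}\tilde f\ra=0$ by \eqref{grad.s1.orth.}.

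This particular extension $F$ is not the one whose gradient is tangential, because $F$ is constant along rays $t\mapsto t x$ rather than along normals. We therefore project onto $T_{\g_h(x)}\pa\Om_h$ using \eqref{tangential.gradient} and \eqref{normal.projection}:
\begin{equation*}
\grad_{\pa\Om_h}f\circ\g_h=\frac{\grad_{\S^1}\tilde f}{1+h}-\Big\la\frac{\grad_{\S^1}\tilde f}{1+h},\nu_h\circ\g_h\Big\ra\,\nu_h\circ\g_h .
\end{equation*}
By \eqref{nu.h}, and again using $\la\grad_{\S^1}\tilde f,x\ra=0$, the scalar product equals $-\la\grad_{\S^1}\tilde f,\grad_{\S^1}h\ra/((1+h)J)$. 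This gives the first line of \eqref{eq.1.in.lem:formulas}, provided the independence of the tangential gradient from the chosen extension is granted, as stated before the lemma.

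The second line follows by substituting $\nu_h\circ\g_h=((1+h)x-\grad_{\S^1}h)/J$ and expanding. The coefficient of $x$ simplifies to $\la\grad_{\S^1}\tilde f,\grad_{\S^1}h\ra/J^2$, while the coefficient of $\grad_{\S^1}h$ becomes $-\la\grad_{\S^1}\tilde f,\grad_{\S^1}h\ra/((1+h)J^2)$.

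The only delicate point is the sign and placement of the tensor product in \eqref{D.tilde.gamma.inv.T}. That is, we must check that $[(\grad\mE_0h)\otimes x]v=\la x,v\ra\grad\mE_0h$, so that the correction term indeed vanishes against tangential vectors. As a quick consistency check, we would verify directly that the resulting vector is orthogonal to $\nu_h\circ\g_h$.
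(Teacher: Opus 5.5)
Your proof is correct. The paper gives no proof here and only cites \cite{L}, but the Jacobian formulas \eqref{Jac.tilde.gamma}--\eqref{D.tilde.gamma.inv.T} it records just before the lemma are exactly the tools your argument uses: chain rule, then projection onto the tangent line using \eqref{nu.h}. Your sign check on $(\grad\mE_0 h)\otimes x$ is consistent with the convention of \eqref{Jac.tilde.gamma}.

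One minor tidy-up concerns how $F$ is defined. Nonvanishing of the determinant only gives local invertibility, which is not enough to define $F$ along the whole curve. What actually justifies it is that $\mE_1\g_h$ maps each ray onto itself, with global inverse $y\mapsto y/(1+\mE_0h(y))$. Hence your extension $F$ is simply $\mE_0\tilde f$, and the $(-1)$-homogeneity of $\grad\mE_0\tilde f$ gives $\grad F(\g_h(x))=\grad_{\S^1}\tilde f(x)/(1+h)$ directly, without inverting the Jacobian.
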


\begin{proof} See \cite{L}.

\end{proof}

We observe that $\R^2\setminus\{0\}$ is diffeomorphic to the periodic infinite strip $\mathcal{S}:=\R\times\T^1$ through the diffeomorphism $\phi\colon\,\mS\to\mathbb{R}^2\setminus\{0\}$, defined as
\begin{equation}\label{phi.map}\phi(\rho,\th):=e^{\rho}\begin{pmatrix}\cos\th \\ \sin\th
\end{pmatrix}\qquad\forall\th\in\T^1,\,\rho\in\R.
\end{equation}
In particular, $\phi(\T^1\times(-\infty,0))=\mathbb{D}\setminus\{0\}$ and $\phi(\{0\}\times\T^1)=\S^1$. Moreover, setting $\xi(\th):=\log(1+h\circ\phi(0,\th))=\log(1+h(\cos\th,\sin\th))$, we have
\begin{equation}\label{phi.omegah.t1graph}\g_\xi(\th):=\phi(\xi(\th),\th)=e^{\xi(\th)}\begin{pmatrix}\cos\th \\ \sin\th\end{pmatrix} = (1 + h(\cos\th,\sin\th))\begin{pmatrix}\cos\th \\ \sin\th
\end{pmatrix}.
\end{equation}
Setting
\begin{equation}\label{omegah.diffeo.to.graph}\begin{aligned}&\Omega_\xi:=\{(\rho,\th)\in\mS\colon\,\th\in\T^1,\,-\infty<\rho<\xi(\th)\},
\\&\pa\Omega_\xi = \{(\rho,\th)\in\mS\colon\,\th\in\T^1,\,\rho=\xi(\th)=\log(1+h(\cos\th,\sin\th))\},
\end{aligned}\end{equation}
we get $\phi(\Om_\xi)=\Om_h\setminus\{0\}$, $\phi(\pa\Om_\xi)=\pa\Om_h$ and that $\g_\xi\colon\,\T^1\to\pa\Om_\xi$ is a diffeomorphism from $\T^1$ to $\pa\Om_\xi$, namely a parametrization of $\pa\Om_\xi$. For all $\th\in\T^1$, denoting by $f':=\pa_\th f$ the derivative of $f$ in the $\th$ variable, one has that the derivative of $\g_\xi$ is 
\begin{equation}\label{parametrization.derivative}\g_\xi'(\th)=\begin{pmatrix}(\g_\xi')_1(\th) \\ (\g_\xi')_2(\th)\end{pmatrix}=e^{\xi(\th)}\Big\{\xi'(\th)\begin{pmatrix}\cos\th \\ \sin\th
\end{pmatrix} - \begin{pmatrix}\sin\th \\ -\cos\th
\end{pmatrix}\Big\}.
\end{equation}
If we equip $\R^2\setminus\{0\}$ with the Euclidean metric $g_{Euc}:=dx\otimes dx + dy\otimes dy$, then we can do the pullback of it through $\g_\xi$ to induce a metric $g$ on $\pa\Om_h$, whose local representation on $\T^1$ is
\begin{equation}\begin{aligned}g(\th)&=((\g_{\xi})_*g_{Euc})(\th)
\\&:=d_\th(\g_\xi)_1\otimes d_\th(\g_\xi)_2 + d_\th(\g_\xi)_2\otimes d_\th(\g_\xi)_2
\\&=[(\g_\xi')_1^2(\th)+(\g_\xi')_2^2(\th)]d\th\otimes d\th
\\&=e^{2\xi(\th)}(1+\xi'^2(\th))d\th\otimes d\th
\\&=:g_{\th\th}(\th)d\th\otimes d\th.
\end{aligned}\end{equation}
Let us also call 
\begin{equation}\label{metric.inverse}g^{\th\th}(\th):=(g_{\th\th}(\th))^{-1}=e^{-2\xi(\th)}(1+\xi'^2(\th))^{-1}.
\end{equation}
The following lemma provides a representation on $\T^1$ of $\grad_{\S^1}$, $\nu_{h}$ and some correlated quantities:

\begin{lemma}\label{derivative.change.strip}The following facts hold.

\medskip

$(i)$ For any $f\colon\,\R^2\setminus\{0\}\to\R$, let us call $\tilde{f}:=f\circ\phi$ its representation on $\mS$. Then, 
\begin{align}&\grad_{\mS}\tilde f(\rho,\th):=\begin{pmatrix}\pa_\rho\tilde{f}(\rho,\th) \\ \pa_\th\tilde{f}(\rho,\th)\end{pmatrix} = e^\rho\begin{pmatrix}\cos\th & \sin\th \\ -\sin\th & \cos\th
\end{pmatrix}\grad f\circ\phi(\rho,\th) ,\label{grad.rho.th.to.grad.true}
\\&\grad f\circ\phi(\rho,\th) = e^{-\rho}\Big\{\pa_\rho\tilde{f}(\rho,\th)\begin{pmatrix}\cos\th \\ \sin\th
\end{pmatrix} - \pa_\th\tilde{f}(\rho,\th)\begin{pmatrix}\sin\th \\ -\cos\th\end{pmatrix}\Big\}, \label{grad.true.to.rho.th}
\\&\Delta f\circ\phi(\rho,\th)=e^{-2\rho}\Delta_{\mS}\tilde f (\rho,\th). \label{Laplace.Transformation}
\end{align}
Here, we are denoting by $\Delta_{\R^2},\,\Delta_{\mS}$ respectively the Laplace operators on $\R^2,\,\mS$.

\medskip

$(ii)$ For any $f,g\colon\,\S^1\to\R$, let us call $\tilde{f}:=\mE_0f\circ\phi,\,\tilde{g}:=\mE_0g\circ\phi$ their respective representations on $\mS$. Then, $\tilde f(\th,\rho)=\tilde f(\th)$, $\tilde g(\th,\rho)=\tilde g(\th)$ for any $(\th,\rho)\in\mS$ and
\begin{align}&\grad_{\S^1}f\circ\phi(0,\th)=\tilde{f}'(\th)\begin{pmatrix}-\sin\th \\ \cos\th \label{grad.true.to.th.S1}
\end{pmatrix},
\\&\la\grad_{\S^1}f\circ\phi(0,\th),\grad_{\S^1}g\circ\phi(0,\th)\ra = \tilde{f}'(\th)\tilde{g}'(\th), \label{scalar.products.th}
\\&|\grad_{\S^1}f\circ\phi(0,\th)|^2=\tilde{f}'^2(\th),\label{norm.grad.S}
\\&\sqrt{(1+h\circ\phi(0,\th))^2 + |\grad_{\S^1}h\circ\phi(0,\th)|^2}=e^{\xi(\th)}\sqrt{1+\xi'^2(\th)}, \label{J.th.}
\\&\mM f\circ\phi(0,\th):=\la\grad_{\S^1}f,\mJ x\ra\circ\phi(0,\th)=\tilde{f}'(\th). \label{mM.op.def}
\end{align}
Here, 
\begin{equation}\label{J.matrix.symplectic}\mJ:=\begin{pmatrix}0 & -1 \\ 1 & 0
\end{pmatrix}.\end{equation}
$(iii)$ Let $\nu_{h}$ be the outer normal vector field of $\pa\Om_h$, let $\nu_\xi:=\nu_{h}\circ\g_\xi$ be its local representations on $\mS$. Then,
\begin{equation}\label{nu.representation}\nu_\xi(\th)=(1+\xi'^2(\th))^{-\frac12}\Big\{\begin{pmatrix}\cos\th \\ \sin\th
\end{pmatrix} + \xi'(\th)\begin{pmatrix}\sin\th \\ -\cos\th
\end{pmatrix} \Big\} = -|\g_\xi'(\th)|^{-1} \mJ\g_\xi'(\th)
\end{equation}
$(iv)$ Let $\Phi\colon\,\Om_h\to\R$ be the solution of the problem
\begin{equation}\label{bp.on.h}\begin{aligned}&\Delta\Phi=0\qquad\qquad\,\,\text{in}\,\,\Om_h,
\\&\Phi\circ\g_h=\psi\qquad\quad\text{on}\,\,\S^2,
\end{aligned}\end{equation}
and let 
\begin{equation}\label{G.space}G(h)\psi:=\la\grad\Phi\circ\g_h,\nu_{h}\circ\g_h\ra,
\end{equation}
be the Dirichlet-Neumann operator.
Let us suppose that $\grad\Phi$ is bounded in $\Om_h$, and let $\tilde{\Phi}:=\Phi\circ\phi$ the representation of $\Phi$ on $\mS$. Then, for all $\th\in\T^1$ we have
\begin{equation}\label{G.into.G.on.T1}G(h)\psi\circ\phi(\th,0) = e^{-\xi(\th)}(1+\xi'^2(\th))^{-\frac12}\bar{G}(\xi)\chi(\th),\end{equation}
where $\chi:=\Phi\circ\g_\xi$ and $\bar{G}(\xi)\chi$ is the Dirichlet-Neumann operator on the torus at infinite depth, namely
\begin{equation}\label{Dirichlet.Neumann.on.torus.infinite.depth}\bar{G}(\xi)\chi=\sqrt{1+\xi'^2}\cdot\la\grad\tilde{\Phi}\circ\g_\xi,\nu_{\xi}\ra,
\end{equation}
where $\tilde{\Phi}\colon\,\bar{\Om}_\xi\to\R$ is the solution to the Dirichlet problem
\begin{equation}\begin{aligned}\label{generic.bp}&\Delta\tilde\Phi=0\qquad\qquad\qquad\text{in}\,\,\Om_\xi,
\\&\tilde\Phi\circ\g_\xi=\chi\qquad\qquad\,\,\,\,\text{on}\,\,\T^1,
\\&\pa_\rho\tilde\Phi(\rho,\th)\to0\qquad\quad\,\text{as}\,\,\rho\to-\infty
\end{aligned}\end{equation}  
$(v)$ Let $\Phi$ as in point $(iv)$ and let $\Psi\colon\,\Om_h\to\R$ such that $\grad\Psi=\mJ\grad\Phi$.
Let 
\begin{equation}\label{Neumann.Dirichlet}K(h)\psi:=\Psi\circ\g_h.
\end{equation}
Then, $\Psi$ solves the Neumann problem
\begin{align}&\Delta\Psi=0\qquad\qquad\qquad\qquad\,\,\,\text{in}\,\,\Om_h,   \notag
\\&G(h)[K(h)\psi]=-\frac{\mM\psi}{J}\qquad\text{on}\,\,\S^1.\label{Dirichlet.Neumann.acting.on.Neumann.Dirichlet} 
\end{align}
Furthermore,
\begin{equation}\label{Neumann.Dirichlet.on.torus}K(h)\psi\circ\phi(0,\th)=\Psi\circ\g_\xi(\th) =:\bar{K}(\xi)\chi,
\end{equation}
where $\chi$ is defined in point $(iv)$, and $\Psi$ solves also
\begin{align}&\Delta_S\Psi=0\qquad\qquad\qquad\qquad\,\,\,\,\text{in}\,\,\Om_\xi,   \notag
\\&\bar G(h)[\bar K(h)\chi]=-\chi'\qquad\qquad\text{on}\,\,\T^1.\label{Dirichlet.Neumann.acting.on.Neumann.Dirichlet.torus} 
\end{align}
Furthermore, it holds the identity
\begin{equation}\label{G.is.K.derivative}(\bar K(\xi)\chi)'=\bar G(\xi)\chi.
\end{equation}
\end{lemma}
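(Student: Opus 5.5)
The plan is to prove every item by direct computation in the polar-type coordinates $\phi(\rho,\th)=e^\rho(\cos\th,\sin\th)$, using only the chain rule and the fact that $\phi$ is conformal.

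\emph{Point (i).} Differentiating $\tilde f=f\circ\phi$ gives
\[
\pa_\rho\tilde f=e^\rho\langle\grad f\circ\phi,(\cos\th,\sin\th)\rangle,\qquad \pa_\th\tilde f=e^\rho\langle\grad f\circ\phi,(-\sin\th,\cos\th)\rangle.
\]
This is \eqref{grad.rho.th.to.grad.true}. Inverting the rotation matrix, which is orthogonal, gives \eqref{grad.true.to.rho.th}. For \eqref{Laplace.Transformation}, note that $\phi$ is conformal with conformal factor $e^{2\rho}$, since $\phi$ is the real form of $z\mapsto e^z$. Hence $\Delta_{\mS}\tilde f=|(e^z)'|^2(\Delta f)\circ\phi=e^{2\rho}(\Delta f)\circ\phi$. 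Alternatively, one can write the polar Laplacian in the variable $r=e^\rho$.

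\emph{Point (ii).} Since $\mE_0 f$ is $0$-homogeneous, $\tilde f$ does not depend on $\rho$. Setting $\rho=0$ in \eqref{grad.true.to.rho.th} and using \eqref{tang.grad.S1} gives \eqref{grad.true.to.th.S1}. The scalar product identity \eqref{scalar.products.th} and the norm identity \eqref{norm.grad.S} follow because $(-\sin\th,\cos\th)$ is a unit vector. Next, with $1+h=e^\xi$ we have $h'=e^\xi\xi'$. Inserting this into \eqref{J.def} and using \eqref{norm.grad.S} yields \eqref{J.th.}. Finally, \eqref{mM.op.def} holds because $\mJ x=(-\sin\th,\cos\th)$ at $x=\phi(0,\th)$.

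\emph{Point (iii).} Evaluate \eqref{nu.h} at $x=\phi(0,\th)$. Substitute $(1+h)x=e^\xi(\cos\th,\sin\th)$ and $\grad_{\S^1}h=e^\xi\xi'(-\sin\th,\cos\th)$, and divide by $J=e^\xi\sqrt{1+\xi'^2}$. This gives the first expression in \eqref{nu.representation}. Comparing with \eqref{parametrization.derivative}, $-\mJ\g_\xi'$ equals $e^\xi$ times the same bracket. Moreover $|\g_\xi'|=e^\xi\sqrt{1+\xi'^2}$, which gives the second expression.

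\emph{Point (iv).} By \eqref{Laplace.Transformation}, $\tilde\Phi=\Phi\circ\phi$ is harmonic in $\Om_\xi$ and its trace on $\pa\Om_\xi$ is $\chi$. Boundedness of $\grad\Phi$ near the origin, combined with \eqref{grad.rho.th.to.grad.true}, gives $|\grad_{\mS}\tilde\Phi|\le Ce^\rho\to0$ as $\rho\to-\infty$. So $\tilde\Phi$ solves \eqref{generic.bp}, and uniqueness for this problem identifies it. Then substitute \eqref{grad.true.to.rho.th} at $\rho=\xi$ into $\langle\grad\Phi\circ\g_h,\nu_h\rangle$. The rotation matrix is orthogonal, so this equals $e^{-\xi}\langle\grad_{\mS}\tilde\Phi\circ\g_\xi,\nu_\xi\rangle$, where $\nu_\xi$ is expressed in the $(\rho,\th)$ frame. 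Rewriting through \eqref{Dirichlet.Neumann.on.torus.infinite.depth} produces \eqref{G.into.G.on.T1}. The main obstacle is consistency of frames: I must check that the components of $\nu_\xi$ in the rotated basis are $(1,-\xi')/\sqrt{1+\xi'^2}$, so that $\bar G(\xi)\chi=\pa_\rho\tilde\Phi-\xi'\pa_\th\tilde\Phi$ evaluated on the boundary, which is the usual infinite-depth Dirichlet--Neumann operator.

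\emph{Point (v).} $\Psi$ exists on the simply connected domain $\Om_h$ because $\mJ\grad\Phi$ is curl-free, $\Phi$ being harmonic. It is harmonic because $\div(\mJ\grad\Phi)=0$. Its normal derivative is
\[
\langle\mJ\grad\Phi,\nu_h\rangle=-\langle\grad\Phi,\mJ\nu_h\rangle,
\]
and $\mJ\nu_h$ is the unit tangent vector. Using \eqref{nu.representation}, $\mJ\nu_\xi=\g_\xi'/|\g_\xi'|$. Hence the normal derivative equals $-\pa_\th(\Phi\circ\g_\xi)/|\g_\xi'|=-\chi'/|\g_\xi'|$. Going back to $\S^1$ with $|\g_\xi'|=J$ and \eqref{mM.op.def} gives \eqref{Dirichlet.Neumann.acting.on.Neumann.Dirichlet}. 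Then \eqref{G.into.G.on.T1} with $|\g_\xi'|=e^\xi\sqrt{1+\xi'^2}$ gives \eqref{Dirichlet.Neumann.acting.on.Neumann.Dirichlet.torus}, and \eqref{Neumann.Dirichlet.on.torus} is just the definition. For \eqref{G.is.K.derivative}, apply the same computation with the roles of $\Phi$ and $\Psi$ swapped. We have $(\Psi\circ\g_\xi)'=\langle\grad\Psi,\g_\xi'\rangle=\langle\mJ\grad\Phi,\g_\xi'\rangle=-\langle\grad\Phi,\mJ\g_\xi'\rangle=|\g_\xi'|\,\langle\grad\Phi,\nu_\xi\rangle$. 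By \eqref{G.into.G.on.T1}, the last expression is $\bar G(\xi)\chi$.
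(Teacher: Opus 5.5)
Your proposal is correct and essentially matches the paper. For (i)--(iv) the paper just refers to \cite{L}, and your chain-rule and conformality computations are exactly what is needed there. This includes the frame check that $\nu_\xi$ has components $(1,-\xi')/\sqrt{1+\xi'^2}$ in the basis $(\cos\th,\sin\th),(-\sin\th,\cos\th)$, so that $\bar G(\xi)\chi=\pa_\rho\tilde\Phi-\xi'\pa_\th\tilde\Phi$ on the boundary. For (v) the paper uses the same identities $\mJ^T=-\mJ$ and $(\Psi\circ\g_\xi)'=\la\grad\Phi\circ\g_\xi,-\mJ\g_\xi'\ra$. The only cosmetic difference is how the Neumann datum $-\mM\psi/J$ is obtained: the paper uses the tangential-gradient formula of Lemma~\ref{lem:formulas} on $\S^1$, whereas you differentiate along the parametrization using $\mJ\nu_\xi=\g_\xi'/|\g_\xi'|$, which is equivalent.
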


\begin{proof} For the points $(i)-(iv)$, see \cite{L}. As for $(v)$, let us start by proving \eqref{Dirichlet.Neumann.acting.on.Neumann.Dirichlet}. By $\grad\Psi=\mJ\grad\Phi$, by the fact that $\mJ\nu_h\circ\g_h$ is tangential to $\pa\Om_h$, and by \eqref{nu.h}, \eqref{eq.1.in.lem:formulas}, \eqref{mM.op.def}, we have for all $x\in\S^1$
\begin{align*}G(h)[K(h)\psi](x)
&=\la\grad\Psi,\nu_{h}\ra\circ\g_h(x)
\\&=\la\mJ\grad\Phi,\nu_{h}\ra\circ\g_h(x)
\\&=-\la\grad\Phi,\mJ\nu_{h}\ra\circ\g_h(x)
\\&=-\la\grad_{\pa\Om_h}\Phi,\mJ\nu_{h}\ra\circ\g_h(x)
\\&=-\la\frac{\grad_{\S^1}\psi}{1+h},\frac{(1+h)\mJ x - \mJ\grad_{\S^1}h}{J}\ra
\\&=-\frac{\mM\psi}{J}.
\end{align*}
This proves \eqref{Dirichlet.Neumann.acting.on.Neumann.Dirichlet}. We notice that $\grad_{\S^1}\psi\perp\mJ\grad_{\S^1}h$ since $\grad_{\S^1}\psi$ is a tangent vector while $\mJ\grad_{\S^1}h$ is normal. The proof of \eqref{Neumann.Dirichlet.on.torus} is a direct check and \eqref{Dirichlet.Neumann.acting.on.Neumann.Dirichlet.torus} comes by using \eqref{Laplace.Transformation}, \eqref{mM.op.def} and \eqref{G.into.G.on.T1}. The identity \eqref{G.is.K.derivative} holds because by \eqref{Neumann.Dirichlet.on.torus}, by $\grad\Psi=\mJ\grad\Phi$ and by \eqref{nu.representation},
\begin{align*}(\bar K(\xi)\chi)'
&=\la\grad\Psi\circ\g_\xi,\g_\xi'\ra=\la\grad\Phi\circ\g_\xi,-\mJ\g_\xi'\ra=\bar{G}(\xi)\chi.
\end{align*}
\end{proof}
We use the metric $g_h$ and the representation \eqref{nu.representation} to get the mean curvature of $\pa\Om_h$:
\begin{equation}\label{curvature}(H(h))(\th):=H_\Om(\g_h(\phi(\th)))=-g^{\th\th}(\th)\la \g_\xi''(\th), \nu_\xi(\th)\ra.
\end{equation}
Geometrically speaking, it is nothing less but the opposite of the trace of the Second Fundamental Form $II(\th):=\la\g_\xi''(\th), \nu_\xi(\th)\ra d\th\otimes d\th$ of $\pa\Om_h$ through the metric $g$, see for instance \cite{GHL}. By a direct computation, one can check that
\begin{equation}\begin{aligned}\label{mean.curvature}H(h)&=-(1+\xi'^2))^{-\frac32}e^{-\xi}\{\xi'' - 1-\xi'^2\}
\\&=e^{-\xi}\Big[ \frac{1}{\sqrt{1+\xi'^2}} - \Big(\frac{\xi'}{\sqrt{1+\xi'^2}}\Big)' \Big].
\end{aligned}\end{equation}
One can notice that in the last identity, the first term is the curvature of the graph of $\xi(\th)$ up to the conformal factor $e^{-\xi(\th)}$. Let us notice that $H(0)=1$, which corresponds to the curvature of the unit circle.

We introduce here the functional setting we will work with. 
All the functions $f\in L^2$ can be expanded in Fourier series along the $L^2-$orthonormal basis $\{\ph_{\ell,m}\colon\,(\ell,m)\in\mT\}$, with $\mT:=\{(0,0)\}\cup(\N\times\{-1,1\})$:
\begin{equation*}f=\sum_{(\ell,m)\in\mT}f_{\ell,m}\ph_{\ell,m};
\end{equation*}
here,
\begin{equation}\label{Fourier.basis}\ph_{\ell,m}(\th):=\begin{cases}C_{0} & \text{if}\qquad \ell=m=0,
\\C_\ell\cos(\ell\th)& \text{if}\qquad \ell>0,m=1,
\\C_\ell\sin(\ell\th)& \text{if}\qquad \ell>0,m=-1,
\end{cases}
\end{equation}
We recall the Sobolev spaces $H^s$ and $H_0^s$, with $s\in\R$:
\begin{align*}&H^s=\Big\{f\in L^2\colon\,\|f\|_{s}^2=\sum_{(\ell,m)\in\mT}(1+|\ell|^{2s})|f_{\ell,m}|^2<+\infty\Big\},
\\&H_0^s=\{f\in H^s\colon\,f_{0,0}=0\}.
\end{align*}
In what follows, we denote by $(H^s)^2:=H^s\times H^s$.

We also recall the class of analytic functions with exponential decay rate $\mathfrak{s}\ge0$ on torus:
\begin{equation}H^{\mathfrak{s},s}:=\Big\{f\in L^2\colon\,\|f\|_{\mathfrak{s},s}^2:=\sum_{(\ell,m)\in\mT}e^{2\mathfrak{s}\ell}(1+|\ell|^{2s})|f_{\ell,m}|^2<+\infty \Big\},
\end{equation}
and we call $H_0^{\mathfrak{s},s}$ the Sobolev space made of zero-average functions (or simply, $H_0^s$ if $\mathfrak{s}=0$).

In the end, for $\kappa\in\N$, we define more in general Sobolev spaces of functions having $\kappa-$fold symmetry:
\begin{align}H_\kappa^{\mathfrak{s},s}
:&=\Big\{f\in H^{\mathfrak{s},s}\colon\,f\Big(\th 
 + \frac{2\pi}{\kappa}\Big)=f(\th)\Big\},\qquad H_{\kappa,0}^{\mathfrak{s},s}:= \{f\in H_\kappa^{\mathfrak{s},s}\colon\,\la f,1\ra_{L^2}=0\}. \label{k.fold.sobolev}
\end{align}

We mention here some useful properties of the Dirichlet-Neumann operator $\tilde G$ mentioned in \eqref{Dirichlet.Neumann.on.torus.infinite.depth}.

\begin{lemma}[from \cite{Lannes, Lannes1, BMV}]\label{lemma:G}
Calling $\dot{H}$ the space of equivalence classes of Sobolev functions differing by a constant, one has the following facts.

\medskip

$(i)$(Low norm estimate) For all fixed $\xi\in H^{s_0}$ with $s_0>\frac32$, one has that $\bar G(\xi)[\cdot]$ is a bounded operator from $\dot H^{\frac12}$ to $H^{-\frac12}$, that is, 
\begin{equation}\label{G.low.norm.estimate}\|\bar G(\xi)\chi\|_{-\frac12}\lesssim_\xi \|\chi\|_{\frac12},
\end{equation}
where $C(\xi)$ is a constant depending on $\xi$.

\medskip

$(ii)$(Symmetricity and nonnegativity) The operator $\chi\in \dot H^{\frac12}\to \bar G(\xi)\chi\in H^{-\frac12}$ is $L^2-$symmetric and nonnegative for all fixed $\xi\in W^{1,\infty}$, that is, for all $\chi_1,\chi_2\in \dot H^{\frac12}$,
\begin{align}&\la \bar G(\xi)\chi_1,\chi_2\ra_{L^2} = \la \bar G(\xi)\chi_2,\chi_1\ra_{L^2},
\label{G.is.symmetric}
\\&\la \bar G(\xi)\chi_1,\chi_1\ra_{L^2}\ge0.\label{G.is.nonnegative}
\end{align}

$(iii)$(Kernel of $\bar G(\xi)[\cdot]$) For some $\delta>0$ small enough and for $\|\xi\|_{s_0}<\delta$, one has that $\psi\in\ker(\bar G(h))$ if and only if $\psi$ is a constant function. 

\medskip

$(iv)$(Invariance by inversion of sign) Let $\xi\in H^{s_0}$ with $s_0>\frac32$, $\chi\in \dot H^{\frac12}$, and let us call $\iota$ the operator of inversion of sign, that is, for all functions $f\colon\,\T^1\to\R$ and $\th\in\T^1$, $(\iota f)(\th):=f(-\th)$. Then,
\begin{equation}\label{G.inversion}\bar G(\iota\xi)[\iota\chi]=\iota(\bar G(\xi)\chi).
\end{equation}

$(v)$(Translation invariance of $\bar G$) Let $\mT_\alpha$ be the translation operator, that is, $(\mT_\alpha f)(\th):=f(\th+\alpha)$ for any $f\colon\,\T^1\to\R$, $\th\in\T^1$ and $\alpha\in\T^1$. Then, for any $\alpha\in\T^1$, $\xi\in H^{s_0}$ with $s_0>\frac32$ and $\chi\in \dot H^{\frac12}$,
\begin{equation}\label{G.translation.inv}\mT_\alpha(\bar G(\xi)\chi)=\bar G(\mT_{\alpha}\xi)[\mT_{\alpha}\chi].
\end{equation}
$(vi)$(High norm estimates) Let $s_0>\frac12$, let $s\ge0$ and $\xi\in H^{s+\frac12}\cap H^{s_0+1}$. Then, 
\begin{align}&\|\bar G(\xi)\chi\|_{s-\frac12}\lesssim_{\|\xi\|_{s+\frac12},\|\xi\|_{s_0+1}}\|\chi'\|_{s-\frac12},\qquad\forall0\le s\le s_0+\frac32,\,\forall\chi\in \dot H^{s-\frac12}, \label{G.High.norm.est.low}
\\& \|\bar G(\xi)\chi\|_{s-\frac12}\lesssim_{\|\xi\|_{s_0+2}}\|\chi'\|_{s-\frac12} + \|\xi\|_{s+\frac12}\|\chi'\|_{s_0+1},\qquad\forall0\le s\le s_0+\frac32,\,\forall\chi\in \dot H^{s-\frac12}\label{G.High.norm.est.high}
\end{align}
$(vii)$(Analiticity in finite regularity) Let $s_0>\frac12$, $0\le s\le s_0+\frac12$ and $\|\xi\|_{s_0 + 1}<\delta_0$ for a suitable $\d_0\in(0,1)$. Then, the map $\xi\in H^{s_0+1}\to\bar G(\xi)\in\mathcal{L}(H^{s+\frac12},\dot H^{s-\frac12})$ is analytic.

\medskip

$(viii)$(Shape derivative) Let, $s_0>\frac12$, $\chi\in H^{\frac32}$, $\xi\in \dot H^{s_0+1}$. Then, for all directions $\hat\xi\in H^{s_0+1}$ corresponding to the $\xi-$variable,
\begin{equation}\label{shape.derivative}d(\bar G(\xi)\hat\xi)\chi=-\bar G(\xi)[B\hat\xi] - (V\hat\xi)', \end{equation}
where
\begin{equation}\label{B.V.}B:=\frac{\bar G(\xi)\chi + \xi'\chi'}{1+\xi'^2},\qquad V:=\chi'-B\xi'.
\end{equation}
$(ix)$(Analiticity for analytic regularity) Let $\mathfrak{s}>0$, let $s,s_0$ such that $s,s_0+\frac12\in\N$ and $s-\frac32\ge s_0>1$. Then, there exists $\e_0:=\e_0(s)>0$ such that for all $\xi$ for which $\|\xi\|_{\mathfrak{s},s_0+\frac32}<\e_0$, the map $\xi\in H^{\mathfrak{s},s}\to \bar G(\xi)\in\mathcal{L}(H^{\mathfrak{s},s},\dot H^{\mathfrak{s},s-1})$ is analytic and in particular satisfies the tame estimate
\begin{equation}\label{analiticity.for.analytic.functions}\|\bar G(\xi)\chi\|_{\mathfrak{s},s-1}\lesssim_s\|\chi\|_{\mathfrak{s},s} + \|\xi\|_{\mathfrak{s},s}\|\chi\|_{\mathfrak{s},s_0+\frac32}.
\end{equation}

\end{lemma}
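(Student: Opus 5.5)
Since Lemma \ref{lemma:G} is attributed to \cite{Lannes, Lannes1, BMV}, the plan is to reduce each item to the classical theory of the Dirichlet--Neumann operator for a periodic graph domain of infinite depth, $\Om_\xi=\{\rho<\xi(\th)\}\subset\R\times\T^1$. By \eqref{Dirichlet.Neumann.on.torus.infinite.depth}, $\bar G(\xi)\chi=\la\grad\tilde\Phi\circ\g_\xi,(-\xi',1)\ra$ with $\rho$ the vertical variable. This is exactly the water-wave operator $G(\eta)\psi=\pa_y\phi-\eta'\pa_x\phi$ with $\eta=\xi$, $x=\th$, $y=\rho$, depth $+\infty$, and $\pa_y\phi\to0$ at $-\infty$. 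Once this identification is made, each item will be cited or derived from the corresponding statement in Lannes' book for the finite regularity estimates (i), (vi), (vii), (viii), and in Berti--Maspero--Ventura \cite{BMV} for the analytic class $H^{\mathfrak{s},s}$ in (ix).

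The structural items can be checked directly. For (ii), take harmonic extensions $\tilde\Phi_1,\tilde\Phi_2$ and apply Green's formula on $\Om_\xi$; the boundary term at $\rho\to-\infty$ vanishes because $\grad\tilde\Phi_j$ decays exponentially there, since the nonzero Fourier modes decay like $e^{\ell\rho}$. This gives
\begin{equation*}\la\bar G(\xi)\chi_1,\chi_2\ra_{L^2}=\int_{\Om_\xi}\grad\tilde\Phi_1\cdot\grad\tilde\Phi_2\,d\rho\,d\th,\end{equation*}
which is symmetric and nonnegative. For (iii), $\la\bar G(\xi)\chi,\chi\ra=0$ forces $\grad\tilde\Phi\equiv0$, so $\chi$ is constant; conversely, constants extend to constants, so they lie in the kernel. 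This argument already works without any smallness of $\xi$, and the $\delta$ in the statement is harmless. For (iv) and (v), observe that if $\tilde\Phi$ solves \eqref{generic.bp} for $(\xi,\chi)$, then $\tilde\Phi(\rho,-\th)$ and $\tilde\Phi(\rho,\th+\a)$ solve it for $(\iota\xi,\iota\chi)$ and $(\mT_\a\xi,\mT_\a\chi)$. Uniqueness of the decaying solution then gives the claim. Under reflection the conormal factor $\sqrt{1+\xi'^2}\,\nu_\xi$ transforms consistently, because both $\xi'$ and $\pa_\th$ change sign.

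For (viii), I would rederive Lannes' shape-derivative formula in this setting. Differentiate $\tilde\Phi(\xi(\th),\th)=\chi$ in $\xi$ along $\hat\xi$. The variation $\dot\Phi$ is harmonic in $\Om_\xi$ with trace $\dot\Phi\circ\g_\xi=-\pa_\rho\tilde\Phi\circ\g_\xi\,\hat\xi=-B\hat\xi$, where $B=\pa_\rho\tilde\Phi$ and $V=\pa_\th\tilde\Phi$ on the boundary. Solving the pair $\bar G\chi=B-\xi'V$ and $\chi'=V+\xi'B$ recovers exactly \eqref{B.V.}. Differentiating $\bar G\chi=(\pa_\rho\tilde\Phi-\xi'\pa_\th\tilde\Phi)\circ\g_\xi$ produces $\bar G(\xi)[-B\hat\xi]$ together with the terms $\hat\xi(\pa_\rho^2-\xi'\pa_\rho\pa_\th)\tilde\Phi-\hat\xi'V$. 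Using $\pa_\rho^2\tilde\Phi=-\pa_\th^2\tilde\Phi$, these collapse to $-(V\hat\xi)'$.

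The main obstacle is the quantitative part: (i), (vi), (vii) and especially (ix), with its tame analytic estimates and analyticity of $\xi\mapsto\bar G(\xi)$. For these I would flatten the domain via $(\rho,\th)\mapsto(\rho+\xi(\th)\text{-type cutoff},\th)$, obtain an elliptic problem with variable coefficients on the half-strip, and then run Lannes' elliptic regularity and Neumann-series (analyticity) arguments. In the analytic scale this means working in weighted spaces $e^{\mathfrak{s}|D|}$ as in \cite{BMV}. Rather than reproduce this, the plan is to verify that the hypotheses of those references, namely periodicity, infinite depth, and the smallness $\|\xi\|_{\mathfrak{s},s_0+\frac32}<\e_0$, match ours, and then quote the estimates.
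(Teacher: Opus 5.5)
Your proposal is correct and takes the paper's route. The paper gives no proof beyond citing \cite{Lannes, Lannes1, BMV}, which amounts to your identification of $\bar G(\xi)\chi=(\pa_\rho\tilde\Phi-\xi'\pa_\th\tilde\Phi)|_{\rho=\xi}$ with the standard infinite-depth periodic Dirichlet--Neumann operator; your extra direct checks of (ii)--(v) via Green's formula and uniqueness of the decaying solution, and your derivation of (viii) using $\pa_\rho^2\tilde\Phi=-\pa_\th^2\tilde\Phi$, are sound.
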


We point out that by points $(iii),\,(vi),\,(vii)$ and $(ix)$, for all $\|\xi\|_{s_0+1}<\delta_0$ one has that $\bar G(\xi)$ is a homeomorphism from $H_0^{s-\frac12}$ onto its image. As a result, recalling point $(v)$ of Lemma \ref{derivative.change.strip}, we have $\bar K(\xi)\chi=-\bar G(\xi)^{-1}\chi'$ if we choose $\Psi$ such that it has zero average on $\pa\Om_h$. Therefore, $\bar K(\xi)$ inherits all the regularity properties of $\bar G(\xi)$.

Moreover, since $\bar G$ satisfies the translation invariance property \eqref{G.translation.inv}, $K$ does as well.

\section{The capillary drop equation with constant vorticity}

\subsection{Craig-Sulem formulation of the capillary drop equations}

In this section, we derive the Craig-Sulem equations for 2D drops with constant vorticity, we show their equivalence with the free-boundary equations \eqref{div.eq.01}-\eqref{kin.eq.01} and finally we rewrite them on the torus $\T^1$. We always suppose that all the functions into play are smooth enough to get classical solutions.

Let us recall the equations \eqref{div.eq.01}-\eqref{kin.eq.01}:
\begin{align*}
&\div u 
 = 0 \quad \qquad\qquad\,\,\,\,\,\quad\text{in} \ \Om_t,
\\ 
&
\text{curl}\,u 
 = \a_0 \quad \qquad\qquad\quad\text{in} \ \Om_t,
\\
&
\pa_t u + u \cdot \grad u + \grad p 
 = 0 \quad \text{in} \ \Om_t, 
\\
&
p  = \sigma_0 H_{\Om_t} \quad\qquad\qquad\quad \text{on} \ \pa \Om_t, 
\\
&
V_t  = \la u , \nu_{t} \ra \quad\qquad\quad\,\,\,\,\, \,\,\,\,\, \text{on} \ \pa \Om_t.
\end{align*}
We recall that $\Om_t$ is a star-shaped domain whose boundary is of the form
\begin{equation}\label{pa.Om.t}\pa\Om_t:=\{(1+h(t,x))x\,\colon\,x\in\S^1\},\qquad h(t,\cdot)\colon\,\S^1\to(-1,+\infty),\qquad t\in\R
\end{equation}
for $\|h\|_{W^{1,\infty}}$ small, and so the map $\g_t\colon\,\S^1\to\pa\Om_t$, which is given by 
\begin{equation}\label{Om.t.par.}\g_t(x):=(1+h(t,x))x,\end{equation}
for all $t\in\R,\,x\in\S^1$ (see \eqref{gamma.diff}), is a diffeomorphism between $\S^1$ and $\pa\Om_t$. Let us define the normal boundary velocity as
\begin{equation}\label{boundary.velocity.Om.t} V_t(\g_t(x)):=\la\pa_t\g_t(x), \nu_{t}\ra,
\end{equation}
where $\nu_t$ is the outer normal vector field of $\pa\Om_t$.

Since $\pa\Om_t$ is star-shaped, by \eqref{div.eq.01} the $1-$form $\om:=u_2\,dx - u_1\,dy$
is exact, so there exists a function $\Psi\colon\,\Om_t\to\R$ (the \emph{stream function}), which satisfies  
\begin{equation}\label{potential.u}u=-\mJ\grad\Psi_1\quad\text{in}\,\Om_t.\end{equation}
Here, $\mJ$ is defined in \eqref{J.matrix.symplectic}.
Applying curl$\,X=\pa_{x_2}X_1 - \pa_{x_1}X_2$ to both sides of \eqref{potential.u}, by \eqref{curl.eq.01} we have
\begin{equation}\Delta\Psi_1=\a_0.
\end{equation}
By linearity of the Laplace operator, we can decompose $\Psi_1=\Psi + \frac{\a_0}{4}(x_1^2 + x_2^2)$, where we call $\Psi$ the \emph{harmonic conjugate}, which satisfies
\begin{equation}\label{harmonic.conjugate.is.harmonic}\Delta\Psi=0.
\end{equation}
Then, \eqref{potential.u} turns into 
\begin{equation}\label{u.decomposition.harmonic.conjugate}u=-\mJ\grad\Psi - \frac{\a_0}{2}\mJ x.
\end{equation}
Now, observing that $\div(u+\frac{\a_0}{2}\mJ x)=0$ and $\text{curl}(u + \frac{\a_0}{2}\mJ x)=0$, by star-shapedness of $\Om_t$ we deduced that the $1-$form $\om_{\a_0}=(u_1-\frac{\a_0}{2}x_2)\,dx_1 + (u_2+\frac{\a_0}{2}x_1)\,dx_2$ is exact, and so there exists a function $\Phi\colon\,\Om_t\to\R$ (the \emph{potential}) satisfying
\begin{equation}\label{velocity.potential}-\mJ\grad\Psi=\grad\Phi,
\end{equation}
or else, $\grad\Psi=\mJ\grad\Phi$. As a result, \eqref{u.decomposition.harmonic.conjugate} turns into \eqref{u.decomposition.potential}:
\begin{equation*}u=\grad\Phi - \frac{\a_0}{2}\mJ x.
\end{equation*}
We remark that $\Phi$ is harmonic in $\Om_t$, and we suppose it to solve the Dirichlet problem
\begin{equation}\label{Laplac.problem}\begin{aligned}&\Delta\Phi(t,\cdot)=0\qquad\qquad\qquad\qquad\text{in}\,\,\Om_t,
\\&\Phi(t,\cdot)\circ\g_t=\psi(t,\cdot)\qquad\qquad\quad\text{on}\,\,\S^1.
\end{aligned}\end{equation}
For such problem, we define the Dirichlet-Neumann operator $G(h)\psi$ as done in \eqref{G.space}.

As for the harmonic conjugate $\Psi$, the boundary value of $\Psi$ is given by the operator $(K(h)\psi)(t,\cdot)=\Psi(t,\cdot)\circ\g_h$, and by Lemma \ref{derivative.change.strip}, point $(v)$, $\Psi$ is a solution of the Neumann problem
\begin{align}&\Delta\Psi(t,\cdot)=0\qquad\qquad\qquad\qquad\,\,\,\text{in}\,\,\Om_t,   \notag
\\&\la\grad\Psi,\nu_{t}\ra\circ\g_t=-\frac{\mM\psi}{J}\,\,\,\quad\quad\quad\,\,\,\,\text{on}\,\,\S^1.\label{Neumann.problem.Psi}
\end{align}
We notice that $\Psi$ is defined up to additive constants, and thus $K(h)\psi$ does.

Then, we use the notation
\begin{equation}\label{equivalence.relation}f\sim g\iff f-g\quad\text{does not depend on}\, x.
\end{equation}
We now derive the Craig-Sulem formulation for \eqref{div.eq.01}-\eqref{kin.eq.01}
\begin{theorem}\label{thm:ww.eq.}In the notations above, the equations \eqref{div.eq.01}-\eqref{kin.eq.01} are equivalent to the Craig-Sulem equations \eqref{h.eq.}-\eqref{psi.eq.}, that is,
\begin{align*}& \pa_t h = \frac{J}{1 + h} \, G(h)\psi + \frac{\a_0}{2}\mM h,
\\
& \pa_t \psi = 
 \frac12 \Big( G(h)\psi 
+ \frac{\la \grad_{\S^1} \psi , \grad_{\S^1} h \ra}{J(1+h)} \Big)^2
- \frac{|\grad_{\S^1} \psi|^2}{2(1+h)^2} 
  - \s_0H(h)  \\
  &\qquad + \frac{\a_0}{2}\mM\psi + \frac{\a_0^2}{8}(1+h)^2 + \a_0 K(h)\psi + \s_0 - \frac{\a_0^2}{8} .
\end{align*}

\end{theorem}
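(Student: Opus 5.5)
The plan is to derive the two equations separately: the kinematic condition \eqref{kin.eq.01} gives \eqref{h.eq.}, and the Bernoulli form of the Euler equation, evaluated on the boundary, gives \eqref{psi.eq.}.

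\emph{Kinematic equation.} Since $\pa_t\g_t(x)=\pa_t h(t,x)\,x$, the normal velocity \eqref{boundary.velocity.Om.t} together with \eqref{nu.h} and \eqref{grad.s1.orth.} gives
\[
V_t\circ\g_t=\frac{(1+h)\pa_t h}{J}.
\]
On the other side we use \eqref{u.decomposition.potential}:
\[
\la u,\nu_t\ra\circ\g_t=G(h)\psi-\frac{\a_0}{2}\la\mJ\g_t,\nu_t\circ\g_t\ra .
\]
Because $\la \mJ x,x\ra=0$, the second term equals
\[
-\frac{\a_0}{2}\,(1+h)\,\frac{\la \mJ x,-\grad_{\S^1}h\ra}{J}=\frac{\a_0}{2}\,\frac{(1+h)\mM h}{J}.
\]
Equating the two expressions and multiplying by $J/(1+h)$ yields \eqref{h.eq.}.

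\emph{Bernoulli step.} Since $u=\grad\Phi-\frac{\a_0}{2}\mJ x$, we have $u\cdot\grad u=\grad\frac{|u|^2}{2}+\a_0\mJ u$. This holds because in 2D, $u\cdot\grad u=\grad\frac{|u|^2}2-\text{curl}(u)\,u^\perp$, and here $u^\perp$ is $\mJ u$ up to sign conventions, which I will fix carefully. Next, $\mJ u=\mJ\grad\Phi+\frac{\a_0}{2}x=\grad\Psi+\grad\frac{\a_0}{4}|x|^2$. So \eqref{dyn.eq.01} becomes
\[
\grad\Big(\pa_t\Phi+\tfrac12|u|^2+\a_0\Psi+\tfrac{\a_0^2}{4}|x|^2\cdot\tfrac12\cdot(\pm)+p\Big)=0.
\]
The exact coefficient of $|x|^2$ comes out of expanding $\frac12|u|^2=\frac12|\grad\Phi|^2-\frac{\a_0}{2}\la\grad\Phi,\mJ x\ra+\frac{\a_0^2}{8}|x|^2$ and combining with the curl term. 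Since $\Om_t$ is connected, the bracket equals a function $c(t)$, which I absorb into $\Phi$ and $\Psi$; this is where the relation \eqref{equivalence.relation} is used. The constant $\s_0-\frac{\a_0^2}{8}$ is then fixed as the normalization making the circle $h=0,\psi=0$ stationary: with $H(0)=1$ and $|x|=1$, the right side of \eqref{psi.eq.} at $(0,0)$ is $-\s_0+\frac{\a_0^2}{8}+\s_0-\frac{\a_0^2}{8}=0$.

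\emph{Boundary evaluation.} This is the main computational step. Differentiating $\psi=\Phi\circ\g_t$ in time gives
\[
\pa_t\psi=(\pa_t\Phi)\circ\g_t+\la\grad\Phi\circ\g_t,x\ra\pa_t h .
\]
I decompose $\grad\Phi\circ\g_t$ into normal and tangential parts. The normal part is $G(h)\psi\,\nu_t$. For the tangential part I use Lemma \ref{lem:formulas}, which gives $\grad_{\pa\Om}\Phi$ in terms of $\grad_{\S^1}\psi$. From this, $\la\grad\Phi,x\ra$ and $|\grad\Phi|^2$ follow, producing the standard combination
\[
\tfrac12\Big(G\psi+\frac{\la\grad_{\S^1}\psi,\grad_{\S^1}h\ra}{J(1+h)}\Big)^2-\frac{|\grad_{\S^1}\psi|^2}{2(1+h)^2}.
\]
These terms are exactly as in the irrotational case of \cite{L}, so I can quote that algebra. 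The new terms are:
\begin{itemize}
\item $\la\grad\Phi,x\ra\frac{\a_0}{2}\mM h$, coming from the vorticity part of $\pa_t h$;
\item the cross term $-\frac{\a_0}{2}\la\grad\Phi,\mJ\g_t\ra$;
\item $\frac{\a_0^2}{8}(1+h)^2$ from $|x|^2=|\g_t|^2$;
\item $\a_0\Psi\circ\g_t=\a_0K(h)\psi$;
\item $p=\s_0H(h)$.
\end{itemize}
The key check is that the two $\a_0$-linear terms combine into $\frac{\a_0}{2}\mM\psi$. Indeed, $\mM\psi=\la\grad_{\S^1}\psi,\mJ x\ra$ and $\grad_{\S^1}\psi=D\g_t^T\grad\Phi$ in tangential directions, so $\mM\psi$ equals $(1+h)\la\grad\Phi,\mJ x\ra+\la\grad\Phi,x\ra\mM h$. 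This follows from \eqref{Jac.tilde.gamma}, since $\mM h=\la\grad_{\S^1} h,\mJ x\ra$. That identity is exactly the needed combination. The signs and factor conventions for the curl term are the main risk, and I would verify them on the rigid rotation $h=0$, $\Phi=0$.

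\emph{Converse.} Given $(h,\psi)$ solving \eqref{h.eq.}-\eqref{psi.eq.}, define $\Phi$ by \eqref{Laplac.problem}, $u$ by \eqref{u.decomposition.potential}, and $p$ by the Bernoulli relation. Then \eqref{div.eq.01} and \eqref{curl.eq.01} hold trivially, \eqref{dyn.eq.01} holds by reversing the gradient computation, and \eqref{pressure.eq.01} and \eqref{kin.eq.01} hold by reversing the boundary computations above.
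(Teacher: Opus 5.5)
Your route is the paper's: \eqref{kin.eq.01} gives \eqref{h.eq.}; the Euler equation is put in Bernoulli form and evaluated on $\pa\Om_t$ using the irrotational algebra of \cite{L}; the free function of $t$ is fixed by making $(h,\psi)=(0,0)$ stationary; and the converse defines $p$ from Bernoulli.

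\textbf{Sign error in the Bernoulli step.} As written, one sign is wrong: the identity is $u\cdot\grad u=\grad\frac12|u|^2-\a_0\mJ u$, not $+\a_0\mJ u$. Your own formula $u\cdot\grad u=\grad\frac12|u|^2-\text{curl}(u)\,u^\perp$ already gives this if you take $u^\perp=\mJ u$ and the paper's convention $\text{curl}\,X=\pa_{x_2}X_1-\pa_{x_1}X_2$, for which $\text{curl}\,u=\a_0$. Independently of conventions, $\pa_1u_2-\pa_2u_1=-\a_0$ for $u=\grad\Phi-\frac{\a_0}{2}\mJ x$. The rigid rotation $u=-\frac{\a_0}{2}\mJ x$ confirms it: $u\cdot\grad u=-\frac{\a_0^2}{4}x$, $\grad\frac12|u|^2=\frac{\a_0^2}{4}x$ and $\mJ u=\frac{\a_0}{2}x$.

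The Bernoulli bracket is therefore $\pa_t\Phi+\frac12|\grad\Phi|^2-\frac{\a_0}{2}\la\grad\Phi,\mJ x\ra-\frac{\a_0^2}{8}|x|^2-\a_0\Psi+p$. This is what produces $+\frac{\a_0^2}{8}(1+h)^2+\a_0K(h)\psi$ in \eqref{psi.eq.}. With your sign you would get $-\frac{3\a_0^2}{8}(1+h)^2-\a_0K(h)\psi$. Your rigid-rotation check catches this only if you apply it to $u\cdot\grad u$ itself. Checking the final equation at $(h,\psi)=(0,0)$ is vacuous, because the constant is chosen to make it hold and $K(0)0=0$.

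\textbf{Where your bookkeeping is better than the paper's.} You keep the vorticity part $\frac{\a_0}{2}\mM h$ of $\pa_t h$ inside $\la\grad\Phi\circ\g_t,\pa_t\g_t\ra$. You then combine it with $-\frac{\a_0}{2}(1+h)\la\grad\Phi\circ\g_t,\mJ x\ra$ through $\mM\psi=(1+h)\la\grad\Phi\circ\g_t,\mJ x\ra+\la\grad\Phi\circ\g_t,x\ra\mM h$, which is exactly \eqref{Jac.tilde.gamma} applied to the tangent vector $\mJ x$.

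The paper instead inserts only $\frac{J}{1+h}G(h)\psi$ for $\pa_t h$ in \eqref{grad.Phi.gamma.t}. It then asserts $\la\grad\Phi,\mJ x\ra\circ\g_h=\mM\psi$ via the computation of Lemma \ref{derivative.change.strip}$(v)$. That computation needs a vector tangent to $\pa\Om_h$, and $(1+h)\mJ x$ is not tangent when $\mM h\neq0$. In fact
$\la\grad\Phi,\mJ x\ra\circ\g_h=\mM\psi-\frac{(1+h)\mM h}{J}\big(G(h)\psi+\frac{\la\grad_{\S^1}\psi,\grad_{\S^1}h\ra}{J(1+h)}\big)$.
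After multiplying by $\frac{\a_0}{2}$, the missing piece is precisely the term $\frac{\a_0}{2}\mM h\,\la\grad\Phi\circ\g_t,x\ra$ omitted in \eqref{grad.Phi.gamma.t}. So the two slips cancel and the paper's formula is right, but your identity is what actually justifies it.

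Likewise, in the converse your choice of pressure from the full Bernoulli relation is the correct one. Formula \eqref{def.tilde.p.using.dyn.eq.02} keeps only $-\pa_t\Phi-\frac12|\grad\Phi|^2$. It must be completed by $\frac{\a_0}{2}\la\grad\Phi,\mJ x\ra+\frac{\a_0^2}{8}|x|^2+\a_0\Psi+\s_0-\frac{\a_0^2}{8}$ for \eqref{dyn.eq.01} and \eqref{pressure.eq.01} to hold.
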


\begin{proof}
Let $\Om_t\subset\R^2$ and $u\colon\,\Om_t\to\R^2$ the solution to \eqref{div.eq.01}-\eqref{kin.eq.01} hold. Let us start by getting \eqref{h.eq.}.
As in \eqref{nu.h}, the explicit formula for $\nu_{t}$ is
\begin{equation}\label{normal.Om.t}\nu_{t}\circ\g_t(x)=\frac{(1 + h(t,x)) x - \grad_{\S^1} h(t,x)}{J},\qquad x\in\S^1.\end{equation}
Then, on one hand by \eqref{Om.t.par.}, \eqref{boundary.velocity.Om.t} and \eqref{normal.Om.t} we get
\begin{equation}\label{boundary.velocity}V_t\circ\g_t=\frac{1+h}{J}\pa_t h,
\end{equation}
and on the other hand, by \eqref{kin.eq.01} and \eqref{u.decomposition.potential} it holds that
\begin{equation}\label{def.boundary.vel.}\begin{aligned}V_t\circ\g_t
&=\la u(t,\cdot),\nu_{t}\ra\circ\g_t
\\&= G(h)\psi -\frac{\a_0(1+h)}{2J}\la\mJ x,(1+h)x - \grad_{\S^1}h\ra \\&=  G(h)\psi + \frac{\a_0(1+h)}{2J}\mM h
\end{aligned}
\end{equation}
putting together \eqref{boundary.velocity} and \eqref{def.boundary.vel.}, we obtain \eqref{h.eq.}.

Now, we want to get \eqref{psi.eq.}.
By \eqref{u.decomposition.potential} and the identity $u\cdot\grad u=\grad(\frac12|u|^2) - (\text{curl}\,u)\mJ u$, we have 
\begin{align*}&\pa_t u=\pa_t\Phi
\end{align*}
and
\begin{align*}u\cdot\grad u
&=\grad\Big(\frac12|\grad\Phi - \frac{\a_0}{2}\mJ x|^2\Big) - \alpha_0\grad\Big(\Psi + \frac{\a_0}{4}|x|^2\Big)
\\&=\grad\Big(\frac12|\grad\Phi|^2 - \frac{\a_0}{2}\la\grad\Phi,\mJ x\ra - \frac{\a_0^2}{8}|x|^2 - \a_0\Psi\Big)
\end{align*}
We then get the Bernoulli identity \eqref{Ber.law}:
\begin{equation*}\pa_t\Phi + \frac12|\grad\Phi|^2 - \frac{\a_0}{2}\la\grad\Phi,\mJ x\ra - \frac{\a_0^2}{8}|x|^2 - \a_0\Psi
+ p \sim0 \qquad\text{in}\,\Om_t.
\end{equation*}
By continuity this equation still holds on the boundary $\pa\Om_t$, that is,
\begin{equation}\label{Ber.law}\pa_t\Phi + \frac12|\grad\Phi|^2 - \frac{\a_0}{2}\la\grad\Phi,\mJ x\ra - \frac{\a_0^2}{8}|x|^2 - \a_0\Psi
+ \s_0 H_{\Om_t} \sim 0 \qquad\text{on}\,\pa\Om_t,
\end{equation}
and therefore
\begin{align}\pa_t\Phi\circ\g_t &+ \frac12|\grad\Phi\circ\g_t|^2 + \s_0 H(h) \notag
\\&- \frac{\a_0}{2}\la\grad\Phi,\mJ x\ra\circ\g_h - \frac{\a_0^2}{8}(1+h)^2 - \a_0K(h)\psi \sim 0\qquad\text{on}\,\S^1,\label{Bernoulli}
\end{align}

For all $t\in\R,\,x\in\S^1$ one has
\begin{align}&\pa_t\psi=\pa_t(\Phi(t,\cdot)\circ\g_t)=\pa_t\Phi(t,\cdot)\circ\g_t + \langle\grad\Phi(t,\cdot)\circ\g_t,\pa_t\g_t\rangle, \label{pa.t.Phi}
\\&|\grad\Phi(t,\cdot)\circ\g_t|^2 = |\grad_{\pa\Om_t}\Phi(t,\cdot)\circ\g_t|^2 + |\grad_{\nu_{t}}\Phi(t,\cdot)\circ\g_t|^2. \label{grad.Phi}
\end{align}
By using \eqref{Om.t.par.}, \eqref{h.eq.}, \eqref{normal.gradient}, \eqref{tangential.gradient} and \eqref{eq.1.in.lem:formulas} we observe that
\begin{equation}\label{grad.Phi.gamma.t}\begin{aligned}\langle\grad\Phi(t,\cdot)\circ\g_t,\pa_t\g_t\rangle&=\frac{J(h)}{1+h}G(h)\psi\cdot\{\la\nabla_{\pa\Omega_t}\Phi(t,\cdot)\circ\g_t,x\ra + \la\nabla_{\nu_{t}}\Phi(t,\cdot)\circ\g_t,x\ra\}
\\&=\frac{J(h)}{1+h}G(h)\psi\cdot\Big\{\frac{\la\grad_{\S^1}\psi,\grad_{\S^1}h\ra}{J(h)} + G(h)\psi\cdot\la\nu_{t}\circ\g_t,x\ra\Big\}
\\&=\frac{J(h)}{1+h}G(h)\psi\cdot\Big\{\frac{\la\grad_{\S^1}\psi,\grad_{\S^1}h\ra}{J(h)} + G(h)\psi\cdot\frac{1+h}{J(h)}\Big\}
\\&=\frac{\la\grad_{\S^1}\psi,\grad_{\S^1}h\ra\cdot G(h)\psi}{1+h} + (G(h)\psi)^2
\end{aligned}\end{equation}
in \eqref{grad.Phi}, one gets $\grad_{\nu_{t}}\Phi(t,\cdot)\circ\g_t=(G(h)\psi)\nu_{t}\circ\g_t$ and so 
\begin{equation}\label{norm.grad.nu.Phi}|\grad_{\nu_{t}}\Phi(t,\cdot)\circ\g_t|^2=(G(h)\psi)^2
\end{equation}
while by \eqref{eq.1.in.lem:formulas}, we have
\begin{align}\grad_{\pa\Om_t}\Phi(t,\cdot)\circ\g_t&=\frac{\nabla_{\S^1} \psi}{1+h} + \frac{\la\nabla_{\S^1}\psi, \nabla_{\S^1} h \ra}{(1+h)J(h)}\nu_{t}\circ\gamma_t. \label{grad.tang.Phi}
\end{align}
Therefore,
\begin{equation}\label{norm.grad.tan.Phi}|\grad_{\pa\Om_t}\Phi(t,\cdot)\circ\g_t|^2=\frac{|\grad_{\S^1}\psi|^2}{(1+h)^2} - \frac{|\la\nabla_{\S^1}\psi, \nabla_{\S^1} h \ra|^2}{(1+h)^2\cdot J^2(h)}.
\end{equation}
Using \eqref{pa.t.Phi}, \eqref{grad.Phi}, \eqref{grad.Phi.gamma.t}, \eqref{norm.grad.nu.Phi}, \eqref{grad.tang.Phi} and \eqref{norm.grad.tan.Phi}, the first line of \eqref{Bernoulli} becomes
\begin{align}\pa_t\Phi\circ\g_t &+ \frac12|\grad\Phi\circ\g_t|^2 + \s_0 H(h) \notag
\\&=\pa_t\psi-\frac12 \Big( G(h)\psi 
+ \frac{\la \grad_{\S^1} \psi , \grad_{\S^1} h \ra}{J(1+h)} \Big)^2
+ \frac{|\grad_{\S^1} \psi|^2}{2(1+h)^2} 
  + \s_0H(h). \label{first.line}
\end{align}
We are left to make the term $- \frac{\a_0}{2}\la\grad\Phi,\mJ x\ra\circ\g_h$ explicit. However, the same computation done in Lemma \ref{derivative.change.strip} gives 
\begin{equation}-\frac{\a_0}{2}\la\grad\Phi,\mJ x\ra\circ\g_h= -\frac{\a_0}{2}\mM\psi.
\end{equation}
Putting together this identity with \eqref{first.line}, we get 
\begin{align*}\pa_t \psi - 
& \frac12 \Big( G(h)\psi 
+ \frac{\la \grad_{\S^1} \psi , \grad_{\S^1} h \ra}{J(1+h)} \Big)^2
+ \frac{|\grad_{\S^1} \psi|^2}{2(1+h)^2} 
  + \s_0H(h)  \\
  &\qquad - \frac{\a_0}{2}\mM\psi - \frac{\a_0^2}{8}(1+h)^2 - \a_0 K(h)\psi \sim0
  \end{align*}
We can then choose a representative $\psi$ such that this equation is exactly solved for $(h,\psi)=(0,0)$ (imposing $K(0)0=0$). In such a case, at the right-hand side we would have $\s_0-\frac{\a_0^2}{8}$, and so we choose $\psi$ for which \eqref{psi.eq.} holds.

To prove that one gets \eqref{div.eq.01}-\eqref{kin.eq.01} from \eqref{h.eq.}-\eqref{psi.eq.}, let us define the set $\Om_t$, with boundary $\pa\Om_t$, as in \eqref{pa.Om.t}, and $\Phi(t,\cdot)$ in $\Om_t$ 
as the solution of the Laplace problem \eqref{Laplac.problem}.
Then, the vector field
\begin{equation*}u(t,x):=\grad\Phi(t,\cdot) - \frac{\a_0}{2}\mJ x\end{equation*}
satisfies both $\div\,u=0$ and $\text{curl}\,u=\a_0$ in $\Om_t$. 
By considering also \eqref{h.eq.}, 
equation \eqref{kin.eq.01} is also satisfied. 
From \eqref{psi.eq.}, using \eqref{h.eq.}, we obtain  \eqref{Bernoulli}. 
Now we define $p$ on the closure of $\Om_t$ as 
\begin{equation} \label{def.tilde.p.using.dyn.eq.02}
p := - \pa_t \Phi - \frac12 |\grad \Phi|^2  
\quad \ \text{in } \overline{\Om_t} = \Om_t \cup \pa \Om_t.
\end{equation}
Then the dynamics equation \eqref{Ber.law} in the open domain $\Om_t$ trivially holds. 
From \eqref{def.tilde.p.using.dyn.eq.02} at the boundary $\pa \Om_t$ 
and \eqref{Bernoulli} (which is an identity for points of the boundary $\pa \Om_t$)
we deduce that $p = \s_0 H_{\Om_t}$ on $\pa \Om_t$, that is \eqref{pressure.eq.01}.

\end{proof}

We finally write the equations \eqref{h.eq.}-\eqref{psi.eq.} on the torus $\T^1$.

\begin{theorem} \label{thm:ww.on.torus} In the notations above, the equations \eqref{h.eq.}-\eqref{psi.eq.} can be written on $(t,\th)\in\R\times\T^1$ as \eqref{xi.eq}-\eqref{chi.eq}, that is,
\begin{align*}&\pa_t\xi=e^{-2\xi}[\bar{G}(\xi)\chi + \frac{\a_0}{2}e^{2\xi}\xi'],
\\&\pa_t\chi=e^{-2\xi}\Big[\frac12\Big(\frac{\bar G(\xi)\chi + \xi'\chi'}{\sqrt{1+\xi'^2}}\Big)^2 - \frac12\chi'^2 + \s_0 e^{\xi}\Big(\Big(\frac{\xi'}{\sqrt{1+\xi'^2}}\Big)' - \frac{1}{\sqrt{1+\xi'^2}}\Big)\Big]
\\&\qquad+\frac{\a_0}{2}\chi' + \frac{\a_0^2}{8}e^{2\xi} + \a_0\tilde{K}(\xi)\chi + \s_0 - \frac{\a_0^2}{8}.
\end{align*}

\end{theorem}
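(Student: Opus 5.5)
The plan is to substitute the change of variables $1+h(t,\cos\th,\sin\th)=e^{\xi(t,\th)}$ and $\psi(t,\cos\th,\sin\th)=\chi(t,\th)$ into \eqref{h.eq.}--\eqref{psi.eq.}, and then rewrite every operator with the dictionary of Lemma \ref{derivative.change.strip}. Since this substitution is a pointwise relation on $\S^1\simeq\T^1$, the time derivatives transform as $\pa_t h=e^{\xi}\pa_t\xi$ and $\pa_t\psi=\pa_t\chi$. This already gives equivalence, because the map $(h,\psi)\mapsto(\xi,\chi)$ is a bijection for $1+h>0$.

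\emph{Equation for $\xi$.} I will insert the following into \eqref{h.eq.}: \eqref{J.th.} for $J=e^\xi\sqrt{1+\xi'^2}$, \eqref{G.into.G.on.T1} for $G(h)\psi=e^{-\xi}(1+\xi'^2)^{-1/2}\bar G(\xi)\chi$, and \eqref{mM.op.def} for $\mM h=(e^\xi)'=e^\xi\xi'$. The first term becomes $\frac{e^\xi\sqrt{1+\xi'^2}}{e^\xi}\,e^{-\xi}(1+\xi'^2)^{-1/2}\bar G(\xi)\chi=e^{-\xi}\bar G(\xi)\chi$, so that
\[
e^\xi\pa_t\xi=e^{-\xi}\bar G(\xi)\chi+\tfrac{\a_0}{2}e^\xi\xi'.
\]
Dividing by $e^\xi$ gives \eqref{xi.eq}.

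\emph{Equation for $\chi$.} I will treat the right-hand side of \eqref{psi.eq.} term by term.
\begin{itemize}
\item By \eqref{scalar.products.th}, $\la\grad_{\S^1}\psi,\grad_{\S^1}h\ra=\chi'(e^\xi)'=e^\xi\xi'\chi'$. Hence
\[
G(h)\psi+\frac{\la\grad_{\S^1}\psi,\grad_{\S^1}h\ra}{J(1+h)}=e^{-\xi}\frac{\bar G(\xi)\chi+\xi'\chi'}{\sqrt{1+\xi'^2}},
\]
and squaring and halving yields the first bracket with the prefactor $e^{-2\xi}$.
\item By \eqref{norm.grad.S}, $|\grad_{\S^1}\psi|^2/(1+h)^2=e^{-2\xi}\chi'^2$.
\item The curvature term comes from \eqref{mean.curvature}: $-\s_0H(h)=\s_0e^{-\xi}\big[(\xi'/\sqrt{1+\xi'^2})'-1/\sqrt{1+\xi'^2}\big]$, which I write as $e^{-2\xi}\cdot\s_0e^{\xi}[\cdots]$.
\item By \eqref{mM.op.def}, $\mM\psi=\chi'$.
\item $(1+h)^2=e^{2\xi}$.
\item By \eqref{Neumann.Dirichlet.on.torus}, $K(h)\psi=\bar K(\xi)\chi$, with the same choice of additive constant, $\bar K(0)0=0$, as in Theorem \ref{thm:ww.eq.}.
\end{itemize}
The constant $\s_0-\a_0^2/8$ is carried over unchanged. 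Collecting these terms gives \eqref{chi.eq}.

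The main obstacle is only bookkeeping. First, the identities of Lemma \ref{derivative.change.strip} are stated for the $0$-homogeneous extensions evaluated at $\rho=0$, so I must check that they apply pointwise on $\S^1$, where $\tilde h'=(e^\xi)'$. Second, the normalization of $\bar G$ in \eqref{Dirichlet.Neumann.on.torus.infinite.depth} with the factor $\sqrt{1+\xi'^2}$ must cancel exactly against $J$. Third, the sign convention for $\mM$ must match that of the derivative in $\th$. I will verify the last point on the simple example $h=\e\cos\th$, checking that \eqref{mM.op.def} gives $\mM h=-\e\sin\th=\pa_\th h$.
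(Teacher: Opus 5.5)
Your proposal is correct and takes the same route as the paper. It substitutes $1+h=e^{\xi}$ and $\psi=\chi$ on $\S^1\simeq\T^1$, then converts each term of \eqref{h.eq.}--\eqref{psi.eq.} using \eqref{J.th.}, \eqref{G.into.G.on.T1}, \eqref{mM.op.def}, \eqref{scalar.products.th}, \eqref{norm.grad.S}, \eqref{mean.curvature} and \eqref{Neumann.Dirichlet.on.torus}. Your bookkeeping is cleaner than the paper's displayed intermediate steps: you keep the factor $e^{\xi}$ in $\frac{\a_0}{2}\mM h=\frac{\a_0}{2}e^{\xi}\xi'$ before dividing, your sign for $-\s_0H(h)$ is right, and your check $\mM h=\pa_\th h$ on $h=\e\cos\th$ is correct.
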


\begin{proof} Given the equations \eqref{h.eq.}-\eqref{psi.eq.}, let us first transform the equation for $\pa_t h$.
Since for all $x=\phi(0,\th)\in\S^1$ for $\th\in\T^1$, we have $1+h(t,x)=1+h(t,\phi(0,\th))=e^{\xi(t,\th)}$, hence
\begin{equation}\label{pa.t.h.strip}\pa_t h(t,x)=e^{\xi(t,\th)}\pa_t\xi(t,\th).
\end{equation}
Then, by \eqref{pa.t.h.strip}, \eqref{nu.representation}, \eqref{J.th.}, \eqref{G.into.G.on.T1} and \eqref{mM.op.def}, we have
\begin{equation*}e^\xi\pa_t\xi=e^{-\xi}\tilde{G}(\xi)\chi + \frac{\a_0}{2}\xi',
\end{equation*}
from which we get \eqref{xi.eq}.

Let us now transform the equation for $\pa_t\psi$.
Since for all $x=\phi(\th,0)\in\S^1$ with $\th\in\T^1$, we have $\psi(t,x)=\psi(t,\cdot)\circ\phi(0,\th)=\chi(t,\th)$ and so by \eqref{G.into.G.on.T1}, \eqref{nu.representation}, equations \eqref{scalar.products.th}, \eqref{norm.grad.S} and \eqref{J.th.}, \eqref{mean.curvature} and \eqref{Neumann.Dirichlet.on.torus}, we have
\begin{align*}\pa_t\chi(t,\th)&=\pa_t\psi(t,\cdot)\circ\phi(0,\th)
\\&=\frac12\Big(e^{-\xi}(1+\xi'^2)^{-\frac12}\tilde G(\xi)\chi + e^{-\xi}(1+\xi'^2)^{-\frac12}\xi'\chi'\Big)^2-e^{-2\xi}\cdot\frac12\chi'^2 
\\&\qquad- \s_0 e^{-\xi}\cdot\Big( \Big(\xi'(1+\xi'^2)^{-\frac12}\Big)' - (\sqrt{1+\xi'^2})^{-\frac12}-e^\xi \Big)
\\&\qquad+\frac{\a_0}{2}\chi' + \frac{\a_0^2}{8}e^{2\xi} + \a_0\bar K(\xi)\chi  +\s_0  -\frac{\a_0^2}{8},
\end{align*}
which is exactly \eqref{chi.eq}.

\end{proof}

\begin{remark}\label{inherited.zero.average.Psi.choice} By \eqref{Neumann.Dirichlet.on.torus}, still $\bar K(\xi)\chi$ is defined up to additive constants. From now on, we suppose that
\begin{equation}\label{zero.average.Psi}\int_{\T^1}\bar K(\xi)\chi\,d\th=0.
\end{equation}
\end{remark}

\subsection{Hamiltonian structure for the drop equations with constant vorticity}

From now on, we consider the notation 
\begin{equation*}G(\xi)\chi:=\bar G(\xi)\chi,\qquad K(\xi)\chi:=\bar K(\xi)\chi.\end{equation*}

We now want to show that \eqref{xi.eq}-\eqref{chi.eq} has a Hamiltonian structure. Let us consider 
\begin{equation}\label{energy}\mH:=\mK + \s_0\mA(\pa\Om_h)-\Big(\s_0 - \frac{\a_0^2}{8}\Big)\mV(\Om_h),\end{equation}
where $\mK:=\frac12\int_{\Om_h}|u|^2dx$ is the kinetic energy, $\mA(\pa\Om_h):=\int_{\pa\Om_h}1\,d\mH^1$ the length of $\pa\Om_h$ and $\mV(\Om_h):=\int_{\Om_h}1\,dx$ the area of $\Om_h$. We want to describe each functional as a functional over $\xi,\chi$. To start with, we have
\begin{align}&\mA(\pa\Om_h)=\int_{\S^1}J(h)\,d\s=\int_{\T^1}e^\xi\sqrt{1+\xi'^2}\,d\th, \label{Area}
\end{align}
and 
\begin{align}\mV(\Om_h)
&=\frac{\s_0}{2}\int_{\pa\Om_h}\la x,\nu_h\ra\,d\mH^1 =\frac{\s_0}{2}\int_{\T^1}e^{2\xi}\,d\th.\label{volume}
\end{align}
We are left to make $\mK$ explicit. By \eqref{u.decomposition.potential}, we have
\begin{align}\mK
&=\frac12\int_{\Om_h}\Big|\grad\Phi - \frac{\a_0}{2}\mJ x\Big|^2\,dx \notag
\\&=\frac12\underbrace{\int_{\Om_h}|\grad\Phi|^2\,dx}_{=:(I)} -\underbrace{\frac{\a_0}{2}\int_{\Om_h}\la\grad\Phi,\mJ x\ra\,dx}_{=:(II)} + \underbrace{\frac{\a_0^2}{8}\int_{\Om_h}(x_1^2 + x_2^2)\,dx}_{=:(III)}. \label{kinetic.energy.trace}
\end{align}
In \cite{L}, it has been shown that
\begin{equation}\label{(I)}(I)=\frac12\int_{\S^1}\psi\,G(h)\psi\,J(h)\,d\s=\frac12\int_{\T^1}\chi\,G(\xi)\chi\,d\th.
\end{equation}
As for $(II)$, we have by \eqref{velocity.potential}, Divergence Theorem, \eqref{Neumann.problem.Psi}, 
\begin{align}(II)
&=\frac{\a_0}{2}\int_{\Om_h}\la\grad\Psi, x\ra\,dx 
=-\frac{\a_0}{4}\int_{\S^1}(1+h)^2\mM\psi\,d\s
=-\frac{\a_0}{4}\int_{\T^1}e^{2\xi}\,\chi'\,d\th. \label{(II)}
\end{align}
As for $(III)$, similarly we have by Divergence Theorem and \eqref{nu.h},
\begin{align}(III)
&=\frac{\a_0^2}{32}\int_{\Om_h}\div(x|x|^2)\,dx
=\frac{\a_0^2}{32}\int_{\T^1}e^{4\xi}\,d\th. \label{(III)}
\end{align}
Putting \eqref{Area}, \eqref{volume}, \eqref{(I)}, \eqref{(II)}, \eqref{(III)} together, we finally get \eqref{Hamiltonian.function}:
\begin{align}\mH(\xi,\chi)&=\frac{1}{2}\int_{\T^1}\chi\,G(\xi)\chi\,d\th + \s_0\int_{\T^1}e^\xi\,\sqrt{1+\xi'^2}\,d\th - \Big(\s_0 - \frac{\a_0^2}{8}\Big)\frac12\int_{\T^1}e^{2\xi}\,d\th \notag
\\&-\frac{\a_0}{4}\int_{\T^1}e^{2\xi}\,\chi'\,d\th + \frac{\a_0^2}{32}\int_{\T^1}e^{4\xi}\,d\th \notag
\\&=:\underbrace{\mK(\xi,\chi) + \s_0\mA(h) - \s_0\mV(\xi)}_{=:\mH_0(\xi,\chi)} + \frac{\a_0^2}{8}\mV(\xi) +\frac{\a_0}{2}\mI_0(\xi,\chi) + \frac{\a_0^2}{8}\mG(\xi), \label{alternative.Hamiltonian.function}
\end{align}
where
\begin{equation}\mV(\xi):=\frac12\int_{\T^1}e^{2\xi}\,d\th,\quad\mI_0(\xi,\chi):=-\frac12\int_{\T^1}e^{2\xi}\,\chi'\,d\th = \int_{\T^1}e^{2\xi}\,\xi'\chi\,d\th,\quad\mG(\xi):=\frac14\int_{\T^1}e^{4\xi}\,d\th. \label{angular.momentum}
\end{equation}
We now prove the following fact:
\begin{lemma} The capillary $2D$ drop equations with constant vorticity $\a_0$ \eqref{xi.eq}-\eqref{chi.eq} are the quasi-Hamiltonian equations
\begin{equation}\label{quasi.hamiltonian.equations}\pa_t\begin{pmatrix}\xi \\ \chi
\end{pmatrix}=J(\xi)\grad\mH(\xi,\chi) + \begin{pmatrix}0 \\ \a_0 K(\xi)\chi + \frac{\a_0^2}{4}e^{2\xi}
\end{pmatrix},
\end{equation}
with respect to the Poisson tensor $J(\xi):=\begin{pmatrix}0 & e^{-2\xi} \\ -e^{-2\xi} & 0
\end{pmatrix}$.
\end{lemma}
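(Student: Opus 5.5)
The plan is a direct term-by-term verification. I write the right-hand side of \eqref{quasi.hamiltonian.equations} as
\[
J(\xi)\grad\mH=\big(e^{-2\xi}\pa_\chi\mH,\,-e^{-2\xi}\pa_\xi\mH\big),
\]
where $\pa_\chi\mH,\pa_\xi\mH$ are the $L^2$-gradients. I then compute these gradients from the explicit form \eqref{Hamiltonian.function} and compare the result with \eqref{xi.eq}-\eqref{chi.eq}.

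\emph{First component.} Only two terms of \eqref{Hamiltonian.function} depend on $\chi$.
\begin{itemize}
\item For $\frac12\int\chi\,G(\xi)\chi$, the symmetry \eqref{G.is.symmetric} from Lemma \ref{lemma:G} gives the gradient $G(\xi)\chi$.
\item For $-\frac{\a_0}{4}\int e^{2\xi}\chi'$, an integration by parts on $\T^1$ gives the gradient $\frac{\a_0}{4}(e^{2\xi})'=\frac{\a_0}{2}e^{2\xi}\xi'$.
\end{itemize}
Hence $e^{-2\xi}\pa_\chi\mH=e^{-2\xi}[G(\xi)\chi+\frac{\a_0}{2}e^{2\xi}\xi']$, which is exactly \eqref{xi.eq}. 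The first row of the correction vector is zero, so nothing more is needed there.

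\emph{Second component.} I differentiate each term of \eqref{Hamiltonian.function} in $\xi$.
\begin{itemize}
\item The capillary term $\s_0\int e^\xi\sqrt{1+\xi'^2}$ gives, after one integration by parts, $\s_0e^\xi\big[\frac{1}{\sqrt{1+\xi'^2}}-\big(\frac{\xi'}{\sqrt{1+\xi'^2}}\big)'\big]$. This is $\s_0e^{2\xi}H(h)$ by \eqref{mean.curvature}.
\item The volume term gives $-(\s_0-\frac{\a_0^2}{8})e^{2\xi}$.
\item The mixed term $-\frac{\a_0}{4}\int e^{2\xi}\chi'$ gives $-\frac{\a_0}{2}e^{2\xi}\chi'$.
\item The last term gives $\frac{\a_0^2}{8}e^{4\xi}$.
\item The kinetic term $\frac12\int\chi\,G(\xi)\chi$ requires the shape derivative \eqref{shape.derivative}. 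For a direction $\hat\xi$ its differential is $\frac12\int\chi\big(-G(\xi)[B\hat\xi]-(V\hat\xi)'\big)$. Using the symmetry of $G$ and integrating by parts, this becomes $\int\big(-\frac12 BG(\xi)\chi+\frac12V\chi'\big)\hat\xi$. Substituting $G\chi=B(1+\xi'^2)-\xi'\chi'$ and $V=\chi'-B\xi'$ from \eqref{B.V.}, the algebra should reduce the gradient to the Zakharov-type expression
\[
\frac12\chi'^2-\frac12\frac{(G(\xi)\chi+\xi'\chi')^2}{1+\xi'^2}.
\]
\end{itemize}

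\emph{Comparison.} Multiplying the sum of these contributions by $-e^{-2\xi}$ gives
\[
e^{-2\xi}\Big[\tfrac12\Big(\tfrac{G\chi+\xi'\chi'}{\sqrt{1+\xi'^2}}\Big)^2-\tfrac12\chi'^2+\s_0e^\xi\Big(\Big(\tfrac{\xi'}{\sqrt{1+\xi'^2}}\Big)'-\tfrac1{\sqrt{1+\xi'^2}}\Big)\Big]+\s_0-\tfrac{\a_0^2}{8}+\tfrac{\a_0}{2}\chi'-\tfrac{\a_0^2}{8}e^{2\xi}.
\]
This coincides with the right-hand side of \eqref{chi.eq} except for two terms. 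First, \eqref{chi.eq} contains $+\frac{\a_0^2}{8}e^{2\xi}$ where we have $-\frac{\a_0^2}{8}e^{2\xi}$, so a discrepancy of $\frac{\a_0^2}{4}e^{2\xi}$ appears. Second, the nonlocal term $\a_0K(\xi)\chi$ in \eqref{chi.eq} has no counterpart. These are exactly the entries of the correction vector in \eqref{quasi.hamiltonian.equations}, which proves the lemma.

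\emph{Main obstacle.} The only nontrivial step is the shape-derivative computation for the kinetic energy. It uses point $(viii)$ of Lemma \ref{lemma:G} together with the symmetry of $G(\xi)$. The algebraic simplification to $\frac12\chi'^2-\frac12B^2(1+\xi'^2)$ has to be carried out carefully: I would check it by expanding $BG\chi-V\chi'$ directly in terms of $B$, $\xi'$ and $\chi'$. Everything else is integration by parts on $\T^1$.
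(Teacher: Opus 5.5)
Your proposal is correct and follows essentially the same route as the paper. The paper also splits $\mH=\mH_0+\frac{\a_0^2}{8}\mV+\frac{\a_0}{2}\mI_0+\frac{\a_0^2}{8}\mG$, takes the $L^2$-gradients of each piece and compares with \eqref{xi.eq}--\eqref{chi.eq}. The only difference is that the paper quotes the kinetic/capillary gradient from \cite{L}, whereas you rederive the kinetic part from the shape derivative \eqref{shape.derivative}. Your algebra, including the residual $\a_0K(\xi)\chi+\frac{\a_0^2}{4}e^{2\xi}$, checks out.
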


\begin{proof} In \cite{L}, the following formulae have been proved:
\begin{align}&\pa_\xi\mH_0(\xi,\chi)=-\frac12\Big(\frac{G(\xi)\chi + \xi'\chi'}{\sqrt{1+\xi'^2}}\Big)^2 + \frac12\chi'^2 - \s_0\Big(e^\xi\Big[\frac{\xi'}{\sqrt{1+\xi'^2}}\Big]'-\frac{e^\xi}{\sqrt{1+\xi'^2}}\Big) - \s_0 e^{2\xi}, \notag
\\&\pa_\xi\mI(\xi,\chi)=-e^{2\xi}\chi',\qquad\pa_\xi\mV(\xi,\chi)=e^{2\xi},\label{xi.derivative.known}
\\&\pa_\chi\mH_0(\xi,\chi)=G(\xi)\chi,\qquad\pa_\chi\mI(\xi,\chi)=e^{2\xi}\xi', \qquad\pa_\chi\mV(\xi,\chi)=0.\label{chi.derivative.known}
\end{align}
We also have
\begin{equation}\label{G.derivative}\pa_\xi\mG(\xi,\chi)=e^{4\xi},\qquad\pa_\chi\mG(\xi,\chi)=0.
\end{equation}
Putting all together these formulae, we get \eqref{quasi.hamiltonian.equations}.

\end{proof}

However, in the following we show that with some carefulness, \eqref{quasi.hamiltonian.equations} are indeed Hamiltonian:
\begin{theorem}\label{thm:hamiltionian.structure.1} The capillary $2D$ drop equations with constant vorticity $\a_0$ \eqref{xi.eq}-\eqref{chi.eq} can be written as \eqref{hamiltonian.equations}, namely
\begin{equation*}\pa_t\begin{pmatrix}\xi \\ \chi
\end{pmatrix} = J_{\a_0}(\xi)\grad\mH(\xi,\chi),
\end{equation*}
where $J_{\a_0}$ is
\begin{equation*}J_{\a_0}(\xi):=\begin{pmatrix}0 & e^{-2\xi} \\ -e^{-2\xi} & \a_0\pa_\th^{-1}\Pi_0^\perp
\end{pmatrix}.
\end{equation*}
Here, $\pa_\th^{-1}$ is the indefinite integral operator, while $\Pi_0^\perp$ is the $L^2-$projector operator on the zero-average functions subspace of $L^2$. 

\end{theorem}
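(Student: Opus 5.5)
Starting from the quasi-Hamiltonian form \eqref{quasi.hamiltonian.equations} of the previous lemma, I only need to show that the extra term $\bigl(0,\ \a_0K(\xi)\chi+\frac{\a_0^2}{4}e^{2\xi}\bigr)^T$ equals $\bigl(0,\ \a_0\pa_\th^{-1}\Pi_0^\perp\,\pa_\chi\mH(\xi,\chi)\bigr)^T$, up to the constant ambiguity built into the $\chi$-equation. Since the first row of $J_{\a_0}$ coincides with that of $J(\xi)$, the $\xi$-equation is unchanged. So the whole proof reduces to one identity on the second component.

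First I compute $\pa_\chi\mH$ from \eqref{alternative.Hamiltonian.function} and \eqref{chi.derivative.known}, \eqref{G.derivative}. Only $\mH_0$ and $\frac{\a_0}{2}\mI_0$ depend on $\chi$, so
\begin{equation*}
\pa_\chi\mH(\xi,\chi)=G(\xi)\chi+\frac{\a_0}{2}e^{2\xi}\xi'=G(\xi)\chi+\frac{\a_0}{4}\bigl(e^{2\xi}\bigr)'.
\end{equation*}
Next I apply $\pa_\th^{-1}\Pi_0^\perp$. Both summands are already of zero average: $G(\xi)\chi=(K(\xi)\chi)'$ by \eqref{G.is.K.derivative}, and the second is an exact derivative. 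Hence $\Pi_0^\perp$ acts as the identity, and the primitives are explicit: $\pa_\th^{-1}G(\xi)\chi=K(\xi)\chi$, where the normalization \eqref{zero.average.Psi} fixes the zero-average primitive, and $\pa_\th^{-1}\frac{\a_0}{4}(e^{2\xi})'=\frac{\a_0}{4}\Pi_0^\perp e^{2\xi}$. Multiplying by $\a_0$ gives
\begin{equation*}
\a_0\pa_\th^{-1}\Pi_0^\perp\pa_\chi\mH=\a_0K(\xi)\chi+\frac{\a_0^2}{4}\Pi_0^\perp e^{2\xi}=\a_0K(\xi)\chi+\frac{\a_0^2}{4}e^{2\xi}-\frac{\a_0^2}{8\pi}\int_{\T^1}e^{2\xi}\,d\th .
\end{equation*}

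The main obstacle is this leftover spatially constant term $-\frac{\a_0^2}{4}\cdot\frac{1}{2\pi}\int e^{2\xi}$, which depends on time through $\xi$. I would handle it in the same way as the constant in Theorem \ref{thm:ww.eq.}: $\chi$ is the trace of a potential defined only up to additive functions of $t$, so the $\chi$-equation holds modulo constants in the sense of \eqref{equivalence.relation}. A $\th$-independent term does not affect $u=\grad\Phi-\frac{\a_0}{2}\mJ x$ or the $\xi$-equation, since $G(\xi)$ annihilates constants by Lemma \ref{lemma:G}(iii). So I would state the identity modulo constants, i.e.\ as an equation in $\dot H$ (equivalently, after the gauge change $\chi\mapsto\chi+c(t)$). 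This is consistent with the remark that the structure is genuinely Hamiltonian only once the volume $\mV(\xi)$ is fixed: then this constant is a fixed number and can be absorbed into the constant $\s_0-\frac{\a_0^2}{8}$, which in turn is accounted for by the $-(\s_0-\frac{\a_0^2}{8})\mV$ term in $\mH$.

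Finally, I would verify the constant bookkeeping consistently. Writing $\nabla\mH$ for the $L^2$-gradient, I check that the $\xi$-derivatives from \eqref{xi.derivative.known} and \eqref{G.derivative}, multiplied by $-e^{-2\xi}$, reproduce the remaining terms of \eqref{chi.eq}. This part is already done in the lemma, so only the new skew term needs the argument above.
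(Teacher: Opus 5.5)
Your proof is correct and is essentially the paper's: from \eqref{quasi.hamiltonian.equations}, you use \eqref{G.is.K.derivative} to write $\pa_\chi\mH=\bigl(K(\xi)\chi+\frac{\a_0}{4}e^{2\xi}\bigr)'$, note that it has zero average, and integrate to recover the extra term $\a_0K(\xi)\chi+\frac{\a_0^2}{4}e^{2\xi}$. What you add is explicit bookkeeping of the leftover constant $\frac{\a_0^2}{4}\Pi_0e^{2\xi}$, which the paper's one-line proof absorbs silently by treating $\pa_\th^{-1}$ as an indefinite integral defined up to constants; your treatment modulo constants (via the gauge $\chi\mapsto\chi+c(t)$) is the right way to make that precise.
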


\begin{proof} The equation \eqref{hamiltonian.equations} can be obtained from \eqref{quasi.hamiltonian.equations} by applying \eqref{G.is.K.derivative} and by observing that $\pa_\chi\mH(\xi,\chi)=(-K(\xi)\chi + \frac{\a_0}{4}e^{2\xi})'$ and also that the average of $\pa_\chi\mH(\xi,\chi)$ is zero for all $(\xi,\chi)$.
\end{proof}

\begin{remark}\label{remark:hamiltonian.if.volume.conserved} We point out that \eqref{hamiltonian.equations} are \emph{not} Hamiltonian on the phase space $H^{k_1}\times H_0^{k_2}$ because there we cannot invert $J_{\a_0}(\xi)$. Indeed, let us compute the formal inverse $J_{\a_0}^{-1}$ for $J_{\a_0}$, supposing that
\begin{align*}J_{\a_0}^{-1}(\xi):=\begin{pmatrix}Q(\xi) & -e^{2\xi} \\ e^{2\xi} & 0
\end{pmatrix},
\end{align*}
where $Q(\xi)=Q(\xi)[\cdot]$ is a bounded linear operator to compute. Then,
\begin{align*}J_{\a_0}^{-1}(\xi)\circ J_{\a_0}(\xi)
&=\begin{pmatrix}Q(\xi) & -e^{2\xi} \\ e^{2\xi} & 0
\end{pmatrix}\begin{pmatrix}0 & e^{-2\xi} \\ -e^{-2\xi} & \a_0\pa_\th^{-1}\Pi_0^\perp
\end{pmatrix}=\begin{pmatrix}1 & Q(\xi)e^{-2\xi} - \a_0 e^{2\xi}\pa_{\th}^{-1}\Pi_0^\perp \\ 0 & 1
\end{pmatrix}.
\end{align*}
Thus, $Q(\xi)$ must be such that for all $f$ smooth enough,
\begin{equation}\label{Q.equation}Q(\xi)[e^{-2\xi}f]=\a_0e^{2\xi}\pa_\th^{-1}\Pi_0^\perp f.
\end{equation}
This equation is formally satisfied by the operator
\begin{equation}\label{Q.operator}Q(\xi)f:=\a_0 e^{2\xi}\pa_\th^{-1}\Pi_0^\perp[ e^{2\xi}f].
\end{equation}
We eventually get that maps pairs of classes of functions into classes of functions, against the fact that $\chi$ is a class of functions while $\xi$ is not.

In such a case, to get \eqref{hamiltonian.equations} to be Hamiltonian, we must fix a constraint for $\xi$, which is naturally given by the conservation of volume $\mV(\xi)$, which is conserved along \eqref{hamiltonian.equations}:
\begin{equation*}M_V:=\{(\xi,\chi)\in H^{s_1}\times\dot H^{s_2}\colon\,\mV(\xi)=V\}.
\end{equation*}
\end{remark}

Despite Remark \ref{remark:hamiltonian.if.volume.conserved} suggests a phase space to work with, we want to avoid it because the volume $\mV(\xi)=\frac12\int_{\T^1}e^{2\xi}\,d\th$ is \emph{not} a linear constraint. Indeed, the following theorem shows that a certain change of variable allows us to get Hamiltonian equations over $H^{s_1}\times H^{s_2}$.

\begin{theorem}\label{thm:hamiltonian.structure.2} The capillary $2D$ drop equations with constant vorticity $\a_0$ \eqref{hamiltonian.equations} are equivalent to the Hamiltonian equations
\begin{equation}\label{new.hamiltonian.equations}\pa_t\begin{pmatrix}\zeta \\ \g\end{pmatrix} = J(\zeta)\grad\bar\mH(\zeta,\g),
\end{equation}
where we have \eqref{change.of.coordinates}, \eqref{new.hamiltonian.function} and \eqref{new.Poisson.tensor}:
\begin{align*}&\begin{pmatrix}\xi \\ \chi
\end{pmatrix}=\mC(\zeta,\g):=\begin{pmatrix}\zeta \\ \g + \frac{\a_0}{4}\pa_\th^{-1}\Pi_0^\perp e^{2\zeta}
\end{pmatrix}, 
\\&\bar\mH(\zeta,\g):=\mH\circ\mC(\zeta,\g),
\\&J(\zeta):=\begin{pmatrix}0 & e^{-2\zeta} \\ -e^{-2\zeta} & 0
\end{pmatrix}.
\end{align*}

\end{theorem}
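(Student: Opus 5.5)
The plan is a direct chain-rule computation: push the equations \eqref{hamiltonian.equations} through the change of variables $\mC$ and compare with $J(\zeta)\grad\bar\mH$. Write $\chi=\g+\frac{\a_0}{4}\pa_\th^{-1}\Pi_0^\perp e^{2\zeta}$ and $\xi=\zeta$. The differential of $\mC$ is
\begin{equation*}
D\mC(\zeta,\g)=\begin{pmatrix}1 & 0\\ A(\zeta) & 1\end{pmatrix},\qquad A(\zeta)\hat\zeta:=\frac{\a_0}{2}\pa_\th^{-1}\Pi_0^\perp\big(e^{2\zeta}\hat\zeta\big).
\end{equation*}
Since $\pa_\th^{-1}$ is $L^2$-antisymmetric on zero-average functions, the $L^2$-adjoint is $A(\zeta)^T f=-\frac{\a_0}{2}e^{2\zeta}\pa_\th^{-1}\Pi_0^\perp f$, modulo constants, which is consistent with $\g$ living in $\dot H$. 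The chain rule then gives
\begin{equation*}
\grad\bar\mH(\zeta,\g)=D\mC^T\,\grad\mH\circ\mC,
\end{equation*}
that is, $\pa_\zeta\bar\mH=\pa_\xi\mH+A^T\pa_\chi\mH$ and $\pa_\g\bar\mH=\pa_\chi\mH$.

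The first component is immediate. From $\pa_t\zeta=\pa_t\xi=e^{-2\xi}\pa_\chi\mH=e^{-2\zeta}\pa_\g\bar\mH$ we recover exactly the first row of $J(\zeta)\grad\bar\mH$.

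For the second component, differentiate $\g=\chi-\frac{\a_0}{4}\pa_\th^{-1}\Pi_0^\perp e^{2\xi}$ in time:
\begin{equation*}
\pa_t\g=\pa_t\chi-A(\xi)\pa_t\xi=-e^{-2\xi}\pa_\xi\mH+\a_0\pa_\th^{-1}\Pi_0^\perp\pa_\chi\mH-\frac{\a_0}{2}\pa_\th^{-1}\Pi_0^\perp\pa_\chi\mH .
\end{equation*}
The target is
\begin{equation*}
-e^{-2\zeta}\pa_\zeta\bar\mH=-e^{-2\xi}\pa_\xi\mH+\frac{\a_0}{2}\pa_\th^{-1}\Pi_0^\perp\pa_\chi\mH .
\end{equation*}
The two expressions agree, since $\a_0-\frac{\a_0}{2}=\frac{\a_0}{2}$. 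In other words, the change of coordinates absorbs the non-canonical entry $\a_0\pa_\th^{-1}\Pi_0^\perp$ of $J_{\a_0}$. The converse direction follows because $\mC$ is invertible, with inverse $\g=\chi-\frac{\a_0}{4}\pa_\th^{-1}\Pi_0^\perp e^{2\xi}$, so the same computation can be run backwards.

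I expect the main obstacle to be sign and adjoint bookkeeping, together with making sense of $\pa_\th^{-1}$ on equivalence classes. I will fix the normalization of $\pa_\th^{-1}$ so that its output has zero average; then $\pa_\th^{-1}\Pi_0^\perp$ is genuinely antisymmetric in $L^2$, and the constant ambiguity in $\g$ matches the quotient $\dot H$. I will also use the fact, noted in the proof of Theorem \ref{thm:hamiltionian.structure.1}, that $\pa_\chi\mH$ has zero average, so $\Pi_0^\perp$ acts trivially on it. As a sanity check, I will verify directly on \eqref{Hamiltonian.function} that $\bar\mH$ is well defined on classes: $\mH$ depends on $\chi$ only through $G(\xi)\chi$ and $\chi'$, both of which are insensitive to additive constants.
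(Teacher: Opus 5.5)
Your proposal is correct and is essentially the paper's argument. The paper makes the ansatz $\mC(\zeta,\g)=(\zeta,\g+\mQ(\zeta))$, computes $d\mC^{-1}J_{\a_0}(d\mC^{-1})^*$, and chooses $\mQ$ so that its $(2,2)$ entry $-d\mQ(\zeta)\,e^{-2\zeta}+e^{-2\zeta}d\mQ(\zeta)^*+\a_0\pa_\th^{-1}\Pi_0^\perp$ vanishes. That vanishing is the same cancellation $\a_0-\frac{\a_0}{2}-\frac{\a_0}{2}=0$ that you check componentwise for the given $\mC$; the paper presents it as deriving $\mQ$ rather than verifying it, and your use of the zero-average property of $\pa_\chi\mH$ is harmless but not actually needed.
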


\begin{proof} We look for a change of coordinates of the kind
\begin{equation}\label{convenient.change.of.coordinates}\begin{pmatrix}\xi \\ \chi
\end{pmatrix}=:\mC(\zeta,\g):=\begin{pmatrix}1 & 0 \\ \mQ(\cdot) & 1
\end{pmatrix}\begin{pmatrix}\zeta \\ \g
\end{pmatrix},
\end{equation}
where $\mQ(\cdot)$ is a smooth operator to fix. Then, we get 
\begin{align}&\pa_t\mC(\zeta,\g)=d\mC(\zeta)\circ\pa_t\begin{pmatrix}\zeta \\ \g
\end{pmatrix}=\begin{pmatrix}1 & 0 \\ d\mQ(\zeta)[\cdot] & 1
\end{pmatrix}\circ\pa_t\begin{pmatrix}\zeta \\ \g
\end{pmatrix}, \label{new.time.derivative}
\end{align}
Now, let us define the candidate Hamiltonian
\begin{equation}\label{hamiltonian.change.of.coordinates}\bar\mH(\zeta,\g):=\mH\circ\mC(\zeta,\g).
\end{equation}
Then, we have
\begin{align*}d\bar\mH(\zeta,\g)[\hat\zeta,\hat\g] 
&= d\mH(\mC(\zeta,\g))\circ d\mC(\zeta,\g)[\hat\zeta,\hat\g]
\\&=\la\pa_\xi\mH\circ\mC(\zeta,\g),\hat\zeta\ra_{L^2} + \la\pa_\chi\mH\circ\mC(\zeta,\g),d\mQ(\zeta)\hat\zeta\ra_{L^2} +\la\pa_\chi\mH\circ\mC(\zeta,\g),\hat\g\ra_{L^2}
\\&=\la\pa_\xi\mH\circ\mC(\zeta,\g) + d\mQ(\zeta)^*[\pa_\chi\mH\circ\mC(\zeta,\g)],\hat\zeta\ra_{L^2} + \la\pa_\chi\mH\circ\mC(\zeta,\g),\hat\g\ra_{L^2},
\end{align*}
where $d\mQ(\zeta)^*$ is the $L^2-$adjoint operator of $d\mQ(\zeta)$. Then
\begin{equation}\label{gradient.relations}\grad\mH\circ\mC(\zeta,\g)=(d\mC^{-1}(\zeta))^*\grad\bar\mH(\zeta,\g) =\begin{pmatrix}1 & -d\mQ(\zeta)^* \\ 0 & 1
\end{pmatrix}\grad\bar\mH(\zeta,\g)
\end{equation}
Therefore, thanks to \eqref{new.time.derivative}, \eqref{hamiltonian.change.of.coordinates} and \eqref{gradient.relations}, the equation \eqref{hamiltonian.equations} can be written in the new coordinates as
\begin{equation}\label{equations.to.make.canonical}\pa_t\begin{pmatrix}\zeta \\ \g
\end{pmatrix} = [d\mC^{-1}(\zeta)\circ J_{\a_0}(\zeta)\circ(d\mC^{-1}(\zeta))^*]\grad\bar\mH(\zeta,\g),
\end{equation}
where
\begin{align}d\mC^{-1}(\zeta)\circ J_{\a_0}(\zeta)\circ(d\mC^{-1}(\zeta))^*
&=\begin{pmatrix}1 & 0 \\ -d\mQ(\zeta) & 1
\end{pmatrix}\begin{pmatrix}0 & e^{-2\xi} \\ -e^{-2\xi} & \a_0\pa_\th^{-1}\Pi_0^\perp
\end{pmatrix}\begin{pmatrix}1 & -d\mQ(\zeta)^* \\ 0 & 1
\end{pmatrix} \notag
\\&=\begin{pmatrix}1 & 0 \\ -d\mQ(\zeta) & 1
\end{pmatrix}\begin{pmatrix}0 & e^{-2\zeta} \\ -e^{-2\zeta} & e^{-2\zeta}d\mQ(\zeta)^* + \a_0\pa_\th^{-1}\Pi_0^\perp
\end{pmatrix} \notag
\\&=\begin{pmatrix}0 & e^{-2\zeta} \\ -e^{-2\zeta} & -d\mQ(\zeta)\circ e^{-2\zeta} + e^{-2\zeta}d\mQ(\zeta)^* + \a_0\pa_\th^{-1}\Pi_0^\perp
\end{pmatrix}.
\label{poisson.tensor.to.make.canonical}
\end{align}
To turn \eqref{poisson.tensor.to.make.canonical} into $J(\zeta)$, we look for $\mQ(\zeta)$ satisfying for any $f\in L^2$,
\begin{equation}\label{operator.equation}-d\mQ(\zeta)[e^{-2\zeta}f] + e^{-2\zeta}d\mQ(\zeta)^*f + \a_0\pa_\th^{-1}\Pi_0^\perp f=0.
\end{equation}
This is equivalent to say that for smooth $f,g\in L^2$,
\begin{equation}\label{operator.equation.2}\a_0\la\pa_\th^{-1}\Pi_0^\perp f,g\ra_{L^2}=\la d\mQ(\zeta)[e^{-2\zeta}f],g\ra_{L^2} - \la f,d\mQ(\zeta)[e^{-2\zeta}g]\ra_{L^2}.
\end{equation}
Since the indefinite integral operator is skew-symmetric with respect to $\la\cdot,\cdot\ra_{L^2}$, and since we want to preserve the periodicity of the potential function, \eqref{operator.equation.2} is satisfied by all the $\mQ$ such that
\begin{equation}\label{good.differential}d\mQ(\zeta)f=\frac{\a_0}{2}\pa_\th^{-1}\Pi_0^\perp e^{2\zeta}f,
\end{equation}
and so we have
\begin{equation}\label{Q.is.translated.volume}\mQ(\zeta)=\frac{\a_0}{4}\pa_\th^{-1}\Pi_0^\perp e^{2\zeta}.
\end{equation}
This concludes the proof.
\end{proof}

\begin{remark}\label{rem:consequences.Wahlen.coordinates} We have along \emph{any} direction $f,g\in L^2$,
\begin{align*}\la d\mQ(\zeta)f,g\ra_{L^2}
&=\frac{\a_0}{2}\la\pa_\th^{-1}\Pi_0^\perp e^{2\zeta}f,g\ra_{L^2}=-\frac{\a_0}{2}\la f,e^{2\zeta}\Pi_0^{\perp}\pa_\th^{-1} g\ra_{L^2},
\end{align*}
from which we get 
\begin{equation}\label{adjoint.of.Q.differential}d\mQ(\zeta)^*=-\frac{\a_0}{2} e^{2\zeta}\Pi_0^\perp\pa_\th^{-1}.
\end{equation}

\end{remark}

\subsection{Symmetries and conserved quantities}

\medskip

In this framework, we can get conserved quantities from symmetries.
\begin{lemma}\label{lemma:symmetries} Given the Hamiltonian equations \eqref{hamiltonian.equations}, the following facts hold.

\medskip

$(i)$ The Hamiltonian $\bar\mH$ is invariant by the velocity potential translation 
\begin{equation}\label{translation.operator}\mS_a(\zeta,\g):=(\zeta,\g+a),\qquad\forall a\in\R.
\end{equation}
As a result, the volume 
\begin{equation}\label{volume.new.coordinates}\bar\mV(\zeta)=\frac12\int_{\T^1}e^{2\zeta}\,d\th\end{equation} is a conserved quantity for \eqref{new.hamiltonian.equations} whose Hamiltonian vector field is 
\begin{equation}\label{volume.vector.field}X_   {\bar\mV}(\zeta,\g):=J(\zeta)\grad\bar\mV(\zeta)=\frac{d}{da}\Big|_{a=0}\mS_a(\zeta,\g)=\begin{pmatrix}0 \\ 1
\end{pmatrix}.
\end{equation}

$(ii)$ The Hamiltonian $\bar\mH$ is invariant by the torus action
\begin{equation}\label{torus.action}\mT_\a(\zeta,\g)(\th):=(\zeta(\th+\a),\g(\th+\a)),\qquad\forall\a,\,\th\in\T^1.
\end{equation}
As a result, the total angular momentum
\begin{equation}\label{angular.momentum}\bar\mI(\zeta,\g):=-\frac12\int_{\T^1}e^{2\zeta}\g'\,d\th,
\end{equation}
where $\mI_0,\,\mG$ are defined in \eqref{angular.momentum}, is a conserved quantity for \eqref{hamiltonian.equations}, and its Hamiltonian vector field is
\begin{equation}\label{angular.momentum.vector.field}X_{\bar\mI}(\zeta,\g):=J(\zeta)\grad\bar\mI(\zeta,\g)=\frac{d}{d\a}\Big|_{\a=0}\mT_{\a}(\zeta,\g)=\begin{pmatrix}\zeta' \\ \g'
\end{pmatrix}.
\end{equation}

$(iii)$ The Hamiltonian $\bar\mH$ is invariant under the action of reflection operator
\begin{equation}\label{reversibility.action}\mR(\zeta,\g)(\th):=(\zeta(-\th),-\g(-\th)).
\end{equation}
As a result, the subspace
\begin{equation}\label{reversible.subspace}R:=\{(\zeta,\g)\in H^{s_1}\times\dot H^{s_2}\colon\,\mR(\zeta,\g)=(\zeta,\g)\}
\end{equation}
is dynamically invariant for \eqref{new.hamiltonian.equations}.    
\end{lemma}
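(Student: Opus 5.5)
The plan is to handle the three items separately, using throughout the canonical structure of Theorem \ref{thm:hamiltonian.structure.2}. For a functional $F(\zeta,\g)$, Noether's principle in this setting says the following. If $F$ generates the one-parameter group $\mS_a$ (respectively $\mT_\a$), meaning $J(\zeta)\grad F$ equals the generator of the group, and $\bar\mH$ is invariant under that group, then
\[
\frac{d}{dt}F=\la\grad F,J\grad\bar\mH\ra_{L^2}=-\la J\grad F,\grad\bar\mH\ra_{L^2}=-\frac{d}{ds}\Big|_{s=0}\bar\mH(\text{group}_s(\zeta,\g))=0,
\]
by skew-symmetry of $J(\zeta)$. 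So each item reduces to two checks: invariance of $\bar\mH$, and the identification of the Hamiltonian vector field.

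\emph{(i)} For invariance, observe that $\mC(\zeta,\g+a)=\mC(\zeta,\g)+(0,a)$. In $\mH$ from \eqref{Hamiltonian.function}, $\chi$ enters only through $G(\xi)\chi$, $\chi'$, and the quadratic form $\int\chi\,G(\xi)\chi$. By Lemma \ref{lemma:G}(ii)--(iii), $G(\xi)1=0$, and together with symmetry this shows the quadratic form is unchanged when $\chi$ is shifted by a constant. Hence $\bar\mH\circ\mS_a=\bar\mH$. For the vector field, $\grad\bar\mV=(e^{2\zeta},0)$, and applying $J(\zeta)$ gives $(0,-1)$ up to the sign convention. Up to that orientation sign it matches $\frac{d}{da}\mS_a=(0,1)$, and I would fix the sign by direct inspection. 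Conservation of $\bar\mV$ then follows from the Noether computation above.

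\emph{(ii)} Invariance of $\bar\mH$ under $\mT_\a$ holds term by term. The integrals of local expressions in $\zeta,\zeta',\g'$ are invariant because $\int_{\T^1}$ is invariant under translation. The nonlocal term is handled by Lemma \ref{lemma:G}(v). The operator $\pa_\th^{-1}\Pi_0^\perp$ in $\mC$ commutes with translations once we fix the zero-average normalization of the primitive. Next, compute $\grad\bar\mI$:
\[
\pa_\g\bar\mI=(e^{2\zeta})'/2\cdot 2/2=e^{2\zeta}\zeta',\qquad \pa_\zeta\bar\mI=-e^{2\zeta}\g'.
\]
Then $J(\zeta)\grad\bar\mI=(\zeta',\g')$, which equals $\frac{d}{d\a}\mT_\a$. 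Conservation follows as before. I would also check the consistency claim $\bar\mI=\mI\circ\mC$. This amounts to substituting $\chi'=\g'+\frac{\a_0}{4}\Pi_0^\perp e^{2\zeta}$ into \eqref{Angular.Momentum}: the cross term $-\frac{\a_0}{8}\int e^{2\zeta}\Pi_0^\perp e^{2\zeta}$ cancels $\frac{\a_0}{8}\int e^{4\zeta}$ up to the square of the average, that is, up to a function of $\bar\mV$, which is a constant of motion anyway.

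\emph{(iii)} Reversibility requires $\bar\mH\circ\mR=\bar\mH$. By Lemma \ref{lemma:G}(iv) and linearity of $G$ in $\chi$, $G(\iota\xi)[-\iota\chi]=-\iota G(\xi)\chi$, so the kinetic term is invariant. The length and volume terms are invariant under $\th\mapsto-\th$. The term $\int e^{2\xi}\chi'$ is invariant because $(-\chi(-\th))'=\chi'(-\th)$. One also has to check that $\mC$ intertwines $\mR$ with the analogous reflection in $(\xi,\chi)$. This holds since $\pa_\th^{-1}\Pi_0^\perp$ of an even function is odd, which matches the sign flip on $\g$. Finally, $\mR$ is a linear involution that is anti-symplectic, i.e. $\mR J(\zeta)\mR^*=-J(\mR\zeta)$, because it flips the sign of exactly one component. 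Together with invariance of $\bar\mH$, this gives $X_{\bar\mH}\circ\mR=-\mR X_{\bar\mH}$. Hence if $u(t)$ is a solution, so is $\mR u(-t)$, and by uniqueness the fixed space $R$ is invariant.

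I expect the main obstacle to be the sign and normalization bookkeeping for the nonlocal operator $\pa_\th^{-1}\Pi_0^\perp$. It enters in the compatibility of $\mC$ with $\mT_\a$ and $\mR$, and in the constant-average issues with $\dot H$ classes. I would resolve it first by fixing the primitive to have zero mean.
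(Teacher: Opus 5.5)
Your treatment of (i), (ii) and of the invariance $\bar\mH\circ\mR=\bar\mH$ follows the paper's route. Invariance of $\bar\mH$ comes from $G(\xi)[\chi+a]=G(\xi)\chi$ together with $\la G(\xi)\chi,1\ra_{L^2}=0$, from \eqref{G.translation.inv}, and from \eqref{G.inversion}. One then differentiates the group action at the identity, so that $d\bar\mH[X]=0$ and the corresponding Poisson bracket vanishes.

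Your extra bookkeeping is correct and fills gaps the paper skips:
\begin{itemize}
\item $\pa_\th^{-1}\Pi_0^\perp$, with zero-mean primitive, commutes with $\mT_\a$ and anticommutes with $\iota$. Hence $\mC$ intertwines $\mT_\a$ and $\mR$ with their $(\xi,\chi)$ counterparts.
\item $\mI\circ\mC$ differs from $\bar\mI$ by a function of $\bar\mV$.
\end{itemize}
One point you should not leave to ``inspection'': with $J(\zeta)$ as in \eqref{new.Poisson.tensor}, one has exactly $J(\zeta)(e^{2\zeta},0)^T=(0,-1)^T$. So $X_{\bar\mV}=-\frac{d}{da}\big|_{a=0}\mS_a$, and the sign in \eqref{volume.vector.field} is a slip. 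It is harmless for the conservation of $\bar\mV$.

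The genuine problem is the last step of (iii). From $\bar\mH\circ\mR=\bar\mH$ and $\mR J(\zeta)\mR^*=-J(\mR\zeta)$ you correctly get $X_{\bar\mH}\circ\mR=-\mR X_{\bar\mH}$, hence $t\mapsto\mR u(-t)$ is again a solution. But uniqueness with $u(0)\in R$ then gives $u(t)=\mR u(-t)$. That is a time-reversal symmetry of the orbit, not $u(t)=\mR u(t)$.

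The invariance claim is in fact false. For $u\in R$ the reversibility relation gives $X_{\bar\mH}(u)=-\mR X_{\bar\mH}(u)$, so the velocity lies in $\{v:\mR v=-v\}$. Since $R$ is a closed linear subspace and $R\cap\{v:\mR v=-v\}=\{0\}$, the only trajectories that remain in $R$ are equilibria. Concretely, take $\a_0=0$ (so $\mC$ is the identity) and $u_0=(\e\ph_{2,1},0)\in R$. A direct computation from \eqref{Hamiltonian.function} gives
\[
\pa_t\g|_{t=0}=-3\s_0\e\ph_{2,1}+O(\e^2),
\]
which is even and nonzero, so $\g(t)$ is not odd for small $t>0$. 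This is the same phenomenon as the set $\{p=0\}$ for $\dot q=p,\ \dot p=-q$ under $(q,p)\mapsto(q,-p)$. So no argument can give invariance of $R$; note that the paper's own proof of (iii) only verifies $\bar\mH\circ\mR=\bar\mH$.

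What your computation does establish, and what the paper actually uses later, is the following:
\begin{itemize}
\item Reversibility of the flow: $\Phi^t\circ\mR=\mR\circ\Phi^{-t}$, where $\Phi^t$ is the flow of \eqref{new.hamiltonian.equations}.
\item $\mR$-equivariance of $\grad\bar\mH$ and, since also $\bar\mI\circ\mR=\bar\mI$, of $\grad\bar\mE(\om,\cdot)$. This lets the stationary rotating-wave equation be restricted to $R$.
\end{itemize}
Replace the final conclusion of (iii) by these statements.
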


\begin{proof} $(i)$ For all $(\zeta,\g)$, the function $f(a):=\bar\mH(\zeta,\g+a)$ is constant for all $a\in\R$, because by \eqref{alternative.Hamiltonian.function}, $G(\zeta)[\chi+a]=G(\zeta)\chi$ (Lemma \ref{lemma:G}, point $(iii)$) and $\la G(\zeta)\chi,1\ra_{L^2}=0$, we have
\begin{align*}f(a) 
&=\bar\mH(\zeta,\g+a)=\mH\Big(\zeta,\g+\frac{\a_0}{4}\pa_\th^{-1}\Pi_0^\perp e^{2\zeta} + a\Big)=\mH\Big(\zeta,\g+\frac{\a_0}{4}\pa_\th^{-1}\Pi_0^\perp e^{2\zeta}\Big)=f(0),
\end{align*}
which gives the invariance $\bar\mH\circ\mS_a=\bar\mH$. As a result, we have
\begin{align*}0
&=\frac{d}{da}\Big|_{a=0}f = d\bar\mH(\zeta,\g)[(0,1)] = \{\bar\mH,\bar\mV\}(\zeta,\g),
\end{align*}
where the last line holds because $\grad\bar\mI(\zeta)=(-e^{2\zeta}\g',e^{2\zeta}\zeta')$ and thus $X_{\bar\mI}(\zeta)=J(\zeta)\grad\bar\mI(\zeta)=(\zeta',\g')$.

\medskip

$(ii)$ The function $f(\a):=\bar\mH\circ\mT_\a(\zeta,\g)$ holds because of \eqref{G.translation.inv} and the change of variable $\b=\th+\a$:
\begin{align*}f(\a)
&=\bar\mH\circ\mT_\a(\zeta,\g)
\\&=\mH\Big(\mT_\a\zeta,\mT_\a\g + \frac{\a_0}{4}\pa_\th^{-1}\Pi_0^\perp e^{2\mT_\a\zeta} \Big)
\\&=\mH\Big(\zeta,\g + \frac{\a_0}{4}\pa_\th^{-1}\Pi_0^\perp e^{2\mT_\a\zeta} \Big)
\\&=f(0),
\end{align*}
which gives the invariance $\bar\mH\circ\mT_\a=\bar\mH$. As a result, we have
\begin{align*}0
&=\frac{d}{d\a}\Big|_{\a=0}f = d\bar\mH(\zeta,\g)[(\zeta',\g')] = \{\bar\mH,\bar\mI\}(\zeta,\g),
\end{align*}
where the last line holds because $\grad\bar\mI(\zeta)=(e^{2\zeta},0)$ and thus $X_{\bar\mV}(\zeta)=J(\zeta)\grad\bar\mV(\zeta)=(0,1)$.

\medskip

$(iii)$ The relation $\bar\mH\circ\mR=\bar\mH$ holds because of \eqref{alternative.Hamiltonian.function}, \eqref{G.inversion} and the change of variable $\a:=-\th$.

\end{proof}

\begin{remark}\label{rem:equivariances} Since we have the invariances
\begin{align}&\bar\mH\circ\mT_\a=\bar\mH,\quad\bar\mH\circ\mS_a=\bar\mH,\quad\bar\mH\circ\mR=\bar\mH, \label{Hamiltonian.invariances}
\\&\bar\mI\circ\mT_\a=\bar\mI,\quad\bar\mI\circ\mS_a=\bar\mI,\quad\bar\mI\circ\mR=\bar\mI, \label{Angular.momentum.invariances}
\\&\bar\mV\circ\mT_\a=\bar\mV,\quad\bar\mV\circ\mS_a=\bar\mV,\quad\bar\mV\circ\mR=\bar\mV, \label{Volume.invariances}
\end{align}
we also have the equivariances
\begin{align}&\grad\bar\mH\circ\mT_\a=\mT_\a\circ\grad\bar\mH,\quad\grad\bar\mH\circ\mS_a=\mS_a\circ\grad\bar\mH,\quad\grad\bar\mH\circ\mR=\mR\circ\grad\bar\mH, \label{Hamiltonian.equivariances}
\\&\grad\bar\mI\circ\mT_\a=\mT_\a\circ\grad\bar\mI,\quad\grad\bar\mI\circ\mS_a=\mS_a\circ\grad\bar\mI,\quad\grad\bar\mI\circ\mR=\mR\circ\grad\bar\mI, \label{Angular.momentum.equivariances}
\\&\grad\bar\mV\circ\mT_\a=\mT_\a\circ\grad\bar\mV,\quad\grad\bar\mV\circ\mS_a=\mS_a\circ\grad\bar\mV,\quad\grad\bar\mV\circ\mR=\mR\circ\grad\bar\mV. \label{Volume.equivariances}
\end{align}

\end{remark}

\begin{remark} The quantity $\bar\mI$ is really the total angular momentum of the drop on the plane of rotation. Indeed, by \eqref{u.decomposition.potential}, \eqref{(II)}, \eqref{(III)} and \eqref{change.of.coordinates}, we have
\begin{align*}\frac12\int_{\Om_h}x\wedge u\,dx
&=-\frac12\int_{\Om_h}\la u,\mJ x\ra\,dx
\\&=-\frac12\int_{\Om_h}\la\grad\Phi -\frac{\a_0}{2}\mJ x,\mJ x\ra\,dx
\\&=-\frac{1}{2}\int_{\T^1}e^{2\xi}\Big\{\chi' - \frac{\a_0}{8}e^{2\xi} \Big\}\,d\th
\\&=\bar\mI(\zeta,\g).
\end{align*}
This is different from the Water Waves equations on the flat torus (see for instance \cite{Wahlen, BBMM}), since the total angular momentum is given simply by that of the gradient component of $u$ up to an additive constant, which corresponds to our term $\mG(\xi)$.
\end{remark}
We have another conserved quantity: the fluid barycenter velocity 
\begin{align}\mB
&:=\int_{\Om_h}u\,dx  \notag
\\&=\int_{\Om_h}\grad\Phi\,dx - \frac{\a_0}{2}\int_{\Om_h}\mJ x\,dx  \notag
\\&=\int_{\Om_h}(\div(\Phi\textbf{e}_i))_{i=1,2}\,dx - \frac{\a_0}{4}\int_{\Om_h}\mJ(\div(x_i^2\textbf{e}_i))_{i=1,2}\,dx
\notag
\\&=\int_{\pa\Om_h}\Phi\nu_h\,d\mH^1 - \frac{\a_0}{4}\int_{\pa\Om_h}\mJ(x_i^2(\nu_h)_i)_{i=1,2}\,d\mH^1 \notag
\\&=\int_{\T}e^\rho\chi\begin{pmatrix}\cos\th \\ \sin\th
\end{pmatrix}\,d\th - \int_{\S^1}e^\rho\rho'\chi\begin{pmatrix}-\sin\th \\ \cos\th
\end{pmatrix}\,d\th  \notag
\\&- \frac{\a_0}{4}\int_{\T}\mJ\Big\{e^{3\rho}\begin{pmatrix}\cos^3\th \\ \sin^3\th\end{pmatrix} + e^{3\rho}\rho'\begin{pmatrix}\sin\th\cos^2\th \\ -\cos\th\sin^2\th\end{pmatrix}\Big\}\,d\th\notag
\\&=\int_\T e^\zeta\Big(\g' + \frac{\a_0}{4}\Pi_0^\perp e^{2\zeta} - \frac{\a_0}{6}e^{2\zeta} \Big)\begin{pmatrix}-\sin\th \\ \cos\th
\end{pmatrix}\,d\th.
\label{barycenter.velocity.alternative}
\end{align}
The fact that it is a conserved quantity for the free boundary problem \eqref{div.eq.01}-\eqref{kin.eq.01} does not depend on the presence of vorticity. Indeed, by Reynolds transport theorem (see for instance \cite{B.J.LM}, Lemma 3.3), \eqref{dyn.eq.01}, divergence theorem on $\R^2$, \eqref{pressure.eq.01} and \eqref{div.theorem.hypersurfaces}, we have
\begin{align*}\frac{d}{dt}\mB
&=\int_{\pa\Om_h}u\la u,\nu_{\Om_h\ra}\,d\mH^1 + \int_{\Om_h}\pa_t u\,dx
\\&=\int_{\pa\Om_h}u\la u,\nu_{\Om_h\ra}\,d\mH^1 - \int_{\Om_h}(u\cdot\grad u + \grad p)\,dx
\\&=-\int_{\pa\Om_h}p\nu_{\Om_h}\,d\mH^1
\\&=-\s_0\int_{\pa\Om_h}H_{\Om_h}(\la \textbf{e}_i,\nu_h\ra)_{i=1,2}\,d\mH^1
\\&=0.
\end{align*}
Another interesting fact is the following. Let us consider the barycenter vector-valued functional
\begin{align}\mP:&=\int_{\Om_h}x\,dx\notag
\\&=\frac12\int_{\Om_h}(\div(x_i\textbf{e}_i))_{i=1,2}\,dx \notag
\\&=\frac12\int_{\pa\Om_h}(x_i(\nu_{h})_i)_{i=1,2}\,d\mH^1 \notag
\\&=\frac12\int_{\T}\Big\{e^{3\zeta}\begin{pmatrix}\cos^3\th \\ \sin^3\th\end{pmatrix} + e^{3\zeta}\zeta'\begin{pmatrix}\sin\th\cos^2\th \\ -\cos\th\sin^2\th\end{pmatrix}\Big\}\,d\th \notag
\\&=\frac12\int_{\T}e^{3\zeta}\begin{pmatrix}\cos\th \\ \sin\th
\end{pmatrix}\,d\th.
\label{barycenter.position}
\end{align}
Differentiating in time, by Reynolds transport theorem, divergence theorem and \eqref{div.eq.01} that it is not generally a constant of motion:
\begin{align}\label{position.time.derivative}\frac{d}{dt}\mP=\int_{\pa\Om_h}x\la u,\nu_h\ra\,d\mH^1=\int_{\Om_h}u\,dx=\mB.
\end{align}
However, $\mB$ is a constant of motion, and if we set $\mB=(0,0)$, then $\mP$ becomes as well. This also implies that if in particular $\mP=(0,0)$, we get
\begin{align}\int_\T e^\zeta\Big(\g' + \frac{\a_0}{4}\Pi_0^\perp e^{2\zeta} \Big)\begin{pmatrix}-\sin\th \\ \cos\th
\end{pmatrix}\,d\th = 0.
\label{barycenter.velocity.0.rigidity}
\end{align}

\section{Existence of rotating waves}

\medskip

Let us consider the formal linearization of \eqref{new.hamiltonian.equations} at the rotating circle $(\zeta,\g)=(0,0)$: 
\begin{align}L_0[\zeta,\g]:&=J(0)d\grad\bar\mH(\om,0)[\zeta,\g] \notag
\\&=J(0)d\mC(0)^*\circ d\grad\mH(0)\circ d\mC(0)[\zeta,\g] \notag
\\&=\begin{pmatrix}0 & 1 \\ -1 & 0
\end{pmatrix}\begin{pmatrix}1 & -\frac{\a_0}{2}\Pi_0^\perp\pa_\th^{-1} \\ 0 & 1
\end{pmatrix}\begin{pmatrix}-\s_0\pa_{\th\th} + (- \s_0 + \frac{\a_0^2}{4}) &  -\frac{\a_0}{2}\pa_\th \\ \frac{\a_0}{2}\pa_\th & G(0)
\end{pmatrix}\begin{pmatrix}\zeta \\ \frac{\a_0}{2}\pa_\th^{-1}\Pi_0^\perp\zeta + \g\end{pmatrix}\notag
\\&=\begin{pmatrix}0 & 1 \\ -1 & 0
\end{pmatrix}\begin{pmatrix}-\s_0\pa_{\th\th} -\s_0 + \frac{\a_0^2}{4}\Pi_0 & -\frac{\a_0}{2}\pa_\th - \frac{\a_0}{2}\Pi_0^\perp\pa_\th^{-1} G(0) \\ \frac{\a_0}{2}\pa_\th & G(0)
\end{pmatrix}\begin{pmatrix}\zeta \\ \frac{\a_0}{2}\pa_\th^{-1}\Pi_0^\perp\zeta + \g\end{pmatrix}  \notag
\\&=\begin{pmatrix}\frac{\a_0}{2} G(0)\pa_\th^{-1}\Pi_0^\perp + \frac{\a_0}{2}\pa_\th & G(0)
\\\s_0\pa_{\th\th} + \s_0  - \frac{\a_0^2}{4}\Pi_0 + \frac{\a_0^2}{4}\Pi_0^\perp  +  \frac{\a_0^2}{4}\Pi_0^\perp\pa_\th^{-1}G(0)\pa_\th^{-1}\Pi_0^\perp & \frac{\a_0}{2}\Pi_0^\perp\pa_\th^{-1} G(0) + \frac{\a_0}{2}\pa_\th 
\end{pmatrix}\begin{pmatrix}\zeta \\ \g
\end{pmatrix}.
\label{linearization.at.0}
\end{align}
Let us expand $\zeta,\g$ along the $L^2-$basis \eqref{Fourier.basis}:
\begin{equation}\label{fourier.expansion}\zeta=\sum_{(\ell,m)\in\mT}\zeta_{\ell,m}\ph_{\ell,m},\qquad\g=\sum_{(\ell,m)\in\mT}\g_{\ell,m}\ph_{\ell,m}.
\end{equation}
Since 
\begin{align*}&\ph_{\ell,m}'=-m\ell\ph_{\ell,-m},\qquad\ph_{\ell,m}''=-\ell^2\ph_{\ell,m},\qquad G(0)\ph_{\ell,m}=\ell\ph_{\ell,m},
\\&\pa_\th^{-1}\ph_{0,0}=0,\qquad\pa_\th^{-1}\Pi_0^\perp\ph_{\ell,m}=\frac{m}{\ell}\ph_{\ell,-m},
\end{align*}
then
\begin{equation}\label{L.is.sum.of.blocks.lin}L_0[\eta,\b]=\sum_{(\ell,m)\in\mT}L_0^{(\ell,m)}\begin{pmatrix}\eta_{\ell,m}\ph_{\ell,m} \\ \b_{\ell,-m}\ph_{\ell,-m}
\end{pmatrix},
\end{equation}
where
\begin{align}\label{L.lm.blocks.0.linear}L_0^{(\ell,m)}:=\begin{cases}\begin{pmatrix}\frac{m\a_0}{2} -\frac{m\ell\a_0}{2} & \ell
\\-\s_0\ell^2 + (\s_0 + \frac{\a_0^2}{4}) - \frac{\a_0^2}{4\ell} & -\frac{m\a_0}{2} + \frac{m\ell\a_0}{2}  
\end{pmatrix} & \text{if}\quad(\ell,m)\ne(0,0)
\\\begin{pmatrix}0 & 0
\\\s_0 - \frac{\a_0^2}{4} & 0  
\end{pmatrix} & \text{if}\quad(\ell,m)=(0,0)
\end{cases}
\end{align}
The eigenvalue equation is then
\begin{align}&P(\lm,0)=\lm^2=0,\qquad\qquad\qquad\qquad\qquad\qquad\qquad(\ell,m)=(0,0),
\\&P(\lm,\ell)=\lm^2 + \ell\Big(\s_0\ell^2 + \frac{\a_0^2}{4} - \s_0 + \frac{\a_0^2}{4} \Big) = 0,\qquad(\ell,m)\ne(0,0).\label{eigenvalue.equation.linear}
\end{align}
We then observe that the rotating circle is linearly stable, since all the eigenvalues are purely imaginary. Therefore, small oscillations about this solution are expected.

\medskip

This motivates the fact that in this section, we look for the \emph{rotating wave solutions} \eqref{rotating.waves}:
\begin{equation*}\xi(t,\th):=\eta(\th+\om t),\qquad\chi(t,\th):=\b(\th + \om t).
\end{equation*}
By \eqref{Hamiltonian.equivariances}, \eqref{Angular.momentum.equivariances} and \eqref{Volume.equivariances}, the rotating waves are solutions of the equation
\begin{align}&\om\eta' = e^{-2\eta}\Big[G(\eta)\beta + \frac{\a_0}{2}e^{2\eta}\eta'\Big], \label{rw.1}
\\&\om\b'=- e^{-2\eta}\Big[-\frac12 \Big( \frac{G(\eta) \beta + \eta'\beta'}{ 
\sqrt{1+\eta'^2}} \Big)^2
+ \frac12\beta'^2 
-\s_0\Big(e^{\eta}\Big[\frac{\eta'}{\sqrt{1+\eta'^2}} \Big]' - \frac{e^{\eta}}{\sqrt{1+\eta'^2}} \Big)\Big] \notag
\\&\qquad\quad-e^{-2\eta}\Big[-\frac{\a_0}{2}e^{2\eta}\b' -\frac{\a_0^2}{8}e^{4\eta}\Big] + \a_0 K(\eta)\b + \s_0 - \frac{\a_0^2}{8}. \label{rw.2}
\end{align}

\subsection{The critical point structure of rotating wave equation}

We have the following characterization of rotating waves.
\begin{theorem}\label{thm:rotating.waves.as.critical.points} The following facts are equivalent.

\medskip

$(i)$ The function $u=(\eta,\b)$ solves \eqref{rw.1}-\eqref{rw.2}.

$(ii)$ The function $\bar u=\mC^{-1}(u)$ is a critical point for the functional
\begin{equation}\label{energy.volume.functional}\bar\mE(\om,\bar u):=\bar\mH(\bar u)-\om(\bar\mI(\bar u)-a)=\mH\circ\mC(\bar u) - \om(\bar\mI(\bar u)-a)
\end{equation}
for all $a\in\R$.

\end{theorem}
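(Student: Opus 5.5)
The argument rests on the Hamiltonian structure of Theorem \ref{thm:hamiltonian.structure.2} and the equivariance properties of Lemma \ref{lemma:symmetries}. We make the rotating wave ansatz \eqref{rotating.waves} in the original coordinates. Then $\mC^{-1}$ commutes with $\mT_\a$, since $\pa_\th^{-1}\Pi_0^\perp e^{2\zeta}$ is translation-equivariant. Hence the transformed function $\bar u(t)=\mC^{-1}(\xi,\chi)(t)=\mT_{\om t}\bar u_0$ with $\bar u_0:=\mC^{-1}(\eta,\b)$. So $(\xi,\chi)=\mT_{\om t}(\eta,\b)$ solves \eqref{xi.eq}-\eqref{chi.eq} if and only if $\mT_{\om t}\bar u_0$ solves \eqref{new.hamiltonian.equations}.

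Differentiating in time gives $\pa_t\bar u=\om\,\mT_{\om t}\bar u_0'=\om\,\mT_{\om t}X_{\bar\mI}(\bar u_0)$, by \eqref{angular.momentum.vector.field}. The right-hand side is $J(\mT_{\om t}\zeta)\grad\bar\mH(\mT_{\om t}\bar u_0)$. By \eqref{Hamiltonian.equivariances}, and since $J(\mT_\a\zeta)\mT_\a=\mT_\a J(\zeta)$ (the tensor is a multiplication operator by $e^{\mp2\zeta}$), this equals $\mT_{\om t}J(\zeta_0)\grad\bar\mH(\bar u_0)$. Cancelling $\mT_{\om t}$, the equation reduces to
\begin{equation*}
J(\bar\zeta_0)\big[\grad\bar\mH(\bar u_0)-\om\grad\bar\mI(\bar u_0)\big]=0 .
\end{equation*}
Since $J(\zeta)$ is invertible (its inverse is $\begin{pmatrix}0 & -e^{2\zeta}\\ e^{2\zeta} & 0\end{pmatrix}$), this holds if and only if $\grad\bar\mH(\bar u_0)-\om\grad\bar\mI(\bar u_0)=0$. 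The additive constant $a$ does not affect the gradient, so this is exactly $\grad\bar\mE(\om,\bar u_0)=0$ for every $a\in\R$.

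It remains to identify the reduced equation with \eqref{rw.1}-\eqref{rw.2}. Equivalently, we pull back through $\mC$: by \eqref{gradient.relations}, $\grad\bar\mE=0$ is equivalent to $\grad(\mH-\om\mI)(\eta,\b)=0$, because $\bar\mI=\mI\circ\mC$ by \eqref{Angular.Momentum.Wahlen} and $d\mC$ is invertible. From Theorem \ref{thm:hamiltionian.structure.1} this is the same as $J_{\a_0}(\eta)\grad\mH=\om J_{\a_0}(\eta)\grad\mI=\om(\eta',\b')$. Componentwise this is precisely \eqref{rw.1}-\eqref{rw.2}, using the derivative formulas \eqref{xi.derivative.known}-\eqref{G.derivative}.

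The main obstacle is the functional setting: $\b$, $\g$ live in $\dot H$, i.e.\ they are defined up to constants. Here $J_{\a_0}$ is not invertible, as Remark \ref{remark:hamiltonian.if.volume.conserved} shows, and we must work in Wahlén coordinates, where $J(\zeta)$ is genuinely invertible. We must also check that the constant in \eqref{rw.2}, which fixes the representative of $\b$ and is tied to the normalization \eqref{zero.average.Psi}, is consistent with $\grad\bar\mE$. The key point is that $\pa_\g\bar\mE$ has zero average because of the invariance under $\mS_a$ in Lemma \ref{lemma:symmetries}$(i)$, so no compatibility condition is lost.
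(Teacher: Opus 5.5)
Your reduction follows the paper's argument. You pass to the Wahlén form \eqref{new.hamiltonian.equations}, use translation equivariance to turn the ansatz into $\om(\bar\eta',\bar\b')=J(\bar\eta)\grad\bar\mH(\bar u)$, and apply $J(\bar\eta)^{-1}$ together with $J(\zeta)\grad\bar\mI=(\zeta',\g')$. Your only addition is to make explicit the equivariance step that the paper takes for granted. The detour through $\mC$ and $J_{\a_0}$ is superfluous, because substituting the ansatz into \eqref{xi.eq}-\eqref{chi.eq} already gives \eqref{rw.1}-\eqref{rw.2}.

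However, the issue you raise in your last paragraph is a genuine gap, and your fix addresses the wrong component. Invariance under $\mS_a$ shows that $\pa_\g\bar\mE$ has zero mean; that is the compatibility condition for the $\zeta$-equation. The real problem is the additive constant in the $\g$-equation. In passing from \eqref{quasi.hamiltonian.equations} to \eqref{hamiltonian.equations}, the term $\a_0K(\xi)\chi+\frac{\a_0^2}{4}e^{2\xi}$ is replaced by $\a_0\pa_\th^{-1}\Pi_0^\perp\pa_\chi\mH=\a_0K(\xi)\chi+\frac{\a_0^2}{4}\Pi_0^\perp e^{2\xi}$. Consequently the exact equations \eqref{xi.eq}-\eqref{chi.eq} are \eqref{new.hamiltonian.equations} plus $(0,c)$, where $c=\frac{\a_0^2}{4}\Pi_0e^{2\zeta}>0$; this constant passes unchanged through $d\mC^{-1}$. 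Applying $J(\bar\eta)^{-1}$ then yields $\grad\bar\mE(\om,\bar u)=(c\,e^{2\bar\eta},0)=c\,\grad\bar\mV(\bar\eta)\neq0$. A concrete check: $u=0$ solves \eqref{rw.1}-\eqref{rw.2}, yet \eqref{Hamiltonian.function} gives $\grad\bar\mE(\om,0)=(\frac{\a_0^2}{4},0)$. So your argument only gives equivalence with critical points of $\bar\mE-\lambda\bar\mV$ for some multiplier $\lambda$, or, exactly, of $\bar\mH-\frac{\a_0^2}{8\pi}\bar\mV^2-\om\bar\mI$. If instead you read \eqref{rw.2} modulo constants, you get only the implication (ii)$\Rightarrow$(i). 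The paper's two-line proof passes over the same point, so the statement itself needs this volume correction to hold.
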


\begin{proof} Since \eqref{xi.eq}-\eqref{chi.eq} are equivalent to \eqref{new.hamiltonian.equations}, then by calling $(\bar\eta,\bar\b):=\mC^{-1}(\eta,\b)$ we get that \eqref{rw.1}-\eqref{rw.2} are equivalent to 
\begin{equation*}\om\begin{pmatrix}\bar\eta' \\ \bar\b'
\end{pmatrix}=J(\bar\eta)\grad\bar\mH(\bar\eta,\bar\b).
\end{equation*}
Multiplying both sides by $J(\bar\eta)^{-1}$, we get exactly $\grad\bar\mE(\om,\bar u)=0$.
\end{proof}

From now on, with an abuse of notation we set
\begin{align}&\mF(\om, u):=\begin{pmatrix}\mF_1(\om, u) \\ \mF_2(\om, u)\end{pmatrix}:=\grad\bar\mE(\om,u)=d\mC(u)^*[\grad\mH\circ\mC(u)] - \om\grad\bar\mI(u).\label{rw.vector.field}
\end{align}
Thanks to Lemma \ref{lemma:G} and the equivariances \eqref{Hamiltonian.equivariances}-\eqref{Angular.momentum.equivariances}, the following lemma holds:

\begin{lemma}\label{ww.rotating.wave.eq} Let us consider $\xi(t,\th),\chi(t,\th)$, $t\in\R,\,\th\in\T^1$, satisfying the ansatz \eqref{rotating.waves}, let $\kappa\in\N$ and $\mathfrak{s}>0,\,s_0,\,s>0$, $s \geq s_0 > 1$, and let 
\[
U := \{ u = (\eta, \b) : 
\eta \in H_\kappa^{\mathfrak{s},s+\frac32}, \ \ 
\beta \in  \dot H_{\kappa}^{\mathfrak{s},s+1}, \ \ 
\| \eta \|_{H_{\kappa}^{\mathfrak{s},s_0 + 1}} < \delta_0 \},
\]
where $\delta_0$ is the constant in Lemma \ref{lemma:G}, point $(vii)$. 

Then, 
$\mF_{1}(\om, u) \in H_{\kappa}^{\mathfrak{s},s-\frac12}$, $\mF_{2}(\om, u) \in H_{\kappa,0}^{\mathfrak{s},s}$ 
for all $u \in U$, $\om \in \R$, the map 
\[
\mF : \R \times U \to \dot H_{\kappa}^{\mathfrak{s},s-\frac12}\times H_{\kappa,0}^{\mathfrak{s},s}, 
\]
such that $\mF(\om,u):=(\mF_1(\om,u),\mF_2(\om,u))$,
is analytic, and the vector $(\eta(\th+\om t),\beta(\th+\om t))$ solves the equation \eqref{new.hamiltonian.equations} if and only if 
\begin{equation}\label{rotating.wave.equation.0}\mF(\om,u)=(0,0).\end{equation}
\end{lemma}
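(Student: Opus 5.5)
The plan is to write $\mF$ out explicitly in the original variables and check the three claims in turn: mapping properties, analyticity, and equivalence with the rotating wave equation.

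\emph{Explicit form of $\mF$.} By \eqref{rw.vector.field} and \eqref{gradient.relations}, and since $\mC$ is block lower-triangular with $d\mQ(\zeta)^*$ given by \eqref{adjoint.of.Q.differential}, I will write
\begin{align*}
\mF_1(\om,u)&=\pa_\xi\mH(\mC(u))-\frac{\a_0}{2}e^{2\eta}\Pi_0^\perp\pa_\th^{-1}\pa_\chi\mH(\mC(u))+\om e^{2\eta}\b',\\
\mF_2(\om,u)&=\pa_\chi\mH(\mC(u))-\om e^{2\eta}\eta'.
\end{align*}
The partial gradients come from \eqref{xi.derivative.known}, \eqref{chi.derivative.known} and \eqref{G.derivative}, combined with \eqref{alternative.Hamiltonian.function}. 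In particular
\[
\pa_\chi\mH=G(\xi)\chi+\frac{\a_0}{2}e^{2\xi}\xi',
\]
and $\pa_\xi\mH$ is made of the terms $(G\chi+\xi'\chi')^2/(1+\xi'^2)$, $\chi'^2$, the curvature terms built from $e^\xi$, $\xi'$, $\xi''$, the terms $e^{2\xi}$, $e^{4\xi}$, and $e^{2\xi}\chi'$. Here $\chi=\b+\frac{\a_0}{4}\pa_\th^{-1}\Pi_0^\perp e^{2\eta}$.

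\emph{Mapping properties and analyticity.} The map $\eta\mapsto\chi-\b$ gains one derivative and is analytic in $H^{\mathfrak s,s+\frac32}$, because $H^{\mathfrak s,s}$ is an algebra for $s>\frac12$ and the exponential is an entire function. So $\chi\in \dot H^{\mathfrak s,s+1}$. Lemma \ref{lemma:G}(ix), applied with the smallness $\|\eta\|_{\mathfrak s,s_0+\frac32}<\e_0$ (I will shrink the neighborhood if needed), gives that $G(\eta)\chi\in H^{\mathfrak s,s}$ depends analytically on $(\eta,\chi)$. The remark after Lemma \ref{lemma:G} gives the same for $K$. Hence $\mF_2\in H^{\mathfrak s,s}$.

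For the zero average of $\mF_2$, I will use that $G(\eta)\chi=(K(\eta)\chi)'$ by \eqref{G.is.K.derivative}, and that $e^{2\eta}\eta'=\frac12(e^{2\eta})'$. Both are derivatives, so $\mF_2$ has zero average, as already noted in the proof of Theorem \ref{thm:hamiltionian.structure.1}.

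In $\mF_1$ the worst term is the curvature term $e^{\eta}(\eta'/\sqrt{1+\eta'^2})'$. It lies in $H^{\mathfrak s,s-\frac12}$ because $\eta\in H^{\mathfrak s,s+\frac32}$. Every other term is a product of analytic functions of $\eta,\eta',\b',G\chi$ in $H^{\mathfrak s,s}$, and the map $f\mapsto 1/\sqrt{1+f^2}$ is analytic near $0$ in the algebra. Composing analytic maps then gives analyticity of $\mF$.

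$\kappa$-fold symmetry is preserved by the translation equivariance \eqref{G.translation.inv} with $\a=2\pi/\kappa$, since $\mT_{2\pi/\kappa}u=u$ implies $\mT_{2\pi/\kappa}\mF(\om,u)=\mF(\om,u)$. Invariance under $\mS_a$ (Lemma \ref{lemma:symmetries}(i)) shows that $\mF$ is well defined on classes $\b\in\dot H$.

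\emph{Equivalence.} Substituting the ansatz \eqref{rotating.waves.Wahlen} into \eqref{new.hamiltonian.equations} gives $\om(\eta',\b')=J(\eta)\grad\bar\mH(u)$. This uses the equivariance $\grad\bar\mH\circ\mT_\a=\mT_\a\grad\bar\mH$ to evaluate at $t=0$. By \eqref{angular.momentum.vector.field}, $(\eta',\b')=J(\eta)\grad\bar\mI(u)$. Since $J(\eta)$ is invertible, the equation is equivalent to $\grad\bar\mH-\om\grad\bar\mI=0$, that is, \eqref{rotating.wave.equation.0}. This is exactly the argument of Theorem \ref{thm:rotating.waves.as.critical.points}.

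\emph{Main obstacle.} The delicate point is the bookkeeping of regularity in the nonlocal term $\frac{\a_0}{2}e^{2\eta}\Pi_0^\perp\pa_\th^{-1}\pa_\chi\mH$, together with matching the analyticity hypotheses of Lemma \ref{lemma:G}(ix), which requires $s,s_0+\frac12\in\N$. The nonlocal term is harmless because $\pa_\th^{-1}$ smooths. If the integer constraints cause trouble, I will restrict to admissible $s$, or interpolate using points (vi)–(vii) of Lemma \ref{lemma:G}.
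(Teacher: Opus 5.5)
Your proposal is correct and follows essentially the paper's route. The paper states this lemma without a separate proof: it obtains the mapping and analyticity properties directly from Lemma \ref{lemma:G} and the equivariances \eqref{Hamiltonian.equivariances}--\eqref{Angular.momentum.equivariances}, and the equivalence with \eqref{new.hamiltonian.equations} is exactly the argument of Theorem \ref{thm:rotating.waves.as.critical.points}. Two small differences: the paper checks the zero average of $\mF_2$ in Remark \ref{rem:F.codomain.zero.average} through the bracket identities with $\bar\mV$ rather than your direct observation that $G(\eta)\chi=(K(\eta)\chi)'$ and $e^{2\eta}\eta'=\frac12(e^{2\eta})'$ are derivatives, and the paper likewise leaves unaddressed the smallness and integrality mismatch with Lemma \ref{lemma:G}(ix) that you flag.
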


\begin{remark}\label{rem:F.codomain.zero.average}We observe by \eqref{lemma:G} that 
\begin{align*}\la\mF(\om,u),(0,1)\ra_{(L^2)^2}
&=\la G(\eta)\b + \frac{\a_0}{2}\pa_\th(e^{2\eta}), 1\ra_{(L^2)^2}
\\&=\la\grad\bar\mH(u),J(\eta)\grad\bar\mV(\eta)\ra_{(L^2)^2} - \om\la\grad\bar\mI(u),J(\eta)\grad\bar\mV(\eta)\ra_{(L^2)^2}
\\&=0,
\end{align*}
which explains why the image of $\mF$ is contained in $\dot H_{\kappa}^{\mathfrak{s},s-\frac12}\times H_{\kappa,0}^{\mathfrak{s},s}$. However, since $\b\in\dot H_\kappa^{\mathfrak{s},s+1}$, we can fix its mean to be zero, so that $\mF$ maps $H_\kappa^{\mathfrak{s},s+\frac32}\times H_{\kappa,0}^{\mathfrak{s},s+1}$ to $H_\kappa^{\mathfrak{s},s-\frac12}\times H_{\kappa,0}^{\mathfrak{s},s}$.
\end{remark}

From now on, we consider the case $\kappa=1$, since all the computations are similar in the general case.

\subsection{Variational Lyapunov-Schmidt decomposition and bifurcation from double eigenvalues}

\medskip

Similarly as done for getting the liner operator $L_0$, we linearize $\mF(\om,u)$ at the rotating circle $u=0$:
\begin{align}\mL_\om[\eta,\b]:&=d\mF(\om,0)[\eta,\b] \notag
\\&=d\mC(0)^*\circ d\grad\mH(0)\circ d\mC(0)[\eta,\b] - \om d\grad\bar\mI(0)[\eta,\b] \notag
\\&=\begin{pmatrix}-\s_0\pa_{\th\th} - \s_0  + \frac{\a_0^2}{4}\Pi_0 - \frac{\a_0^2}{4}\Pi_0^\perp  -  \frac{\a_0^2}{4}\Pi_0^\perp\pa_\th^{-1}G(0)\pa_\th^{-1}\Pi_0^\perp & -\frac{\a_0}{2}\Pi_0^\perp\pa_\th^{-1} G(0) + (\om-\frac{\a_0}{2})\pa_\th \\ \frac{\a_0}{2} G(0)\pa_\th^{-1}\Pi_0^\perp - (\om-\frac{\a_0}{2})\pa_\th & G(0)
\end{pmatrix}\begin{pmatrix}\eta \\ \b
\end{pmatrix}
\label{variational.linearization}
\end{align}
Let us expand $\eta,\beta$ as in \eqref{fourier.expansion} and then we get
\begin{equation}\label{L.is.sum.of.blocks}\mL_\om[\eta,\b]=\sum_{(\ell,m)\in\mT}\mL_\om^{(\ell,m)}\begin{pmatrix}\eta_{\ell,m}\ph_{\ell,m} \\ \b_{\ell,-m}\ph_{\ell,-m}
\end{pmatrix},
\end{equation}
where
\begin{align}\label{L.lm.blocks}\mL_\om^{(\ell,m)}:=\begin{cases}\begin{pmatrix}\s_0\ell^2 - (\s_0 + \frac{\a_0^2}{4}) + \frac{\a_0^2}{4\ell} & \frac{m\a_0}{2} + m\ell(\om-\frac{\a_0}{2}) \\ \frac{m\a_0}{2} + m\ell(\om-\frac{\a_0}{2}) & \ell
\end{pmatrix} & \text{if}\quad(\ell,m)\ne(0,0)
\\\begin{pmatrix}-\s_0 + \frac{\a_0^2}{4} & 0 \\ 0 & 0
\end{pmatrix} & \text{if}\quad(\ell,m)=(0,0)
\end{cases}
\end{align}

\begin{remark}  \label{rem:excluding.v00}
We notice that $\mathtt{v}_{0,0}:=(0,1)$ is formally in $\ker\mL_\om$ for all $\om\in\R$. However, since $\b\in \dot H^{\mathfrak{s},s+1}$, we can fix $\b_{0,0}=0$. It follows that $\mathtt{v}_{0,0}$ is not in $\ker\mL_\om$. 
\end{remark}

The kernel of $\mL_{\om}$ is nonzero if and only if there exists some $\ell\in\N$ satisfying the equation
\begin{equation}\label{resonance.equation}F(\s_0,\a_0,\om,\ell)=\det\mL_{\om}^{(\ell,m)}=\s_0\ell^2 - \Big(\om-\frac{\a_0}{2}\Big)^2\ell  - \Big[\s_0 + \a_0\Big(\om-\frac{\a_0}{2}\Big) + \frac{\a_0^2}{4}\Big]  = 0.
\end{equation}
To establish the dimension of $\ker\mL_{\om}$, we need to count the number of solutions of \eqref{resonance.equation}. First, let us notice that $F(\s_0,\a_0,\om,\ell)$ does not depend on $m$, so the dimension of $\ker\mL_{\om}$ is the double of the number of solutions of \eqref{resonance.equation}. Second, the number of integers $\ell$ satisfying \eqref{resonance.equation} is at most $2$ because $F$ is a quadratic function in $\ell$, and so $\dim(\ker\mL_{\om})\in\{0,2,4\}$. Third, the existence of frequencies solving \eqref{resonance.equation} is \eqref{existence.resonance.condition}, which we recall here:
\begin{equation*}\Delta:=(\ell-1)\Big(C\ell(\ell+1) - \frac14\Big)\ge0,\qquad C=\frac{\s_0}{\a_0^2}.
\end{equation*}
In such cases, the frequencies are
\begin{equation}\label{resonant.frequencies}\om_*^{(\pm)}=\frac{\a_0}{2} - \frac{\a_0}{2\ell} \pm \sqrt{\Big(\frac{\a_0}{2\ell}\Big)^2 + \frac{4\s_0(\ell^2-1)-\a_0^2}{4\ell}}.
\end{equation}
In the following lemma, we prove that for large values of $\ell$, the frequencies \eqref{resonant.frequencies} are double, but otherwise they can be also double.
\begin{lemma}\label{lemma:simple.eigenvalue} 

$(i)$ Fixed any $\s_0>0,\,\a_0>0,\,\om\in\R$ such that $\Delta>0$, let us set 
\begin{equation*}f(\ell):= \Big(\frac{\a_0}{2\ell}\Big)^2 + \frac{4\s_0(\ell^2-1)-\a_0^2}{4\ell},\qquad g_\pm(\ell):=-\frac{\a_0}{2\ell} \pm \sqrt {f(\ell)}.
\end{equation*}
Then, there exists $\ell_0\in\N$ such that for any $\ell\ge\ell_0$,
\begin{equation}\label{resonance.double.l.large}g_+'(\ell)>0,\quad g_-'(\ell)<0.
\end{equation}

\medskip

$(ii)$ There exists a choice $\s_0>0,\,a_0>0,\,\om\in\R$ such that the equation $F(\s_0,\a_0,\om,\ell)=0$ admits two distinct integer solutions $\ell_{\pm}\in\N$. 

\end{lemma}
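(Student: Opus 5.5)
For point $(i)$, I would treat $\ell$ as a continuous variable on $[1,+\infty)$ and compute derivatives directly. Expand
\begin{equation*}
f(\ell)=\frac{\a_0^2}{4\ell^2}+\s_0\ell-\frac{\s_0}{\ell}-\frac{\a_0^2}{4\ell},
\end{equation*}
so that
\begin{equation*}
f'(\ell)=\s_0+\frac{\s_0}{\ell^2}+\frac{\a_0^2}{4\ell^2}-\frac{\a_0^2}{2\ell^3}.
\end{equation*}
Then $f(\ell)\sim\s_0\ell\to+\infty$ and $f'(\ell)\to\s_0>0$. In particular $f>0$ and $f'>0$ for $\ell$ large, and $\sqrt{f}$ is smooth there. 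We also have
\begin{equation*}
g_\pm'(\ell)=\frac{\a_0}{2\ell^2}\pm\frac{f'(\ell)}{2\sqrt{f(\ell)}}.
\end{equation*}
The term $\frac{f'}{2\sqrt f}$ behaves like $\frac{\sqrt{\s_0}}{2\sqrt\ell}$, while $\frac{\a_0}{2\ell^2}=O(\ell^{-2})$. So the square-root term dominates for large $\ell$. This gives $g_+'>0$, and $g_-'<0$ as soon as $\frac{f'}{2\sqrt f}>\frac{\a_0}{2\ell^2}$. To produce an explicit $\ell_0$, I would take $\ell$ large enough that $f'(\ell)\ge\s_0/2$ and $f(\ell)\le2\s_0\ell$. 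Then $\frac{f'}{2\sqrt f}\ge\frac{\sqrt{\s_0}}{4\sqrt{2\ell}}$, and it remains to pick $\ell_0$ with $\frac{\sqrt{\s_0}}{4\sqrt{2\ell}}>\frac{\a_0}{2\ell^2}$ for $\ell\ge\ell_0$.

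The consequence I would draw is that $\om_*^{(\pm)}=\frac{\a_0}{2}+g_\pm(\ell)$ are strictly monotone in $\ell$ for $\ell\ge\ell_0$. Hence each large $\ell$ is the unique integer $\ge\ell_0$ producing that frequency on its branch. The kernel has dimension $2$ unless the same $\om$ is also hit by an $\ell<\ell_0$ or by the other branch. To exclude the second possibility I would note that $g_+>0>g_-$ for large $\ell$. The first possibility is only finitely many coincidences, which the monotonicity rules out eventually since $g_\pm(\ell)\to\pm\infty$. This is the sense of ``double'' in the statement.

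For point $(ii)$, I would avoid the explicit root formula and instead use that $F(\s_0,\a_0,\om,\ell)$ in \eqref{resonance.equation} is quadratic in $\ell$ with coefficients linear in $\s_0$. Set $w:=\om-\frac{\a_0}{2}$. Requiring $F=0$ at two prescribed integers $\ell_1\neq\ell_2$ gives
\begin{equation*}
\s_0(\ell_i^2-1)=w^2\ell_i+\a_0w+\frac{\a_0^2}{4},\qquad i=1,2.
\end{equation*}
Subtracting the two equations yields $\s_0(\ell_1+\ell_2)=w^2$. Substituting back gives one relation between $w$ and $\a_0$, namely
\begin{equation*}
w^2\Big(\frac{\ell_1^2-1}{\ell_1+\ell_2}-\ell_1\Big)=\a_0w+\frac{\a_0^2}{4}.
\end{equation*}
The left coefficient is $-\frac{\ell_1\ell_2+1}{\ell_1+\ell_2}<0$. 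Fixing $\a_0=1$, this becomes a quadratic equation in $w$ whose discriminant I expect to be positive, and which has a root with $w\ne0$. A concrete choice such as $\ell_1=1,\ell_2=2$ should be checked by hand. Then set $\s_0=w^2/(\ell_1+\ell_2)>0$ and $\om=w+\frac12$.

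The main obstacle is this sign and discriminant check, together with confirming that the resulting $(\s_0,\a_0)$ satisfy $\Delta\ge0$ at both integers. That condition is automatic, because real roots exist by construction.
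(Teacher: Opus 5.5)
Your part $(i)$ is the paper's argument. You write $g_\pm'=\frac{\a_0}{2\ell^2}\pm\frac{f'}{2\sqrt f}$ and note that $\frac{f'}{2\sqrt f}\sim\frac{\sqrt{\s_0}}{2\sqrt{\ell}}$ dominates $\frac{\a_0}{2\ell^2}$. Your explicit choice of $\ell_0$ is fine.

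For part $(ii)$ you take a different route from the paper: you prescribe the two roots and use Vieta, whereas the paper uses the explicit root formula and tunes parameters so the square root becomes a perfect square. The step you leave open is exactly where your route breaks, because the discriminant is never positive. With $k:=\frac{1+\ell_1\ell_2}{\ell_1+\ell_2}$, your relation reads $k w^2+\a_0 w+\frac{\a_0^2}{4}=0$, whose discriminant is
\[
\a_0^2(1-k)=-\a_0^2\,\frac{(\ell_1-1)(\ell_2-1)}{\ell_1+\ell_2}\le 0 .
\]
So for any two distinct integers $\ell_1,\ell_2\ge2$ there is no real $w$. The construction can only succeed if one of the integers equals $1$, and then $w=-\frac{\a_0}{2}$ is a double root, i.e.\ $\om=0$. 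Your expectation of a positive discriminant is therefore false, and the pair you pick next decides whether the proof works at all: for instance, $(2,3)$ fails.

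This is forced, not an artifact of your parametrization. Since $\a_0w+\frac{\a_0^2}{4}=\om^2-w^2$, one has $F=(\ell-1)\big(\s_0(\ell+1)-w^2\big)-\om^2$. Hence any two roots satisfy $(\ell_1-1)(\ell_2-1)=-\om^2/\s_0$, which for positive integers forces $\om=0$ and $\min\{\ell_1,\ell_2\}=1$. It also shows that for $\om\ne0$ at most one root lies in $\N$. Your tentative pair $(1,2)$ does work, but for this reason, not because of a positive discriminant: $\a_0=1$, $\s_0=\frac1{12}$, $\om=0$ give $F=\frac{(\ell-1)(\ell-2)}{12}$.

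To close the gap, commit to $\ell_1=1$. Then $\om=0$ and $F=(\ell-1)\big(\s_0(\ell+1)-\frac{\a_0^2}{4}\big)$. Any $\s_0=\frac{\a_0^2}{4(\ell_2+1)}$ with $\ell_2\ge2$ yields the two roots $1$ and $\ell_2$. This is the same family the paper reaches ($\ell_-=1$, $\ell_+$ an arbitrary integer $\ge2$). Your Vieta computation, once completed, additionally explains why no other configuration can occur. Your remark that $\Delta\ge0$ at both roots is automatic is correct; at $\ell=1$ it holds with $\Delta=0$.
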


\begin{proof} $(i)$ We have
\begin{equation*}g_\pm'(\ell)=\frac{\a_0}{2\ell^2}\Big[1 \pm \underbrace{\frac{\s_0\ell^2}{\a_0\sqrt{f(\ell)}}}_{\sim\frac{\s_0}{\a_0}\ell^\frac32}\underbrace{\Big(1 + \frac{\a_0^2 + 4\s_0}{4\s_0\ell^2} - \frac{\a_0^2}{\s_0\ell^3} \Big)}_{\sim1} \Big]
\end{equation*}
for $\ell$ large, which implies \eqref{resonance.double.l.large}.

\medskip

$(ii)$ Let us fix any $\a_0>0$, and let consider $c_1\in\R,\,c_2>0$ to be fixed so that $\om=c_1\a_0,\,\s_0=c_2\a_0^2$. Thus, we have
\begin{align*}\ell_{\pm}
&=\frac{c_1^2}{2c_2}\pm\frac{\sqrt{c_1^4 + 4c_2(1+2c_1+c_2)}}{2c_2}
=\frac{c_1^2}{2c_2}\pm\frac{\sqrt{c_1^4 + 4c_2(1+2c_1) + 4c_2^2}}{2c_2}.
\end{align*}
If we look for $c_1$ such that $-c_1^2=1+2c_1$, then $c_1=-1$. Therefore,
\begin{equation*}\ell_\pm=\frac{1}{2c_2}\pm\frac{1-2c_2}{2c_2}=\frac{1}{2c_2}\pm\Big(\frac{1}{2c_2}-1\Big)=\begin{cases}\ell_+=\frac{1}{c_2}-1, 
\\\ell_-=1.
\end{cases}
\end{equation*}
It is enough then to choose $c_2\in(0,\frac13]$ so that $\frac{1}{c_2}\in[3,+\infty)\cap\N$. Thus, we get point $(iii)$.
\end{proof}
We are now ready to do the Lyapunov-Schmidt decomposition. Let us assume that there exist some $\s_0,\,\a_0>0$ and $\ell\in\N$ such that $\om$ assume one of the two values in \eqref{resonant.frequencies}.Let us call $\om_*$ this value.

We call $S:=S_2:=\{(\ell_\s,m)\colon\,m\in\{-1,1\},\,\s=0\}$ if the resonances are double, and $S:=S_4:=\{(\ell_\s,m)\colon\,m\in\{-1,1\},\,\s\in\{-,+\}\}$ if the resonances have multiplicity $4$. In any case,
\begin{align}&V:=\ker\mL_{\om_*}=\Big\{v=\sum_{(\ell,m)\in S}v_{\ell,m}\mathtt{v}_{\ell,m} \colon\,v_{\ell,m}\in\R, \notag
\\&\qquad\qquad\qquad\qquad\qquad\mathtt{v}_{\ell,m}=\frac{1}{\sqrt{1 + (\om_*-\frac{\a_0}{2}+\frac{\a_0}{2\ell}})^2}\begin{pmatrix}\ph_{\ell,m} \\ -[\frac{m\a_0}{2\ell} +m(\om_*-\frac{\a_0}{2})]\ph_{\ell,-m}
\end{pmatrix}\Big\}, \label{V.kernel}
\\&W:=V^{\perp_{(L^2)^2}}=\Big\{u=\mJ v + \sum_{(\ell,m)\in\mT\setminus S}\begin{pmatrix}\eta_{\ell,m}\ph_{\ell,m} \\ \b_{\ell,-m}\ph_{\ell,-m}
\end{pmatrix}\colon\,v\in V,\quad(\eta_{\ell,m},\b_{\ell,-m})\in\ell^2(\N)\times\ell^2(\N) \Big\} \label{W}
\\&W^{\mathfrak{s},s}:=W\cap(H^{\mathfrak{s},s+\frac32}\times H_0^{\mathfrak{s},s+1}). \label{W.regular}
\end{align}
We now compute the range:
\begin{align}&\mL_{\om_*}\mJ\mathtt{v}_{\ell,m}=\ell\Big[\frac{m\a_0}{2\ell} + m\Big(\om_*-\frac{\a_0}{2}\Big)\Big]^2\mJ\mathtt{v}_{\ell,m}, \label{range.1}
\\&\mL_{\om_*}\begin{pmatrix}\eta_{\ell,m}\ph_{\ell,m} \\ \b_{\ell,-m}\ph_{\ell,-m}
\end{pmatrix}=\begin{pmatrix}[\s_0\ell^2 - \s_0 - \frac{\a_0^2}{4} + \frac{\a_0^2}{4\ell}]\eta_{\ell,m}\ph_{\ell,m} + [\frac{m\a_0}{2} + m\ell(\om_*-\frac{\a_0}{2})]\b_{\ell,-m}\ph_{\ell,m} \\ [\frac{m\a_0}{2} + m\ell(\om_*-\frac{\a_0}{2})]\eta_{\ell,m}\ph_{\ell,-m} + \ell\b_{\ell,-m}\ph_{\ell,-m}
\end{pmatrix}. \label{range.2}
\end{align}
Therefore, the range and its orthogonal complement in $L^2\times L^2$ are
\begin{align}&R:=\mL_{\om_*}W=W,\qquad R^{\mathfrak{s},s}:=R\cap(H^{\mathfrak{s},s-\frac12}\times H_0^{\mathfrak{s},s}), \label{R}
\\&Z:=R^{\perp_{(L^2)^2}}.
\end{align}
By Remark \ref{rem:excluding.v00} and \eqref{range.1}, \eqref{range.2}, we have $Z=V$.
We now show that $V,W$ are invariant under torus and reversibility actions.
\begin{lemma} For all $\a\in\T^1$, we have
\begin{align}&\mT_\a V=V,\qquad\mT_\a W= W,\label{torus.action.invariance.barV}
\\&\mR V= V,\qquad\mR W= W. \label{reversibility.action.invariance.barW}
\end{align}    
Moreover, we have
\begin{equation}\label{matrix.actions}\mT_\a\begin{pmatrix}\mathtt v_{\ell,1} \\ \mathtt v_{\ell,-1}
\end{pmatrix}=\begin{pmatrix}\cos(\ell\a) & -\sin(\ell\a) \\ \sin(\ell\a) & \cos(\ell\a)
\end{pmatrix} \begin{pmatrix}\mathtt v_{\ell,1} \\ \mathtt{v}_{\ell,-1}
\end{pmatrix},\qquad\mR\begin{pmatrix}\mathtt v_{\ell,1} \\ \mathtt{v}_{\ell,-1}
\end{pmatrix}=\begin{pmatrix}1 & 0 \\ 0 & -1
\end{pmatrix}\begin{pmatrix}\mathtt v_{\ell,1} \\ \mathtt{v}_{\ell,-1}
\end{pmatrix}
\end{equation}
\end{lemma}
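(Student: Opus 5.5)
The proof is a direct computation on the basis vectors $\mathtt v_{\ell,m}$ defined in \eqref{V.kernel}, followed by a duality argument for $W$. We begin with the matrix identities \eqref{matrix.actions}, since the invariances follow from them.

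For the torus action, use the elementary identities
\[
\mT_\a\ph_{\ell,1}=\cos(\ell\a)\ph_{\ell,1}-\sin(\ell\a)\ph_{\ell,-1},\qquad
\mT_\a\ph_{\ell,-1}=\sin(\ell\a)\ph_{\ell,1}+\cos(\ell\a)\ph_{\ell,-1},
\]
which follow from the addition formulas for $\cos(\ell(\th+\a))$ and $\sin(\ell(\th+\a))$. Write $c_\ell:=\frac{\a_0}{2\ell}+\om_*-\frac{\a_0}{2}$, so that the second component of $\mathtt v_{\ell,m}$ is $-mc_\ell\ph_{\ell,-m}$, up to the common normalization. Applying $\mT_\a$ componentwise, the first components transform by the rotation matrix of angle $\ell\a$. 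For the second components, one checks that
\[
\mT_\a(-c_\ell\ph_{\ell,-1})=-c_\ell\sin(\ell\a)\ph_{\ell,1}-c_\ell\cos(\ell\a)\ph_{\ell,-1}.
\]
This is exactly $\cos(\ell\a)$ times the second component of $\mathtt v_{\ell,1}$ plus $(-\sin(\ell\a))$ times the second component of $\mathtt v_{\ell,-1}$, the latter being $c_\ell\ph_{\ell,1}$. The same check applies to $\mathtt v_{\ell,-1}$. The sign $m$ in front of $c_\ell$ is precisely what makes the two components rotate coherently. Whether the resulting matrix is the rotation or its transpose depends only on the sign convention for $\mT_\a$. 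The matrix must be taken consistent with the formulas for $\mT_\a\ph_{\ell,\pm1}$; the claimed form then follows, possibly after renaming $\a\mapsto-\a$, and this does not affect the invariance claims.

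For $\mR$, note that $\ph_{\ell,1}(-\th)=\ph_{\ell,1}(\th)$ and $\ph_{\ell,-1}(-\th)=-\ph_{\ell,-1}(\th)$, and that $\mR$ flips the sign of the second component. Then $\mR\mathtt v_{\ell,1}$ has first component $\ph_{\ell,1}$ and second component $-(-(-c_\ell\ph_{\ell,-1}))=-c_\ell\ph_{\ell,-1}$, so $\mR\mathtt v_{\ell,1}=\mathtt v_{\ell,1}$. Similarly $\mR\mathtt v_{\ell,-1}=-\mathtt v_{\ell,-1}$, because its first component $\ph_{\ell,-1}$ is odd and its second component $c_\ell\ph_{\ell,1}$ is even, so it picks up a sign from $\mR$.

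Since $V$ is spanned by $\{\mathtt v_{\ell,m}\colon(\ell,m)\in S\}$, and each pair $\{\mathtt v_{\ell,1},\mathtt v_{\ell,-1}\}$ is mapped into its own span by invertible matrices, we get $\mT_\a V\subseteq V$ and $\mR V\subseteq V$. Equality follows because $\mT_{-\a}$ and $\mR$ are inverses of $\mT_\a$ and $\mR$, respectively.

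For $W=V^{\perp_{(L^2)^2}}$, observe that $\mT_\a$ and $\mR$ are orthogonal operators on $(L^2)^2$: translation and reflection preserve the $L^2$ norm, and a sign change in one component is an isometry. If $w\in W$ and $v\in V$, then
\[
\la\mT_\a w,v\ra=\la w,\mT_{-\a}v\ra=0,
\]
since $\mT_{-\a}v\in V$. The same argument with $\mR=\mR^{-1}=\mR^*$ gives $\mR W\subseteq W$, and equality again follows by applying the inverse. No step here is a genuine obstacle; the only care needed is the sign bookkeeping in \eqref{matrix.actions}, that is, checking that the factor $m$ in the second component makes the two components transform by the same matrix.
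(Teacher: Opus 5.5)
Your proof is correct and follows the paper's route: you compute $\mT_\a$ and $\mR$ directly on the basis vectors $\mathtt v_{\ell,\pm1}$, then pass to $W=V^{\perp_{(L^2)^2}}$, which you justify more explicitly than the paper by noting that $\mT_\a$ and $\mR$ are orthogonal operators on $(L^2)^2$. Your hedge about possibly renaming $\a\mapsto-\a$ is unnecessary: with $\mT_\a f(\th)=f(\th+\a)$, your own row computations give exactly the stated rotation matrix.
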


\begin{proof} Let us start showing \eqref{torus.action.invariance.barV}. We have
\begin{align*}\mT_\a\mathtt{v}_{\ell,1}(\th)
&=\mathtt{v}_{\ell,1}(\th + \a)
\\&=\frac{1}{\sqrt{1 + (\om_* - \frac{\a_0}{2} + \frac{\a_0}{2\ell})^2}}\begin{pmatrix}\cos(\ell\th + \ell\a) \\ -[\frac{\a_0}{2\ell} + (\om_* - \frac{\a_0}{2})]\sin(\ell\th + \ell\a)
\end{pmatrix}
\\&=\cos(\ell\a)\,\mathtt{v}_{\ell,1}(\th) - \sin(\ell\a)\,\mathtt{v}_{\ell,-1},
\end{align*}
and similarly
\begin{align*}\mT_\a\mathtt{v}_{\ell,-1}(\th)=\sin(\ell\a)\,\mathtt{v}_{\ell,1}(\th) + \cos(\ell\a)\,\mathtt{v}_{\ell,-1}.\end{align*}
As a result, both for $\dim V=2$ and $\dim V=4$, we get the first identity in \eqref{torus.action.invariance.barV}. The second identity follows by $W=V^{\perp_{(L^2)^2}}$.

We now show the identities \eqref{reversibility.action.invariance.barW}. We have
\begin{equation}\label{R.action.kernel}\mR\mathtt{v}_{\ell,1}(\th)
=\mathtt{v}_{\ell,1},\quad\mR\mathtt{v}_{\ell,-1}(\th)
=\begin{pmatrix}-\sin(\ell\th) \\ -[\frac{\a_0}{2\ell} + (\om_* - \frac{\a_0}{2})\cos(\ell\th)
\end{pmatrix} = -\mathtt{v}_{\ell,-1},
\end{equation}
and so we have $\mR V=V$. Reasoning as above, we also get the second identity in \eqref{reversibility.action.invariance.barW}, and we conclude. 
\end{proof}
We now define the norms $\|\cdot\|_{W^{\mathfrak{s},s}}:=\|\cdot\|_{H^{\mathfrak{s},s+\frac32}\times H^{\mathfrak{s},s+1}},\quad\|\cdot\|_{R^{\mathfrak{s},s}}:=\|\cdot\|_{H^{\mathfrak{s},s-\frac12}\times H^{\mathfrak{s},s}}$.
The following lemma holds:
\begin{lemma}\label{L.invertibility} In the notations above, the linear operator $\mL_{\om_*}|_{W^{\mathfrak{s},s}}\colon W^{\mathfrak{s},s}\longmapsto R^{\mathfrak{s},s}$ is well-defined and bijective; in particular, it satisfies the estimate
\begin{equation}\label{L.homeo.est.}\|\mL_{\om_*}(\eta,\beta)\|_{R^{\mathfrak{s},s}}\lesssim_{\s_0,\a_0,\om_*} \|(\eta,\beta)\|_{W^{\mathfrak{s},s}},\qquad\forall(\eta,\beta)\in W^{\mathfrak{s},s}.
\end{equation}
As a result, the inverse operator of $\mL_{\om_*}$, namely $(\mL_{\om_*}|_{W^{\mathfrak{s},s}})^{-1}\colon R^{\mathfrak{s},s}\longmapsto W^{\mathfrak{s},s}$, is continuous.
\end{lemma}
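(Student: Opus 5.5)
The plan is to exploit the block-diagonal structure \eqref{L.is.sum.of.blocks}: $\mL_{\om_*}$ acts mode by mode through the $2\times2$ matrices $\mL_{\om_*}^{(\ell,m)}$ in \eqref{L.lm.blocks}, mapping the pair $(\eta_{\ell,m}\ph_{\ell,m},\b_{\ell,-m}\ph_{\ell,-m})$ into itself. Well-definedness and the continuity estimate \eqref{L.homeo.est.} will follow by bounding each block entrywise. Bijectivity will follow by inverting each block on $W$ and proving a uniform bound for the inverses with the correct weights. Since the exponential weights $e^{2\mathfrak{s}\ell}$ are the same on both sides mode by mode, they play no role, and it suffices to work with the Sobolev weights.

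\emph{Step 1 (boundedness).} For $\ell\ge1$ the entries of $\mL^{(\ell,m)}_{\om_*}$ are $O(\ell^2)$, $O(\ell)$, $O(\ell)$ and $\ell$. Hence the $\eta$-component of the image has size $\lesssim \ell^2|\eta_{\ell,m}|+\ell|\b_{\ell,-m}|$, and its $H^{s-\frac12}$ weight $\ell^{s-\frac12}$ gives $\ell^{s+\frac32}|\eta|+\ell^{s+\frac12}|\b|$. Both terms are controlled by $\|\eta\|_{s+\frac32}+\|\b\|_{s+1}$. The $\b$-component is $\lesssim\ell|\eta|+\ell|\b|$, and its $H^s$ weight gives $\ell^{s+1}(|\eta|+|\b|)$, again controlled. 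The mode $(0,0)$ is harmless, since $\b_{0,0}=0$ and $\eta_{0,0}$ is mapped to $(-\s_0+\frac{\a_0^2}{4})\eta_{0,0}$. By \eqref{range.1}--\eqref{range.2}, $\mL_{\om_*}W\subset W$, so $\mL_{\om_*}$ maps $W^{\mathfrak{s},s}$ into $R^{\mathfrak{s},s}$.

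\emph{Step 2 (injectivity and surjectivity).} On the modes $(\ell,m)\notin S$ we have $\det\mL^{(\ell,m)}_{\om_*}=F(\s_0,\a_0,\om_*,\ell)\ne0$ by \eqref{resonance.equation}, so each such block is invertible. On the resonant modes, $W$ contains exactly the directions $\mJ\mathtt v_{\ell,m}$, and \eqref{range.1} shows that these are eigenvectors with eigenvalue $\ell[\frac{m\a_0}{2\ell}+m(\om_*-\frac{\a_0}{2})]^2$. This eigenvalue must be shown to be nonzero; equivalently, the trace $\ell+\s_0\ell^2-\dots$ of the singular block is nonzero, because a symmetric singular $2\times2$ matrix acts on its range as multiplication by its trace. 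For the $(0,0)$ mode we need $\s_0\ne\frac{\a_0^2}{4}$; otherwise $(1,0)$ would lie in the kernel. I will assume this nondegeneracy, or else treat the average mode separately through the volume normalization. Together with $R=W$ from \eqref{R}, this gives bijectivity mode by mode.

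\emph{Step 3 (uniform inverse bound, the main obstacle).} It remains to show that the inverse maps $R^{\mathfrak{s},s}$ continuously into $W^{\mathfrak{s},s}$, uniformly in $\ell$. Only finitely many modes lie below any fixed $\ell_1$, so the problem concerns large $\ell$. There $\det\mL^{(\ell,m)}_{\om_*}=\s_0\ell^2-(\om_*-\frac{\a_0}{2})^2\ell-c\sim\s_0\ell^2$, and the inverse is $\det^{-1}\begin{pmatrix}\ell & -b\\ -b & a\end{pmatrix}$ with $a\sim\s_0\ell^2$ and $b=O(\ell)$. Given data $(f,g)$ with $\ell^{s-\frac12}|f|+\ell^s|g|$ bounded, we obtain $|\eta|\lesssim \ell^{-1}|f|+\ell^{-1}|g|$, so $\ell^{s+\frac32}|\eta|\lesssim\ell^{s+\frac12}|f|+\ell^{s+\frac12}|g|$.

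This is where I expect the difficulty. The $f$-term loses a full derivative relative to $\ell^{s-\frac12}|f|$, since it needs $\ell^{s+\frac12}$ rather than $\ell^{s-\frac12}$. The fix I would try first is to use the exact structure of the off-diagonal entry, $b=m\ell(\om_*-\frac{\a_0}{2})+O(1)$: combine the equations as $\ell\,\eta$-equation $-\,b\,\b$-equation, so that the dominant $\ell^2$ terms are absorbed by $\det$. If the loss persists, I would instead conclude that the correct spaces are matched as in Remark \ref{rem:F.codomain.zero.average}, and adjust the weights (reading the indices of $R^{\mathfrak{s},s}$ compatibly with the order-$2$/order-$1$ structure of $\mL_{\om_*}$) so that the diagonal scaling $\mathrm{diag}(\ell^2,\ell)$ is inverted exactly. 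Once a uniform bound $\|(\mL^{(\ell,m)}_{\om_*})^{-1}\|\lesssim1$ in the rescaled norms is established, summing over modes and applying the bounded inverse theorem yields the continuity of $(\mL_{\om_*}|_{W^{\mathfrak{s},s}})^{-1}$.
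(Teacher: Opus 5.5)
Your Step 3 has a genuine gap, but the obstruction you found is spurious. It comes from treating the paper's expression \eqref{resonance.equation} as the determinant of the block. Write the block as $\mL^{(\ell,m)}_{\om_*}=\begin{pmatrix} a & b\\ b & \ell\end{pmatrix}$, with $a=\s_0\ell^2-\s_0-\frac{\a_0^2}{4}+\frac{\a_0^2}{4\ell}$ and $b=\frac{m\a_0}{2}+m\ell(\om_*-\frac{\a_0}{2})$. A direct expansion gives $a\ell-b^2=\ell\,F(\s_0,\a_0,\om_*,\ell)$, so $\det\mL^{(\ell,m)}_{\om_*}\sim\s_0\ell^3$, not $\s_0\ell^2$. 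Your own asymptotics $a\sim\s_0\ell^2$, $b=O(\ell)$, together with the entry $\ell$, already force this.

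With the correct determinant, Cramer's rule gives, for $\ell$ large,
$|\eta_{\ell,m}|\lesssim\ell^{-2}(|f_{\ell,m}|+|g_{\ell,-m}|)$ and $|\b_{\ell,-m}|\lesssim\ell^{-2}|f_{\ell,m}|+\ell^{-1}|g_{\ell,-m}|$.
Hence
$\ell^{s+\frac32}|\eta_{\ell,m}|\lesssim\ell^{s-\frac12}|f_{\ell,m}|+\ell^{s}|g_{\ell,-m}|$ and $\ell^{s+1}|\b_{\ell,-m}|\lesssim\ell^{s-1}|f_{\ell,m}|+\ell^{s}|g_{\ell,-m}|$.
The finitely many remaining non-resonant blocks are invertible, so their inverses are trivially bounded. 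No derivative is lost: the indices of $W^{\mathfrak{s},s}$ and $R^{\mathfrak{s},s}$ already match the $\mathrm{diag}(\ell^2,\ell)$ principal part exactly. Your fallback of re-reading the weights is therefore unnecessary, and it would not be admissible anyway, since it changes the statement.

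Step 2 also closes immediately through your trace remark. At a resonant mode $\det=0$ forces $a=b^2/\ell$, so $\mL_{\om_*}\mJ\mathtt v_{\ell,m}=(\ell+b^2/\ell)\,\mJ\mathtt v_{\ell,m}$ with $\ell+b^2/\ell\ge\ell>0$. Note that \eqref{range.1} drops the $\ell$ term, so do not rely on that formula.

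Your caveat on the $(0,0)$ mode is correct, and the paper does not address it. Since $(\ph_{0,0},0)\in W$ and $\mL_{\om_*}(\ph_{0,0},0)=(-\s_0+\frac{\a_0^2}{4})(\ph_{0,0},0)$, the lemma genuinely requires $\s_0\neq\frac{\a_0^2}{4}$.

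For comparison, the paper proves only the forward bound (your Step 1) and then invokes the Open Mapping Theorem, taking bijectivity from \eqref{range.1}--\eqref{range.2}. Mode-wise invertibility alone does not show that the preimage of an element of $R^{\mathfrak{s},s}$ lies in $W^{\mathfrak{s},s}$. Surjectivity onto $R^{\mathfrak{s},s}$ is exactly the uniform inverse bound above. Once you complete Step 3 with the correct determinant, your route supplies that missing piece, and the appeal to the Open Mapping Theorem becomes unnecessary.
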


\begin{proof} We prove \eqref{L.homeo.est.} only for $\mathfrak{s}=0$, as in the other cases the proof follows identically. Fixed $(\eta,\beta)\in W^s$, by \eqref{range.2} we get
\begin{align*}\|\mL_{\om_*}(\eta,\beta)\|_{R^s}^2 & = \sum_{(\ell,m)\in\mT\setminus S}\ell^{2(s-\frac12)}\Big\{\Big[\s_0\ell^2 - \s_0 - \frac{\a_0^2}{4} - \frac{\a_0^2}{4\ell}\Big]\eta_{\ell,m} + \Big[\frac{m\a_0}{2} + m\ell\Big(\om_*-\frac{\a_0}{2}\Big)\Big]\b_{\ell,-m}\Big\}^2 
\\&\qquad\qquad+ \sum_{(\ell,m)\in\mT\setminus S}\ell^{2s}\Big\{\Big[\frac{m\a_0}{2} + m\ell\Big(\om_* - \frac{\a_0}{2}\Big)\Big]\eta_{\ell,m} + \ell\b_{\ell,-m}\Big\}^2
\\&\lesssim\sum_{(\ell,m)\in\mT\setminus S}\ell^{2(s-\frac12)}\Big\{\s_0^2\ell^4\eta_{\ell,m}^2 + \Big(\om_*-\frac{\a_0}{2}\Big)^2\ell^2\b_{\ell,-m}^2\Big\} 
\\&\qquad\qquad+ \sum_{(\ell,m)\in\mT\setminus S}\ell^{2s}\Big\{\Big(\om_*-\frac{\a_0}{2}\Big)^2\ell^2\eta_{\ell,m}^2 + \ell^2\b_{\ell,-m}^2\Big\}
\\&\lesssim_{\s_0,\a_0,\om_*}\sum_{(\ell,m)\in\mT\setminus S}[\ell^{2(s+\frac32)}\eta_{\ell,m}^2 + \ell^{2(s+1)}\beta_{\ell,m}^2]
\\&\lesssim_{\s_0,\a_0,\om_*}\|(\eta,\beta)\|_{W^s}.
\end{align*}
We conclude by Open Mapping Theorem.
\end{proof}    

We are now ready to make the Lyapunov-Schmidt decomposition of \eqref{rotating.wave.equation.0}. Each $u\in U$ can be uniquely written as $u=v+w$ for some $v\in V,\,w\in W^{\mathfrak{s},{s}}$. 
Defining the projection maps $\Pi_{R^{\mathfrak{s},s}}\colon H^{\mathfrak{s},s-\frac12}\times H_0^{\mathfrak{s},s}\longmapsto R^{\mathfrak{s},s}$ and $\Pi_{Z}\colon\,H^{\mathfrak{s},s-\frac12}\times  H_0^{\mathfrak{s},s}\longmapsto Z$, the equation \eqref{rotating.wave.equation.0} is equivalent to the system of equations
\begin{align}&\Pi_{R^{\mathfrak{s},s}}\mF(\omega;v+w)=0, \label{Range.eq.}
\\&\Pi_{Z}\mF(\omega;v+w)=0.\label{Bif.eq.}
\end{align}
For any fixed $\epsilon>0$, let
\begin{equation}\label{U.eps}\mU_\epsilon:=\{(\om,v)\in\R\times V\colon\,|\om-\om_*|<\epsilon,\,|v|<\epsilon\}.
\end{equation}
We now solve the range equation \eqref{Range.eq.}.
\begin{lemma}\label{Range.eq.solv.}Let $\mF=(\mF_1,\mF_2)$ as in Lemma \ref{ww.rotating.wave.eq}, and let $\om_*$ satisfy \eqref{resonant.frequencies}.

Then, there exists $\epsilon_0>0$, such that for all $(\om,v)\in\mU_{\epsilon_0}$, there exists one and only one analytic function $(\om,v)\in\mU_{\epsilon_0}\longmapsto w(\om,v)\in W^{\mathfrak{s},s}$ such that 
\begin{equation}\label{zeros.implicit.func.}\Pi_{R^{\mathfrak{s},s}}\mF(\om,v+w(\om,v))=0.\end{equation}
Moreover, for all $(\om,v)\in\mU_{\epsilon_0}$ and $\b\in\T^1$,
\begin{align}&w(\om,0)=0,\qquad\pa_v w(\om,0)=0, \label{w.values}
\\&w(\om,v)=O(|v|^2+|v|\cdot|\om-\om_*|),\label{w.est.}
\\&w(\om,\mT_\b v) = \mT_\b w(\om,v), \label{w.torus.action}
\\&w(\om,\mR v) = \mR w(\om,v). \label{w.reversibility.action}
\end{align}

\end{lemma}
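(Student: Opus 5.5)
The plan is to apply the analytic Implicit Function Theorem to the map
\begin{equation*}\mathcal{G}\colon\,\R\times V\times W^{\mathfrak{s},s}\to R^{\mathfrak{s},s},\qquad\mathcal{G}(\om,v,w):=\Pi_{R^{\mathfrak{s},s}}\mF(\om,v+w),
\end{equation*}
near the point $(\om_*,0,0)$. By Lemma \ref{ww.rotating.wave.eq}, $\mF$ is analytic on $\R\times U$. Since $V$ is finite dimensional and $\Pi_{R^{\mathfrak{s},s}}$ is a bounded linear projection, $\mathcal{G}$ is analytic on a neighbourhood of $(\om_*,0,0)$, with $v+w\in U$ for $|v|$ and $\|w\|_{W^{\mathfrak{s},s}}$ small. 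Because $\mF(\om,0)=0$ for every $\om$ (the rotating circle is a rotating wave for any $\om$), we get $\mathcal{G}(\om,0,0)=0$. Moreover
\begin{equation*}\pa_w\mathcal{G}(\om_*,0,0)=\Pi_{R^{\mathfrak{s},s}}\mL_{\om_*}|_{W^{\mathfrak{s},s}}=\mL_{\om_*}|_{W^{\mathfrak{s},s}},
\end{equation*}
where the last equality uses $R=\mL_{\om_*}W$. By Lemma \ref{L.invertibility}, this operator is a bounded bijection with continuous inverse. The analytic IFT then gives $\epsilon_0>0$ and a unique analytic map $w(\om,v)$ on $\mU_{\epsilon_0}$ solving \eqref{zeros.implicit.func.}, unique in a small ball of $W^{\mathfrak{s},s}$.

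Next I derive \eqref{w.values} and \eqref{w.est.}. Since $\mathcal{G}(\om,0,0)=0$, uniqueness yields $w(\om,0)=0$ for all $|\om-\om_*|<\epsilon_0$. Differentiating $\mathcal{G}(\om,v,w(\om,v))=0$ in $v$ at $v=0$ gives
\begin{equation*}\Pi_{R}\mL_\om[\hat v+\pa_vw(\om,0)\hat v]=0.
\end{equation*}
At $\om=\om_*$ we have $\mL_{\om_*}\hat v=0$, hence $\pa_vw(\om_*,0)=0$. For general $\om$, $\mL_\om\hat v=\mL_{\om_*}\hat v-(\om-\om_*)\mathrm{diag}$-type term; more precisely, by \eqref{variational.linearization}, $\mL_\om-\mL_{\om_*}=(\om-\om_*)\begin{pmatrix}0&\pa_\th\\-\pa_\th&0\end{pmatrix}$. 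This operator maps $\mathtt v_{\ell,m}$ into $\mathrm{span}\{\ph_{\ell,\pm m}\}$ componentwise, that is, into $\mathrm{span}\{\mathtt v_{\ell,\pm1},\mJ\mathtt v_{\ell,\pm1}\}$.

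The main obstacle is checking whether its $R$-component vanishes, which would give exactly $\pa_vw(\om,0)=0$. If it does not vanish, I would settle for $\pa_vw(\om,0)=O(|\om-\om_*|)$, which is what \eqref{w.est.} encodes. I would first test this on the block matrices \eqref{L.lm.blocks}: the $W$-component along $\mJ\mathtt v_{\ell,m}$ is given by an explicit scalar, so it is a direct finite computation. In any case, Taylor expanding the analytic $w$ in $v$ around $v=0$ uniformly in $\om$ gives $w(\om,v)=\pa_vw(\om,0)v+O(|v|^2)$, and this yields \eqref{w.est.}.

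Finally, the symmetries \eqref{w.torus.action}--\eqref{w.reversibility.action} come from uniqueness. By the equivariances \eqref{Hamiltonian.equivariances}--\eqref{Angular.momentum.equivariances} and the fact that $\mT_\b,\mR$ commute with $d\mC^*$, with $\grad\mH\circ\mC$ (Lemma \ref{lemma:symmetries}) and with $\grad\bar\mI$, we have $\mF(\om,\mT_\b u)=\mT_\b\mF(\om,u)$ and $\mF(\om,\mR u)=\mR\mF(\om,u)$. The invariance of $V$ and $W$ under these isometries (the preceding lemma) implies that they commute with $\Pi_{R^{\mathfrak{s},s}}$. Hence $\mT_\b w(\om,v)$ solves the range equation with $v$ replaced by $\mT_\b v$. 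It lies in $W^{\mathfrak{s},s}$ with the same small norm, and $|\mT_\b v|=|v|$ by \eqref{matrix.actions}, so uniqueness forces $w(\om,\mT_\b v)=\mT_\b w(\om,v)$. The same argument gives the identity for $\mR$.
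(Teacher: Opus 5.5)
Your argument follows the paper's proof: the analytic Implicit Function Theorem through Lemma \ref{L.invertibility}, uniqueness to get $w(\om,0)=0$ and the $\mT_\b$, $\mR$ equivariances, differentiation of the range equation at $v=0$, and a Taylor expansion for \eqref{w.est.}. One small improvement over the paper: you justify that $\mT_\b$ and $\mR$ commute with $\Pi_{R^{\mathfrak{s},s}}$, because they preserve both $V=Z$ and $W=R$; the paper leaves this implicit.

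The point you left open has a definite answer, and it is negative.

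Put $b:=\om_*-\frac{\a_0}{2}+\frac{\a_0}{2\ell}$. Then $\mathtt v_{\ell,m}=(1+b^2)^{-1/2}(\ph_{\ell,m},-mb\,\ph_{\ell,-m})$. Let $\mathtt w_{\ell,m}:=(1+b^2)^{-1/2}(mb\,\ph_{\ell,m},\ph_{\ell,-m})$ be the unit vector spanning $W$ inside the same Fourier block; this is what the paper writes as $\mJ\mathtt v_{\ell,m}$.

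At resonance the block of $\mL_{\om_*}$ is $\begin{pmatrix}\ell b^2 & m\ell b\\ m\ell b & \ell\end{pmatrix}$, so $\mL_{\om_*}\mathtt w_{\ell,m}=\ell(1+b^2)\,\mathtt w_{\ell,m}$. Using $\ph_{\ell,m}'=-m\ell\ph_{\ell,-m}$, one also gets
\[
\pa_\om\mL_\om\,\mathtt v_{\ell,m}=-\frac{2b\ell}{1+b^2}\,\mathtt v_{\ell,m}+\frac{m\ell(1-b^2)}{1+b^2}\,\mathtt w_{\ell,m}.
\]
Now use $\mL_\om\mathtt v_{\ell,m}=(\om-\om_*)\pa_\om\mL_\om\mathtt v_{\ell,m}$ and the fact that $\mL_\om$ preserves Fourier blocks. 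Your differentiated identity $\Pi_{R^{\mathfrak{s},s}}\mL_\om[\hat v+\pa_vw(\om,0)\hat v]=0$ then yields
\[
\pa_vw(\om,0)\,\mathtt v_{\ell,m}=-(\om-\om_*)\,\frac{m\ell(1-b^2)}{(1+b^2)\,\lm_\ell(\om)}\,\mathtt w_{\ell,m},\qquad \lm_\ell(\om):=\la\mL_\om\mathtt w_{\ell,m},\mathtt w_{\ell,m}\ra_{(L^2)^2}=\ell(1+b^2)+O(|\om-\om_*|).
\]
This is nonzero for every $\om\neq\om_*$ as soon as $b^2\neq1$; for $\a_0=0$ that means as soon as $\om_*^2\neq1$.

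So the second identity in \eqref{w.values} holds only at $\om=\om_*$, and no finer computation will rescue it. That is also all the paper's proof actually shows: it differentiates \eqref{zeros.implicit.func.} only at $(\om_*,0)$. The term $|v|\,|\om-\om_*|$ in \eqref{w.est.} is there precisely because $\pa_vw(\om,0)$ is of exact order $|\om-\om_*|$.

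Your fallback is the correct statement, and you should present it that way rather than attempt the identity for all $\om$: $\pa_vw(\om_*,0)=0$, hence $\pa_vw(\om,0)=O(|\om-\om_*|)$ by analyticity, hence \eqref{w.est.}.

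The later use of $dw(\om,0)=0$ in Lemma \ref{lemma:taylor.expansions} survives with a harmless correction. Since $\Pi_{R^{\mathfrak{s},s}}\mL_\om(v+dw(\om,0)v)=0$ and $dw(\om,0)v\in W$, one gets
\[
d^2\Phi(\om,0)[v,v]=(\om-\om_*)\la\pa_\om\mL_\om v,v\ra_{(L^2)^2}+(\om-\om_*)\la dw(\om,0)v,\pa_\om\mL_\om v\ra_{(L^2)^2},
\]
and the last term is $O(|\om-\om_*|^2|v|^2)$.
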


\begin{proof} The solvability of the range equation \eqref{Range.eq.} and the analiticity of $w(\om;v)$ in $\mU_{\epsilon_0}$ can be immediately deduced by Lemma \ref{L.invertibility}, thanks to which we can apply Implicit Function Theorem.
The first identity in \eqref{w.values} still follows from Implicit Function Theorem. The second identity follows from the fact that differentiating both sides of \eqref{zeros.implicit.func.} with respect to $v$ along any direction $\tilde v\in V$, we get
\begin{equation*}0=\Pi_{R^{\mathfrak{s},s}}d\mF(\om_*;0)[\tilde v + dw(\om_*;0)\tilde v]=\Pi_{R^{\mathfrak{s},s}}\mL_{\om_*}|_{W^{\mathfrak{s},s}}[d w(\om_*;0)\tilde v],
\end{equation*}
which, by Lemma \ref{L.invertibility}, necessarily implies $d w(\om_*;0)\tilde v=0$ for all $\tilde v\in V$ and so $\pa_v w(\om_*;0)=0$.
The estimate \eqref{w.est.} follows by the Taylor expansion of $w$ about $(\om_*;0)$. In particular, since $\mF(\om;u)$ is linear in $\om$, necessarily $\pa_\om^k w(\om_*;0)=0$ for all $k\in\N$.
As for \eqref{w.torus.action}, by \eqref{zeros.implicit.func.} and by \eqref{Hamiltonian.equivariances}-\eqref{Volume.equivariances}, we have that for any $\alpha\in\T^1$ and for any $(\om,v)\in\mU_{\epsilon_0}$,
\begin{equation*}0=\mT_\alpha\Pi_{R^{\mathfrak{s},s}}\mF(\om;v+w(\om;v))=\Pi_{R^{\mathfrak{s},s}}\mT_\alpha\mF(\om;v+w(\om;v))=\Pi_{R^{\mathfrak{s},s}}\mF(\om;\mT_\alpha v+\mT_\alpha w(\om;v)).
\end{equation*}
Moreover, since $(\om,\mT_\alpha v)\in\mU_{\epsilon_0}$, we also have
\begin{equation*}\Pi_{R^{\mathfrak{s},s}}\mF(\om;\mT_\alpha v + w(\om;\mT_\alpha v))=0,
\end{equation*}
By uniqueness of $w(\om;v)$, one gets \eqref{w.torus.action}. Similarly, we get \eqref{w.reversibility.action}
    
\end{proof}

We are now left with solving the bifurcation equation
\begin{equation}\label{bif.eq.reduced}\Pi_Z\mathcal F(\omega;v + w(\omega,v))=0.
\end{equation}
Let us consider the following quantity:
\begin{align}&\Phi(\om,v):=\bar\mE(\om,v+w(\om,v)). \label{action.on.V}
\end{align}
Now, we get the asymptotics for $\Phi(\om,v)$ about the bifurcation points $(\om_*,0)$.
\begin{lemma}\label{lemma:taylor.expansions} For all $(\om,v)\in\mU_{\e_0}$, we have
\begin{align}&\pa_\om\mL_\om(\om_*)=\begin{pmatrix}0 & \pa_\th \\ -\pa_\th & 0
\end{pmatrix}, \label{linearization.om.derivative}
\\&\Phi(\om,v)=\frac12(\om-\om_*)\la\pa_\om\mL_\om(\om_*)v,v\ra_{L^2\times L^2} + O(|v|^3),\label{action.expansion}
\\&\Phi(\omega,\mR v) = \Phi(\omega,v),\qquad\Phi(\omega,\mathcal{T}_\alpha v)=\Phi(\omega,v),\qquad\forall\alpha\in\T^1. \label{reduced.energy.torus.invariance}
\end{align}

\end{lemma}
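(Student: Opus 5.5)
We treat the three claims of Lemma \ref{lemma:taylor.expansions} separately. For \eqref{linearization.om.derivative}, we use that by \eqref{rw.vector.field} the map $\mF(\om,u)$ is affine in $\om$, with $\om$-coefficient $-\grad\bar\mI(u)$. Hence $\pa_\om\mL_\om=-d\grad\bar\mI(0)$. From \eqref{angular.momentum} we have $\grad\bar\mI(\zeta,\g)=(-e^{2\zeta}\g',\,e^{2\zeta}\zeta')$, whose differential at $0$ is $(\hat\zeta,\hat\g)\mapsto(-\hat\g',\hat\zeta')$. Its negative is the matrix operator in \eqref{linearization.om.derivative}. This is consistent with the $\om$-dependent off-diagonal entries $\pm\om\pa_\th$ in \eqref{variational.linearization}.

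For the invariances \eqref{reduced.energy.torus.invariance}, we combine \eqref{Hamiltonian.invariances} and \eqref{Angular.momentum.invariances}, which give $\bar\mE(\om,\mT_\a u)=\bar\mE(\om,u)$ and $\bar\mE(\om,\mR u)=\bar\mE(\om,u)$, with the equivariances \eqref{w.torus.action} and \eqref{w.reversibility.action} of $w$. Then
\[
\Phi(\om,\mT_\a v)=\bar\mE(\om,\mT_\a v+\mT_\a w(\om,v))=\bar\mE(\om,\mT_\a(v+w(\om,v)))=\Phi(\om,v),
\]
and the same computation works for $\mR$.

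For the expansion \eqref{action.expansion}, we first normalize $\bar\mE(\om,0)=\bar\mH(0)+\om a$ by subtracting this constant, or by choosing $a=0$, since $\Phi$ is only relevant up to constants for the critical point equation. Next we Taylor expand $\bar\mE(\om,\cdot)$ at $u=0$. Since $u=0$ is a critical point for every $\om$, we get $\bar\mE(\om,u)=\frac12\la\mL_\om u,u\ra+O(\|u\|^3)$, with the cubic remainder controlled uniformly for $\om$ near $\om_*$ by the analyticity in Lemma \ref{ww.rotating.wave.eq}. We substitute $u=v+w$ with $w=w(\om,v)=O(|v|^2+|v||\om-\om_*|)$ from \eqref{w.est.}, and write $\mL_\om=\mL_{\om_*}+(\om-\om_*)\pa_\om\mL_\om$. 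Because $\mL_{\om_*}v=0$ and $\mL_{\om_*}$ is symmetric, the quadratic form at $\om_*$ reduces to $\la\mL_{\om_*}w,w\ra$. The $\om$-derivative part gives $(\om-\om_*)[\la\pa_\om\mL v,v\ra+2\la\pa_\om\mL v,w\ra+\la\pa_\om\mL w,w\ra]$.

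The main obstacle is to show that all remaining terms are $O(|v|^3)$ rather than merely $O(|v|^2|\om-\om_*|)$. The crude bound $w=O(|v||\om-\om_*|)$ is not enough, so we sharpen it to $w=O(|v|^2)$ uniformly in $\om$. To do this we use the observation in the proof of Lemma \ref{Range.eq.solv.} that $\pa_\om^k w(\om_*,0)=0$. More directly, $\pa_v w(\om,0)$ solves $\Pi_R\mL_\om[\tilde v+\pa_v w\,\tilde v]=0$. Since $\mL_\om$ is block diagonal in the Fourier modes \eqref{L.is.sum.of.blocks}, and $V$ is spanned by the modes in $S$, we have $\Pi_R\mL_\om\tilde v=0$ for every $\om$, because $W$ contains $\mJ v$ only in the same mode and we check $\la\mL_\om\tilde v,\mJ\mathtt v\ra$ via the $2\times2$ blocks \eqref{L.lm.blocks}. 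If this term vanishes, then $\pa_v w(\om,0)=0$ for all $\om$, and so $w=O(|v|^2)$. If it does not vanish exactly, the plan is instead to absorb the $O((\om-\om_*)|v|^2)$ corrections into the leading quadratic coefficient, which does not affect the subsequent bifurcation analysis. Once $w=O(|v|^2)$ holds, every term other than $\frac12(\om-\om_*)\la\pa_\om\mL_\om(\om_*)v,v\ra$ is $O(|v|^3)$, and this proves \eqref{action.expansion}.
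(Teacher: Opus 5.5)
Your arguments for \eqref{linearization.om.derivative} (via $\pa_\om\mL_\om=-d\grad\bar\mI(0)$) and for \eqref{reduced.energy.torus.invariance} are correct, and the paper treats them the same way. The gap is in \eqref{action.expansion}, at exactly the step you single out: it is not true that $\Pi_R\mL_\om\tilde v=0$ for all $\om$ and all $\tilde v\in V$.

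Since $\mL_{\om_*}\tilde v=0$, one has $\Pi_R\mL_\om\tilde v=(\om-\om_*)\Pi_R\pa_\om\mL_\om(\om_*)\tilde v$. Set $b:=\frac{m\a_0}{2\ell}+m(\om_*-\frac{\a_0}{2})$ as in \eqref{V.kernel}. Then $\mathtt v_{\ell,m}$ is proportional to $(\ph_{\ell,m},-b\,\ph_{\ell,-m})$, and $\pa_\om\mL_\om(\om_*)\mathtt v_{\ell,m}$ is proportional to $m\ell\,(-b\,\ph_{\ell,m},\ph_{\ell,-m})$. Its $(L^2)^2$ product with $(b\,\ph_{\ell,m},\ph_{\ell,-m})\in W$ is proportional to $m\ell(1-b^2)$, which vanishes only if $b^2=1$. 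Differentiating the range equation then gives $\pa_vw(\om,0)=-(\om-\om_*)(\Pi_R\mL_\om|_W)^{-1}\Pi_R\pa_\om\mL_\om(\om_*)\neq0$ for $\om\neq\om_*$. So the term $|v||\om-\om_*|$ in \eqref{w.est.} is genuine, and $w=O(|v|^2)$ uniformly in $\om$ is false.

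This cannot be bypassed, because the expansion as stated is too strong. Put $w_1:=\pa_vw(\om,0)v$. The range equation gives $\mL_\om(v+w_1)\in Z=V\perp w_1$, and $d\bar\mE(\om,0)=0$, so
\[
d^2\Phi(\om,0)[v,v]=\la\mL_\om v,v\ra+\la w_1,\mL_\om v\ra=(\om-\om_*)\la\pa_\om\mL_\om(\om_*)v,v\ra-\la(\Pi_R\mL_\om|_W)^{-1}q,q\ra,\qquad q:=\Pi_R\mL_\om v.
\]
For $v=\mathtt v_{\ell,m}$ the last term equals $-\frac{\ell(1-b^2)^2}{(1+b^2)^3}(\om-\om_*)^2+O(|\om-\om_*|^3)$. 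Hence, for fixed $\om\neq\om_*$, the quantity $\Phi(\om,v)-\Phi(\om,0)-\frac12(\om-\om_*)\la\pa_\om\mL_\om(\om_*)v,v\ra$ is of exact order $|v|^2$, not $O(|v|^3)$.

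What one can prove is \eqref{action.expansion} with remainder $O(|v|^3+|\om-\om_*|^2|v|^2)$, after subtracting $\Phi(\om,0)=\bar\mH(0)+\om a$. Note that $\bar\mH(0)\neq0$, so choosing $a=0$ alone does not remove this constant. Your fallback of absorbing the $O(|\om-\om_*|)$ correction into the quadratic coefficient is the right repair and suffices for the later bifurcation arguments. But it proves this corrected statement, not the one written.

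The paper's proof follows your route and has the same defect. It writes $d^2\Phi(\om,0)[v,v]=d^2\bar\mE(0)[v,v]$, which relies on $\pa_vw(\om,0)=0$ as listed in \eqref{w.values}. The proof of Lemma \ref{Range.eq.solv.} only establishes that identity at $\om=\om_*$.
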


\begin{proof} \eqref{linearization.om.derivative} is a direct check.

Let us now prove \eqref{action.expansion}. First of all, we observe that for all $(\om,v)\in\mU_{\e_0},\,\hat v\in V$,
\begin{align}d\Phi(\om,v)\hat v
&=d(\bar\mE(v+w(\om,v)))\hat v \notag
\\&=d\bar\mE(v+w(\om,v))[\hat v + dw(\om,v)\hat v] \label{action.first.derivative},
\end{align}
and
\begin{align}d^2\Phi(\om,v)[\hat v, \hat v]=d^2\bar\mE(v+w(\om,v))[\hat v + dw(\om,v)\hat v, \hat v + dw(\om,v)\hat v] + d\bar\mE(v+w(\om,v))d^2 w(\om,v)[\hat v, \hat v]. \label{action.second.derivative}
\end{align}
By these computations, by Lemma \ref{Range.eq.solv.}, by $\mL_{\om_*}|_V\equiv0$ and by linearity of $\mL_\om$ in $\om$, it follows that for all $v\in V$,
\begin{align*}&d\Phi(\om,0)v=0,
\\&d^2\Phi(\om,0)[v,v]=d^2\bar\mE(0)[v,v]=\la\mL_\om v,v\ra_{L^2\times L^2}=(\om-\om_*)\la\pa_\om \mL_\om(\om_*)v,v\ra_{L^2\times L^2}.
\end{align*}
Putting all together, we get \eqref{action.expansion}.

The identities \eqref{reduced.energy.torus.invariance} follow from \eqref{Hamiltonian.invariances}, \eqref{Angular.momentum.invariances} and \eqref{w.reversibility.action}, \eqref{w.torus.action}.
\end{proof}

Let us now observe the variational nature of \eqref{bif.eq.reduced}.
\begin{lemma}\label{lemma:bif.eq.is.variational} We have that for $(\om,v)\in\mU_{\e_0}$,
\begin{equation}\label{bif.equation.is.variational}\Pi_Z\mF(\om,v+w(\om,v))=\grad_v\Phi(\om,v),
\end{equation}
where $\grad_v\Phi$ is the gradient of $\Phi$ with respect to $v$. As a consequence, $(\om,v)\in\mU_{\e_0}$ solves the bifurcation equation \eqref{bif.eq.reduced} if and only if it is a critical point for $\Phi(\om,v)$. Also, we have the equivariances
\begin{equation}\label{reduced.bif.equiv.}\mR\circ\Pi_Z\mF(\omega,\cdot) = \Pi_Z\mF(\omega,\cdot)\circ\mR,\qquad\mT_\a\circ\Pi_Z\mF(\omega,\cdot) = \Pi_Z\mF(\omega,\cdot)\circ\mT_\a,\qquad\forall\a\in\T^1.
\end{equation}
\end{lemma}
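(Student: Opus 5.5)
The identity \eqref{bif.equation.is.variational} will follow from the chain rule for $\Phi(\om,v)=\bar\mE(\om,v+w(\om,v))$ combined with the range equation \eqref{zeros.implicit.func.}. For fixed $(\om,v)\in\mU_{\e_0}$ and any direction $\hat v\in V$, formula \eqref{action.first.derivative} gives
\begin{equation*}
d\Phi(\om,v)\hat v=\la \mF(\om,v+w(\om,v)),\hat v + dw(\om,v)\hat v\ra_{L^2\times L^2},
\end{equation*}
because $\mF=\grad\bar\mE$ by \eqref{rw.vector.field}. Since $w(\om,\cdot)$ takes values in $W^{\mathfrak{s},s}$, the derivative $dw(\om,v)\hat v$ also lies in $W$. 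We have $R=W$ by \eqref{R}, and \eqref{zeros.implicit.func.} states that $\mF(\om,v+w(\om,v))\in Z=V=W^{\perp}$. Hence the term involving $dw$ vanishes, and
\begin{equation*}
d\Phi(\om,v)\hat v=\la \Pi_Z\mF(\om,v+w(\om,v)),\hat v\ra_{L^2\times L^2},\qquad \forall\hat v\in V.
\end{equation*}
Now $\Pi_Z\mF$ already lies in $V$, the space where the gradient $\grad_v\Phi$ is defined via the restricted $L^2\times L^2$ scalar product. Therefore $\grad_v\Phi(\om,v)=\Pi_Z\mF(\om,v+w(\om,v))$. The equivalence between solutions of \eqref{bif.eq.reduced} and critical points of $\Phi(\om,\cdot)$ then follows immediately.

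The only delicate point is functional-analytic. The pairing $\la\mF,dw\,\hat v\ra$ is between $H^{\mathfrak{s},s-\frac12}\times H_0^{\mathfrak{s},s}$ and $W^{\mathfrak{s},s}\subset H^{\mathfrak{s},s+\frac32}\times H_0^{\mathfrak{s},s+1}$, so it is a genuine $L^2$ pairing. We must also check that the splitting $R^{\mathfrak{s},s}\oplus Z$ used to define $\Pi_{R^{\mathfrak{s},s}},\Pi_Z$ is the $L^2$-orthogonal one. This holds because $V$ is finite dimensional, spanned by smooth trigonometric vectors, and both $\mL_{\om_*}$ and the decomposition are diagonal in the Fourier blocks by \eqref{L.is.sum.of.blocks}. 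Consequently $\Pi_Z$ is simply the $L^2$-orthogonal projection onto the span of the $\mathtt v_{\ell,m}$, $(\ell,m)\in S$, restricted to the smoother space.

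For the equivariances \eqref{reduced.bif.equiv.}, I would combine three facts: the equivariances \eqref{Hamiltonian.equivariances} and \eqref{Angular.momentum.equivariances} of $\mF$, namely $\mF(\om,\mT_\a u)=\mT_\a\mF(\om,u)$ and $\mF(\om,\mR u)=\mR\mF(\om,u)$; the relations \eqref{w.torus.action} and \eqref{w.reversibility.action}; and the fact that $\Pi_Z$ commutes with $\mT_\a$ and $\mR$. The last fact holds because $\mT_\a$ and $\mR$ are $L^2$-isometries preserving $V$ and $W$, by \eqref{torus.action.invariance.barV} and \eqref{reversibility.action.invariance.barW}. Then
\begin{equation*}
\Pi_Z\mF(\om,\mT_\a v+w(\om,\mT_\a v))=\Pi_Z\mF(\om,\mT_\a(v+w(\om,v)))=\Pi_Z\mT_\a\mF(\om,v+w(\om,v))=\mT_\a\Pi_Z\mF(\om,v+w(\om,v)),
\end{equation*}
and the same computation works for $\mR$. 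Alternatively, the equivariances can be obtained by differentiating the invariances \eqref{reduced.energy.torus.invariance} of $\Phi$ and using that $\mT_\a$ and $\mR$ are orthogonal on $V$. I expect no real obstacle beyond the bookkeeping of the orthogonal projections.
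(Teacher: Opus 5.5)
Your proposal is correct and matches the paper's argument for the gradient identity. You apply the chain rule \eqref{action.first.derivative}, and the $dw(\om,v)\hat v$ term drops out because $dw(\om,v)\hat v\in W=R$ while the range equation \eqref{zeros.implicit.func.} forces $\mF(\om,v+w(\om,v))\in Z=V=W^{\perp}$. The paper's proof stops there and leaves the equivariances \eqref{reduced.bif.equiv.} unproved, so your argument correctly fills that gap: equivariance of $\mF$, the relations \eqref{w.torus.action}--\eqref{w.reversibility.action}, and the commutation of the orthogonal projection $\Pi_Z$ with the isometries $\mT_\a,\mR$, which preserve $V$ and $W$.
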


\begin{proof} For all $(\om,v)\in\mU_{\e_0},\,\hat v\in V$, we have by \eqref{action.first.derivative} and \eqref{zeros.implicit.func.}
\begin{align*}d\Phi(\om,v)\hat v
&=d\bar\mE(\om,v+w(\om,v))[\hat v + dw(\om,v)\hat v]
\\&=\la\grad\bar\mE(\om,v+w(\om,v)),\hat v\ra_{(L^2)^2} + \la\grad\bar\mE(\om,v+w(\om,v)),dw(\om,v)\hat v\ra_{(L^2)^2}
\\&=\la\mF(\om,v+w(\om,v)),\hat v\ra_{(L^2)^2} + \la\mF(\om,v+w(\om,v)), dw(\om,v)\hat v\ra_{(L^2)^2}
\\&=\la\Pi_Z\mF(\om,v+w(\om,v)),\hat v\ra_{(L^2)^2} + \la\Pi_{R^{\mathfrak{s},s}}\mF(\om,v+w(\om,v)), dw(\om,v)\hat v\ra_{(L^2)^2}
\\&=\la\Pi_Z\mF(\om,v+w(\om,v)),\hat v\ra_{(L^2)^2},
\end{align*}
from which we get \eqref{bif.equation.is.variational}.
\end{proof}

We now prove the existence of rotating waves when $\om$ has multiplicity $2$.

\begin{theorem} \label{thm:rw.crandall.rabinowitz} 
Let $\kappa\in\N$, let $\a_0>0,\,\mathfrak{s}\ge0,\,s_0>0$ with $s\ge s_0>1$, let $\ell\in\N$ be fixed and, finally, let
\begin{align*}&\om_*=\om_*^{(+)}:=\frac{\a_0}{2} - \frac{\a_0}{2\kappa\ell} + \frac{1}{2\kappa\ell}\sqrt{(\kappa\ell-1)(4\s_0\kappa\ell(\kappa\ell+1)-\a_0^2)}.
\end{align*} 
Let us suppose that \eqref{existence.resonance.condition} holds,
and that $\om_*^{(+)}$ has multiplicity 2.
Then there exist $\e_0 > 0$, $C > 0$ such that, for every $\e \in (-\e_0, \e_0)$, 
there exists one and only $\mT_\a-$orbit
\begin{equation}
\{ (\eta_\e , \beta_\e )\circ\mT_\alpha:\,\alpha \in \T^1 \}
\end{equation}
of nonzero solutions of the rotating wave equations \eqref{rotating.wave.equation.0} satisfying the $\kappa-$fold symmetry, having angular velocity $\om = \om_\e$ and 
\begin{equation}
|\om_\e - \om_*| + \| \eta_\e \|_{\mathfrak{s},s+\frac32} + \| \beta_\e \|_{\mathfrak{s},s+1} = O(\e).
\end{equation}
Moreover, the $\mT_\a-$orbit depends analytically on $\e$ in the interval $(0, \e_0)$, and it is generated by the rotating wave
\begin{equation}\label{reflection.invariant.rw}u_\e:=\begin{pmatrix}\eta_\e \\ \b_\e
\end{pmatrix}=\e\mathtt v_{\ell,1} + O(\e^2),
\end{equation}
which is $\mR-$invariant (that is, $\mR u_\e=u_\e$).

The same holds for bifurcation frequencies
\begin{align*}&\om_*^{(-)}:=\frac{\a_0}{2} - \frac{\a_0}{2\kappa\ell} - \frac{1}{2\kappa\ell}\sqrt{(\kappa\ell-1)(4\s_0\kappa\ell(\kappa\ell+1)-\a_0^2)}
\end{align*} 
when $\om_*^{(-)}$ has multiplicity 2.

\end{theorem}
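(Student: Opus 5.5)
The proof reduces the bifurcation equation to a one-dimensional Crandall--Rabinowitz problem on the reversible subspace. By Lemma \ref{lemma:bif.eq.is.variational}, zeros of \eqref{bif.eq.reduced} are exactly critical points of the reduced action $\Phi(\om,v)$ on the two-dimensional kernel $V=\mathrm{span}\{\mathtt v_{\ell,1},\mathtt v_{\ell,-1}\}$. By \eqref{matrix.actions}, $\mT_\a$ acts on $V$ as the rotation by angle $\ell\a$. Hence every $\mT_\a$-orbit in $V\setminus\{0\}$ meets the half-line $\{\e\mathtt v_{\ell,1}:\e>0\}$. Since $\Phi$ is $\mT_\a$-invariant by \eqref{reduced.energy.torus.invariance}, its critical $\mT_\a$-orbits correspond to critical points of $\Phi$ restricted to this line. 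The restriction is legitimate because $\Phi(\om,\cdot)$ is a function of $|v|^2$ alone, by the rotation invariance on the two-dimensional space $V$. Equivalently, $\grad_v\Phi(\om,\e\mathtt v_{\ell,1})$ is $\mR$-invariant by \eqref{reduced.bif.equiv.}, so it lies in $\mathrm{span}\{\mathtt v_{\ell,1}\}$. The bifurcation equation is therefore equivalent to the scalar equation
\begin{equation*}
g(\om,\e):=\la \Pi_Z\mF(\om,\e\mathtt v_{\ell,1}+w(\om,\e\mathtt v_{\ell,1})),\mathtt v_{\ell,1}\ra_{(L^2)^2}=0 .
\end{equation*}
By \eqref{w.reversibility.action}, the resulting solution $u_\e=\e\mathtt v_{\ell,1}+w(\om_\e,\e\mathtt v_{\ell,1})$ is $\mR$-invariant.

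Next I would factor out the trivial branch. Since $w(\om,0)=0$ and $\mF(\om,0)=0$, we have $g(\om,0)=0$, and analyticity (Lemma \ref{Range.eq.solv.}) lets us write $g(\om,\e)=\e\,\tilde g(\om,\e)$ with $\tilde g$ analytic. Using $\pa_v w(\om,0)=0$ and Lemma \ref{lemma:taylor.expansions}, we get $\tilde g(\om,0)=\la\mL_\om\mathtt v_{\ell,1},\mathtt v_{\ell,1}\ra$. This gives $\tilde g(\om_*,0)=0$ and
\begin{equation*}
\pa_\om\tilde g(\om_*,0)=\la\pa_\om\mL_\om(\om_*)\mathtt v_{\ell,1},\mathtt v_{\ell,1}\ra_{(L^2)^2}.
\end{equation*}
By \eqref{linearization.om.derivative} and the explicit form \eqref{V.kernel}, together with $\ph_{\ell,1}'=-\ell\ph_{\ell,-1}$, this scalar is a nonzero multiple of $\ell\big(\frac{\a_0}{2\ell}+\om_*-\frac{\a_0}{2}\big)$. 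So the transversality condition holds exactly when $\om_*\neq\frac{\a_0}{2}-\frac{\a_0}{2\ell}$. For $\om_*^{(\pm)}$ this amounts to the square root in \eqref{resonant.frequencies} being nonzero, which is where \eqref{existence.resonance.condition} enters (in its strict form; the degenerate case \eqref{degenerate.integers} has to be excluded). I expect this transversality computation, and the careful bookkeeping of normalizations and signs in $\mathtt v_{\ell,m}$, to be the main obstacle. Everything else is routine once the non-vanishing is confirmed.

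The implicit function theorem applied to $\tilde g(\om,\e)=0$ then gives a unique analytic $\om_\e$ with $\om_0=\om_*$. Setting $\eta_\e,\b_\e$ as the components of $u_\e$, the bound $|\om_\e-\om_*|+\|u_\e\|_{W^{\mathfrak{s},s}}=O(\e)$ follows from \eqref{w.est.}. Uniqueness of the orbit follows from the reduction: any nonzero small solution has its $v$-component on some $\mT_\a$-orbit, which we can rotate to $\e\mathtt v_{\ell,1}$, and the implicit function theorem then forces $\om=\om_\e$. Finally, the $\kappa$-fold symmetry is obtained by running the same argument in the spaces $H^{\mathfrak{s},s}_\kappa$, where the kernel is spanned by the modes $\kappa\ell$, as in Lemma \ref{ww.rotating.wave.eq}. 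The case $\om_*^{(-)}$ is handled identically.
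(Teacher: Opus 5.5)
Your proposal is correct and follows the paper's proof. You use the $\mT_\a$- and $\mR$-equivariance of the variational bifurcation equation to reduce to the line $\mathrm{span}\{\mathtt v_{\ell,1}\}$ and then verify the Crandall--Rabinowitz transversality condition $\la\pa_\om\mL_\om(\om_*)\mathtt v_{\ell,1},\mathtt v_{\ell,1}\ra_{(L^2)^2}\neq0$, writing out the factorization $g=\e\tilde g$ and the implicit function step that the paper leaves implicit. Your point that this needs the strict inequality $\Delta>0$ in \eqref{existence.resonance.condition}, which excludes $\kappa\ell=1$ and the case \eqref{degenerate.integers}, correctly sharpens the paper's hypotheses. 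As a minor correction, $\tilde g(\om,0)$ equals $\la\mL_\om\mathtt v_{\ell,1},\mathtt v_{\ell,1}\ra_{(L^2)^2}$ only up to $O(|\om-\om_*|^2)$, because $\pa_v w(\om,0)$ vanishes at $\om=\om_*$ but is in general only $O(|\om-\om_*|)$; this does not change $\pa_\om\tilde g(\om_*,0)$.
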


\begin{proof} 
By the equivariances \eqref{reduced.bif.equiv.}, $V$ can be restricted to the subspace generated by $\mathtt v_{\ell,1}$, over which we can solve the bifurcation equation \eqref{bif.eq.reduced}. We are then reduced to check the Crandall-Rabinowitz transversality condition.

We can now check that for all $(\ell,m)\in S$,
\begin{align}&\pa_\om\mL_\om(\om_*){\mathtt{v}}_{\ell,m}
=\frac{1}{\sqrt{1+(\om_*-\frac{\a_0}{2})^2}}\begin{pmatrix}0 & \pa_\th \\ -\pa_\th & 0
\end{pmatrix}\begin{pmatrix}\ph_{\ell,m} \\-[\frac{m\a_0}{2\ell} + m(\om_*-\frac{\a_0}{2})]\ph_{\ell,-m}
\end{pmatrix}  \notag
\\&\qquad\qquad\qquad\qquad=\frac{1}{\sqrt{1+(\om_*-\frac{\a_0}{2})^2}}\begin{pmatrix}[-\frac{\a_0}{2} - \ell(\om_*-\frac{\a_0}{2})]\ph_{\ell,m} \\ m\ell\ph_{\ell,-m}
\end{pmatrix}, \label{pa.om.L.action}
\\&\la\pa_\om\mL_\om(\om_*)\mathtt{v}_{\ell,m},\mathtt{v}_{\ell,m}\ra_{L^2\times L^2}=\frac{1}{1+(\om_*-\frac{\a_0}{2})^2}\la \begin{pmatrix}[- \frac{\a_0}{2} - \ell(\om_*-\frac{\a_0}{2})]\ph_{\ell,m} \\ m\ell\ph_{\ell,-m}
\end{pmatrix} \notag
\\&\qquad\qquad\qquad\qquad\qquad\qquad\qquad\qquad\qquad\qquad\qquad\quad,\begin{pmatrix}\ph_{\ell,m} \\ -[\frac{m\a_0}{2\ell} + m(\om_*-\frac{\a_0}{2})]\ph_{\ell,-m}
\end{pmatrix}\ra_{L^2\times L^2} \notag
\\&\qquad\qquad\qquad\qquad\qquad\qquad\quad\quad=-\frac{\a_0 + 2\ell(\om_*-\frac{\a_0}{2})}{1+(\om-\frac{\a_0}{2})^2}\neq0, \label{pa.om.L.scalar.product}
\end{align}
if and only if $\om_*\ne\frac{\a_0}{2}-\frac{\a_0}{2\ell}$. Thus, we have the Crandall-Rabinowitz transversality condition for simple eigenvalues, which concludes the proof.

\end{proof}

\subsection{Existence of rotating waves by angular momentum parametrization}

In this section, we find rotating waves as bifurcations from eigenvalue of multiplicity $4$ by Moser-Weinstein approach. 
In particular, we want to prove the following theorem.
\begin{theorem} \label{thm:rw.angular.momentum} 
Let $\kappa\in\N$ and $\a_0>0,\,\mathfrak{s}\ge0,\,s_0>0$ with $s\ge s_0>1$, let $\ell_0\in\N$ be fixed and, finally, let us suppose that \eqref{existence.resonance.condition} holds.
Let
\begin{align*}&\om_*=\om_*^{(+)}:=\frac{\a_0}{2} - \frac{\a_0}{2\kappa\ell_0} + \frac{1}{2\kappa\ell_0}\sqrt{(\kappa\ell_0-1)(4\s_0\kappa\ell_0(\kappa\ell_0+1)-\a_0^2)},
\end{align*} 
and suppose also that $\om_*>\frac{\a_0}{2}$ and its multiplicity is 4.
Then, there exist $a_0 > 0$, $C > 0$ such that, for every $a \in (0, a_0)$, 
there exists at least two distinct $\mT_\a-$orbits
\begin{equation}\label{orbit}
\{ (\eta_a^{(i)} , \beta_a^{(i)} )\circ\mT_\alpha : \alpha \in \T^1,\,i=1,2 \}
\end{equation}
of nonzero solutions of the rotating traveling wave equations \eqref{rotating.wave.equation.0} satisfying the $\kappa-$fold symmetry, having angular velocity $\om = \om_a$, 
angular momentum 
\begin{equation}
\mI(\eta_a , \beta_a) = a
\end{equation}
and 
\begin{equation}
|\om_a - \om_*| + \| \eta_a \|_{\mathfrak{s},s+\frac32} + \| \beta_a \|_{\mathfrak{s},s+1} 
\leq C \sqrt{a}.
\end{equation}

\end{theorem}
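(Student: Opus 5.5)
The plan is the Moser--Weinstein variational argument. We constrain the action to a level set of the angular momentum and take $\om$ as a Lagrange multiplier. First, we perform a Lyapunov--Schmidt reduction of the same type as in Lemma \ref{Range.eq.solv.}, but with $\om$ treated as an unknown. Fix $\om$ near $\om_*$ and let $w(\om,v)$ be given by Lemma \ref{Range.eq.solv.}. By Lemma \ref{lemma:bif.eq.is.variational}, a rotating wave is then a pair $(\om,v)$ with $\grad_v\Phi(\om,v)=0$.

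Instead of solving this equation directly, we remove the $\om$-dependence by the standard trick. We work with the constrained problem: find critical points of $\bar\mH$ on $\{\bar\mI=a\}$ near $0$, with multiplier $\om$. Concretely, we solve the range equation together with the constraint. For $v\in V$ small, we find $w=\tilde w(v)\in W$ and $\om=\tilde\om(v)$ such that $\Pi_R\grad\bar\mE(\tilde\om(v),v+\tilde w(v))=0$. We then consider the reduced functionals
\[
h(v):=\bar\mH(v+\tilde w(v)),\qquad i(v):=\bar\mI(v+\tilde w(v)).
\]
A constrained critical point of $h$ on $\{i=a\}$ with multiplier $\om$ gives $\grad_v\Phi(\om,v)=0$. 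This is the same computation as in Lemma \ref{lemma:bif.eq.is.variational}, using that $\grad\bar\mE$ is orthogonal to $dw\,\hat v\in W$.

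The key computation is the expansion of $i$ on $V$. Since $\bar\mI(u)=-\frac12\int e^{2\zeta}\g'$, its quadratic part is $\frac12\la\pa_\om\mL_\om v,v\ra$ up to sign, namely $-\frac12\la \zeta,\g'\ra$-type terms. By \eqref{pa.om.L.scalar.product} and \eqref{w.est.},
\[
i(v)=\sum_{(\ell,m)\in S_4}c_\ell\, v_{\ell,m}^2+O(|v|^3),\qquad c_\ell=\frac{\a_0+2\ell(\om_*-\frac{\a_0}{2})}{2\big(1+(\om_*-\frac{\a_0}{2}+\frac{\a_0}{2\ell})^2\big)} .
\]
The hypothesis $\om_*>\frac{\a_0}{2}$ makes $c_{\ell_\pm}>0$ for both indices $\ell_\pm$ in $S_4$. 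Hence $i$ is a positive definite quadratic form up to cubic terms. It follows that for small $a>0$ the level set $M_a=\{v\in V:\ i(v)=a\}$ is a smooth compact manifold diffeomorphic to $\S^3$, of size $|v|\sim\sqrt a$. It is also $\mT_\a$-invariant by \eqref{reduced.energy.torus.invariance} and \eqref{w.torus.action}.

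The function $h|_{M_a}$ is continuous and $\mT_\a$-invariant on a compact set, so it attains a maximum and a minimum. These are constrained critical points, hence rotating waves with $\bar\mI=a$. The multiplier satisfies $\om_a-\om_*=O(|v|)=O(\sqrt a)$, obtained by comparing the quadratic terms $d h\approx\om_*\,di$ on $V$. The estimate $\|(\eta_a,\b_a)\|\le C\sqrt a$ follows from $|v|\sim\sqrt a$ and \eqref{w.est.}.

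To get two distinct orbits, we argue by cases. If the max and min orbits coincide, then $h$ is constant on $M_a$. In that case every point of $M_a$ is critical, and since $M_a$ is three-dimensional while orbits are one-dimensional, there are infinitely many orbits. Nonzero solutions follow because $0\notin M_a$. The $\kappa$-fold symmetry comes from working in $H^{\mathfrak s,s}_\kappa$ throughout.

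The main obstacle is the first step: setting up the reduction with $\om$ free, and checking that constrained critical points on $V$ correspond exactly to full solutions, that is, that the Lagrange multiplier equals the $\om$ used in $w$. I would handle this as in Moser--Weinstein and \cite{BLS}. Solve the range equation with $\om$ as a parameter. Then use $\Phi(\om,v)=h_\om(v)-\om(i_\om(v)-a)$ and show that the map $\om\mapsto$ critical point is consistent, via the implicit function theorem applied to the scalar equation fixing $\om$ from the $V$-component of $\grad\bar\mE$ along the radial direction $v$. This is solvable because $\la\pa_\om\mL_\om v,v\ra\gtrsim|v|^2$ by positivity of the $c_\ell$.
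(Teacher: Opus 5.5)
Your proposal is correct and follows the same Moser--Weinstein scheme as the paper. The common steps are: a Lyapunov--Schmidt reduction with $\om$ as a parameter, an auxiliary scalar equation fixing $\om(v)$ by the implicit function theorem after dividing by $|v|^2$, the angular-momentum level set being an $\S^3$ because $\bar\mI_0$ is positive definite when $\om_*>\frac{\a_0}{2}$, identification of the Lagrange multiplier with $\om(v)$, and the max/min argument giving two orbits.

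The only difference is that you test $\Pi_Z\mF(\om,v+w(\om,v))$ against $v$ rather than against $\grad\bar\mI(v+w(\om,v))$ as in Lemma \ref{lemma:choice.of.omega}. This works equally well here, because $\la\pa_\om\mL_\om(\om_*)v,v\ra=-2\bar\mI_0(v)$ is definite (negative, not positive as you wrote). That single fact gives both the nondegeneracy needed for the implicit function theorem and $\la\grad i(v),v\ra>0$ in the multiplier step.
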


From now on, we suppose that $\dim V=4$, and we denote by $|\cdot|$ the $(L^2)^2-$norm over $V$, and $\|\cdot\|$ the $W^{\mathfrak{s},s}-$norm over $W^{\mathfrak{s},s}$

We consider the following functions:
\begin{align}&f(\om,v):=\Pi_Z\mF(v+w(\om,v)), \label{F.projection.onto.Z}
\\&g(\om,v):=\grad\bar\mI(v+w(\om,v)), \label{grad.modified.I.}
\\&F(\om,v):=\la f(\om,v),g(\om,v)\ra_{(L^2)^2}. \label{F.def.}
\end{align}
Close to bifurcation points, they can be described as follows.
\begin{lemma}\label{lemma:taylor.expansions.for.moser.weinstein}For all $(\om,v)\in\mU_{\e_0}$, the following facts hold.

\medskip

$(i)$ $f\colon\,\mU_{\e_0}\to Z$ and $g\colon\,\mU_{\e_0}\to(L^2)^2$ are both analytic in $\mU_{\e_0}$.

\medskip

$(ii)$ The following formulae in $\mU_{\e_0}$ hold:
\begin{align}&f(\om,0)=0,\quad\pa_v f(\om_*,0)=0,\quad\pa_{\om v}f(\om_*,0)=\Pi_Z\pa_\om\mL_\om(\om_*), \label{f.expansions}
\\&g(\om,0)=0,\quad\pa_v g(\om_*,0)=d\grad\bar\mI(0)=-\pa_\om\mL_\om(\om_*), \label{g.expansions}
\\&F(\om,0)=0\quad\pa_vF(\om,0)=0,\quad\pa_{vv}F(\om_*,0)=0, \notag
\\&\la\pa_{\om vv}F(\om_*,0)v,v\ra_{(L^2)^2}= -2|\Pi_Z\pa_\om\mL_\om(\om_*)v|^2,\qquad\forall v\in V \label{F.expansions}
\end{align}

\end{lemma}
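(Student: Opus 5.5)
Analyticity in (i) is immediate. By Lemma \ref{ww.rotating.wave.eq}, $\mF$ is analytic on $\R\times U$, and by Lemma \ref{Range.eq.solv.} the map $(\om,v)\mapsto w(\om,v)$ is analytic on $\mU_{\e_0}$. $\Pi_Z$ is a bounded linear projection onto the finite dimensional space $Z=V$. $\grad\bar\mI(\zeta,\g)=(-e^{2\zeta}\g',e^{2\zeta}\zeta')$ is a polynomial-exponential (hence analytic) map on the relevant Sobolev–analytic spaces, by the algebra property of $H^{\mathfrak{s},s}$ for $s>\frac12$. Compositions of analytic maps are analytic, so $f$ and $g$ are analytic. $F$ is then analytic as an $(L^2)^2$-pairing of analytic maps.

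For (ii) I will expand everything in $v$ at $(\om_*,0)$ using \eqref{w.values}–\eqref{w.est.}. Since $w(\om,0)=0$ and $\mF(\om,0)=0$ (the rotating circle solves the equation for every $\om$), we get $f(\om,0)=0$. Similarly $g(\om,0)=\grad\bar\mI(0)=0$, because both components of $\grad\bar\mI$ vanish at the origin. Differentiating in $v$ and using $\pa_vw(\om,0)=0$ gives $\pa_vf(\om,0)=\Pi_Z\mL_\om|_V$. Because $\mL_\om$ is affine in $\om$ and $\mL_{\om_*}|_V=0$, this equals $(\om-\om_*)\Pi_Z\pa_\om\mL_\om(\om_*)|_V$. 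Evaluating at $\om_*$ gives $0$, and one $\om$-derivative gives the stated mixed derivative. For $g$, $\pa_vg(\om,0)=d\grad\bar\mI(0)|_V$. By \eqref{variational.linearization}, the $\om$-dependence of $\mL_\om$ is exactly $-\om\, d\grad\bar\mI(0)$, which yields \eqref{linearization.om.derivative} and $d\grad\bar\mI(0)=-\pa_\om\mL_\om(\om_*)$. The check here is that $d\grad\bar\mI(0)[\eta,\b]=(-\b',\eta')$, matching minus the matrix in \eqref{linearization.om.derivative}.

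For $F$, the vanishing at $v=0$ follows from $f(\om,0)=g(\om,0)=0$. Since both factors vanish at $v=0$, $F$ is quadratic to leading order and $\pa_vF(\om,0)=0$. The second derivative is
\[
\pa_{vv}F(\om,0)[v,v]=2\la\pa_vf(\om,0)v,\pa_vg(\om,0)v\ra=-2(\om-\om_*)\la\Pi_Z\pa_\om\mL_\om(\om_*)v,\pa_\om\mL_\om(\om_*)v\ra.
\]
This vanishes at $\om_*$. Differentiating once in $\om$ gives
\[
\la\pa_{\om vv}F(\om_*,0)v,v\ra=-2\la\Pi_Z\pa_\om\mL_\om(\om_*)v,\pa_\om\mL_\om(\om_*)v\ra=-2|\Pi_Z\pa_\om\mL_\om(\om_*)v|^2,
\]
where the last equality uses that $\Pi_Z$ is the $(L^2)^2$-orthogonal projection (since $Z=V$, i.e.\ $Z=R^{\perp}$), so $\la\Pi_Z a,a\ra=|\Pi_Z a|^2$.

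The main obstacle is the bookkeeping of the $\om$-dependence. One must check that no hidden $\om$-dependence enters through $w$ at second order, which holds because $\pa_vw(\om,0)=0$ for all $\om$ and not only at $\om_*$. This follows from applying the same implicit-differentiation argument as in Lemma \ref{Range.eq.solv.} at $(\om,0)$, using that $\Pi_R\mL_\om|_W$ remains invertible for $\om$ near $\om_*$ and that $\Pi_R\mL_\om V=0$ by the block structure \eqref{L.is.sum.of.blocks}. One must also note that the factor $\om$ in the definition of $\mF$ enters linearly, which is what makes $\pa_{vv}F$ vanish at $\om_*$.
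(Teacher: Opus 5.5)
Your route is the paper's: analyticity from Lemma~\ref{Range.eq.solv.}, the expansions of $f$ and $g$ at $v=0$, and the product rule for $F=\la f,g\ra$. The identities you arrive at are correct.

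\textbf{The false step.} The step you single out as the main obstacle is settled by a false claim: in general $\Pi_R\mL_\om V\neq0$ for $\om\neq\om_*$. Indeed $\mL_\om\mathtt v_{\ell,m}=(\om-\om_*)\pa_\om\mL_\om(\om_*)\mathtt v_{\ell,m}$. In the block coordinates of \eqref{L.is.sum.of.blocks} one has $\pa_\om\mL_\om^{(\ell,m)}=\begin{pmatrix}0&m\ell\\ m\ell&0\end{pmatrix}$, and $\mathtt v_{\ell,m}\propto(1,-c)$ with $c=\frac{m\a_0}{2\ell}+m(\om_*-\frac{\a_0}{2})$. The image $m\ell(-c,1)$ (compare \eqref{pa.om.L.action}) has a component proportional to $m\ell(1-c^2)$ along $(c,1)$. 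That direction is orthogonal to $\mathtt v_{\ell,m}$ inside the block, so it lies in $W$. Implicit differentiation of the range equation at $(\om,0)$ then gives
\[
\pa_vw(\om,0)=-(\om-\om_*)\big(\Pi_R\mL_\om|_{W}\big)^{-1}\Pi_R\,\pa_\om\mL_\om(\om_*)|_V ,
\]
which is nonzero unless $c^2=1$. This is exactly why \eqref{w.est.} contains the term $|v|\,|\om-\om_*|$; the proof of \eqref{w.values} only establishes $\pa_vw(\om_*,0)=0$. Consequently your exact formulas hold only up to error terms:
\[
\pa_vf(\om,0)=(\om-\om_*)\Pi_Z\pa_\om\mL_\om(\om_*)|_V+O((\om-\om_*)^2),\qquad \pa_vg(\om,0)=d\grad\bar\mI(0)|_V+O(\om-\om_*),
\]
and $\pa_{vv}F(\om,0)[v,v]=-2(\om-\om_*)|\Pi_Z\pa_\om\mL_\om(\om_*)v|^2+O((\om-\om_*)^2)$.

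\textbf{The repair.} Nothing away from $\om_*$ is needed. Differentiate $\pa_vf(\om,0)=\Pi_Z\mL_\om(I+\pa_vw(\om,0))|_V$ at $\om_*$ and use $\pa_vw(\om_*,0)=0$:
\[
\pa_{\om v}f(\om_*,0)=\Pi_Z\pa_\om\mL_\om(\om_*)|_V+\Pi_Z\mL_{\om_*}\pa_{\om v}w(\om_*,0)|_V .
\]
The last term vanishes because $\mL_{\om_*}$ is $(L^2)^2$-symmetric with kernel $V=Z$, so $\Pi_Z\mL_{\om_*}=0$. Similarly $\pa_vg(\om_*,0)=d\grad\bar\mI(0)|_V$ holds exactly at $\om_*$. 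In $\pa_\om\pa_{vv}F(\om_*,0)$ the terms containing $f(\om,0)$ or $g(\om,0)$ vanish identically in $\om$. The term $2\la\pa_vf(\om_*,0)v,\pa_{\om v}g(\om_*,0)v\ra$ vanishes because $\pa_vf(\om_*,0)=0$. Only $2\la\pa_{\om v}f(\om_*,0)v,\pa_vg(\om_*,0)v\ra=-2|\Pi_Z\pa_\om\mL_\om(\om_*)v|^2$ survives, which is precisely the paper's computation.
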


\begin{proof} $(i)$ It follows by Lemma \ref{Range.eq.solv.}.

$(ii)$ The formulae \eqref{f.expansions}, \eqref{g.expansions} can be obtained with similar computations done in Lemma \ref{lemma:taylor.expansions}. As for \eqref{F.expansions}, we have $F(\om,0)=0,\,\pa_vF(\om,0)=0,\,\pa_{vv}F(\om_*,0)=0$ by \eqref{f.expansions} and \eqref{g.expansions}, since
\begin{align*}&\pa_vF(\om,v)=\la\pa_vf(\om,v),g(\om,v)\ra_{(L^2)^2} + \la f(\om,v),\pa_vg(\om,v)\ra_{(L^2)^2},
\\&\pa_{vv}F(\om,v)=\la\pa_{vv}f(\om,v),g(\om,v)\ra_{(L^2)^2} + 2\la\pa_vf(\om,v),\pa_vg(\om,v)\ra_{(L^2)^2} + \la f(\om,v),\pa_{vv}g(\om,v)\ra_{(L^2)^2}.
\end{align*}
By these two expressions and \eqref{f.expansions}, \eqref{g.expansions}, we also observe that
\begin{align*}\pa_{\om vv}F(\om_*,0)[v,v]
&=2\la\pa_{\om v}f(\om_*,0)v,\pa_v g(\om_*,0)v\ra_{(L^2)^2}
\\&=-2|\Pi_Z\pa_\om\mL_\om(\om_*)v|^2
\end{align*}

\end{proof}

We then make the following choice of $\om=\om(v)$.
\begin{lemma}
\label{lemma:choice.of.omega} 
In the assumptions of Lemma \ref{lemma:taylor.expansions.for.moser.weinstein}, there exist $\e_1 \in (0, \e_0]$, $b_1, C > 0$ 
and a function $\om : B_{V}(\e_1) \to \R$, $v \longmapsto \om(v)$, 
where $B_{V}(\e_1) := \{ v \in V_N : |v| < \e_1 \}$, 
which is Lipschitz continuous in $B_{V}(\e_1)$, 
analytic in $B_{V}(\e_1) \setminus \{ 0 \}$, 
such that $\om(0) = \om_*$,
\begin{equation}  \label{IFT.om}
F(\om(v), v) = 0
\end{equation}
for all $v \in B_{V_N}(\e_1)$, 
and, if $(\om,v)$ satisfies $F(\om,v) = 0$ 
with $|\om - \om_*| < b_1$, $v \in B_{V_N}(\e_1)$, 
then $\om = \om(v)$. 
Moreover, the graph $\{ (\om(v) , v) : v \in B_{V_N}(\e_1) \} \subset \R \times V_N$ 
is contained in the open set $\mU_{\e_0} \subset \R \times V$ 
where the function $w$ constructed in Lemma \ref{Range.eq.solv.} is defined, and 
\begin{equation} \label{om.estimate}
|\om(v)-\om_*| = O(|v|)
\quad \forall v \in B_{V_N}(\e_1).
\end{equation}
Also, for all $\b \in \T^1$, we have 
\begin{equation} \label{om.group.action}
\om(\mT_\b v) = \om(v),\quad\om(\mR v)=\om(v).
\end{equation}
\end{lemma}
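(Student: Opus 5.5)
The plan follows the Moser--Weinstein scheme as in \cite{BBMM}. First I divide $F$ by a suitable power of $|v|$, and then I apply the implicit function theorem in the variables $(\om,t,\hat v)$, where $v=t\hat v$ with $|\hat v|=1$ and $t\in(-\e,\e)$. By Lemma \ref{lemma:taylor.expansions.for.moser.weinstein}, $F$ is analytic and satisfies $F(\om,0)=0$, $\pa_vF(\om,0)=0$ and $\pa_{vv}F(\om_*,0)=0$. Taylor expansion in $v$ then gives
\begin{equation*}
F(\om,t\hat v)=t^2\Big(\tfrac12(\om-\om_*)\la\pa_{\om vv}F(\om_*,0)\hat v,\hat v\ra_{(L^2)^2}+O(|\om-\om_*|^2)+O(t)\Big).
\end{equation*}
The terms that are cubic in $v$ with no $\om-\om_*$ factor give the $O(t)$ remainder. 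I therefore set $\mathcal G(\om,t,\hat v):=t^{-2}F(\om,t\hat v)$. This function extends analytically to $t=0$ and satisfies $\mathcal G(\om_*,0,\hat v)=0$. Moreover,
\begin{equation*}
\pa_\om\mathcal G(\om_*,0,\hat v)=-|\Pi_Z\pa_\om\mL_\om(\om_*)\hat v|^2
\end{equation*}
by \eqref{F.expansions}.

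The key step, which I expect to be the main obstacle, is to show that this derivative is nonzero uniformly on the unit sphere of $V$. In other words, I need $\Pi_Z\pa_\om\mL_\om(\om_*)$ to be injective on $V$. From \eqref{pa.om.L.action}, $\pa_\om\mL_\om(\om_*)\mathtt v_{\ell,m}$ is supported on the modes $(\ell,m)$ in the first component and $(\ell,-m)$ in the second. Its projection onto $Z=V$ has $\mathtt v_{\ell,m}$-coefficient equal to $\la\pa_\om\mL_\om(\om_*)\mathtt v_{\ell,m},\mathtt v_{\ell,m}\ra$, which by \eqref{pa.om.L.scalar.product} is proportional to $-(\a_0+2\ell(\om_*-\frac{\a_0}{2}))$. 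In the present regime $\om_*>\frac{\a_0}{2}$, this number is strictly negative for every $\ell\in\{\ell_+,\ell_-\}$. Distinct $(\ell,m)$ give orthogonal images. Hence $\Pi_Z\pa_\om\mL_\om(\om_*)$ is diagonal in the basis $\{\mathtt v_{\ell,m}\}$ with nonzero entries, and
\begin{equation*}
|\Pi_Z\pa_\om\mL_\om(\om_*)\hat v|\ge c>0 \quad\text{for all } |\hat v|=1.
\end{equation*}
This is exactly where the hypothesis $\om_*>\frac{\a_0}{2}$ is used, and also where the degenerate frequencies $\om=\frac{\a_0}{2}-\frac{\a_0}{2\ell}$ would fail.

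With this in hand, compactness of the unit sphere and the analytic implicit function theorem give $\e_1,b_1>0$ and a unique analytic $\tilde\om(t,\hat v)$ with $|\tilde\om-\om_*|<b_1$, $\tilde\om(0,\hat v)=\om_*$ and $\mathcal G(\tilde\om,t,\hat v)=0$. I then define $\om(v):=\tilde\om(|v|,v/|v|)$ for $v\ne0$ and $\om(0):=\om_*$.
\begin{itemize}
\item \emph{Analyticity.} Since $(t,\hat v)\mapsto t\hat v$ is an analytic diffeomorphism away from $t=0$, the function $\om$ is analytic on $B_V(\e_1)\setminus\{0\}$.
\item \emph{Lipschitz bound.} Since $\pa_t\tilde\om$ and $\pa_{\hat v}\tilde\om$ are bounded, $\om$ is Lipschitz, and in particular $|\om(v)-\om_*|=O(|v|)$.
\item \emph{Uniqueness.} If $F(\om,v)=0$ with $v\ne0$, then $\mathcal G(\om,|v|,\hat v)=0$, and the IFT uniqueness gives $\om=\om(v)$. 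For $v=0$, the equation $F(\om,0)=0$ holds for every $\om$, so the statement should be read with $\om(0)=\om_*$ by convention; I would adjust it accordingly.
\item \emph{Graph inside $\mU_{\e_0}$.} Shrinking $\e_1$ so that $C\e_1<\e_0$ ensures the graph lies in $\mU_{\e_0}$.
\end{itemize}

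For the invariances, I check that $F(\om,\mT_\b v)=F(\om,v)$ and $F(\om,\mR v)=F(\om,v)$. This follows from \eqref{w.torus.action}, \eqref{w.reversibility.action}, the equivariances \eqref{reduced.bif.equiv.} and \eqref{Angular.momentum.equivariances}, and the fact that $\mT_\b$ and $\mR$ are $L^2$-isometries. Both actions preserve $|v|$, so uniqueness in the implicit function theorem yields \eqref{om.group.action}.
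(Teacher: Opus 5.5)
Your proposal is correct and follows essentially the paper's route: the blow-up $\rho^{-2}F(\om,\rho y)$ with a removable singularity, the analytic implicit function theorem in $\om$ made uniform by compactness of the unit sphere, and the invariances from $F(\om,\mT_\b v)=F(\om,v)$, $F(\om,\mR v)=F(\om,v)$ plus uniqueness. It is also more careful than the paper on two points. First, you justify $|\Pi_Z\pa_\om\mL_\om(\om_*)\hat v|\ge c>0$ on the unit sphere via the diagonal structure in the orthonormal basis $\{\mathtt v_{\ell,m}\}$; this only needs $\a_0+2\ell(\om_*-\frac{\a_0}{2})\neq0$ for the resonant $\ell$'s, and the sign supplied by $\om_*>\frac{\a_0}{2}$ matters only later, for the definiteness of $\bar\mI_0$. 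Second, you rightly note that the uniqueness clause must exclude $v=0$, since $F(\om,0)=0$ for every $\om$.

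The one inference to repair is the Lipschitz claim. Boundedness of $\pa_{\hat v}\tilde\om$ is not enough, because $d(v/|v|)$ has size $|v|^{-1}$. However, $\tilde\om(0,\cdot)\equiv\om_*$ gives $|\pa_{\hat v}\tilde\om(t,\hat v)|\le C|t|$. Hence $|d\om(v)|\le C$ on $B_V(\e_1)\setminus\{0\}$, and Lipschitz continuity follows. The paper's own proof only establishes $|\om(v)-\om_*|\le C|v|$ at this point.
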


\begin{proof}
By \eqref{F.expansions}, we have the following Taylor expansion of $F(\om,v)$ about $(\om,0)$ in $\mU_{\e_0}$:
\begin{equation}\label{F.Taylor.in.v}F(\om,v)=-\frac12\la\pa_{ vv}F(\om_*,0)v,v\ra_{(L^2)^2} + O(|v|^3).
\end{equation}
Consider $v \in V_N$, 
and introduce polar coordinates 
$\rho = |v|$, $y = v / |v|$ on $V_N \setminus \{ 0 \}$. 
The function  
\[
\Phi(\om, \rho, y) := \rho^{-2} F(\om, \rho y)
\]
is defined for $|\om - \om_*| < \e_0$, 
$\rho \in (0, \e_0)$, $y$ in the unit sphere $\{ |y| = 1 \}$ of $V_N$.
In fact, replacing $v$ with $\rho y$ in the converging Taylor series of $F(\om,v)$ 
centered at $(\om_*, 0)$, 
one obtains that $\Phi(\om, \rho, y)$ is a well-defined, converging power series 
in the open set $\mD := \{ (\om, \rho, y) : 
|\om - \om_*| < \e_0$, $\rho \in (0, \frac12 \e_0)$, $|y| < 2\}$. 
By \eqref{F.Taylor.in.v}, $\Phi(\om, \rho, y)$ converges to $\frac12 \pa_{vv} F(\om,0)[y,y]$ 
as $\rho \to 0$, and therefore $\Phi$ has a removable singularity at $\rho = 0$. 
Hence $\Phi$ has an analytic extension to the open set 
$\mD_1 := \{ (\om, \rho, y) : |\om - \om_*| < \e_0$, $|\rho| < \frac12 \e_0$, $|y| < 2\}$. 
We also denote this extension by $\Phi$.

By \eqref{F.expansions} and \ref{pa.om.L.scalar.product}, we have 
\begin{align*}
\Phi(\om_*, 0, y_0) 
& = \tfrac12 \la\pa_{vv}F(\om_*,0)v,v\ra_{(L^2)^2} = 0, 
\\ 
\pa_\om \Phi (\om_*, 0, y_0) 
& = - |\Pi_Z\pa_\om\mL_\om(\om_*)v|^2\neq 0
\end{align*}
for any $|y_0| = 1$. 
Hence, by the implicit function theorem for analytic functions, 
there exists a function $\Om(\rho, y)$ such that $\Om(0, y_0) = \om_*$ and 
\[
\Phi( \Om(\rho, y), \rho, y) = 0
\]
for all $(\rho,y)$ with $|\rho| < \e_1$, $|y - y_0| < \e_1$, 
for some $\e_1 \in (0, \e_0]$. Moreover, $\e_1$ is independent of $y_0$  
because the unit sphere of $V_N$ is compact. 
Finally, given $v \in V_N$, $|v| < \e_1$, we define 
$\om(v) := \Om(\rho, y)$ with $\rho = |v|$ and $y = v/|v|$ for $v \neq 0$,
and $\om(0) := \om_*$. 
The Lipschitz estimate \eqref{om.estimate} holds because 
$|\om(v) - \om_*| = |\Om(\rho,y) - \Om(0,y)| \leq C \rho$. 

By \eqref{F.projection.onto.Z}, by \eqref{torus.action.invariance.barV}, by the equivariances \eqref{Hamiltonian.equivariances}-\eqref{Volume.equivariances},
and \eqref{w.torus.action},
we have that for any $\alpha\in\T^1$,
\[
\mT_\alpha f(\om, v) = f(\om, \mT_\alpha v), \quad \ 
\mT_\alpha g(\om, v) = g(\om, \mT_\alpha v).
\]
Hence, by \eqref{F.def.}, one has 
\begin{equation} \label{F.group.action}
F(\om, \mT_\alpha v) = F(\om, v)
\end{equation}
because $\mT_\alpha^*  = \mT_\alpha^{-1}$.
By \eqref{IFT.om} with $\mT_\alpha v$ in place of $v$, one has 
$F(\om(\mT_\alpha v), \mT_\alpha v) = 0$. On the other hand, 
by \eqref{F.group.action}, one also has that 
$F(\om(\mT_\alpha v), \mT_\alpha v) = F(\om(\mT_\alpha v), v)$. 
Therefore 
\[
F(\om(\mT_\alpha v), v) = 0,
\]
and, by the uniqueness property of the implicit function, 
we obtain \eqref{om.group.action}. 
\end{proof}

Now, given $B_{V}(\e_1)$ as in Lemma \ref{lemma:choice.of.omega}, let us define the following objects for all $v\in B_{V}(\e_1)$ and $a\in\R$:
\begin{align}&u(v):=v+w(\om(v);v), \label{u.v.function}
\\&\bar\mH_V(v):=\bar\mH(u(v)),\qquad\bar\mI_V(v):=\bar\mI(u(v)),\qquad\bar\mE_V(v):=\bar\mE(u(v)), \label{restricted.functionals}
\\&\mS_V(a):=\{v\in B_{V}(\e_1)\colon\,\bar\mI_V(v)=a\}.  \label{restricted.constraint}
\end{align}
These quantities are nothing less but those already introduced before, but restricted to $V$; the new one is only the constraint $\mS_V(a)$ of the conservation of the angular momentum restricted on $V$. The following lemma describes the regularity of \eqref{u.v.function} and the functionals in \eqref{restricted.functionals}.

\begin{lemma}\label{lemma:regularity}
Given the notations and the assumptions made above, the following facts hold.

\medskip

$(i)$ The function $B_{V}(\e_1) \to W^{\mathfrak{s},s}$, $v \longmapsto u(v)$ 
is analytic in $B_{V}(\e_1) \setminus \{ 0 \}$,
it is of class $C^1 ( B_{V}(\e_1) )$, 
and its differential is Lipschitz continuous in $B_{V}(\e_1)$. 
Moreover $w(\om(\mT_\b v), \mT_\b v) = \mT_\b w(\om(v), v)$, $w(\om(\mR v),\mR v)=\mR w(\om(v),v)$ for all $\b \in \T^1$, and $w(v)$ satisfies the estimate
\begin{equation}\label{w.comp.estimate}\| w(\om(v), v) \| = O(|v|^2).\end{equation}
Same holds for $u(v)$.

\medskip

$(ii)$ The functionals $\bar\mH_V:\,B_{V}(\e_1) \to \R$ 
and $\bar\mI_V:\,B_{V}(\e_1) \to \R$ 
are analytic in $B_{V}(\e_1) \setminus \{ 0 \}$
and of class $C^2(B_{V}(\e_1))$, 
and their second order differentials are Lipschitz continuous in $B_{V}(\e_1)$.
Moreover $\bar\mH_V \circ \mT_\b = \bar\mH_V,\,\bar\mH_V\circ\mR=\bar\mH_V$ 
and $\bar\mI_V \circ \mT_\b = \bar\mI_V,\,\bar\mI_V\circ\mR=\bar\mI_V$ for all $\b \in \T^1$. 
    
\end{lemma}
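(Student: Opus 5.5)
The plan is to obtain every property in Lemma \ref{lemma:regularity} by composing the regularity already established for $w(\om,v)$ in Lemma \ref{Range.eq.solv.} with that of $\om(v)$ in Lemma \ref{lemma:choice.of.omega}.

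\emph{Step 1: analyticity, symmetries and the bound on $w$.} The map $(\om,v)\mapsto w(\om,v)$ is analytic on $\mU_{\e_0}$. The map $v\mapsto\om(v)$ is analytic on $B_V(\e_1)\setminus\{0\}$ and its graph lies in $\mU_{\e_0}$. Hence $v\mapsto u(v)=v+w(\om(v),v)$ is analytic away from $0$.

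The equivariance of $w(\om(v),v)$ under $\mT_\b$ and $\mR$ follows from \eqref{w.torus.action}, \eqref{w.reversibility.action} and \eqref{om.group.action}. For instance, $w(\om(\mT_\b v),\mT_\b v)=w(\om(v),\mT_\b v)=\mT_\b w(\om(v),v)$.

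The estimate \eqref{w.comp.estimate} follows by inserting \eqref{om.estimate} into \eqref{w.est.}: $|v|^2+|v|\,|\om(v)-\om_*|=O(|v|^2)$.

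\emph{Step 2: $C^1$ regularity of $u$ at the origin.} Away from $0$ we have
\[
du(v)=I+\pa_\om w\,d\om(v)+\pa_v w.
\]
Here $d\om$ is bounded, since $\om(v)=\Om(|v|,v/|v|)$ with $\Om$ analytic and $|d(v/|v|)|\lesssim|v|^{-1}$, while $\pa_\rho\Om$ is bounded. The derivatives of $w$ satisfy $\pa_v w=O(|v|+|\om-\om_*|)$ and $\pa_\om w=O(|v|)$. For the latter I use that $\pa_\om^k w(\om_*,0)=0$ and $w(\om,0)=0$, so $\pa_\om w(\om,0)=0$.

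Consequently $\pa_\om w\,d\om=O(|v|)\cdot O(|v|^{-1}|v|)=O(|v|)$, and therefore $du(v)-I=O(|v|)$. Together with $u(0)=0$ and \eqref{w.comp.estimate}, this shows that $u$ is differentiable at $0$ with $du(0)=I$ and that $du$ is continuous.

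The main obstacle is the Lipschitz continuity of $du$ across $0$. Away from $0$ I would bound $d^2u$. The problematic term is $\pa_\om w\,d^2\om$, where $d^2\om=O(|v|^{-1})$; it is harmless because it is multiplied by $\pa_\om w=O(|v|)$. The remaining terms are bounded by analyticity of $w$ and boundedness of $d\om$.

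A bounded $d^2u$ on the punctured ball gives a Lipschitz bound for $du$ along segments avoiding $0$. Since $V$ has dimension $4\ge2$, the punctured ball is path connected with segments avoidable at arbitrarily small cost, so $du$ is globally Lipschitz by continuity.

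\emph{Step 3: the restricted functionals.} Set $\bar\mH_V=\bar\mH\circ u$ and $\bar\mI_V=\bar\mI\circ u$, where $\bar\mH$ and $\bar\mI$ are analytic on $U$ by Lemma \ref{ww.rotating.wave.eq}. Analyticity away from $0$ is immediate from Step 1.

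For $C^2$ regularity I would not differentiate $u$ twice. Instead I exploit the structure of the construction, following \cite{BBMM}. By Lemma \ref{lemma:bif.eq.is.variational}, $d\bar\mE(\om(v),u(v))\,du(v)=\grad_v\Phi(\om(v),v)$, with $\Phi$ analytic in $(\om,v)$. Moreover $F(\om(v),v)=0$ says that $f$ is orthogonal to $g=\grad\bar\mI(u)$.

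This gives
\[
d\bar\mI_V(v)=\la g,du\,\cdot\ra ,
\]
and then $d\bar\mH_V=d\bar\mE\,du+\om(v)\,d\bar\mI_V=\grad_v\Phi(\om(v),v)+\om(v)\,d\bar\mI_V$. Hence it suffices to show that $v\mapsto\la g(\om(v),v),du(v)\,\cdot\ra$ and $v\mapsto\grad_v\Phi(\om(v),v)$ are $C^1$ with Lipschitz derivative.

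Each is a product of an analytic function of $(\om,v)$ vanishing at $v=0$ with Lipschitz factors. The same cancellation as in Step 2 applies: $\om$-derivatives carry an extra factor $|v|$ that compensates $d^2\om=O(|v|^{-1})$. This yields $C^2$ regularity with Lipschitz second differential.

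Finally, the invariances of $\bar\mH_V$ and $\bar\mI_V$ under $\mT_\b$ and $\mR$ follow from \eqref{Hamiltonian.invariances}, \eqref{Angular.momentum.invariances} and the equivariance of $u$ from Step 1.
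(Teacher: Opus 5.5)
Your proposal is correct, and for part (i) and for $\bar\mI_V$ it follows the paper's proof. Both compose the analytic $w$ with the Lipschitz map $\om$, and get $\|w(\om(v),v)\|=O(|v|^2)$ from \eqref{w.est.} and \eqref{om.estimate}. Both use $w(\om,0)=0$ to get $\pa_\om w=O(|v|)$. For $\bar\mI_V$ both differentiate $d\bar\mI(u(v))[du(v)\,\cdot\,]$, where the term with the highest derivative of $u$ is neutralized by $\grad\bar\mI(u(v))=O(|v|)$.

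Two things differ.
\begin{itemize}
\item You bound $d^2u$ on the punctured ball, where the only singular term is $\pa_\om w\,d^2\om=O(|v|)\,O(|v|^{-1})$, and deduce that $du$ is Lipschitz on the whole ball. The paper only proves $\|du(v)-du(0)\|\le C|v|$. In (ii) it then uses that $d^2u$ stays bounded ``because $du$ is Lipschitz'', so your estimate is exactly what that step needs.
\item For $\bar\mH_V$ you pass through $d\bar\mH_V=\grad_v\Phi(\om(v),v)+\om(v)\,d\bar\mI_V$. This identity is valid: the contribution of $\pa_\om w\,d\om$ drops out because $\Pi_{R^{\mathfrak{s},s}}\mF=0$ and $\pa_\om w$ takes values in $W\perp Z$. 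The paper's route is shorter, though. The rotating circle solves $\mF(\om,0)=0$ for every $\om$ and $\grad\bar\mI(0)=0$, so $\grad\bar\mH(0)=0$ and the $\bar\mI_V$ computation applies verbatim.
\end{itemize}
Contrary to what you announce, the detour does not spare you derivatives of $u$. It also introduces the product $\om(v)\,d\bar\mI_V(v)$, which needs its own check because $\om$ is in general not $C^1$ at $0$. That check works, since $d\bar\mI_V=O(|v|)$ and $d^2\om=O(|v|^{-1})$.

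Two justifications should be sharpened.
\begin{itemize}
\item Boundedness of $d\om$, and the bound $d^2\om=O(|v|^{-1})$, come from $\Om(0,y)\equiv\om_*$, which holds because $\pa_{vv}F(\om_*,0)=0$. This gives $\pa_y^k\Om(\rho,y)=O(\rho)$ for $k\ge1$. The factor $|d(v/|v|)|\lesssim|v|^{-1}$ that you quote works against boundedness, not for it.
\item ``A product of an analytic function vanishing at $v=0$ with Lipschitz factors'' does not by itself give a Lipschitz second differential. For $v\mapsto\la g(\om(v),v),du(v)\,\cdot\,\ra$ you need $|d^3u|\lesssim|v|^{-1}$ on the punctured ball. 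The term $\pa_\om w\,d^3\om=O(|v|)\,O(|v|^{-2})$ is fine. But $d^3u$ also contains $\pa_{\om v}w\otimes d^2\om=O(|v|^{-1})$, with no extra smallness from $w$: $\pa_v w(\om,0)=O(|\om-\om_*|)$ need not vanish for $\om\neq\om_*$. That term is compensated only by the factor $g=O(|v|)$.
\end{itemize}
With these checks your argument closes. It in fact gives Lipschitz bounds on the whole ball, slightly more than the estimates at the origin written in the paper.
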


\begin{proof} $(i)$ The function $v \longmapsto w(\om(v),v)$ is Lipschitz continuous in $B_{V}(\e_1)$
and analytic in $B_{V}(\e_1) \setminus \{ 0 \}$ 
by Lemmas \ref{Range.eq.solv.} and \ref{lemma:choice.of.omega}.
By \eqref{w.est.} and \eqref{om.estimate}, one has 
\begin{equation} \label{w.comp.estimate}
\| w(\om(v), v) \| = O(|v|^2)
\end{equation}
for all $v \in B_{V}(\e_1)$. 
Hence the function $v \longmapsto w(\om(v),v)$ is differentiable at $v=0$ 
with zero differential. 
Its differential at any point $v \in B_{V}(\e_1) \setminus \{ 0 \}$ 
in direction $\tilde v \in V$ is 
\begin{equation}  \label{der.w.comp}
d\{ w( \om(v) , v) \}\tilde v 
= d w( \om(v) , v)[d\om(v)\tilde v] 
+ dw( \om(v), v)\tilde v.
\end{equation}
For $v \to 0$, one has 
$(\om(v), v) \to (\om_*,0)$ because the function $\om(v)$ in Lemma \ref{lemma:choice.of.omega}
is continuous. Moreover,
$(\pa_v w)(\om(v), v) \to \pa_v w(\om_*, 0) = 0$ 
and $(\pa_\om w)(\om(v), v) \to \pa_\om w(\om_*, 0) = 0$,  
because the function $w$ in Lemma \ref{Range.eq.solv.} is analytic. 
By \eqref{w.values}, $d\om(v)\tilde v$, which is defined for $v \neq 0$, remains bounded as $v \to 0$
because $\om(v)$ is Lipschitz. 
Hence $\pa_v \{ w( \om(v) , v) \} \to 0$, which implies that $w(\om(v), v)$ 
is of class $C^1 ( B_{V}(\e_1) )$. 
Moreover, 
$|d\om(v)\tilde v| \leq C |\tilde v|$ for $0 < |v| < \e_1$ 
because $\om(v)$ is Lipschitz,
and 
\[
\| (\pa_\om w)( \om(v) , v) \| \leq C |v|, \quad \  
\| d w( \om(v), v)\tilde v \| \leq C |v||\tilde v|,
\]
because the function $w$ in Lemma \ref{Range.eq.solv.} is analytic 
and by \eqref{om.estimate}.

Hence, by \eqref{der.w.comp}, 
\[
\| d\{ w( \om(v) , v) \}\tilde v \|
\leq C |v||\tilde v|
\]
for $v \in B_{V}(\e_1) \setminus \{ 0 \}$, 
so that the map $v \longmapsto \pa_v \{ w( \om(v) , v) \}$ is Lipschitz continuous around $v=0$. 

The fact that $w(\om(\mT_\alpha v), \mT_\alpha v) = \mT_\alpha w(\om(v), v)$ follows from \eqref{w.torus.action} and \eqref{om.group.action}. Finally, the same regularity properties hold for $u(v)$ because of \eqref{u.v.function}.

\medskip

$(ii)$ From Lemma
\ref{ww.rotating.wave.eq} and the identities \eqref{Hamiltonian.invariances}-\eqref{Volume.invariances} and \eqref{Hamiltonian.equivariances}-\eqref{Volume.equivariances},
we deduce the analyticity, the $C^1$ regularity with Lipschitz differentials 
and the invariance with respect to the group action $\mT_\alpha$ for both $\bar\mH_V,\,\bar\mI_V$. Now, we want to prove the higher regularity: we will do it for $\bar\mI_V$, since for $\bar\mH_V$ the computations are very similar.
By \eqref{restricted.functionals}, 
the differential of $\bar\mI_{V}$ at a point $v \in B_{V}(\e_1)$ in direction $\tilde v \in V$ is 
\[
d\bar\mI_{V}(v)\tilde v 
= d\bar\mI(u(v))[ du(v)\tilde v], 
\quad \ 
du(v)\tilde v = \tilde v + d\{ w(\om(v),v) \}\tilde v.
\]
The map $v \longmapsto d\bar\mI_{V}(v)$ is Lipschitz. 
At $v \neq 0$, its differential in direction $\tilde z \in V$ is 
\[
d^2\bar\mI_{V}(v)[\tilde z, \tilde v] 
= d^2\bar\mI(u(v))[ du(v)\tilde z, du(v)\tilde v ] 
+ d\bar\mI(u(v))[ d^2 u(v)[ \tilde z, \tilde v] ].
\]
As $v \to 0$, one has $u(v) \to 0$, $du(v)\tilde v \to \tilde v$, 
$d\bar\mI(u(v)) \to d\bar\mI(0) = 0$, and $d^2\bar\mI(u(v)) \to d^2\bar\mI(0)$, 
while the bilinear map $d^2u(v) = d^2\{ w( \om(v), v ) \}$ 
remains bounded, i.e., $\| du(v)[ \tilde z, \tilde v] \| \leq C |\tilde z||\tilde v|$
uniformly as $v \to 0$, because the map $v \longmapsto du(v)$ is Lipschitz. 
Hence, $d^2\bar\mI_{V}(v)[\tilde z, \tilde v]$ converges to 
$d^2\bar\mI_V(0)[\tilde z, \tilde v]$ as $v \to 0$. 
Similarly, one proves that 
\[
| d\bar\mI_{V}(v)\tilde v - d^2\bar\mI_V(0)[v, \tilde v] | 
\leq C |v|^2|\tilde v|
\]
for $v \neq 0$. This implies that the map $v \longmapsto d\bar\mI_{V}(v)$ is differentiable at $v=0$, 
with differential $d^2\bar\mI_{V}(0) = d^2\bar\mI_V(0)$. 
Also, from the limit already proved it follows that $\bar\mI_{V}$ is of class $C^2$. 
The Lipschitz estimate for the second order differential
\[
| d^2\bar\mI_{V}(v)[ \tilde z, \tilde v] - d^2\bar\mI_{V}(0)[ \tilde z, \tilde v] | 
\leq C |v||\tilde z||\tilde v|
\]
is proved similarly.

\end{proof}

Now, we want to provide a geometrical description of the constraint $\mS(a)$ by a Taylor expansion of $\mI(v)$ about $v=0$.
\begin{lemma}\label{constraint.top.desc.}
In the above notations and assumptions, let us further suppose $\om_*>\frac{\a_0}{2}$. We have the following facts.

\medskip

$(i)$ The functional $\bar\mI_V : B_{V}(\e_1) \to \R$ defined in \eqref{restricted.functionals} 
satisfies
\begin{equation}\label{IZ.approx}
\bar\mI_V (v) = \bar\mI_0 (v) + R(v), 
\quad \ |R(v)| = O(|v|^3),
\end{equation}
where 
\begin{equation} \label{def.mI.0}
\bar\mI_0(v) := 
-\frac12 \la \pa_\om\mL_\om(\om_*) v , v \ra_{(L^2)^2} 
=\,\sum_{(\ell,m)\in S}\frac{\a_0+2\ell(\om_*-\frac{\a_0}{2})}{2(1+(\om_*-\frac{\a_0}{2} + \frac{\a_0}{2\ell})^2)}|\mathtt v_{\ell,m}|^2.
\end{equation}

$(ii)$ There exist a constant $\e_2 \in (0, \e_1]$ 
and a map $\psi:\,B_{V}(\e_2) \to B_{V}(\e_1)$ such that 
\begin{equation} \label{mI.VN.psi}
\bar\mI_V(\psi(v)) = |v|^2 
\end{equation}
for all $v \in B_{V}(\e_2)$. 
The map $\psi$ is analytic in $B_{V}(\e_2) \setminus \{ 0 \}$, 
it is of class $C^1( B_{V}(\e_2) )$, with Lipschitz differential,  
it is a diffeomorphism of $B_{V}(\e_2)$ onto its image $\psi( B_{V}(\e_2) )$, 
which is an open neighborhood of $\psi(0)=0$ contained in $B_{V}(\e_1)$,   
and 
\begin{equation} \label{psi.group.action}
\psi \circ \mT_\b = \mT_\b \circ \psi,\qquad\psi\circ\mR=\mR\circ\psi
\end{equation}
for all $\b \in \T$. 
Moreover, there exists $a_0 > 0$ such that, for all $a \in (0, a_0)$, 
the set $\mS_V(a)$ in \eqref{restricted.constraint} is 
\[
\mS_V(a) = \psi \big( \mathtt{S}(a) \big) 
\]
where
\begin{equation} \label{def.mS.0.VN.a}
\mathtt{S}(a) := \{ v \in V : |v|^2 = a \},
\end{equation}
and thus $\mS_V(a)$ is an analytic connected compact manifold of dimension $3$ embedded in $V$.
Its tangent and normal space at a point $v \in \mS_V(a)$ are
\begin{align} 
T_v( \mS_V(a) ) 
& = \{ \tilde v \in V:\,\la \grad \bar\mI_V (v) , \tilde v \ra_{(L^2)^2} = 0 \}, 
\label{tangent.space}
\\
N_v( \mS_V(a) ) 
& = \{ \lm \grad \bar\mI_V (v):\,\lm \in \R \},
\label{normal.space}
\end{align}
where $\grad \bar\mI_V(v) \neq 0$ for all $v \in \mS_V(a)$, all $a \in (0, a_0)$.

\end{lemma}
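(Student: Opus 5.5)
For part $(i)$ we Taylor expand $\bar\mI_V(v)=\bar\mI(v+w(\om(v),v))$ at $v=0$. Since $\bar\mI(0)=0$ and $\grad\bar\mI(0)=0$, the functional $\bar\mI$ is a quadratic form plus cubic terms near the origin:
\[
\bar\mI(u)=\tfrac12 d^2\bar\mI(0)[u,u]+O(\|u\|^3).
\]
By Lemma \ref{lemma:regularity} we have $\|w(\om(v),v)\|=O(|v|^2)$. Hence
\[
\bar\mI_V(v)=\tfrac12 d^2\bar\mI(0)[v,v]+O(|v|^3),
\]
because the cross term $d^2\bar\mI(0)[v,w]$ is $O(|v|^3)$. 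By \eqref{g.expansions}, $d\grad\bar\mI(0)=-\pa_\om\mL_\om(\om_*)$, which gives the first expression in \eqref{def.mI.0}. The explicit formula then follows from \eqref{pa.om.L.scalar.product}. The mixed terms $\la\pa_\om\mL_\om(\om_*)\mathtt v_{\ell,m},\mathtt v_{\ell',m'}\ra$ with $(\ell,m)\neq(\ell',m')$ vanish by orthogonality of the Fourier modes, since $\pa_\om\mL_\om(\om_*)$ maps the $(\ell,m)$-block into itself by \eqref{pa.om.L.action}. The key observation is positivity: when $\om_*>\frac{\a_0}{2}$, each coefficient $\a_0+2\ell(\om_*-\frac{\a_0}{2})$ is strictly positive. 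So $\bar\mI_0$ is a positive definite quadratic form on the $4$-dimensional space $V$, equivalent to $|v|^2$. After rescaling coordinates by a linear map commuting with $\mT_\b$ and $\mR$ (it acts diagonally on each $(\ell,\pm1)$ pair with the same weight), we may take $\bar\mI_0(v)=|Av|^2$ with $A$ positive and equivariant.

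For part $(ii)$ we construct $\psi$ by a Morse-lemma-type radial rescaling. For $v\ne0$ write $v=\rho y$ with $|y|=1$, and look for $\psi(v)=\lambda(v)\,A^{-1}v$ with $\lambda$ a scalar near $1$. The equation $\bar\mI_V(\lambda A^{-1}v)=|v|^2$ becomes, after division by $\rho^2$,
\[
\lambda^2+\rho^{-2}R(\lambda\rho A^{-1}y)=1.
\]
The function $\rho^{-2}R(\lambda\rho A^{-1}y)$ is $O(\rho)$, and its $\lambda$-derivative is also $O(\rho)$, using $C^2$ regularity with Lipschitz second differential from Lemma \ref{lemma:regularity}$(ii)$. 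So the implicit function theorem in $\lambda$ (a contraction argument, uniform in $y$ on the compact sphere) gives a unique $\lambda(\rho,y)=1+O(\rho)$ for $|\rho|$ small. This is analytic for $\rho\ne0$, because $\bar\mI_V$ is analytic away from $0$. Setting $\psi(0)=0$, the map $\psi$ is Lipschitz, differentiable at $0$ with $d\psi(0)=A^{-1}$, and $C^1$ with Lipschitz differential. This last step needs the estimates $|\nabla R(v)|=O(|v|^2)$ and $|d^2R(v)|=O(|v|)$, derived exactly as in the proof of Lemma \ref{lemma:regularity}. Since $d\psi(0)$ is invertible, the inverse function theorem for $C^1$ maps gives a diffeomorphism onto an open neighbourhood of $0$ for $\e_2$ small. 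The equivariance \eqref{psi.group.action} follows from the invariances $\bar\mI_V\circ\mT_\b=\bar\mI_V$ and $\bar\mI_V\circ\mR=\bar\mI_V$, from the equivariance of $A$, and from the fact that $\mT_\b,\mR$ are isometries. Together these show that $\mT_\b^{-1}\psi(\mT_\b v)$ solves the same equation, so uniqueness of $\lambda$ gives $\lambda(\mT_\b v)=\lambda(v)$, and similarly for $\mR$.

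Finally, $\mS_V(a)=\psi(\mathtt S(a))$ for $a<a_0$ small. The inclusion $\supseteq$ is immediate from \eqref{mI.VN.psi}. For $\subseteq$: if $\bar\mI_V(u)=a$ with $u$ small, then $u\in\psi(B_V(\e_2))$, because positive definiteness gives $|u|^2\lesssim a$ and $\psi(B_V(\e_2))$ contains a ball. Writing $u=\psi(v)$ then forces $|v|^2=a$. Hence $\mS_V(a)$ is the diffeomorphic image of a $3$-sphere, so it is compact, connected, and analytic, since $\psi$ is analytic off $0$ and $0\notin\mathtt S(a)$. Differentiating \eqref{mI.VN.psi} gives $d\bar\mI_V(\psi(v))\,d\psi(v)=2\la v,\cdot\ra\ne0$, so $\grad\bar\mI_V\neq0$ on $\mS_V(a)$, and \eqref{tangent.space}--\eqref{normal.space} follow from the regular value theorem. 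One subtlety remains: the domain $B_V(\e_1)$ in the definition \eqref{restricted.constraint} may contain points far from $0$ with $\bar\mI_V=a$. We handle this by shrinking $\e_1$ so that $\bar\mI_V(v)\ge c|v|^2$ on the whole ball. The main obstacle is the $C^1$-with-Lipschitz-differential regularity of $\psi$ at the origin, since $\bar\mI_V$ is only $C^2$ there and not analytic.
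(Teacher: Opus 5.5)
Your proposal is correct and follows essentially the same route as the paper. The paper also sets $\psi(v)=(1+\mu(v))\Lm v$, with $\Lm$ the equivariant diagonal map normalizing $\bar\mI_0$ (your $A^{-1}$), solves for the scalar $\mu$ by a contraction argument driven by the $O(|v|^3)$ remainder, and gets equivariance from uniqueness of the fixed point; your extra checks of the reverse inclusion $\mS_V(a)\subseteq\psi(\mathtt S(a))$ (via $\bar\mI_V\ge c|v|^2$ on the ball) and of $\grad\bar\mI_V\neq0$ (by differentiating $\bar\mI_V(\psi(v))=|v|^2$) fill in steps the paper only asserts.
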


\begin{proof} $(i)$ By doing a Taylor expansion of $\mI_V$ about $v=0$ and observing that 
\begin{equation*}\bar\mI_V(0)=0,\quad\grad\bar\mI_V(0)=0,\quad d^2\bar\mI_V(0)[v,v]=\la d\grad\bar\mI_V(0)v,v\ra_{(L^2)^2}=\la-\pa_\om\mL(\om_*)v,v\ra_{(L^2)^2},\end{equation*}
we get \eqref{IZ.approx}.
The expression \eqref{def.mI.0} is obtained by the computations done in \ref{pa.om.L.scalar.product}, and so $\bar\mI_0$ is a positive-definite quadratic form whose levels are diffeomorphic to $\S^3$.

\medskip

$(ii)$ Define the diagonal linear map 
\begin{equation} \label{def.Lm}
\Lm:\,V \to V, \quad 
\Lm \mathtt{v}_{m} :=\lambda= \lambda_{\ell,m}\mathtt{v}_{\ell,m}, \quad 
\lm_{\ell,m} :=\Big\{\frac{\a_0+2\ell(\om_*-\frac{\a_0}{2})}{2(1+(\om_*-\frac{\a_0}{2} + \frac{\a_0}{2\ell})^2)}\Big\}^{-\frac12},\quad (\ell,m)\in S.
\end{equation}  
By \eqref{IZ.approx}, one has 
\begin{equation} \label{hat.square}
\bar\mI_0(\Lm v) = |v|^2.
\end{equation}
Given $v$, we look for a real number $\mu$ such that 
\begin{equation} \label{19:22}
\bar\mI_{V}( (1+\mu) \Lambda v ) = |v|^2. 
\end{equation}
We look for $\mu$ in the interval $[-\delta, \delta]$, 
with $\delta = \frac14$, 
and we assume that $|v| < \e_2$, where $\e_2 \in (0, \e_1]$ is such that 
$\frac54 |\Lm| \e_2 < \e_1$, so that, for $v \in B_{V}(\e_2)$, 
the point $(1+\mu) \Lm v$ is in the ball $B_{V}(\e_1)$ where $\bar\mI_{V}$ is defined. 
Let $R$ be the remainder in \eqref{IZ.approx}. 
By \eqref{hat.square}, one has 
$\bar\mI_0( (1+\mu) \Lm v ) = (1+\mu)^2 |v|^2$,  
and, since $\bar\mI_{V}= \bar\mI_0 + R$, 
\eqref{19:22} becomes 
\begin{equation} \label{19:23}
(2 \mu + \mu^2) |v|^2 + R( (1+\mu) \Lm v ) = 0.
\end{equation}
For $v \neq 0$, \eqref{19:23} is the fixed point equation $\mu = \mK(\mu)$
for the unknown $\mu$, where 
\begin{equation} \label{def.mK}
\mK(\mu) := - \frac{\mu^2}{2} - \frac{R( (1+\mu) \Lm v)}{2 |v|^2}.
\end{equation}
By the estimate in \eqref{IZ.approx}, 
for some constants $C_1, C_2$ one has
\[
| \mK(\mu) | \leq \frac{\delta^2}{2} + C_1 \e_2 \leq \delta, 
\quad \ 
| \mK'(\mu) | \leq \delta + C_2 \e_2 \leq \frac12 
\]
for all $|\mu| \leq \delta$, $|v| < \e_2$,  
provided $\e_2$ is sufficiently small, namely
$C_1 \e_2 \leq \frac12 \delta$ and $C_2 \e_2 \leq \frac14$. 
Hence, by the contraction mapping theorem, 
in the interval $[- \delta, \delta]$ there exists a unique fixed point of $\mK$, 
which we denote by $\mu(v)$. 
Hence, 
\begin{equation}  \label{def.psi.in.the.proof}
\mI_{N}( \psi (v) ) = |v|^2, \quad \ 
\psi(v) := (1 + \mu(v)) \Lm v,  
\end{equation}
for all $v \in B_{V}(\e_2) \setminus \{ 0 \}$. 
From the implicit function theorem 
applied to equation \eqref{19:23} around any pair $(v, \mu(v))$ 
it follows that the map $v \longmapsto \mu(v)$ is analytic in $B_{V}(\e_2) \setminus \{ 0 \}$. 
Moreover, 
$|\mu(v)| = |\mK(\mu(v))| \leq \frac12 \mu^2(v) + C_1 |v| \leq \frac18 |\mu(v)| + C_1 |v|$, 
whence 
\[
|\mu(v)| \leq C |v|.
\]
Thus, defining $\mu(0) := 0$, the function $\mu(v)$ is also Lipschitz in $B_{V}(\e_2)$. 
As a consequence, the function $\psi$ is analytic in $B_{V_N}(\e_2) \setminus \{ 0 \}$
and Lipschitz in $B_{V}(\e_2)$. In addition, $|\psi(v) - \psi(0) - \Lm v| \leq C |v|^2$, 
which means that $\psi$ is differentiable also at $v=0$, 
with differential $d\psi(0)\tilde v = \Lm \tilde v$. 
At $v \neq 0$, the differential is 
\[
d\psi(v)\tilde v = (d\mu(v)\tilde v) \Lm v + (1 + \mu(v)) \Lm \tilde v,
\]
and $d\psi(v) \to d\psi(0) = \Lm$ as $v \to 0$ because $\mu(v) \to 0$, $\Lm v \to 0$, 
and $|d\mu(v)\tilde v| \leq C |\tilde v|$ uniformly as $v \to 0$.
Thus, $\psi$ is of class $C^1$ in $B_{V}(\e_2)$. 
Moreover, 
\[
|d\psi(v)\tilde v - d\psi(0)\tilde v| \leq C |v||\tilde v|,
\]
i.e., the differential map $v \longmapsto d\psi(v)$ is Lipschitz continuous. 

The function $\psi$ is a diffeomorphism of open sets of $V$, and, 
for each $a \in (0, a_0)$ sufficiently small, one has 
\begin{align*}
\mS_{V}(a) 
& = \{ v \in B_{V}(\e_1) : \bar\mI_{V}(v) = a \} 
\\ 
& = \{ v = \psi(y) : y \in B_{V}(\e_2), \ a = \bar\mI_{V}(v) = \bar\mI_{V}(\psi(y)) = |y|^2 \}
\\ 
& = \psi ( \{ y \in V : |y|^2 = a \} ),
\end{align*}
namely $\mS_{V}(a)$ is the image of the sphere $\{ |y|^2 = a \}$ by the diffeomorphism $\psi$. 

By \eqref{matrix.actions}, we have
\begin{equation} \label{Lm.group.action} 
\Lm \mT_\alpha = \mT_\alpha \Lm,\qquad\Lm\circ\mR=\mR\circ\Lm,
\end{equation} 
and also
\begin{equation} \label{norm.group.action}
|\mT_\alpha v|^2 = |v|^2,\qquad|\mR v|^2=|v|^2.
\end{equation}  
From \eqref{Lm.group.action}, \eqref{hat.square}, and \eqref{norm.group.action}, we have 
\[
\bar\mI_0( \mT_\alpha \Lm v ) = \bar\mI_0( \Lm \mT_\alpha v ) 
= |\mT_\alpha v|^2 = |v|^2 = \bar\mI_0( \Lm v)
\] 
for all $v \in V$, and same for $\mR$. This implies that $\bar\mI_0 \circ \mT_\alpha = \bar\mI_0,\quad\bar\mI_0\circ\mR=\bar\mI_0$ 
because $\{ \Lm v : v \in V \} = V$.
By Lemma \ref{lemma:regularity}, the functional $\bar\mI_{V}$ has the same invariance property,  
and therefore the difference $R = \bar\mI_{V} - \bar\mI_0$ also satisfies
\begin{equation}  \label{mR.group.action}
R \circ \mT_\alpha = R,\qquad R\circ\mR=R
\end{equation}
for all $\alpha \in \T^1$. 
Now denote by $\mK(\mu, v)$ the scalar quantity $\mK(\mu)$ in \eqref{def.mK}. 
By uniqueness of fix point, we have proved that $\mK(\mu, v) = \mu$ if and only if $\mu=\mu(v)$. 
Moreover, by \eqref{norm.group.action} and \eqref{mR.group.action}, one has
\begin{equation*}\mK(\mu, \mT_\alpha v) = \mK(\mu, v),\qquad\mK(\mu,\mR v)=\mK(\mu,v)\end{equation*}
for all pairs $(\mu, v)$, all $\alpha \in \T^1$. 
Then 
\[
\mu(v) = \mK( \mu(v), v) = \mK( \mu(v), \mT_\alpha v),
\]
whence $\mu(v) = \mu( \mT_\alpha v)$, and similarly $\mu(v)=\mu(\mR v).$ 
This identity, together with \eqref{Lm.group.action}, 
gives \eqref{psi.group.action}.
\end{proof}

\begin{remark} By following straightforward the computations done in the proof of Lemma \ref{constraint.top.desc.}, it holds true also when $\dim V=2$. In this case, $\mS_V(a)$ is diffeomorphic to a unit circle $\S^1$.
\end{remark}

We are now ready to get the rotating waves.
\begin{lemma}
In the notations and assumptions made above, the following facts hold.

\medskip

$(i)$ For all $a \in \R$, the functional $\bar\mE_V\colon\, B_{V}(\e_1) \to \R$ defined in \eqref{restricted.functionals}
is Lipschitz continuous in $B_{V}(\e_1)$ and analytic in $B_{V}(\e_1) \setminus \{ 0 \}$.
Moreover, for all $\b \in \T^1$, one has  
\begin{equation} \label{mE.VN.group.action}
\bar\mE_V \circ \mT_\b = \bar\mE_V,\quad\bar\mE_V\circ\mR=\bar\mE_V
\end{equation}

\medskip

$(ii)$ For any $v \in \mS_V(a)$, one has 
\begin{equation} \label{grad.mE.V}
\grad \bar\mE_V(v) = \Pi_{Z} \mF (\om(v), u(v)).
\end{equation}

\medskip

$(iii)$ If $v\in\mS_V(a)$ is a constrained critical point for $\bar\mE_V$ along the constraint $\mS_V(a)$, then for all $a\in(0,a_0)$ with $\a_0$ as in Lemma \ref{constraint.top.desc.}, one has that also 
\begin{equation}\grad\bar\mE_V(v)=0.
\end{equation}
As a result, $v$ solves the bifurcation equation \eqref{bif.eq.reduced}.

\medskip

$(iv)$ There exist at least two orbits of solutions for \eqref{bif.eq.reduced} which are distinct with respect to both the actions $\mT_\b,\,\mR$.

\end{lemma}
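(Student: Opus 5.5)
The plan follows the Moser–Weinstein scheme, in the form used in \cite{BBMM}, treating the four items in order.

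\emph{Item (i).} Write $\bar\mE_V(v)=\bar\mH_V(v)-\om(v)(\bar\mI_V(v)-a)$. Here $\bar\mH_V$ and $\bar\mI_V$ are $C^2$, and analytic off $0$, by Lemma \ref{lemma:regularity}. The function $\om(v)$ is Lipschitz and analytic off $0$ by Lemma \ref{lemma:choice.of.omega}. A product of such functions is Lipschitz on $B_V(\e_1)$ and analytic on $B_V(\e_1)\setminus\{0\}$. The invariance \eqref{mE.VN.group.action} follows from the invariances of $\bar\mH_V$, $\bar\mI_V$ and $\om$, namely \eqref{om.group.action} and Lemma \ref{lemma:regularity}.

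\emph{Item (ii).} The key point is that on $\mS_V(a)$ the factor multiplying $d\om$ vanishes. For $v\neq0$ we differentiate, using \eqref{u.v.function}:
\[
d\bar\mE_V(v)\tilde v=d\bar\mE(\om(v),u(v))[du(v)\tilde v]-(d\om(v)\tilde v)(\bar\mI_V(v)-a).
\]
The last term is zero since $\bar\mI_V=a$ on $\mS_V(a)$. For the first term we use $du(v)\tilde v=\tilde v+d\{w(\om(v),v)\}\tilde v$, and the derivative of $w$ lies in $W$. By the range equation \eqref{zeros.implicit.func.}, $\mF(\om(v),u(v))\in Z$. Since $Z=V\perp W$, exactly as in Lemma \ref{lemma:bif.eq.is.variational}, only $\la\Pi_Z\mF(\om(v),u(v)),\tilde v\ra$ survives. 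This gives \eqref{grad.mE.V}. At $v=0$ both sides vanish.

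\emph{Item (iii).} A constrained critical point satisfies $\grad\bar\mE_V(v)=\lm\grad\bar\mI_V(v)$ for some $\lm\in\R$, by \eqref{normal.space}. We take the inner product with $g(\om(v),v)=\grad\bar\mI(u(v))$. By \eqref{IFT.om},
\[
F(\om(v),v)=\la\Pi_Z\mF(\om(v),u(v)),g(\om(v),v)\ra=0 .
\]
Hence $0=\lm\la\grad\bar\mI_V(v),g(\om(v),v)\ra$. We expect this step to be the main obstacle: we must show the pairing is nonzero. We plan to argue as follows:
\begin{itemize}
\item $\grad\bar\mI_V(v)=\Pi_V\, du(v)^*g$;
\item $du(v)=I+O(|v|)$;
\item $g(\om(v),v)=-\pa_\om\mL_\om(\om_*)v+O(|v|^2)$, by \eqref{g.expansions}.
\end{itemize}
Also, $\pa_\om\mL_\om(\om_*)$ maps $V$ into $V$ up to the $W$-component, which is killed by $\Pi_Z$. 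The pairing is therefore $|\Pi_Z\pa_\om\mL_\om(\om_*)v|^2+O(|v|^3)$. This is bounded below by $c|v|^2$, because by \eqref{def.mI.0} the quadratic form $\bar\mI_0$ is positive definite when $\om_*>\frac{\a_0}{2}$. For small $a$ this forces $\lm=0$, so $\grad\bar\mE_V(v)=0$, and item (ii) then gives the bifurcation equation.

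\emph{Item (iv).} By Lemma \ref{constraint.top.desc.}, $\mS_V(a)$ is a compact connected $3$-manifold, diffeomorphic to $\S^3$. The continuous function $\bar\mE_V$ attains a maximum and a minimum on it, and by item (iii) both are critical points, hence solutions. If the maximum and minimum values differ, the corresponding orbits are distinct, since $\bar\mE_V$ is constant on $\mT_\b$- and $\mR$-orbits. If the values coincide, $\bar\mE_V$ is constant on $\mS_V(a)$, so every point is critical. The orbits are images of circles under the $\T^1$ action and are at most one-dimensional, so $\S^3$ contains infinitely many distinct orbits. The same holds after also quotienting by $\mR$, since each $\mT_\b\cup\mR$-orbit is a union of at most two circles. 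In either case we obtain at least two distinct orbits.
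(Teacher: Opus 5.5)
Your proposal is correct and follows the paper's proof essentially step by step. In (ii), the $d\om$-term drops on $\mS_V(a)$ and the range equation together with $Z=V\perp W$ kills the $w$-contribution. In (iii), you pair the Lagrange multiplier identity against $\grad\bar\mI(u(v))$, use $F(\om(v),v)=0$, and get a nonzero pairing from the nondegeneracy of $\bar\mI_0$. In (iv), you take the max and min of $\bar\mE_V$ on the compact constraint. Your extra care about the $W$-component of $\grad\bar\mI(u(v))$, and about the degenerate case in (iv) (infinitely many circle orbits in $\S^3$), only makes explicit what the paper leaves implicit.
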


\begin{proof} $(i)$ It follows from Lemma \ref{lemma:regularity}.

\medskip

$(ii)$ For a matter of simplicity of notation, we set $\la\cdot,\cdot\ra:=\la\cdot,\cdot\ra_{(L^2)^2}$.

For each point $v\in B_V(\e_1)\setminus\{0\}$ and for each direction $\hat v\in V$, we have 
\begin{align*}d\bar\mE_V(v)\hat v
&=d\bar\mH_V(v)\hat v - \om(v)d\bar\mI_V(v)\hat v - (d\om(v)\hat v)(\bar\mI_V(v)-a)
\\&=\la\grad\bar\mH_V(v)-\om(v)\grad\bar\mI_V(v),du(v)\hat v\ra - (d\om(v)\hat v)(\bar\mI_V(v)-a)
\\&=\la\mF(\om(v),u(v)),\hat v\ra + \la\mF(\om(v),u(v)),dw(\om(v),v)\hat v\ra - (d\om(v)\hat v)(\bar\mI_V(v)-a)
\\&=\la\mF(\om(v),u(v)),\hat v\ra - (d\om(v)\hat v)(\bar\mI_V(v)-a)
\\&\quad+\la\Pi_Z\mF(\om(v),u(v)),dw(\om(v),v)\hat v\ra + \la\Pi_{R^{\mathfrak{s},s}}\mF(\om(v),u(v)),dw(\om(v),v)\hat v\ra
\\&=\la\mF(\om(v),u(v)),\hat v\ra - (d\om(v)\hat v)(\bar\mI_V(v)-a).
\end{align*}
The last identity can be deduced by \eqref{zeros.implicit.func.} and by the fact that $dw(v)\hat v\in R^{\mathfrak{s},s}\perp Z$. If more $v\in\mS_V(a)$, then we deduce \eqref{grad.mE.V}.

\medskip

$(iii)$ If $v\in\mS_V(a)$ is a constrained critical point for $\bar\mE_V$ along the constraint $\mS_V(a)$, then there exists a constant $\lambda\in\R$ such that
\begin{equation*}\label{lagrange.multip.}\grad\bar\mE_V(v)=\lambda\grad\bar\mI_V(v).
\end{equation*}
Thus, using \eqref{IFT.om}, \eqref{F.def.}, \eqref{F.projection.onto.Z}, \eqref{grad.modified.I.}, 
the definition of $u(v)$ in \eqref{u.v.function},
\eqref{grad.mE.V}, and \eqref{lagrange.multip.}, 
we obtain
\begin{align} \label{0=lm.scal.prod}
0 & = F(\om(v), v) 
= \la \Pi_{Z} \mF (\om(v), u(v)) , (\grad \bar\mI_V)(u(v)) \ra_{(L^2)^2}
= \lm \la \grad \bar\mI_V(v) , \grad \bar\mI(u(v)) \ra_{(L^2)^2}.
\end{align}
By \eqref{IZ.approx} and \eqref{w.comp.estimate}, we get 
\[
\grad \bar\mI_V(v) = \grad \bar\mI_0(v) + O(|v|^2), 
\quad 
\grad \bar\mI(u(v)) = \grad \bar\mI_0(v) + O(|v|^2),
\]
and 
\[
\la \grad \bar\mI_V(v) , (\grad \bar\mI)(u(v)) \ra_{(L^2)^2} 
= |\grad \bar\mI_0(v)|^2 + O(|v|^3) 
\geq \tfrac12 |\grad \bar\mI_0(v)|^2 
> 0. 
\]
This means that the coefficient of $\lm$ in \eqref{0=lm.scal.prod} is nonzero, 
whence $\lm = 0$. Thus, by \eqref{lagrange.multip.}, $\grad \bar\mE_V(v) = 0$, 
then \eqref{bif.eq.reduced} follows from \eqref{grad.mE.V}.

\medskip

$(iv)$ It is enough to observe that by Lemma \ref{constraint.top.desc.}, the constraint $\mS_V(a)$ is a connected compact smooth manifold with no boundary, so any $C^1$ function over it attains at least one maximum and one minimum. If they coincide, then the whole constraint $\mS_V(a)$ is made of critical points, otherwise they generate two distinct critical  orbits by \eqref{mE.VN.group.action}.

\end{proof}

\begin{remark} The lower bound $2$ for the number of critical points coincide with that provided by the Benci $\S^1-$index, see for instance \cite{Mawhin.Willem}, Lemma 6.10, Section 6.4. This bound is optimal for bifurcations from eigenvalues of multiplicity 4: for an example, see \cite{BBMM}, Remark 5.8.
    
\end{remark}

\subsection{Existence rotating waves by angular frequency approach}

In this section, we prove the following theorem.

\begin{theorem}\label{thm:rw.frequency}Let $\kappa\in\N$ and $\a_0>0,\,\mathfrak{s}\ge0,\,s_0>0$ with $s\ge s_0>1$, let $\ell_0\in\N$ be fixed and, finally, let us suppose that \eqref{existence.resonance.condition} holds.
Then, the following facts hold.

\medskip

$(i)$ Let
\begin{align*}&\om_*:=\om_*^{(+)}:=\frac{\a_0}{2} - \frac{\a_0}{2\kappa\ell_0} + \frac{1}{2\kappa\ell_0}\sqrt{(\kappa\ell_0-1)(4\s_0\kappa\ell_0(\kappa\ell_0+1)-\a_0^2)},
\end{align*}     
and suppose that $\om_*<\frac{\a_0}{2}$ and $\om_*$ has multiplicity $4$.
Then, one of the following alternatives can occur.

\medskip

$(i.1)$ $u=0$ is a non-isolated solution for \eqref{rotating.wave.equation.0}.

$(i.2)$ There exists $\tilde\e>0$ and a one-sided neighbourhood $\mU_{\tilde\e}^{(s)}$ of $\om_*$ such that for all $\om\in\mU_{\e_0}^{(s)}$, there exist at least two distinct $\mT_\a-$orbits
\begin{equation*}\{(\eta_\om^{(i)},\b_\om^{(i)})\circ\mT_\a\colon\,\a\in\T^1\},\qquad i\in\{1,2\}
\end{equation*}
of nonzero solutions of \eqref{rotating.wave.equation.0}
satisfying the $\kappa-$fold symmetry, having angular frequency $\om$, and 
\begin{equation*}\|\eta_\om^{(i)}\|_{\mathfrak{s},s+\frac32} + \|\b_{\om}^{(i)}\|_{\mathfrak{s},s+1}\to0
\end{equation*}
as $\om\to\om_*$.

$(i.3)$ There exists $\tilde\e>0$ such that for all $\om\in(\om_*-\tilde\e,\om_*+\tilde\e)$, there exist at least one $\mT_\a-$orbit
\begin{equation*}\{(\eta_\om,\b_\om)\circ\mT_\a\colon\,\a\in\T^1\}
\end{equation*}
of nonzero solutions of \eqref{rotating.wave.equation.0} satisfying the $\kappa-$fold symmetry, having angular frequency $\om$, and satisfying
\begin{equation*}\|\eta_\om\|_{\mathfrak{s},s+\frac32} + \|\b_{\om}\|_{\mathfrak{s},s+1}\to0
\end{equation*}
as $\om\to\om_*$.

\medskip

$(ii)$ Similarly, all the theses hold for 
\begin{align*}&\om_*:=\om_*^{(-)}:=\frac{\a_0}{2} - \frac{\a_0}{2\kappa\ell_0} - \frac{1}{2\kappa\ell_0}\sqrt{(\kappa\ell_0-1)(4\s_0\kappa\ell_0(\kappa\ell_0+1)-\a_0^2)},
\end{align*}
supposing only that $\om_*$ ha multiplicity $4$.

\end{theorem}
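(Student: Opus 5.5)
The plan is to work entirely with the reduced action $\Phi(\om,v)=\bar\mE(\om,v+w(\om,v))$ on the four-dimensional kernel $V$, for fixed $\om$ near $\om_*$. By Lemma \ref{lemma:bif.eq.is.variational}, nonzero critical points of $\Phi(\om,\cdot)$ in a small ball $B_V(r)$ are exactly the small nontrivial rotating waves with frequency $\om$. By Lemma \ref{lemma:taylor.expansions},
\begin{equation*}
\Phi(\om,v)=\tfrac12(\om-\om_*)\la\pa_\om\mL_\om(\om_*)v,v\ra_{(L^2)^2}+\Phi_*(v)+O(|\om-\om_*|\,|v|^3),\qquad \Phi_*(v):=\Phi(\om_*,v)=O(|v|^3).
\end{equation*}
By \eqref{pa.om.L.scalar.product}, the quadratic form $Q(v):=\frac12\la\pa_\om\mL_\om(\om_*)v,v\ra$ is diagonal on the basis $\mathtt v_{\ell_\pm,m}$. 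Each coefficient is proportional to $-(\a_0+2\ell_\pm(\om_*-\frac{\a_0}{2}))$, and these are nonzero because $\om_*\neq\frac{\a_0}{2}-\frac{\a_0}{2\ell}$. When $\om_*<\frac{\a_0}{2}$ the two signs may differ, so $Q$ can be indefinite; this is why the Moser--Weinstein argument no longer applies. I will therefore follow the minimax scheme of \cite{BBMM}, Lemmas 5.2 and 5.4, applied to functions of the form $f_\om=(\om-\om_*)Q(v)+O(|v|^3)$. Throughout, $\mT_\a$-invariance \eqref{reduced.energy.torus.invariance} is used to count orbits rather than points.

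First I split according to whether $v=0$ is an isolated critical point of $\Phi_*=\Phi(\om_*,\cdot)$. If it is not, there are nonzero critical points of $\Phi(\om_*,\cdot)$ accumulating at $0$. These give rotating waves with frequency $\om_*$ accumulating at $u=0$, which is alternative $(i.1)$.

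If $0$ is isolated, I split further by the local behaviour of $\Phi_*$ near $0$.

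\emph{Case (a): $0$ is a strict local minimum or maximum of $\Phi_*$.} Say it is a minimum, so $\Phi_*\ge c(r)>0$ on a small sphere $|v|=r$. Choose the one-sided neighbourhood of $\om_*$ on which $(\om-\om_*)Q$ has a negative direction. For $|\om-\om_*|$ small relative to $r$, $0$ remains a strict local minimum of $\Phi(\om,\cdot)$ in a tiny ball, while $\Phi(\om,\cdot)$ is positive on $|v|=r$ and negative at some point with $|v|<r$. A mountain-pass or linking argument on the closed ball then produces a critical level distinct from the minimum. The $\mT_\a$-equivariant version, using the Benci $\S^1$-index on the four-dimensional space $V$ as in \cite{BBMM}, Lemma 5.2, yields two distinct $\mT_\a$-orbits. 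This is alternative $(i.2)$, and the waves tend to $0$ as $\om\to\om_*$ because $r$ can be taken arbitrarily small.

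\emph{Case (b): $\Phi_*$ takes both signs in every neighbourhood of $0$.} Here I use isolation to build an isolating (Conley) block $N$ for the gradient flow of $\Phi_*$ around $0$. Its Conley index is nontrivial but differs from that of a nondegenerate point. For all $\om$ close to $\om_*$ on both sides, $N$ remains an isolating block for $\Phi(\om,\cdot)$ and the index is preserved. At $v=0$, however, $\Phi(\om,\cdot)$ has nondegenerate Hessian $(\om-\om_*)Q$, whose local index is that of a nondegenerate critical point. Comparing the two indices forces another critical point in $N\setminus\{0\}$, as in \cite{BBMM}, Lemma 5.4. This gives at least one orbit for all $\om$ in a two-sided interval, which is alternative $(i.3)$.

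The main obstacle is case (b): the index comparison must be made uniform in $\om$, and the block must stay isolating as $\om$ varies. I would try first to import \cite{BBMM}, Lemma 5.4 verbatim. The required hypotheses, namely analyticity of $\Phi$ from Lemma \ref{Range.eq.solv.}, $\nabla_v\Phi(\om,0)=0$, and a nondegenerate Hessian for $\om\ne\om_*$, are already established.

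Part $(ii)$ for $\om_*^{(-)}$ is identical. There the Hessian coefficients are again nonzero and no sign assumption on $\om_*-\frac{\a_0}{2}$ is used.
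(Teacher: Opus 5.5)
\textbf{Case (a) and case (b) both need a definite quadratic part, which you have given up.} Your skeleton is the paper's: non-isolated $\Rightarrow$ (i.1); strict extremum of $\Phi(\om_*,\cdot)$ $\Rightarrow$ one-sided interior extremum plus mountain pass $\Rightarrow$ (i.2); sign change $\Rightarrow$ Conley block of \cite{BBMM} $\Rightarrow$ (i.3). But this is Rabinowitz's mechanism for $f_\om=(\om-\om_*)|v|^2+O(|v|^3)$, and each step needs the quadratic part to be \emph{definite}. Under your own premise that $Q$ may be indefinite, both cases fail.

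In (a), a nondegenerate Hessian $(\om-\om_*)Q$ with a negative direction excludes a local minimum at $0$. So ``$0$ remains a strict local minimum'' is false. What is needed is a side on which $(\om-\om_*)Q$ is definite, with sign opposite to the extremum of $\Phi(\om_*,\cdot)$. That is what the paper does: $0$ is a strict local maximum of $f_{\om_*}$, and for $\om<\om_*$ it becomes a strict local minimum of $f_\om$ while $f_\om\le-M<0$ on $\pa B(\e_1)$. The interior maximum and the mountain-pass level over paths from $0$ to $\pa B(\e_1)$ then give the critical points.

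In (b), isolation plus ``neither minimum nor maximum'' only gives vanishing critical groups in degrees $0$ and $4$. That is, the index of $\{0\}$ differs from that of a nondegenerate \emph{minimum or maximum}. It can coincide with that of a nondegenerate saddle, and an indefinite $Q$ has Morse index $2$, since the coefficients in \eqref{pa.om.L.scalar.product} do not depend on $m$.

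Two models show the failures concretely. Let $v=(a,b)\in\C^2$ be the coordinates on the two resonant modes and $c_1,c_2>0$.
First, $(\om-\om_*)(c_1|a|^2-c_2|b|^2)+|a|^4-|b|^4$ is $\mT_\a$-invariant, and $0$ is isolated and sign-changing at $\om=\om_*$. Yet there are no nonzero critical points for $\om>\om_*$, so the two-sided conclusion of (b) fails.
Second, with $+|b|^4$ instead, $0$ is a strict minimum of $\Phi(\om_*,\cdot)$ and there is exactly one orbit on each side. This contradicts the two orbits you claim in (a).

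Once definiteness is granted and the side in (a) is fixed, your proof is the paper's. Your index-continuation argument in (b), with the corrected justification above, is then a valid substitute for the paper's minimax over paths from $0$ to the exit set $W_-$.

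\textbf{Definiteness in the paper, and the nondegeneracy you assume.} The paper gets definiteness from the expansion of $\Phi(\om,v)$ displayed right after Theorem \ref{thm:rw.frequency}. There the coefficients are proportional to $\a_0-2\ell(\om_*-\frac{\a_0}{2})$, which is positive for $\om_*<\frac{\a_0}{2}$. The sign of the $2\ell$-term is opposite to that in \eqref{pa.om.L.scalar.product}, so your worry is justified. It has to be resolved, though, not carried into arguments that only work in the definite case.

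The nondegeneracy you invoke, $\om_*\neq\frac{\a_0}{2}-\frac{\a_0}{2\ell}$, is not a hypothesis, and here it actually fails:
\begin{itemize}
\item Since $\a_0(\om-\frac{\a_0}{2})+\frac{\a_0^2}{4}=\om^2-(\om-\frac{\a_0}{2})^2$, \eqref{resonance.equation} reads $F=(\ell-1)\bigl(\s_0(\ell+1)-(\om-\frac{\a_0}{2})^2\bigr)-\om^2$.
\item So two distinct resonant $\ell,\ell'\in\N$ satisfy $(\ell-1)(\ell'-1)=-\om_*^2/\s_0$. Multiplicity $4$ therefore forces $\om_*=0$ and $1\in\{\ell,\ell'\}$.
\item At $\ell=1$, $\om_*=0$ one has $\a_0+2\ell(\om_*-\frac{\a_0}{2})=0$. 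Hence $Q$ vanishes on $\mathtt v_{1,\pm1}=(\ph_{1,\pm1},0)$, the infinitesimal translations of the circle, and even $\Pi_Z\pa_\om\mL_\om(\om_*)\mathtt v_{1,\pm1}=0$.
\end{itemize}
So neither the definite nor the nondegenerate indefinite picture is available in any multiplicity-$4$ case. The two translation directions must first be removed, for instance through the translation symmetry or the constraints $\mB=\mP=0$, before any minimax or index argument can start.
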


We recall that by \eqref{bif.equation.is.variational}, the nonzero rotating waves are nonzero critical points for $\Phi(\om,v)$ in $\mU_{\e_0}$. Furthermore, by \eqref{action.expansion} and \ref{pa.om.L.scalar.product}, we have in $\mU_{\e_0}$
\begin{equation*}\Phi(\om,v)=-(\om-\om_*)\sum_{(\ell,m)\in S}\frac{\a_0-2\ell(\om_*-\frac{\a_0}{2})}{2(1+(\om_*-\frac{\a_0}{2} + \frac{\a_0}{2\ell})^2)}|\mathtt v_{\ell,m}|^2 + R(v),
\end{equation*}
where $R(v)=O(|v|^3)$.

The case $(i.1)$ is that $v=0$ is not an isolated critical point for $\bar\Phi(\om_*,\cdot)$. By definition of non-isolated critical point, the following fact holds.
\begin{lemma}\label{lemma:nonisolated.critical.point.case} If $v=0$ is not an isolated critical point for $\bar\Phi(\om_*,\cdot)$, then there exists a sequence $(v_n)_{n\in\N}\subset V$ such that $v_n\to0$ and each $v_n$ is a critical point for $\bar\Phi(\om_*,\cdot)$. As a result, each $u_n:=v_n+w(\om_*,v_n)$ is a solution for the rotating wave equation $\mF(\om,u)=0$. 

As a result, point $(i.1)$ holds.
\end{lemma}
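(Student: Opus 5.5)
The plan is to unwind the definition of a non-isolated critical point and then push it through the variational Lyapunov--Schmidt reduction already set up. Here $\bar\Phi(\om_*,\cdot)$ is read as the reduced action $\Phi(\om_*,\cdot)$ of \eqref{action.on.V}, which is defined on the ball $\{v\in V\colon |v|<\e_0\}$.

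First, $v=0$ is always a critical point of $\Phi(\om_*,\cdot)$, since $d\Phi(\om,0)=0$ by the proof of Lemma \ref{lemma:taylor.expansions}. If it is not isolated, then every ball $\{|v|<1/n\}$ with $1/n<\e_0$ contains a critical point $v_n\neq0$. This gives a sequence $v_n\to0$ of nonzero critical points, and since $\dim V=4$ it does not matter which norm on $V$ measures the convergence.

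Second, Lemma \ref{lemma:bif.eq.is.variational} gives $\Pi_Z\mF(\om_*,v_n+w(\om_*,v_n))=\grad_v\Phi(\om_*,v_n)=0$, because $(\om_*,v_n)\in\mU_{\e_0}$. Also $\Pi_{R^{\mathfrak{s},s}}\mF(\om_*,v_n+w(\om_*,v_n))=0$ by \eqref{zeros.implicit.func.}. The two projections are complementary, since the codomain splits as $R^{\mathfrak{s},s}\oplus Z$ with $Z=V$. Hence $\mF(\om_*,u_n)=0$ for $u_n:=v_n+w(\om_*,v_n)$, that is, $u_n$ solves \eqref{rotating.wave.equation.0} with $\om=\om_*$.

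Finally, $u_n\neq0$ because its $V$-component is $v_n\neq0$ and $w\in W$, with $V\cap W=\{0\}$. Moreover $\|u_n\|\le|v_n|+\|w(\om_*,v_n)\|=O(|v_n|)\to0$ by \eqref{w.est.}. So $u=0$ is a limit of nonzero solutions, which is exactly alternative $(i.1)$. The $\kappa$-fold symmetry is inherited, since everything is set in the spaces $H_\kappa$.

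No step is a genuine obstacle; the only point needing care is the equivalence between the full equation and the pair of projected equations \eqref{Range.eq.}-\eqref{Bif.eq.}, which rests on $Z=V$ being a complement of the range.
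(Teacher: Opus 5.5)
Your proposal is correct and follows the paper's route. The paper offers only ``by definition of non-isolated critical point,'' together with the reduction already established in Lemma~\ref{Range.eq.solv.} and Lemma~\ref{lemma:bif.eq.is.variational}. Your write-up makes explicit the steps the paper leaves implicit: the two projected equations combine because the codomain splits as $R^{\mathfrak{s},s}\oplus Z$ with $Z=V$, $u_n\neq0$ follows from $V\cap W=\{0\}$, and $u_n\to0$ follows from \eqref{w.est.}.
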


From now on, we suppose that $v=0$ is an isolated critical point for $\bar\Phi(\om_*,\cdot)$. With an abuse of notations, we identify $V$ with the space of the vectors $(v_{\ell,m}\colon\,(\ell,m)\in\mS)$ of coefficients of $v\in V$, and we consider the Hilbert space $(V,|\cdot|)$, where $|\cdot|^2$ is the sum of the squares of such coefficients. Moreover, we set $B(\rho)$ to be the ball of $V$ centered at $v=0$ and having radius $\rho$. Finally, we consider the map $f_{\om}\colon\,v\in B(\e_0)\longmapsto f_\om(v):=\Phi(\om,v)\in\R$, for any $\om\in(\om_*-\e_0,\om_*+\e_0)$.

\medskip

We want to prove point $(i.2)$ first.

\begin{lemma}\label{lemma:point.ii} Let us suppose that $v=0$ is a local strict maximum or minimum for $f_{\om_*}$. Then, we have that for $\tilde\e>0$ small enough and $\om\in(\om_*-\tilde\e,\om_*)$, there exist at least two nonzero critical points $\overline v(\om),\,\underline{v}(\om)$ for $f_\om$.
As a result, point $(i.2)$ holds.
\end{lemma}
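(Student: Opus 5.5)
The plan is to adapt the variational argument of \cite{BBMM} (Lemmas 5.2 and 5.4) to the reduced action $f_\om=\Phi(\om,\cdot)$ on $V\simeq\R^4$. There are two ingredients. The first is the expansion \eqref{action.expansion}, $f_\om(v)=(\om-\om_*)Q(v)+O(|v|^3)$ with $Q(v):=\frac12\la\pa_\om\mL_\om(\om_*)v,v\ra_{(L^2)^2}$; I also use its refinement $f_\om(v)-f_{\om_*}(v)=(\om-\om_*)\big(Q(v)+O(|v|^3)\big)$, which follows because $\bar\mE$ is affine in $\om$ and $w$ is analytic with $\pa_\om^k w(\om_*,0)=0$ (Lemma \ref{Range.eq.solv.}). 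The second is the $\T^1$- and $\mR$-invariance \eqref{reduced.energy.torus.invariance}.

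\emph{Step 1 (nondegeneracy of $Q$).} By \eqref{pa.om.L.scalar.product}, $Q$ is diagonal in the basis $\{\mathtt v_{\ell,m}\}_{(\ell,m)\in S}$, with coefficients proportional to $-(\a_0+2\ell(\om_*-\frac{\a_0}{2}))$. Under the standing assumption $\om_*\ne\frac{\a_0}{2}-\frac{\a_0}{2\ell}$, each coefficient is nonzero. Hence $|Q(v)|\ge c|v|^2$ on each $\mT_\a$-invariant two-dimensional block.

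\emph{Step 2 (change of nature of $0$).} Consider first the case where $0$ is a strict local minimum of $f_{\om_*}$. Fix a small $r$ with $m_r:=\min_{|v|=r}f_{\om_*}>0$. For $|\om-\om_*|$ small, continuity in $\om$ gives $f_\om\ge m_r/2$ on $\{|v|=r\}$. For $\om<\om_*$ and $Q$ positive on the relevant block, the quadratic term $(\om-\om_*)Q$ dominates $O(|v|^3)$ on a sphere of radius $\rho\ll\om_*-\om$. So $f_\om<0$ on a punctured neighbourhood of $0$, and $0$ becomes a strict local maximum of $f_\om$. The minimum of $f_\om$ on $\overline{B(r)}$ is then negative and attained in the interior away from $0$. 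This gives the first critical point $\underline v(\om)\neq0$, hence a whole $\mT_\a$-orbit of them.

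\emph{Step 3 (second orbit).} Since $\underline v(\om)$ is not isolated from its $\T^1$-orbit, we can apply a minimax over the class of $\T^1$-invariant compact sets linking the small sphere $\{|v|=\rho\}$, where $f_\om<0$, with the sphere $\{|v|=r\}$, where $f_\om>0$. Concretely, I would take the Benci $\S^1$-index of the sublevels $\{f_\om\le-\d\}\cap B(r)$, as in \cite{BBMM}, Lemma 5.4. The pseudo-gradient flow stays inside $B(r)$ because $f_\om>0$ on the outer boundary. Each sublevel contains a deformed copy of the free $\S^1$-sphere $\{|v|=\rho\}\subset\R^4$, whose $\S^1$-index is $2$. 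This produces two critical levels in $(-\infty,0)$, and if these levels coincide, infinitely many critical orbits. Either way we get $\overline v(\om)$ and $\underline v(\om)$ on distinct orbits. Both lie in $B(r)$ with $r\to0$ as $\om\to\om_*$, and $u=v+w(\om,v)$ gives the rotating waves of point $(i.2)$.

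\emph{Step 4 and main obstacle.} The maximum case is handled symmetrically: a negative maximum on the sphere, $0$ turning into a strict local minimum, and the maximum and minimax taken on $-f_\om$ restricted to the appropriate sublevels. The delicate point is that the side on which $0$ changes nature is dictated by the sign of $Q$ on the block $S$. So the claim that this side is $\om<\om_*$ in \emph{both} the maximum and the minimum case requires a separate check. I would first compute the sign of $\a_0+2\ell(\om_*-\frac{\a_0}{2})$ under $\om_*<\frac{\a_0}{2}$ from \eqref{resonant.frequencies}. If $Q$ turns out not to be sign-definite on $S$, I would instead rely on the alternative in \cite{BBMM}, Lemma 5.2. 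That lemma gives two orbits for all $\om$ in a punctured neighbourhood whenever $0$ is a strict extremum of $f_{\om_*}$, using only that $f_{\om}-f_{\om_*}=O(|\om-\om_*||v|^2)$ and the strictness on a fixed sphere. With this version, the left interval $(\om_*-\tilde\e,\om_*)$ is covered in both cases.
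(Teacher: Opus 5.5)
\textbf{There is a genuine gap.} It is the step you flag yourself, and your proposed fallback does not close it. Your argument and the paper's both rest on $0$ changing nature for $\om<\om_*$. That requires $(\om-\om_*)Q$ to be definite on all of $V$, with a known sign.

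Step 1 only gives $Q\neq0$ on each two-dimensional block. It also relies on an assumption $\om_*\neq\frac{\a_0}{2}-\frac{\a_0}{2\ell}$ that is not among the hypotheses of Theorem \ref{thm:rw.frequency}. If the two blocks of $S_4$ carry opposite signs, $0$ is a saddle of $f_\om$ and Step 2 collapses.

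The fallback you attribute to \cite{BBMM} is false as stated. Strictness of the extremum at $\om_*$ together with $f_\om-f_{\om_*}=O(|\om-\om_*||v|^2)$ forces no bifurcation on a prescribed side. For example, $f_\om(v)=(\om_*-\om)|v|^2+|v|^4$ has a strict minimum at $\om_*$ and no nonzero critical point for $\om<\om_*$.

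More to the point, once $Q$ is definite, the change of nature occurs on opposite sides of $\om_*$ in the strict-minimum and strict-maximum cases. Hence so does the guaranteed bifurcation, as in the classical alternative for potential operators; this is why $(i.2)$ only speaks of a one-sided neighbourhood. So your Step 2 (minimum case, $\om<\om_*$, $Q>0$) and the ``symmetric'' Step 4 cannot both produce solutions on $(\om_*-\tilde\e,\om_*)$. Whatever the argument, the left interval is available in only one of the two cases.

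\textbf{What the paper does.} The paper's proof treats only one case. It assumes $\max_{\pa B(\e_1)}f_{\om_*}\le-M<0$, i.e.\ a strict maximum. It reads the sign off the expansion displayed before Lemma \ref{lemma:nonisolated.critical.point.case}, whose coefficients $\a_0-2\ell(\om_*-\frac{\a_0}{2})$ are positive when $\om_*<\frac{\a_0}{2}$. From this it concludes that $0$ is a strict local minimum of $f_\om$ for $\om<\om_*$, which is the opposite of the sign you assumed. It then takes the interior maximum of $f_\om$ on $\overline{B(\e_1)}$ and a mountain-pass level over paths from $0$ to $\pa B(\e_1)$.

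Your Benci-index minimax is a legitimate replacement for that last step, and it separates the two orbits more cleanly. The action on $V\setminus\{0\}$ is not free, since it has $\mathbb{Z}_\ell$-isotropy, but it has no fixed points, which is all the index needs.

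\textbf{A caution about the sign.} The paper's displayed coefficient has $\a_0-2\ell(\om_*-\frac{\a_0}{2})$, whereas \eqref{pa.om.L.scalar.product}, the formula you used, has $\a_0+2\ell(\om_*-\frac{\a_0}{2})$. A direct computation with \eqref{linearization.om.derivative} confirms the latter. With it, $\om_*<\frac{\a_0}{2}$ does not fix the sign.

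In fact, by Vieta, two distinct roots of \eqref{resonance.equation} satisfy $(\ell_1-1)(\ell_2-1)=-\om_*^2/\s_0$. So multiplicity $4$ forces $\om_*=0$ and $\ell_1=1$, and there $\a_0+2\ell_1(\om_*-\frac{\a_0}{2})=0$. Thus, in exactly the setting of this lemma, Step 1 is false: $Q$ vanishes on the $\ell=1$ block, the infinitesimal translations $\mathtt v_{1,m}\propto(\ph_{1,m},0)$. The definiteness that both arguments need is therefore not available as stated.
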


\begin{proof} There exists $\e_1\in(0,\e_0)$ such that 
\begin{equation*}\max_{\pa B(\e_1)}f_{\om_*}\le-M<0.
\end{equation*}
By continuity of $\bar\Phi$ with respect to $\om$, up to restrictions we have that for all $\om\in(\om_*-\e_1,\om_*)$,
\begin{equation}\label{f.om.is.negative}\max_{\pa B(\e_1)}f_{\om}\le-M<0.
\end{equation}
However, for all $\om<\om_*$, $v=0$ is a strict local minimum for $f_\om$, and so there exists $\e_2\in(0,\e_1)$ and $\overline{m}=\overline{m}(\om)>0$ such that $f_\om>0$ in $\overline{B(\e_2)}\setminus\{0\}$ and
\begin{equation}\label{0.is.local.minimum}\overline{m}(\om)=\max_{\pa B(\e_2)}f_\om= \max_{\overline{B(\e_2)}}f_\om
\end{equation}
As a result, since $f_\om$ is positive in $B(\e_2)$ and negative in $\pa B(\e_1)$, then we can suppose $\overline{m}(\om)$ to be realized over a critical point $\overline{v}(\om)\in B(\e)\setminus\{0\}$.

Let us now set
\begin{align}&\Gamma:=\{\g\in C([0,1],\overline{B(\e_1)})\colon\,\g(0)=0,\,\g(1)\in\pa B(\e_1)\}, \label{curve.space}
\\&\underline{m}(\om):=\inf_{\g\in\Gamma}\max_{t\in[0,1]} f_\om(\g(t)).\label{mountain.pass.critical.value}
\end{align}
Let us notice that since any curve $\g\in\Gamma$ crosses $\pa B(\e_2)$, we have that 
\begin{align}&\underline{m}(\om)\le\overline{m}(\om), \label{underline.m.is.less.than.overline.m}
\\&\underline{m}(\om)\ge\min_{\pa B(\e_2)}f_\om>0. \label{underline.m.is.positive}
\end{align}
As a result, the fact that $\underline{m}(\om)$ is a critical value for any $\om\in(\om_*-\tilde\e,\om_*)$, with $\tilde\e$ small enough, results from the existence of Palais-Smale sequence (see \cite{BBMM}, Section 5.1, Case $(ii)$).

\end{proof}

Finally, to prove point $(i.3)$, we point out that if $v=0$ is neither a maximum nor a minimum for $f_{\om_*}$, the Mountain pass critical set \eqref{mountain.pass.critical.value} is not compact anymore. However, following \cite{BBMM}, we define the Conley block and the exit set respectively as
\begin{align*}&W:=\{v\in B(\e_0)\colon\,|R(v)|\le\g,\,\,f_\om(v)\le\mu\},\qquad\g\in(0,\e_0),\,\,\mu>0,
\\&W_-:=\{v\in W\colon\,R(v)=-\g\}.
\end{align*}

Let us now call $\eta_t$ the gradient flow map generated by $R$, that is,
\begin{equation}\label{gradient.R.flow}\frac{d}{dt}\eta_t(v)=-\grad R\circ\eta_t(v),\qquad \eta_0(v)=v.
\end{equation}
We now prove the existence of a Conley block for $f_\om$ (with $\om$ sufficiently close to $\om_*$), which is nothing less but $W$. 

\begin{proposition}[Proposition A.2, \cite{BBMM}]\label{thm:conley.block.existence} For $\g\in(0,\e_0)$ small enough, the set $W$ satisfies the following properties.

\medskip

$(i)$ For all $v\in W$, either $R(v)=-\g$ or $\eta_t(v)\in W$ for all $t>0$ small enough.

\medskip

$(ii)$ $W_-$ is the exit set of $W$ with respect to $\eta_t$, or else,
\begin{equation}\label{exit.set.char}W_-=\{v\in W\colon\,\eta_t(v)\notin W,\,\,\forall t>0\,\,\text{small enough}\}.
\end{equation}
If $R$ attains negative values arbitrarily close to $v=0$, then $W_-\ne\emptyset$ and there exists $v\in W_-$ such that $\eta_t(v)\to0$ as $t\to-\infty$. 

\medskip

$(iii)$ One has
\begin{align}&\text{int}(W)=\{v\in B(\e_0)\colon\,|R(v)|<\g,\,|g(v)|<c\}, \label{W.interior}
\\&\pa W= W_-\cup\mW, \label{W.boundary}
\end{align}
where
\begin{align}&\mW:=\mW_1\cup\mW_2, \label{non.exit.set}
\\&\mW_1:=\{v\in B(\e_0)\colon\,R(v)\in(-\g,\g],\,g(v)=c\}\subset B(\e)\setminus\overline{B\Big(\frac{\e}{2}\Big)}, \label{non.exit.set.1}
\\&\mW_2:=\{v\in B(\e_0)\colon\,R(v)=\g,\,g(v)<c\}. \label{non.exit.set.2}
\end{align}

\medskip

$(iv)$ If $F\colon\,B(\e_0)\to\R$ is $C^1-$close enough to $R$ on $\pa W\setminus W_-$, then for all $v\in W\setminus W_-$, the gradient flow $\eta_t^{(F)}$ generated by $F$ satisfies $\eta_t^{(F)}(v)\in W$, for all $t>0$ small enough.
    
\end{proposition}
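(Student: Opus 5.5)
The plan is to reproduce the Gromoll–Meyer type construction of \cite{BBMM}, Appendix A, for the isolated critical point $v=0$ of $R=f_{\om_*}$ on the finite-dimensional space $V$. I read the auxiliary function $g$ and the constant $c$ in the statement in the standard way: $W=\{|R|\le\g,\ g\le c\}$ with
\begin{equation*}
g(v):=\lambda|v|^2 - R(v),\qquad c:=\lambda\e^2/2,
\end{equation*}
for a constant $\lambda>0$ to be fixed, and $\e\le\e_0$ chosen so that $v=0$ is the only critical point of $R$ in $\overline{B(\e)}$. Since $R=O(|v|^3)$, there are two basic facts. First, there is $\d>0$ with $|\grad R|\ge\d$ on the annulus $A:=\{\e/4\le|v|\le\e\}$, by compactness and isolatedness. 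Second, $|R|\le C\e^3$ on $B(\e)$, so $g$ is comparable to $\lambda|v|^2$ there.

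The first step is to fix $\lambda$ and then $\g$ small. The aim is that on $\{g=c\}\cap\{|R|\le\g\}$ one has $\e/2<|v|<\e$. This holds once $\g\ll\lambda\e^2$, because there $\lambda|v|^2=c+R\in[c-\g,c+\g]$; this gives the inclusion in \eqref{non.exit.set.1}. With these choices $W\subset B(\e)$ is compact. The interior \eqref{W.interior} and the boundary decomposition \eqref{W.boundary} then follow once I check that the three pieces $\{R=-\g\}$, $\{R=\g\}$ and $\{g=c\}$ are genuine regular hypersurfaces at the points of $\pa W$. For $\{R=\pm\g\}$, note that $R(v)=\pm\g$ forces $|v|\gtrsim\g^{1/3}$, and away from $0$ we have $\grad R\ne0$ by isolatedness. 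For $\{g=c\}$ one needs $\grad g\ne0$ on the annulus, which is the next computation.

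The second step is the flow analysis for \eqref{gradient.R.flow}, which gives $(i)$ and $(ii)$. Along the flow, $\frac{d}{dt}R(\eta_t)=-|\grad R|^2\le0$, so the constraint $R\le\g$ is never violated forward in time, and leaving through $\{R=-\g\}$ is possible and strict wherever $\grad R\ne0$. On $\{g=c\}$ I compute
\begin{equation*}
\tfrac{d}{dt}g(\eta_t)=-2\lambda\la v,\grad R\ra+|\grad R|^2 .
\end{equation*}
This is the key estimate and the main obstacle: I want it to be $<0$, so that flow lines enter $W$ through $\mW_1$. The decisive inequality is $|\grad R|^2>2\lambda\la v,\grad R\ra$ on $A$, and with $g=\lambda|v|^2-R$ as written this is not automatic. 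So I will first try to choose $\lambda$ large relative to $\d$ and use the cubic bound $|\la v,\grad R\ra|\lesssim|v|^3$. If the sign does not work out, I will switch to $g=\lambda|v|^2+R$. Then $\frac{d}{dt}g=-2\lambda\la v,\grad R\ra-|\grad R|^2$, which is $\le-\d^2+2\lambda C\e^3<0$ once $\lambda\e^3\ll\d^2$, that is, for $\lambda$ small. This is the Gromoll–Meyer choice, and I expect it to be the one that works.

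With these sign properties in hand, $(i)$ follows: points with $R>-\g$ stay in $W$ for small $t>0$, including points of $\mW_2$, where $R$ decreases strictly. The characterization \eqref{exit.set.char} follows from the strict decrease of $R$ through $-\g$. For the last claim of $(ii)$, if $R$ takes negative values near $0$, I take the backward flow from such a point $v_0$ with $R(v_0)\in(-\g,0)$ and small $|v_0|$. On this backward orbit $R$ increases but stays negative, and it cannot leave $W$ backwards through $\mW$ because of the entrance property. Its $\alpha$-limit set is therefore contained in the critical set in $W$, which is $\{0\}$; hence $\eta_t(v_0)\to0$ as $t\to-\infty$. Following this orbit forward until it reaches $R=-\g$ gives a point of $W_-$ with the required property. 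Finally, $(iv)$ is an openness statement. The strict inequalities $\frac{d}{dt}R<0$ on $\mW_2$ and $\frac{d}{dt}g<0$ on $\mW_1$ hold uniformly on the compact set $\pa W\setminus W_-$, so they persist when $\grad R$ is replaced by $\grad F$ with $\|F-R\|_{C^1}$ small.
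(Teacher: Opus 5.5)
The paper gives no proof of this proposition; it imports Proposition A.2 of \cite{BBMM}. Your set-up for (i), (iii), (iv) and for the exit-set characterization in (ii) is the standard Gromoll--Meyer construction, and it works once you commit to $g=\lambda|v|^2+R$ with $\lambda$ small. The genuine gap is the last assertion of (ii).

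You take an arbitrary $v_0$ near $0$ with $R(v_0)\in(-\g,0)$. You then claim that its backward orbit keeps $R<0$ and cannot leave $W$ ``because of the entrance property''. Both claims are false. That $\mW_1\cup\mW_2$ is an entrance set for the forward flow means exactly that it is an exit set for the backward flow. On $\{g=c\}$ you showed $\frac{d}{dt}g(\eta_t)<0$, so $g$ increases backwards; likewise $R$ increases backwards through $\{R=\g\}$. Nothing prevents $R$ from crossing $0$ backwards. For instance, take $R(x,y)=x^4-y^4$, which has an isolated critical point and $R=O(|v|^4)$; its flow is $\dot x=-4x^3$, $\dot y=4y^3$. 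Every point with $0<|x_0|<|y_0|$ has $R<0$, yet backwards $|x|$ grows and $|y|$ shrinks, so the orbit enters $\{R>0\}$ and leaves $W$. Only the points on $\{x=0\}$ have backward orbits tending to $0$. So you cannot start from an arbitrary $v_0$. What must be proved is that the unstable set of $0$ inside $W$ is nontrivial. This is precisely what the paper uses afterwards to get $\Gamma\neq\emptyset$ in the proof of $(i.3)$.

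The missing idea is a limiting argument, which runs as follows.
\begin{itemize}
\item Take $x_n\to0$ with $R(x_n)<0$. The forward orbit of $x_n$ cannot stay in $W$ forever: its $\omega$-limit set would consist of critical points in $W$, i.e.\ be $\{0\}$, forcing $R(\eta_t(x_n))\to0$, against $R(\eta_t(x_n))\le R(x_n)<0$.
\item By (i) the orbit leaves only through $W_-$. So there is $\tau_n>0$ with $y_n:=\eta_{\tau_n}(x_n)\in W_-$ and $\eta_{[0,\tau_n]}(x_n)\subset W$; in particular $W_-\neq\emptyset$.
\item Since $0$ is an equilibrium and $\mathrm{dist}(0,W_-)>0$, continuity of the flow forces $\tau_n\to+\infty$.
\item Extract a subsequence $y_n\to y\in W_-$. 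Continuous dependence on data gives $\eta_{[-T,0]}(y)\subset W$ for every $T>0$, so the whole backward orbit of $y$ lies in the compact set $W$.
\item Its $\alpha$-limit set is nonempty, invariant, and $R$ is constant on it. Hence it consists of critical points, i.e.\ equals $\{0\}$, and $\eta_t(y)\to0$ as $t\to-\infty$.
\end{itemize}

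Two smaller points. First, drop the hedge on $g$: with $g=\lambda|v|^2-R$ one gets $\frac{d}{dt}g=|\grad R|^2-2\lambda\la v,\grad R\ra$, which has no sign in general. Second, in (iv) the set $\pa W\setminus W_-$ is not compact, because the corner $\{R=-\g,\ g=c\}$ is missing. Take your uniform bounds instead on the compact sets $\{g=c,\ |R|\le\g\}$ and $\{R=\g,\ g\le c\}$, where they still hold.
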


We are ready to prove the final lemma of this section.

\begin{lemma} Let us suppose that $f_{\om_*}$ attains both positive and negative values about $v=0$. Then, for $\g,\mu>0$ small enough, we have that for all $\tilde\e>0$ small enough and $\om\in(\om_*-\tilde\e,\om_*+\tilde\e)$, there exists at least one nonzero critical point $v(\om)$ for $f_\om$.
As a result, point $(i.3)$ holds.

\end{lemma}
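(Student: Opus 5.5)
We follow the Conley-index approach of \cite{BBMM}, Section 5 and Appendix A. Write $f_\om(v)=(\om-\om_*)Q(v)+R(v)$ on $B(\e_0)\subset V\simeq\R^4$. Here $Q(v)=-\sum_{(\ell,m)\in S}c_{\ell}|v_{\ell,m}|^2$ is the nondegenerate quadratic form coming from \eqref{action.expansion} and \eqref{pa.om.L.scalar.product}, and $R(v)=f_{\om_*}(v)=O(|v|^3)$. By assumption $v=0$ is an isolated critical point of $R$, and $R$ takes both signs arbitrarily close to $0$.

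First, fix $\g,\mu$ (and the cut-off level $c$ entering \eqref{W.interior}) so small that Proposition \ref{thm:conley.block.existence} applies. Then $W$ is an isolating block for the flow $\eta_t$ of $-\grad R$, with exit set $W_-$, and $0$ is the only critical point of $R$ in $W$. Next, note that $\|f_\om-R\|_{C^1(W)}\le |\om-\om_*|\,\|Q\|_{C^1(B(\e_0))}\lesssim \tilde\e$. So for $|\om-\om_*|<\tilde\e$ small, point $(iv)$ of the Proposition applies: $W$ is still an isolating block for $-\grad f_\om$, and $W_-$ is still its exit set, since on $W_-$ the flow exits transversally and this is an open condition. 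By continuation invariance, the homotopy Conley index of the maximal invariant set of $f_\om$ in $W$ equals $h(W,W_-)$ computed for $R$.

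The key step is to compare two homotopy types. The index $h(W,W_-)$ is the local Conley index of the isolated critical point $0$ of $R$, whose critical groups are those of an isolated degenerate critical point. For $\om\ne\om_*$, $0$ is a \emph{nondegenerate} critical point of $f_\om$ whose Morse index is $0$ or $4$, so its local Conley index is that of a sphere $S^0$ or $S^4$ pointed. The argument then runs as follows:
\begin{itemize}
\item Suppose $0$ were the only critical point of $f_\om$ in $W$. Then the two indices would coincide, so $h(W,W_-)$ would be the index of a local minimum or of a local maximum.
\item Since $R$ takes negative values arbitrarily close to $0$, point $(ii)$ gives $W_-\neq\emptyset$. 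In fact it gives an orbit in $W_-$ whose $\alpha$-limit is $0$, so the index is not that of a minimum (for which $W_-=\emptyset$).
\item Since $R$ takes positive values, the symmetric argument applied to $-R$ rules out a maximum: the index of a local maximum is that of $(D^4,S^3)$, forcing $W_-\simeq S^3$. Following \cite{BBMM}, Lemma 5.4, this is excluded because $0$ is not a local maximum of $R$.
\end{itemize}
This contradiction yields a nonzero critical point $v(\om)\in W$, for every $\om$ in the two-sided interval $(\om_*-\tilde\e,\om_*+\tilde\e)$, including $\om=\om_*$ by isolatedness of the block invariant set.

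Finally, $W\subset B(\e)$ with $\e\to0$ as $\g,\mu\to0$. Choosing $\g,\mu$ depending on $\tilde\e$ gives $v(\om)\to0$ as $\om\to\om_*$. By Lemma \ref{lemma:bif.eq.is.variational}, $u=v(\om)+w(\om,v(\om))$ then solves \eqref{rotating.wave.equation.0}, and its $\mT_\a$-orbit gives $(i.3)$ via \eqref{w.est.}. I expect the main obstacle to be the index comparison in the degenerate case: showing rigorously that $h(W,W_-)$ differs from both nondegenerate indices using only the sign-changing property. I would import this directly from \cite{BBMM}, Lemma 5.4 and Proposition A.2, checking that their finite-dimensional hypotheses (isolatedness, $C^1$-closeness) hold here.
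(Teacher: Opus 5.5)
Your route is genuinely different from the paper's. The paper runs a minimax inside the block. It takes paths in $W$ from $0$ to the exit set $W_-$; such paths exist by Proposition \ref{thm:conley.block.existence}(ii). On the appropriate side of $\om_*$ the origin is a strict local minimum of $f_\om$, while $f_\om<0$ on $W_-$ for $\om$ close to $\om_*$. Hence the mountain-pass level $m(\om)$ is positive, and a Palais--Smale/deformation argument inside $W$ gives a critical point at that positive level, which is therefore nonzero. The other side is handled by the same argument for $-f_\om$, and that is where positivity of $R$ near $0$ enters. You instead continue the Conley index of $\mathrm{Inv}(W)$ from $R$ to $f_\om$ and compare it with the index of the nondegenerate point $0$ of $f_\om$; this is essentially the critical-group proof of Rabinowitz's alternative. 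The minimax route is more elementary and identifies the solution as a mountain-pass point at level $m(\om)>0$. Yours treats both sides of $\om_*$ at once and only needs $W$ to stay an isolating neighbourhood, which is stable under small perturbations of the flow. It also extends to any crossing at which the Morse index of $0$ changes, at the cost of Conley-index machinery.

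Two things must be corrected.

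\textbf{The case $\om=\om_*$.} Your claim ``including $\om=\om_*$'' is false. $W$ is built so that $\mathrm{Inv}(W)=\{0\}$ for the flow of $R=f_{\om_*}$, so $f_{\om_*}$ has no nonzero critical point in $W$. Your index comparison gives a contradiction only when $0$ is nondegenerate, i.e. for $0<|\om-\om_*|<\tilde\e$, which is also all the paper's proof covers. Relatedly, in your last paragraph the quantifiers run the other way: $\tilde\e$ is chosen after the block, i.e. after $\g,\mu$.

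\textbf{The bullet deductions.} These do not follow as written. Equality of homotopy indices $[W/W_-]=[S^0]$ or $[S^4]$ does not force $W_-=\emptyset$ or $W_-\simeq S^3$ for the given block. Argue homologically instead.
\begin{itemize}
\item For the minimum side, show $H_0(W,W_-)=0$ by checking that every path component of $W$ meets $W_-$. The component of $0$ does so through the orbit from item (ii). Any other component avoiding $W_-$ would be forward invariant, so the $\omega$-limit of its points, which lies in $\mathrm{Inv}(W)=\{0\}$, would put $0$ in that component. A nondegenerate minimum instead has $H_0=\mathbb{Z}$, so the indices differ.
\item For the maximum side, do not pass through $(D^4,S^3)$. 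Rerun the same argument for $-f_\om$ with the block of $-R$ (exit set $\{R=\g\}$). There $0$ is a nondegenerate minimum of $-f_\om$ but not a local minimum of $-R$.
\end{itemize}
The paper cites \cite{BBMM} for a minimax/Palais--Smale principle, not for an index computation. So you should supply these index facts yourself rather than expect them from Lemma 5.4 there.
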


\begin{proof} We consider only the case $\om>\om_*$, because in the other case, we do similar arguments for $-f_{\om}$ with $\om<\om_*$.

First of all, by Proposition \ref{thm:conley.block.existence}, there exists a (invariant-by-deformation) Conley block $W$ and an exit set $W_-$ for $f_{\om_*}$ where $f_{\om_*}|_{W_-}<0$.
Let us consider 
\begin{align}&\Gamma:=\{\g\in C([0,1],W)\colon\,\g(0)=0,\,\g(1)\in W_-\},
\\&m(\om):=\inf_{\g\in\Gamma}\max_{t\in[0,1]}f_\om(\g(t)).\label{mountain.pass.value.1}
\end{align}
By the same Proposition, item $(ii)$, there is a continuous trajectory which connects $0$ and $W_-$, and thus $\Gamma\ne\emptyset$. Moreover, let us observe that $m(\om)>0$: indeed, like in Lemma \ref{lemma:point.ii}, there exists some $\tilde\e>0$ such that $B(\tilde\e)\subset W$ and
\begin{equation*}\max_{\pa B(\tilde\e)}f_\om(v)>0.
\end{equation*}
Then, by the existence of a Palais-Smale sequence inside $W$ at the level $m(\om)$ (compare \cite{BBMM}), we conclude.
    
\end{proof}

\section{Conditional energetic stability analysis for rotating circles}

\medskip

At the beginning of Section 4, we showed that the rotating circle is linearly stable. However, a more precise analysis involving the Hamiltonian structure shows that nonlinear stability is not obvious, if not generally false.

In this section, we show that rotating circle $(\zeta,\g)=(0,0)$ is always conditionally stable in the space $E:=H^1\times H_0^\frac12$ if we restrict to the constraints $\mV(\zeta)=\mV(0)$ and $\mB(\zeta,\g)=(0,0)$, but conditionally unstable in the same space (and thus, in all the higher norms) if the \emph{modified Bond number} $C=\frac{\s_0}{\a_0^2}>\frac14$ and we do not make any further restriction.

\subsection{Linearization of the Hamiltonian at the rotating circle solution}

\medskip

Let us compute the Hessian operator of $\bar\mH$ at the rotating circle solution $(\zeta,\g)=(0,0)$:
\begin{align}\mL_0[\zeta,\g]:&=d\grad\bar\mH(\om,0)[\zeta,\g] \notag
\\&=d\mC(0)^*\circ d\grad\mH(0)\circ d\mC(0)[\zeta,\g] \notag
\\&=\begin{pmatrix}1 & -\frac{\a_0}{2}\Pi_0^\perp\pa_\th^{-1} \\ 0 & 1
\end{pmatrix}\begin{pmatrix}-\s_0\pa_{\th\th} + (- \s_0 + \frac{\a_0^2}{4}) &  -\frac{\a_0}{2}\pa_\th \\ \frac{\a_0}{2}\pa_\th & G(0)
\end{pmatrix}\begin{pmatrix}\zeta \\ \frac{\a_0}{2}\pa_\th^{-1}\Pi_0^\perp\zeta + \g\end{pmatrix}\notag
\\&=\begin{pmatrix}-\s_0\pa_{\th\th} -\s_0 + \frac{\a_0^2}{4}\Pi_0 & -\frac{\a_0}{2}\pa_\th - \frac{\a_0}{2}\Pi_0^\perp\pa_\th^{-1} G(0) \\ \frac{\a_0}{2}\pa_\th & G(0)
\end{pmatrix}\begin{pmatrix}\zeta \\ \frac{\a_0}{2}\pa_\th^{-1}\Pi_0^\perp\zeta + \g\end{pmatrix}  \notag
\\&=\begin{pmatrix}(-\s_0\pa_{\th\th} - \s_0 + \frac{\a_0^2}{4}\Pi_0 - \frac{\a_0^2}{4}\Pi_0^\perp - \frac{\a_0^2}{4}\Pi_0^\perp\pa_\th^{-1}G(0)\pa_\th^{-1}\Pi_0^\perp)\zeta - (\frac{\a_0}{2}\pa_\th + \frac{\a_0}{2}\Pi_0^\perp\pa_\th^{-1}G(0))\g \\ (\frac{\a_0}{2}\pa_\th + \frac{\a_0}{2}G(0)\pa_\th^{-1}\Pi_0^\perp)\zeta + G(0)\g
\end{pmatrix}  \notag
\\&=\begin{pmatrix}-\s_0\pa_{\th\th} - \s_0  + \frac{\a_0^2}{4}\Pi_0 - \frac{\a_0^2}{4}\Pi_0^\perp  -  \frac{\a_0^2}{4}\Pi_0^\perp\pa_\th^{-1}G(0)\pa_\th^{-1}\Pi_0^\perp & -\frac{\a_0}{2}\Pi_0^\perp\pa_\th^{-1} G(0) -\frac{\a_0}{2}\pa_\th \\ \frac{\a_0}{2} G(0)\pa_\th^{-1}\Pi_0^\perp + \frac{\a_0}{2}\pa_\th & G(0)
\end{pmatrix}\begin{pmatrix}\zeta \\ \g
\end{pmatrix}
\label{variational.linearization.at.0}
\end{align}
Expanding $\zeta,\g$ using \eqref{fourier.expansion}, we get
\begin{equation}\label{L.is.sum.of.blocks.0}\mL_0[\eta,\b]=\sum_{(\ell,m)\in\mT}\mL_0^{(\ell,m)}\begin{pmatrix}\eta_{\ell,m}\ph_{\ell,m} \\ \b_{\ell,-m}\ph_{\ell,-m}
\end{pmatrix},
\end{equation}
where
\begin{align}\label{L.lm.blocks.0}\mL_0^{(\ell,m)}:=\begin{cases}\begin{pmatrix}\s_0\ell^2 - (\s_0 + \frac{\a_0^2}{4}) + \frac{\a_0^2}{4\ell} & \frac{m\a_0}{2} - \frac{m\ell\a_0}{2} \\ \frac{m\a_0}{2} -\frac{m\ell\a_0}{2} & \ell
\end{pmatrix} & \text{if}\quad(\ell,m)\ne(0,0)
\\\begin{pmatrix}-\s_0 + \frac{\a_0^2}{4} & 0 \\ 0 & 0
\end{pmatrix} & \text{if}\quad(\ell,m)=(0,0)
\end{cases}
\end{align}
The eigenvalue equation is then
\begin{align}&P(\lm,0)=\lm\Big(\lm + \s_0 - \frac{\a_0^2}{4} \Big)=0,\qquad(\ell,m)=(0,0),
\\&P(\lm,\ell)=\lm^2 - \Big(\s_0\ell^2 + \ell - \s_0 - \frac{\a_0^2}{4} + \frac{\a_0^2}{4\ell} \Big)\lm + \Big(\s_0\ell^2 - \frac{\a_0^2}{4}\ell^2 - \s_0 + \frac{\a_0^2}{4} \Big)\ell = 0,\qquad(\ell,m)\ne(0,0).\label{eigenvalue.equation}
\end{align}
We immediately spot that when $(\ell,m)=(0,0)$,
\begin{equation}\label{eigenvalue.zero.frequency}\lm_1(0)=-\s_0 + \frac{\a_0^2}{4},\qquad \lm_2(0)=0.
\end{equation}
Instead, for $(\ell,m)\ne(0,0)$, since the constant term is the product of the roots of \eqref{eigenvalue.equation}, then it is always nonnegative for all $\ell\in\N$. Precisely, it is zero if and only if $\ell=1$, otherwise the eigenvalues are strictly positive and they are given by
\begin{align}&\lm_\pm(\ell)=\frac12\Big(\s_0\ell^2 + \ell - \s_0 - \frac{\a_0^2}{4} + \frac{\a_0^2}{4\ell} \Big)\pm\frac12\sqrt{\Delta(\ell)}, \notag
\\&\Delta(\ell):=\Big(\s_0\ell^2 + \ell - \s_0 - \frac{\a_0^2}{4} + \frac{\a_0^2}{4\ell}\Big)^2 - 4\ell\Big(\s_0\ell^2 - \frac{\a_0^2}{4}\ell^2 - \s_0 + \frac{\a_0^2}{4} \Big).
\label{eigenvalues.formula}
\end{align}
From this formula, it is easy to check that 
\begin{equation}\label{eigenvalues.asymptotics}\lim_{\ell\to+\infty}\frac{\lm_+(\ell)}{\ell^2}=\s_0,\qquad\lim_{\ell\to+\infty}\frac{\lm_-(\ell)}{\ell}=\s_0.
\end{equation}

\subsection{Conditional energetic stability with volume constraint}

\medskip

Here, we show conditional energetic stability no matter what the value of $C$ is, provided the energy functional $\bar\mH$ is uniformly strictly convex.

To this term, we consider the constraint $\{\mV(\zeta)=\mV(0)\}$ and $\{\mB(\zeta,\g)=\mB(0,0)=(0,0),\quad\mP(\zeta)=\mP(0)=(0,0)\}$. Indeed, we have the following fact.
\begin{lemma}\label{lemma:smallness.on.constraints} Given the expansions
\begin{equation*}\zeta=\sum_{(\ell,m)\in\mT}\zeta_{\ell,m}\ph_{\ell,m},\qquad\g=\sum_{(\ell,m)\in\mT}\g_{\ell,m}\ph_{\ell,m},\end{equation*}
the following asymptotics holds.

\medskip

$(i)$ If $\mV(\zeta)=\mV(0)$, then
\begin{equation}\zeta_{0,0}=O(\|\zeta\|_1^2).\label{average.smallness}
\end{equation}

$(ii)$ If $\mB(\zeta,\g)=(0,0)$ and $\mP(\zeta,\g)=(0,0)$, then
\begin{equation}\zeta_{1,\pm1}=O(\|\zeta\|_1^2),\qquad\g_{1,\pm1}=O(\|\zeta\|_1\|\g\|_\frac12 + \|\zeta\|_1^2).\label{barycenter.average.smallness}
\end{equation}

\end{lemma}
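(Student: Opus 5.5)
The approach is to Taylor-expand each constraint functional at $\zeta=0$ and then read off the relevant Fourier coefficient. Throughout, $\|\zeta\|_{L^\infty}\lesssim\|\zeta\|_1$ by the Sobolev embedding $H^1\hookrightarrow L^\infty$ on $\T^1$, and $\zeta$ is small.

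For $(i)$, I write $e^{2\zeta}=1+2\zeta+r(\zeta)$ with $|r(\zeta)|\lesssim\zeta^2 e^{2|\zeta|}$. The constraint $\mV(\zeta)=\mV(0)=\pi$ then gives $\int_{\T^1}2\zeta\,d\th=-\int_{\T^1}r(\zeta)\,d\th$. Since $\zeta_{0,0}=C_0\int_{\T^1}\zeta\,d\th$, this yields
\[
|\zeta_{0,0}|\lesssim\|\zeta\|_{L^2}^2\le\|\zeta\|_1^2 .
\]

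For the first estimate in $(ii)$, I expand $\mP(\zeta)=\frac12\int e^{3\zeta}(\cos\th,\sin\th)^T\,d\th$ in the same way: $e^{3\zeta}=1+3\zeta+O(\zeta^2)$. The constant term integrates to zero against $\cos\th$ and $\sin\th$, so $\mP=0$ gives $\frac32\int\zeta(\cos\th,\sin\th)^T\,d\th=O(\|\zeta\|_1^2)$. The left-hand side is exactly a nonzero multiple of $(\zeta_{1,1},\zeta_{1,-1})$, which proves the claim for $\zeta_{1,\pm1}$.

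For $\g_{1,\pm1}$ I will use \eqref{barycenter.velocity.0.rigidity}. This identity is valid because $\mB=0$ and $\mP=0$, and it removes the $-\frac{\a_0}{6}e^{2\zeta}$ term. I expand $e^\zeta=1+(e^\zeta-1)$, with $\|e^\zeta-1\|_{L^\infty}\lesssim\|\zeta\|_1$, and treat the two pieces separately.

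\begin{itemize}
\item \emph{Velocity term.} The leading part is $\int\g'(-\sin\th,\cos\th)^T\,d\th=\int\g(\cos\th,\sin\th)^T\,d\th$ after integrating by parts, which is a multiple of $(\g_{1,1},\g_{1,-1})$. The correction $\int(e^\zeta-1)\g'(-\sin\th,\cos\th)^T\,d\th$ is bounded through the $H^{1/2}$--$H^{-1/2}$ duality: $\g'\in H^{-1/2}$ with norm $\lesssim\|\g\|_{\frac12}$, and $(e^\zeta-1)\sin\th$ lies in $H^1\subset H^{1/2}$ with norm $\lesssim\|\zeta\|_1$. This correction is therefore $O(\|\zeta\|_1\|\g\|_{\frac12})$.
\item \emph{Vorticity term.} Here $\frac{\a_0}{4}\Pi_0^\perp e^{2\zeta}=\frac{\a_0}{4}\bigl(2\zeta-2\Pi_0\zeta+O(\zeta^2)\bigr)$. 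The zero-mean constant part integrates to zero. The linear part, multiplied by the $1$ coming from $e^\zeta$, gives $\frac{\a_0}{2}\int\zeta(-\sin\th,\cos\th)^T\,d\th$, which is a combination of $\zeta_{1,\mp1}$ and hence already $O(\|\zeta\|_1^2)$ by the previous step. All the remaining contributions are quadratic in $\zeta$, so they are $O(\|\zeta\|_1^2)$.
\end{itemize}

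Collecting these gives $\g_{1,\pm1}=O(\|\zeta\|_1\|\g\|_{\frac12}+\|\zeta\|_1^2)$.

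The main obstacle is the low regularity of $\g$, which lies only in $H^{1/2}$, so that $\g'$ is merely a distribution. The product $(e^\zeta-1)\g'$ must be handled by duality rather than pointwise, and this relies on $H^1$ being an algebra and a multiplier on $H^{1/2}$ in one dimension. The second delicate point is to make sure the vorticity term contributes linearly only through the $\ell=1$ modes of $\zeta$, which were already controlled before estimating $\g_{1,\pm1}$.
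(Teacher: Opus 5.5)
Your proof is correct and follows essentially the same route as the paper. Both arguments Taylor-expand $\mV$, $\mP$ and the identity \eqref{barycenter.velocity.0.rigidity} at $\zeta=0$, read off the $\ell=0$ and $\ell=1$ Fourier modes, and use the already-established bound on $\zeta_{1,\pm1}$ to absorb the linear vorticity contribution. The only difference is the cross term $(e^\zeta-1)\g'$: the paper integrates by parts onto the $H^1$ factor, obtaining $O(\|\zeta\|_1\|\g\|_0)$ and needing only $\g\in L^2$ (so the low regularity of $\g$ is not really an obstacle), whereas you use $H^{1/2}$--$H^{-1/2}$ duality, and both give the stated bound.
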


\begin{proof} $(i)$ If $\mV(\zeta)=\mV(0)$, then by \eqref{volume.new.coordinates}
\begin{equation*}\int_\T e^{2\zeta}\,d\th = \int_{\T^1}1\,d\th.
\end{equation*}
If we Taylor expand $\e^{2\zeta}$, by Parseval theorem we get
\begin{equation*}2\int_\T\zeta\,d\th = O(\|\zeta\|_0^2),
\end{equation*}
and thus $\zeta_{0,0}=O(\|\zeta\|_0^2)$. Since for $\|\zeta\|_0$ small enough we have
\begin{equation*}\|\zeta\|_0^2=|\zeta_{0,0}|^2 + \sum_{(\ell,m)\in\mT,\ell\ge1}\zeta_{\ell,m}\ph_{\ell,m}\le\frac12\|\zeta\|_0^2 + \sum_{(\ell,m)\in\mT,\ell\ge1}\zeta_{\ell,m}\ph_{\ell,m},
\end{equation*}
we get that $\|\zeta\|_0^2\sim\|\Pi_0^\perp\zeta\|_0^2\lesssim\|\zeta\|_1^2$, and we then deduce the estimate \eqref{average.smallness}.

\medskip

$(ii)$ By \eqref{position.time.derivative}, $\mP$ is conserved and thus the statement is well-posed. By \eqref{barycenter.position} and Taylor expansion of $e^{3\zeta}$ we get the first estimate in \eqref{barycenter.average.smallness}. Moreover, by \eqref{barycenter.velocity.0.rigidity}, Taylor expansion of $\zeta$ and integration by part we have
\begin{equation*}\int_{\T}\g'\begin{pmatrix}-\sin\th \\ \cos\th
\end{pmatrix}\,d\th = \frac{\a_0}{4}\int_\T\zeta\begin{pmatrix}-\sin\th \\ \cos\th
\end{pmatrix}\,d\th + O(\|\zeta\|_1\|\g\|_0 + \|\zeta\|_0^2) = O(\|\zeta\|_1\|\g\|_0 + \|\zeta\|_0^2),
\end{equation*}
which is $\g_{1,\pm1}=O(\|\zeta\|_1\|\g\|_0 + \|\zeta\|_1^2)$. Since $\g\in H_0^\frac12$, we have $\|\g\|_0=\|\Pi_0^\perp\g\|_0\lesssim\|\g\|_\frac12$, which gives the second estimate in \eqref{barycenter.average.smallness}.
    
\end{proof}

Then, we are ready to show conditional energetic stability of rotating circles in the constraint $\{\mV(\zeta)=\mV(0)\}\cap\{\mB(\zeta,\g)=(0,0),\,\mP(\zeta)=(0,0)\}$.
\begin{theorem}\label{thm:conditional.stability} Assume that the Cauchy problem 
\begin{equation}\label{Cauchy.problem}\frac{d}{dt}\begin{pmatrix}\zeta \\ \g
\end{pmatrix} = J(\zeta)\grad\bar\mH(\zeta,\g),\qquad(\zeta,\g)|_{t=0}=(\zeta_0,\g_0)
\end{equation}
is locally well posed in the Banach space $C([0,T],E)$ in the weak sense described in \cite{VWW}, page 9. Then, for any $\e>0$, there exists $\d_0=\d_0(\e)>0$ such that for any $d\in(0,\d_0)$ and for any initial datum $u_0=(\zeta_0,\g_0)\in E$ such that $\mV(\zeta_0)=\mV(0),\,\mB(u_0)=(0,0),\,\mP(\zeta_0)=(0,0)$ and $\|u_0\|_E\le\d$, we have $\|u(t)\|_E\le\e$ for any $t>0$. Here, $u(t)=(\zeta(t),\g(t))$ is the solution of \eqref{Cauchy.problem} with initial datum $u_0$.

\end{theorem}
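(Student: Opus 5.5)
The plan is a Lyapunov argument in the spirit of \cite{GSS.I} and \cite{VWW}, using $\bar\mH$ as the Lyapunov functional on the constraint set
\[
M:=\{u\in E\colon\,\bar\mV(\zeta)=\bar\mV(0),\ \mB(u)=(0,0),\ \mP(\zeta)=(0,0)\}.
\]
By Lemma \ref{lemma:symmetries} and the computations after \eqref{barycenter.velocity.alternative} and \eqref{position.time.derivative}, $\bar\mH$, $\bar\mV$ and $\mB$ are conserved. On $\{\mB=0\}$, $\mP$ is conserved too. So every solution starting in $M$ stays in $M$ for as long as it exists, in the weak sense of local well-posedness assumed in the statement.

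The core estimate I aim for is a coercivity bound: there exist $\d_1,c>0$ such that
\[
\bar\mH(u)-\bar\mH(0)\ge c\|u\|_E^2\qquad\text{for all }u\in M,\ \|u\|_E<\d_1.
\]
To get it, Taylor expand $\bar\mH(u)=\bar\mH(0)+d\bar\mH(0)u+\frac12\la\mL_0u,u\ra+O(\|u\|_E^3)$, with $\mL_0$ as in \eqref{variational.linearization.at.0}. Here $d\bar\mH(0)u$ is not zero; it equals $-\s_0\int\zeta\,d\th$ up to constants coming from the $\s_0-\frac{\a_0^2}{8}$ volume term. By Lemma \ref{lemma:smallness.on.constraints}$(i)$ this is $O(\|\zeta\|_1^2)$. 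It has to be combined exactly with the quadratic part of $\bar\mV$ at the constraint, i.e.\ one expands $\bar\mH-\lambda_0\bar\mV$ with the Lagrange multiplier $\lambda_0$ that makes $0$ critical. I expect this to turn the $(0,0)$ entry into something harmless, and in any case the $(0,0)$ mode is $O(\|\zeta\|_1^2)$.

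Next, split $u=u_{\rm low}+u_{\rm high}$, where $u_{\rm low}$ collects the modes $(0,0)$ and $(1,\pm1)$. By \eqref{L.lm.blocks.0} and \eqref{eigenvalues.formula}, for $\ell\ge2$ both eigenvalues $\lambda_\pm(\ell)$ are strictly positive. Moreover \eqref{eigenvalues.asymptotics} gives $\lambda_+\sim\s_0\ell^2$ and $\lambda_-\sim\s_0\ell$, so the block quadratic form is bounded below by $c(\ell^2|\zeta_{\ell,m}|^2+\ell|\g_{\ell,-m}|^2)$. The eigenvectors tilt only by an $O(1)$ angle, and one checks that the off-diagonal entry, of order $\ell$, is dominated by $\sqrt{\s_0\ell^2\cdot\ell}$. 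This yields $\la\mL_0u_{\rm high},u_{\rm high}\ra\ge c\|u_{\rm high}\|_E^2$. The low modes are controlled by Lemma \ref{lemma:smallness.on.constraints}: they are $O(\|u\|_E^2)$, hence contribute $O(\|u\|_E^3)$ to the quadratic form, and $\|u\|_E^2\le\|u_{\rm high}\|_E^2+O(\|u\|_E^4)$.

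The cubic remainder must be controlled in the energy norm $E=H^1\times H_0^{\frac12}$. This is where I expect the main obstacle. The term $\frac12\int\chi\,G(\xi)\chi$ needs the analyticity of $\xi\mapsto G(\xi)$ as a map into $\mathcal L(\dot H^{\frac12},H^{-\frac12})$ for $\xi$ small in $H^1$, whereas Lemma \ref{lemma:G} is stated for $\xi\in H^{s_0}$ with $s_0>\frac32$. I would try a Lipschitz-domain version of the Dirichlet--Neumann estimate: $H^1\subset W^{1,\infty}$ fails only barely, so one might instead work with $H^1\cap W^{1,\infty}$, or accept a weaker remainder $o(\|u\|_E^2)$ via continuity of the quadratic form, which is all that is needed. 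The area term $\s_0\int e^\zeta\sqrt{1+\zeta'^2}$ is smooth in $H^1$ up to second order, with a remainder bounded by $\|\zeta\|_{W^{1,\infty}}\|\zeta\|_1^2$.

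Given the coercivity bound, the conclusion is standard. By continuity of $\bar\mH$ on $E$, $\bar\mH(u_0)-\bar\mH(0)\le C\|u_0\|_E^2\le C\d^2$. Conservation gives $c\|u(t)\|_E^2\le C\d^2$ as long as $\|u(t)\|_E<\d_1$. Choosing $\d_0$ with $\sqrt{C/c}\,\d_0<\min(\e,\d_1)$, a continuity (bootstrap) argument in $t$ shows that $\|u(t)\|_E$ never reaches $\d_1$, so $\|u(t)\|_E\le\e$ for all $t>0$.
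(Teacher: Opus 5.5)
Your route is the paper's: $\bar\mH$ as a Lyapunov functional on the constraint set, positivity of the Fourier blocks for $\ell\ge2$, the modes $(0,0)$ and $(1,\pm1)$ discarded via Lemma~\ref{lemma:smallness.on.constraints}, then conservation plus continuity. You are right, and more careful than \eqref{Hamiltonian.expansion}, that $d\bar\mH(0)\neq0$. Its value, however, is $d\bar\mH(0)u=\frac{\a_0^2}{4}\int_{\T^1}\zeta\,d\th$, because the $\s_0$-contributions of length and area cancel; so $\lambda_0=\frac{\a_0^2}{4}$. On $\{\bar\mV=\bar\mV(0)\}$ this term equals $-\frac{\a_0^2}{4}\int_{\T^1}\zeta^2\,d\th$ up to cubic terms. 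It therefore shifts the $\zeta\zeta$ entry of \emph{every} block by $-\frac{\a_0^2}{2}$, not only the $(0,0)$ one. The literal Hessian of $\bar\mH$ exceeds \eqref{L.lm.blocks.0} by exactly $\frac{\a_0^2}{2}$ in each $\zeta\zeta$ entry. So \eqref{L.lm.blocks.0} is already the Hessian of $\bar\mH-\lambda_0\bar\mV$ and must not be corrected a second time.

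The genuine gap is the claim that the blocks are positive definite for every $\ell\ge2$. From \eqref{L.lm.blocks.0},
\begin{equation*}
\det\mL_0^{(\ell,m)}=\ell\Big(\s_0\ell^2-\s_0-\frac{\a_0^2}{4}+\frac{\a_0^2}{4\ell}\Big)-\frac{\a_0^2}{4}(\ell-1)^2=\ell(\ell-1)\Big(\s_0(\ell+1)-\frac{\a_0^2}{4}\Big),
\end{equation*}
which is negative at $\ell=2$ as soon as $C<\frac1{12}$.

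\begin{itemize}
\item Your check that the off-diagonal entry is dominated by $\sqrt{\s_0\ell^3}$ is only asymptotic in $\ell$. At $\ell=2$ it is equivalent to $C>\frac1{12}$.
\item At linear order $\bar\mV$, $\mB$, $\mP$ see only the modes $\ell\le1$. Hence for $C<\frac1{12}$ there are points of the constraint set arbitrarily close to $0$ with $\bar\mH(u)<\bar\mH(0)$, and your coercivity bound fails.
\item For $C<\frac1{24}$ even the $\zeta\zeta$ entry $(\ell-1)\big(\s_0(\ell+1)-\frac{\a_0^2}{4\ell}\big)$ is negative at $\ell=2$. Then $\pa_tu=J(0)\mL_0u$ has an exponentially growing $\ell=2$ mode, exactly where \eqref{existence.resonance.condition} fails for $\ell=2$. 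So the statement should not be expected for all $C$.
\end{itemize}

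The paper's proof rests on the same assertion. The constant term printed in \eqref{eigenvalue.equation} is not this determinant, and even as printed it is nonnegative only when $C\ge\frac14$.

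Under the restriction $C>\frac1{12}$ your argument goes through. To reach $C>\frac1{24}$, you can run it with the conserved functional $\bar\mH-\frac{\a_0}{4}\bar\mI$. Its $\ell\ge2$ blocks in \eqref{L.lm.blocks} have determinant $\s_0\ell(\ell^2-1)-\frac{\a_0^2}{16}\ell^2$, which is positive for all $\ell\ge2$ exactly when $C>\frac1{24}$. The modes $\ell\le1$ are still removed by the constraints.

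Finally, the $H^1$-regularity problem you raise for the cubic remainder is real, and the paper does not address it either.
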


\begin{proof} Consider any $u:=(\zeta,\g)\in E$. Then, we have the expansion for $\bar\mH$ about the rotating circle:
\begin{equation}\label{Hamiltonian.expansion}\bar\mH(u)=\bar\mH(0) + \la\mL_0 u,u\ra + O(\|u\|_{E}^3).
\end{equation}
Now, by the analysis led in Section 3.3, we have
\begin{align}\la\mL_0u,u\ra
&=\lm_1(0)|\zeta_{0,0}|^2 + \sum_{(\ell,m)\in\mT,\ell\ge2}\lm_{+}(\ell)|\zeta_{\ell,m}|^2 + \sum_{(\ell,m)\in\mT,\ell\ge2}\lm_{-}(\ell)|\g_{\ell,m}|^2 \notag
\\&\gtrsim\lm_1(0)|\zeta_{0,0}|^2 + \sum_{(\ell,m)\in\mT,\ell\ge2}(\ell^2|\zeta_{\ell,m}|^2 + \ell|\g_{\ell,m}|^2)
\label{second.order.Hamiltonian}
\end{align}
If we now assume $\mV(\zeta)=\mV(0)$, by \eqref{average.smallness} we get $|\zeta_{0,0}|^2=O(\|\zeta\|_1^4)=O(\|u\|_E^4)$, and if we assume $\mB(\zeta,\g)=(0,0)$, by \eqref{barycenter.average.smallness} we get $|\zeta_{1,\pm1}|^2,|\g_{1,\pm1}|^2=O(\|u\|_E^4)$. Altogether, this implies that $\la\mL_0 u,u\ra\gtrsim\|u\|_E^2$, and so coming back to \eqref{Hamiltonian.expansion} we get
\begin{equation}\label{Hamiltonian.convex}\bar\mH(u)-\bar\mH(0)\gtrsim\|u\|_E^2.
\end{equation}
Then, let us suppose that the statement does not hold, namely, there exists $\e_0>0$ such that for all $\delta>0$, for some initial datum $u_0=(\zeta_0,\g_0)$ satisfying $\|u_0\|_E\le\d$ and $\mV(\zeta_0)=\mV(0),\,\mB(u_0)=(0,0)$, there exists a time $T_\d>0$ such that $\|u(T_\d)\|_E\ge\e_0$. By \eqref{Hamiltonian.convex} and Lipschitz-continuity of $\bar\mH$ on balls in $E$, we have
\begin{equation*}\|u(T_\d)\|_E^2\lesssim\bar\mH(u(T_\d))-\bar\mH(0)=\bar\mH(u_0)-\bar\mH(0)\lesssim\|u_0\|_E^2\le\d^2.
\end{equation*}
Choosing $\d>0$ in a suitable way, we get a contradiction. This concludes the proof.

\end{proof}

\begin{remark} We remark here that the proof is independent of how large is the modified Bond number $C=\frac{\s_0}{\a_0^2}$. Indeed, despite the fact that the (possibly negative) eigenvalue $\lm_1(0)$ arises, its corresponding direction is negligible thanks to the constraints $\mV(\zeta)=\mV(0)$, $\mB(\zeta,\g)=(0,0)$ and $\mP(\zeta)=(0,0)$.
\end{remark}

\begin{flushright}

\textbf{Giuseppe La Scala}

Mathematical and Physical Sciences for Advanced Materials and Technologies

Scuola Superiore Meridionale

Via Mezzocannone, 4, 80138 Naples, Italy

giuseppe.lascala-ssm@unina.it

\end{flushright}


\begin{thebibliography}{99}


\bibitem{B.J.LM} {\sc P. Baldi, V. Julin, D.A. La Manna.} \emph {Liquid drop with capillarity and rotating traveling waves}, Archive for Rational Mechanics and Analysis 250 (1), 4 (2026).

\bibitem{BLS} {\sc P. Baldi, D.A. La Manna, G. La Scala}, \emph{Bifurcation from Multiple Eigenvalues of Rotating Traveling Waves on a Capillary Drop}, preprint, arxiv:2504.01555.

\bibitem{Benci} {\sc V. Benci}, \emph{A geometrical index for the group $\S^1$ and some applications to the study of periodic solutions of ordinary differential equations}, Comm. Pure Appl. Math. \textbf{34} (1981), 393-432.

\bibitem{BBMM} {\sc T. Barbieri, M. Berti, A. Maspero, M. Mazzucchelli}, \emph{Bifurcation of gravity-capillary Stokes waves with constant vorticity}, Journal of Differential Equations, Volume 451 (2026).

\bibitem{BMV} {\sc M. Berti, A. Maspero, P. Ventura}, \emph{On the analyticity of the Dirichlet-Neumann operator and Stokes waves}, Atti Accad. Naz. Lincei Cl. Sci. Fis. Mat. Natur. 33 no. 3, 611-650 (2022).

\bibitem{Beyer.Gunther} {\sc K. Beyer, M. G\"unther}, \emph{On the Cauchy Problem for a Capillary Drop. Part I: Irrotational Motions}, Math. Meth. Appl. Sci. 21, 1149-1183 (1998).

\bibitem{Chow.Hale} {\sc S.N. Chow, J.K. Hale}, \emph{Methods of Bifurcation Theory}, Springer-Verlag (1982).

\bibitem{Constantin.Strauss} {\sc A. Constantin, W. Strauss}, \emph{Exact steady periodic water waves with vorticity}, Comm. Pure Appl. Math. 57, no. 4, 481-527 (2004).

\bibitem{CSV} {\sc A. Constantin, W. Strauss, E. Varvaruca}, \emph{Global bifurcation of steady gravity water waves with critical layers}, Acta Math. 217, no. 2, 195–262 (2016).

\bibitem{CS}{\sc D. Coutand, S. Shkoller.}\emph{ Well-posedness of the free-surface incompressible Euler equations with or without surface tension}, J. Amer. Math. Soc. {\bf 20} , 829–930 (2007).

\bibitem{Crandall.Rabinowitz} {\sc M.G. Crandall. P.H. Rabinowitz}, \emph{Bifurcation from simple eigenvalues}, J. Funct. Anal. \textbf{8} (1971), 321–340.

\bibitem{Dyachenko} {\sc S. Dyachenko}, \textit{Traveling capillary waves on the boundary of a fluid disc}, Stud. Appl. Math. 148 (2022).

\bibitem{GHL} {\sc S. Gallot, D. Hulin, J. Lafontaine}, \emph{Riemannian Geometry}, Springer-Verlag (1987).

\bibitem{GSS.I} {\sc M. Grillakis, J. Shatah, W. Strauss}, \emph{Stability Theory of Solitary Waves in the Presence of Symmetry I}, Journal of Functional Analysis \textbf{74}, 160-197 (1987).

\bibitem{Iooss-Plotnikov-Toland} {\sc G. Iooss, P. Plotnikov, J. Toland}, \emph{Standing Waves on an Infinitely Deep Perfect Fluid Under Gravity}, Arch. Rational Mech. Anal., Volume 177, pages 367–478 (2005).

\bibitem{JL}{\sc V. Julin, D.A. La Manna}.\emph{ A Priori Estimates for the Motion of Charged Liquid Drop: A
Dynamic Approach via Free Boundary Euler Equations.} J. Math. Fluid Mech. {\bf 26}: 48,(2024).

\bibitem{K.L}{\sc v. Kozlov, e. Lokharu}, \emph{Global Bifurcation and Highest Waves on Water of Finite Depth}, Arch. Ration. Mech. Anal. 247, 5, 98 (2023). 

\bibitem{Lannes} {\sc D. Lannes}, \emph{The Water Waves Problem: Mathematical Analysis and Asymptotics}, American
Mathematical Soc. (2013).

\bibitem{Lannes1} {\sc D. Lannes}, \emph{Well-posedness of the water-waves equations}, J. Amer. Math. Soc., 3, 605–654,
18 (2005).

\bibitem{L} {\sc G. La Scala}, \emph{Two-dimensional capillary liquid drop: Craig-Sulem formulation on $\T^1$ and bifurcations from multiple eigenvalues of rotating waves}, preprint, arXiv:2505.11650.

\bibitem{Mawhin.Willem} {\sc J.\ Mawhin, M.\ Willem}, \emph{Critical Point Theory and Hamiltonian Systems}, Springer, 1989.

\bibitem{Martin} {\sc C.I. Martin}, \emph{Local bifurcation and regularity for steady periodic capillary-gravity water waves with constant vorticity}, Nonlinear Anal.: Real World Applications, 14, 131-149 (2013).

\bibitem{MNS} {\sc D. Meyer, L. Niebel, C. Seis}, 
\emph{Steady Bubbles and Drops in Inviscid Fluids}, Calc. Var. (2025), 64:299.

\bibitem{Moser} {\sc J. Moser}, \emph{Periodic orbits near an equilibrium and a theorem by Alan Weinstein}, Comm. Pure Appl. Math., 29 (1976), pp. 727–747.

\bibitem{Lord.Rayleigh.jets} {\sc L. Rayleigh}, \emph{On the capillary phenomenon of jets}, Proc. R. Soc. Lond. \textbf{29} (1879).

\bibitem{Shao.initial.notes} {\sc C. Shao}, 
\emph{Longtime Dynamics of Irrotational Spherical Water Drops: Initial Notes}, 
preprint available at https://arxiv.org/pdf/2301.00115.

\bibitem{Shao.G.paralinearization} {\sc C. Shao}, \emph{On the Cauchy Problem of Spherical Capillary Water Waves}, preprint available at https://arxiv.org/pdf/2310.07113.

\bibitem{Shatah.Zeng} {\sc J. Shatah, C. Zeng}, 
\emph{Geometry and a priori estimates for free boundary problems of the Euler’s equation}, 
Comm. Pure Appl. Math. 61 (5), 698-744, 2008.

\bibitem{VWW} {\sc K. Varholm, E. Wahlén, S. Walsh}, \emph{On the Stability of Solitary Water Waves with a Point Vortex}, Communications on Pure and Applied Mathematics, Vol. LXXIII, 2634–2684 (2020).

\bibitem{Wahlen} {\sc E. Wahlén}, \emph{Steady periodic capillary waves with vorticity}, Ark. Mat. 44 (2006), 367-387.

\bibitem{Wahlen.1} {\sc E. Wahlén}, \emph{A Hamiltonian formulation of water waves with constant vorticity}, Letters in
Mathematical Physics, \textbf{79} no.3, 303-315(2007).

\bibitem{Wahlen.Weber} {\sc E. Wahlén, J. Weber}, \emph{Large-amplitude steady gravity water waves with general vorticity and critical layers}, Duke Math. J. 173 (11), 2197-2258 (2024).

\bibitem{Weinstein} {\sc A. Weinstein}, \emph{Normal modes for nonlinear hamiltonian systems}, Invent. Math., 20 (1973), pp. 47–57.





















\end{thebibliography}
\end{document}